\numberwithin{equation}{section}
\numberwithin{equation}{section}
\newtheorem{introtheorem}{Theorem}
\newtheorem{introcorollary}[introtheorem]{Corollary}
\newtheorem{theorem}{Theorem}[section]
\newtheorem{lemma}[theorem]{Lemma}
\newtheorem{proposition}[theorem]{Proposition}
\newtheorem{corollary}[theorem]{Corollary}
\newtheorem*{theorem*}{Theorem}
\newtheorem*{corollary*}{Corollary}
\theoremstyle{definition}
\newtheorem{definition}[theorem]{Definition}
\theoremstyle{remark}
\newtheorem{example}[theorem]{Example}
\theoremstyle{remark}
\newtheorem{remark}[theorem]{Remark}
\begin{document}

\title{Universal C$^{\ast}$-algebras from graph products: structure and applications}

\author{Mario Klisse}

\address{KU Leuven, Department of Mathematics,
Celestijnenlaan 200B, 3001 Leuven, Belgium}

\email{mario.klisse@kuleuven.be}

\date{\today. \emph{MSC2010:}  46L05, 46L09. The author is supported by the FWO postdoctoral grant 1203924N of the Research Foundation Flanders.}

\maketitle
\begin{abstract}
In this article, we introduce and investigate a class of C$^{\ast}$-algebras generated by reduced graph products of C$^{\ast}$-algebras, augmented with families of projections naturally associated with words in right-angled Coxeter groups. These ambient C$^{\ast}$-algebras possess a rich and tractable combinatorial structure, which enables the deduction of a variety of structural properties. Among other results, we establish universal properties, characterize nuclearity and exactness in terms of the vertex algebras, and analyze the ideal structure. In the second part of the article, we leverage this framework to derive new insights into the structure of graph product C$^{\ast}$-algebras -- many of which are novel even in the case of free products.
\end{abstract}

\vspace{3mm}

\section*{Introduction}

\vspace{3mm}

After their introduction as an ingredient of Voiculescu's groundbreaking non-commutative probability theory in \cite{Voiculescu85} (see also \cite{Avitzour82}), free products have become a fundamental tool in the study of operator algebras. Voiculescu's construction can be viewed as the operator-algebraic analogue of free products of groups, with both frameworks exhibiting a structural compatibility. In the group-theoretic setting, free products are naturally generalized through Green's graph products of groups in \cite{Green90}, a construction which starts from a simplicial graph with a discrete group assigned to each vertex. The resulting group amalgamates its vertex groups, enforcing commutation relations that mirror the graph's adjacency structure. This framework interpolates between free and Cartesian products, encompasses important classes such as right-angled Artin and right-angled Coxeter groups, and preserves many group-theoretic properties.

Partially motivated by these developments, analogous constructions in the setting of operator algebras have been introduced and studied by M\l otkowski \cite{Mlotkowski04}, Speicher and Wysocza\'{n}ski \cite{SpeicherWysoczanski16}, and Caspers and Fima \cite{CaspersFima17}. In recent years, graph product structures have attracted growing interest, particularly in relation to free probability (\cite{Mlotkowski04, SpeicherWysoczanski16, CharlesworthCollins21, CSHJEN24-2}), Popa's deformation/rigidity theory (\cite{ChifanSantiagoSucpikarnon18, Caspers20, BorstCaspersWasilewski24, BorstCaspersChen24, CaspersChen25, DrimbeVaes25}), and approximation properties (\cite{CaspersFima17, Atkinson20, Borst24, BorstCaspersChen24}); see also \cite{Caspers16, CaspersKlisseLarsen21, CharlesworthJekel25}. Furthermore, von Neumann algebras arising from group-theoretic graph products have been studied intensively in \cite{BorstCaspers24, ChifanKunnawalkam24, ChifanDavisDrimbe25-1, ChifanDavisDrimbe25-2}. Comparatively little is known about their C$^{\ast}$-algebraic counterparts beyond the free product case.

Let $\Gamma$ be a finite, undirected, simplicial graph with vertex set $V\Gamma$ and edge set $E\Gamma$, and let $\mathbf{A}=(A_{v})_{v\in V\Gamma}$ denote a family of unital C$^{\ast}$-algebras, each equipped with a GNS-faithful state $\omega_{v}$. Following \cite{CaspersFima17}, the \emph{reduced graph product C$^{\ast}$-algebra} $\mathbf{A}_{\Gamma}:=\star_{v,\Gamma}(A_{v},\omega_{v})$ is a unital C$^{\ast}$-algebra with a canonical GNS-faithful state $\omega_{\Gamma}$. This algebra contains isomorphic copies of the vertex algebras $A_{v}$ such that $\omega_{\Gamma}$ restricts to the respective state $\omega_{v}$ and such that $[A_{v},A_{v^{\prime}}]=0$ whenever $(v,v')\in E\Gamma$. Moreover, reduced words in the vertex algebras satisfy a freeness condition with respect to $\omega_{\Gamma}$. In the extreme cases, $\mathbf{A}_{\Gamma}$ recovers Voiculescu's reduced free product when $\Gamma$ has no edges, and the tensor product $\bigotimes_{v\in V\Gamma}A_{v}$ when $\Gamma$ is complete.

Building on the author's previous work on right-angled Hecke C$^{\ast}$-algebras in \cite{Klisse23-1, Klisse23-2}, this article introduces and studies ambient C$^{\ast}$-algebras $\mathfrak{A}(\mathbf{A},\Gamma)$ generated by the reduced graph product $\mathbf{A}_{\Gamma}$ together with a family of projections $(Q_{v})_{v\in V\Gamma}$ associated with words in the right-angled Coxeter group $W_{\Gamma}:=\star_{v,\Gamma}\mathbb{Z}_{2}$. Since every graph product can be expressed as an amalgamated free product of suitable substructures, our construction can be viewed as a refined and well-behaved analog of Hasegawa's construction in \cite{Hasegawa19}. Furthermore, for two-dimensional vertex algebras, $\mathfrak{A}(\mathbf{A},\Gamma)$ identifies with the reduced crossed product associated with the canonical action of $W_{\Gamma}$ on its combinatorial boundary (see \cite{Lecureux10, CapraceLecureux11, Klisse23-1}).

The motivation for the present work is two-fold. On the one hand, the ambient algebras $\mathfrak{A}(\mathbf{A},\Gamma)$ contain the graph product $\mathbf{A}_\Gamma$ and exhibit a rich and tractable combinatorial structure, leading to a variety of interesting and useful properties. Specifically, for any finite, undirected, simplicial graph $\Gamma$ and unital C$^{\ast}$-algebras $\mathbf{A}=(A_{v})_{v\in V\Gamma}$ with GNS-faithful states $\omega_{v}$, the ambient algebra $\mathfrak{A}(\mathbf{A},\Gamma)$ admits a natural \emph{gauge action} $\mathbb{T}^{V\Gamma}\curvearrowright\mathfrak{A}(\mathbf{A},\Gamma)$, and a canonical expected C$^{\ast}$-subalgebra $\mathcal{D}(\mathbf{A},\Gamma)$ of ``diagonal'' operators. Furthermore, in Proposition \ref{DensityStatement} we describe a dense $\ast$-subalgebra of $\mathfrak{A}(\mathbf{A},\Gamma)$ consisting of linear combinations of products of \emph{creation}, \emph{diagonal} and \emph{annihilation} operators. These features allow one to formulate and prove a range of properties.

\begin{introtheorem}[{Theorem \ref{UniversalProperty} and Theorem \ref{UniversalProperty2}}] \label{Introduction1} Let $\Gamma$ be a finite, undirected, simplicial graph and let $\mathbf{A}:=(A_{v})_{v\in V\Gamma}$ be a collection of unital C$^{\ast}$-algebras, equipped with GNS-faithful states $(\omega_{v})_{v\in V\Gamma}$. Then, $\mathfrak{A}(\mathbf{A},\Gamma)$ satisfies the following universal properties: 
\begin{enumerate}
\item For $v_{0}\in V\Gamma$ define 
\[
\mathbf{A}_{1}:=(A_{v})_{v\in V\text{\emph{Star}}(v_{0})},\,\mathbf{A}_{2}:=(A_{v})_{v\in V(\Gamma\setminus\{v_{0}\})},\,\mathbf{B}:=(A_{v})_{v\in V\text{\emph{Link}}(v_{0})}
\]
and consider $\mathfrak{A}_{1}:=\mathfrak{A}(\mathbf{A}_{1},V\text{\emph{Star}}(v_{0}))$, $\mathfrak{A}_{2}:=\mathfrak{A}(\mathbf{A}_{2},V(\Gamma\setminus\{v_{0}\}))$ and $B:=\mathfrak{A}(\mathbf{B},\text{\emph{Link}}(v_{0}))$. Then every unital C$^{\ast}$-algebra $\overline{\mathfrak{A}}$ generated by the images of unital $\ast$-homomorphisms $\kappa_{1}:\mathfrak{A}_{1}\rightarrow\overline{\mathfrak{A}}$, $\kappa_{2}:\mathfrak{A}_{2}\rightarrow\overline{\mathfrak{A}}$ satisfying $\kappa_{1}|_{B}=\kappa_{2}|_{B}$ and $\kappa_{1}(Q_{v_{0}})\kappa_{2}(Q_{v})=0$ for all $v\in V(\Gamma \setminus\text{\emph{Star}}(v_{0}))$ admits a surjective $\ast$-homomorphism $\phi:\mathfrak{A}(\mathbf{A},\Gamma)\twoheadrightarrow\overline{\mathfrak{A}}$ with $\phi|_{\mathfrak{A}_{1}}=\kappa_{1}$ and $\phi|_{\mathfrak{A}_{2}}=\kappa_{2}$. 
\item If the vertex C$^{\ast}$-algebras $(A_{v})_{v\in V\Gamma}$ are nuclear, every unital C$^{\ast}$-algebra $\overline{\mathfrak{A}}$ generated by the images of unital $\ast$-homomorphisms $\kappa_{v}:\mathfrak{A}(A_{v},\{v\})\rightarrow\overline{\mathfrak{A}}$, $v\in V\Gamma$ satisfying 
\[
[\kappa_{v}(x),\kappa_{v^{\prime}}(y)]=0\text{ for all }x\in\mathfrak{A}(A_{v},\{v\}),y\in\mathfrak{A}(A_{v^{\prime}},\{v^{\prime}\})
\]
with $(v,v^{\prime})\in E\Gamma$ and 
\[
\kappa_{v}(Q_{v})\kappa_{v^{\prime}}(Q_{v^{\prime}})=0\text{ for all }(v,v^{\prime})\in E\Gamma^{c}
\]
admits a surjective $\ast$-homomorphism $\phi:\mathfrak{A}(\mathbf{A},\Gamma)\twoheadrightarrow\overline{\mathfrak{A}}$ with $\phi(x)=\kappa_{v}(x)$ for $v\in V\Gamma$, $x\in\mathfrak{A}(A_{v},\{v\})$. 
\end{enumerate}
\end{introtheorem}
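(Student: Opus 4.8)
The plan is to deduce both assertions from an identification of $\mathfrak{A}(\mathbf{A},\Gamma)$ with a \emph{full} amalgamated free product modulo the orthogonality relations among the projections $Q_v$, together with an induction for the second part. For part (1), I would first record the elementary observations. Because $\mathrm{Star}(v_0)$, $\mathrm{Link}(v_0)$ and $\Gamma\setminus\{v_0\}$ are full subgraphs of $\Gamma$, the construction of the ambient algebras provides canonical unital $\ast$-homomorphisms $\iota_1\colon\mathfrak{A}_1\to\mathfrak{A}(\mathbf{A},\Gamma)$ and $\iota_2\colon\mathfrak{A}_2\to\mathfrak{A}(\mathbf{A},\Gamma)$ sending each vertex algebra and each $Q_v$ to its namesake. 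Since $\mathrm{Link}(v_0)=\mathrm{Star}(v_0)\cap(\Gamma\setminus\{v_0\})$ they agree on $B$; since $(v_0,v)\in E\Gamma^{c}$ for all $v\in V(\Gamma\setminus\mathrm{Star}(v_0))$, the defining orthogonality relations among the $Q_v$ give $\iota_1(Q_{v_0})\iota_2(Q_v)=0$; and since each vertex lies in $\mathrm{Star}(v_0)$ or in $\Gamma\setminus\{v_0\}$, the ranges of $\iota_1,\iota_2$ generate $\mathfrak{A}(\mathbf{A},\Gamma)$. Thus $\mathfrak{A}(\mathbf{A},\Gamma)$ is itself one of the algebras $\overline{\mathfrak{A}}$ in the statement. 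Let $\mathfrak{U}$ be the universal C$^{\ast}$-algebra on the listed generators and relations, that is, the quotient of the full amalgamated free product $\mathfrak{A}_1\ast_B\mathfrak{A}_2$ by the closed ideal generated by $\{Q_{v_0}Q_v:v\in V(\Gamma\setminus\mathrm{Star}(v_0))\}$. Then there is a canonical surjection $\pi\colon\mathfrak{U}\twoheadrightarrow\mathfrak{A}(\mathbf{A},\Gamma)$ restricting to $\iota_i$ on $\mathfrak{A}_i$, and the conclusion of (1) is equivalent to $\pi$ being an isomorphism: granting this, every $\overline{\mathfrak{A}}$ as in the statement receives the defining surjection $\mathfrak{U}\twoheadrightarrow\overline{\mathfrak{A}}$, which precomposed with $\pi^{-1}$ is the required $\phi$.

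It therefore remains to prove that $\pi$ is injective, and here I would run a gauge-invariant uniqueness argument. The relations defining $\mathfrak{U}$ are homogeneous for the $\mathbb{T}^{V\Gamma}$-action scaling the vertex algebra of $v$ by $z_v$ and fixing each $Q_v$, so $\mathfrak{U}$ inherits a gauge action for which $\pi$ is equivariant with respect to the gauge action on $\mathfrak{A}(\mathbf{A},\Gamma)$; averaging over $\mathbb{T}^{V\Gamma}$, and then passing down the finer combinatorial ($W_\Gamma$-)grading, yields conditional expectations $E_{\mathfrak{U}}\colon\mathfrak{U}\to\mathcal{D}_{\mathfrak{U}}$ onto a copy of the diagonal, intertwined by $\pi$ with $E\colon\mathfrak{A}(\mathbf{A},\Gamma)\to\mathcal{D}(\mathbf{A},\Gamma)$. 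For $x\in\ker\pi$ with $x\geq 0$ one then has $\pi(E_{\mathfrak{U}}(x))=0$, so $E_{\mathfrak{U}}(x)\in\ker\pi\cap\mathcal{D}_{\mathfrak{U}}$; since $\mathcal{D}_{\mathfrak{U}}$ is generated over the diagonal of $B$ by the diagonals of $\mathfrak{A}_1$ and $\mathfrak{A}_2$ subject only to the orthogonality relations -- which are the Boolean-type identities among the half-space projections that already hold \emph{faithfully} in $\mathcal{D}(\mathbf{A},\Gamma)$ -- a direct check using the normal forms of Proposition \ref{DensityStatement}, transported through the amalgamated free product picture, shows $\pi$ is injective on $\mathcal{D}_{\mathfrak{U}}$, whence $E_{\mathfrak{U}}(x)=0$ and so $x=0$. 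The genuinely delicate point, which I expect to be the main obstacle, is the faithfulness of $E_{\mathfrak{U}}$ along the way down to $\mathcal{D}_{\mathfrak{U}}$ -- equivalently, that the core of $\mathfrak{U}$ is not collapsed relative to that of the reduced construction -- which I would establish by producing an explicit Fock-type $\ast$-representation of $\mathfrak{U}$ on the Hilbert space carrying $\mathfrak{A}(\mathbf{A},\Gamma)$ in which $E_{\mathfrak{U}}$ becomes compression to the cyclic vector (or, in the two-dimensional model, by invoking amenability of the $W_\Gamma$-action on the combinatorial boundary and then bootstrapping to arbitrary vertex algebras).

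Finally, for part (2) I would induct on $|V\Gamma|$, the case $|V\Gamma|=1$ being trivial. If $\Gamma$ is complete, the commutation hypotheses make the ranges of the $\kappa_v$ pairwise commute and there are no orthogonality constraints, so the universal property of the minimal tensor product produces a $\ast$-homomorphism onto $\overline{\mathfrak{A}}$ from $\bigotimes_{v\in V\Gamma}\mathfrak{A}(A_v,\{v\})$, which by the complete-graph case of the construction and nuclearity of the $A_v$ coincides with $\mathfrak{A}(\mathbf{A},\Gamma)$; this is the only place nuclearity enters. Otherwise choose a vertex $v_0$ not adjacent to every other vertex, so that both $\mathrm{Star}(v_0)$ and $\Gamma\setminus\{v_0\}$ have strictly fewer vertices than $\Gamma$. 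The subfamilies of $(\kappa_v)_v$ indexed by $V\mathrm{Star}(v_0)$ and $V(\Gamma\setminus\{v_0\})$ inherit the commutation and orthogonality relations from $\Gamma$ (full subgraphs), so by the inductive hypothesis they assemble into $\ast$-homomorphisms $\kappa_1\colon\mathfrak{A}_1\to\overline{\mathfrak{A}}$ and $\kappa_2\colon\mathfrak{A}_2\to\overline{\mathfrak{A}}$ extending the $\kappa_v$; their restrictions to $B$ coincide, being determined on the generating set $\bigcup_{v\in V\mathrm{Link}(v_0)}A_v\cup\{Q_v:v\in V\mathrm{Link}(v_0)\}$ by the $\kappa_v$, and $\kappa_1(Q_{v_0})\kappa_2(Q_v)=\kappa_{v_0}(Q_{v_0})\kappa_v(Q_v)=0$ for $v\notin\mathrm{Star}(v_0)$ since $(v_0,v)\in E\Gamma^{c}$. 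Part (1) now furnishes the desired surjection $\mathfrak{A}(\mathbf{A},\Gamma)\twoheadrightarrow\overline{\mathfrak{A}}$ (its range is the C$^{\ast}$-algebra generated by $\kappa_1(\mathfrak{A}_1)\cup\kappa_2(\mathfrak{A}_2)$, which contains every $\kappa_v(\mathfrak{A}(A_v,\{v\}))$ and hence is all of $\overline{\mathfrak{A}}$), completing the induction.
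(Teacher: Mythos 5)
Your reduction of part (1) to the injectivity of the canonical surjection $\pi\colon\mathfrak{U}\twoheadrightarrow\mathfrak{A}(\mathbf{A},\Gamma)$, and your proof of part (2) by induction on $\#V\Gamma$ (complete-graph case via the tensor decomposition and nuclearity, non-complete case via part (1)), match the paper's strategy. However, the heart of part (1) --- the injectivity of $\pi$ --- is where your argument has a genuine gap.

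The problem is your claim that averaging over $\mathbb{T}^{V\Gamma}$ ``and then passing down the finer combinatorial ($W_\Gamma$-)grading'' yields a conditional expectation $E_{\mathfrak{U}}\colon\mathfrak{U}\to\mathcal{D}_{\mathfrak{U}}$ onto the diagonal. Averaging over the gauge action only produces an expectation onto the \emph{core}, i.e.\ the fixed-point algebra, and this is strictly larger than the diagonal: gauge-invariance of an elementary operator $(a_1^\dagger\cdots a_k^\dagger)d(b_1^\dagger\cdots b_l^\dagger)^*$ means $\#_i(u_1,\dots,u_k)=\#_i(v_1,\dots,v_l)$ for all $i$, whereas diagonality means $\Sigma(x)=(u_1\cdots u_k)(v_1\cdots v_l)^{-1}=e$ (Proposition \ref{DiagonalityCharacterization}); already for two non-adjacent vertices $s,t$ the words $st$ and $ts$ have equal letter counts but $(st)(ts)^{-1}=stst\neq e$. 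There is no compact-group averaging available for the residual $W_\Gamma$-``grading'' ($W_\Gamma$ is non-abelian and infinite), so the existence of $E_{\mathfrak{U}}$ on the universal algebra is not automatic; and even granting its existence, its faithfulness is precisely the crux. Your proposed fix --- a Fock-type representation of $\mathfrak{U}$ on $\mathcal{H}_\Gamma$ in which $E_{\mathfrak{U}}$ becomes compression --- is circular: any such representation factors through $\pi$, so it can only certify faithfulness of $E_{\mathfrak{U}}$ modulo $\ker\pi$, which is what you are trying to show is zero.

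The paper circumvents this by never constructing an expectation onto the diagonal of $\mathfrak{U}$. Instead it filters the gauge-fixed-point algebra (the core) by the letter-count spaces $B_{n_1,\dots,n_r}$ and $\overline{B}_{n_1,\dots,n_r}$: the bottom layer $B_{n_1,\dots,n_L}$ is identified with the compacts on a Hilbert $\mathcal{D}$-module (so injectivity there reduces, via Proposition \ref{IsomorphismTheorem}, to injectivity on $\overline{\mathrm{Span}}(\mathcal{D}_0)$, which is the orthogonality argument you correctly sketch), and one then climbs back up through the filtration by an ideal/quotient induction (Lemmas \ref{IdealConstruction}, \ref{FixedpointIdentification}, \ref{InductionStatement}) in which each successive union of layers is recognized as the fixed-point algebra of a partial gauge action. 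You would need to supply an argument of comparable substance for the passage from the core down to something on which injectivity can be checked directly; as written, that step is asserted rather than proved.
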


In the special case of free products, analogous results were previously obtained by Hasegawa via an identification with Cuntz--Pimsner algebras \cite[Corollary 4.2.3]{Hasegawa19}. In the more intricate graph product setting, that approach breaks down; however, Theorem \ref{Introduction1} can still be established through a novel approach combining a refinement of methods from Katsura's theory in the context of C$^\ast$-algebras associated with C$^{\ast}$-correspondences \cite{Katsura04, Katsura06} with an inductive argument along the graph structure.

More concretely, given an enumeration $(s_1,\dots,s_L)$ of the vertex set $V\Gamma$, in Subsection~\ref{subsec:Universality} we associate to each multi-index $(n_1,\ldots,n_r) \in \mathbb{N}^r$ with $r \leq L$ a closed linear subspace $B_{n_1,\ldots,n_r} \subseteq \mathfrak{A}(\mathbf{A},\Gamma)$. These subspaces are spanned by linear combinations of products of creation, diagonal, and annihilation operators corresponding to the vertices of $\Gamma$, with multiplicities prescribed by the tuple $(n_1,\ldots,n_r)$. As shown in Lemma~\ref{FixedpointIdentification}, suitable unions of sums of these subspaces identify with fixed-point algebras of appropriate restrictions of the gauge action $\mathbb{T}^{V\Gamma} \curvearrowright \mathfrak{A}(\mathbf{A},\Gamma)$. This observation, combined with orthogonality considerations and an induction over $r$ (starting from $r = L$), leads to the proof of the first part of Theorem~\ref{UniversalProperty}. The second part then follows by iterating the first.

We believe that this approach is of independent interest and may be adapted to a broader class of constructions beyond the present context. In addition, the techniques yield the following criterion for exactness and nuclearity.

\begin{introtheorem}[{Theorem \ref{NuclearityExactness}}] \label{Introduction3} Let $\Gamma$ be a finite, undirected, simplicial graph and let $\mathbf{A}:=(A_{v})_{v\in V\Gamma}$ be a collection of unital C$^{\ast}$-algebras, equipped with GNS-faithful states $(\omega_{v})_{v\in V\Gamma}$. Then the following two statements hold: 
\begin{enumerate}
\item The C$^{\ast}$-algebra $\mathfrak{A}(\mathbf{A},\Gamma)$ is nuclear if and only if $A_{v}$ is nuclear for every $v\in V\Gamma$. 
\item The C$^{\ast}$-algebra $\mathfrak{A}(\mathbf{A},\Gamma)$ is exact if and only if $A_{v}$ is exact for every $v\in V\Gamma$. 
\end{enumerate}
\end{introtheorem}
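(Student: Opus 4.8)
The ``only if'' directions are the easy ones. Since $A_{v}$ embeds into $\mathbf{A}_{\Gamma}$, hence into $\mathfrak{A}(\mathbf{A},\Gamma)$, as a C$^{\ast}$-subalgebra, exactness passes to it; and nuclearity passes to it because $A_{v}$ is the range of a conditional expectation on $\mathfrak{A}(\mathbf{A},\Gamma)$ — for instance the composite of a conditional expectation onto $\mathbf{A}_{\Gamma}$, supplied by the Fock model, with the canonical $\omega_{\Gamma}$-preserving conditional expectation $\mathbf{A}_{\Gamma}\to A_{v}$ of \cite{CaspersFima17}. The real content is the ``if'' direction, which I would prove by induction on $\lvert V\Gamma\rvert$, following the inductive scheme along the graph that underlies Theorem~\ref{Introduction1}(1) and reusing the combinatorial apparatus built there.

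For $\lvert V\Gamma\rvert=1$ the graph product Fock space collapses to the GNS space $\mathcal{H}_{\omega_{v}}$ of $(A_{v},\omega_{v})$, and $\mathfrak{A}(A_{v},\{v\})$ is generated by $\pi_{\omega_{v}}(A_{v})$ together with the projection onto $\mathbb{C}\Omega_{v}^{\perp}$; one checks that it identifies with $\pi_{\omega_{v}}(A_{v})+\mathcal{K}(\mathcal{H}_{\omega_{v}})$, an extension of a quotient of $A_{v}$ by the ideal $\mathcal{K}(\mathcal{H}_{\omega_{v}})$. As nuclearity and exactness are preserved under extensions, and any C$^{\ast}$-subalgebra of $\mathcal{K}(\mathcal{H}_{\omega_{v}})$ is automatically nuclear and exact, the base case follows in both directions. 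In the inductive step, if $\Gamma$ is complete then $\mathbf{A}_{\Gamma}\cong\bigotimes_{v}A_{v}$ propagates to $\mathfrak{A}(\mathbf{A},\Gamma)\cong\bigotimes_{v}\mathfrak{A}(A_{v},\{v\})$ (the $Q_{v}$ pairwise commute), so the claim reduces to the base case via permanence of nuclearity and exactness under minimal tensor products and their state-carrying factors. Otherwise, fix a vertex $v_{0}$ with a non-neighbour; then $\mathrm{Star}(v_{0})$, $\Gamma\setminus\{v_{0}\}$ and $\mathrm{Link}(v_{0})$ all have fewer than $\lvert V\Gamma\rvert$ vertices, so by the inductive hypothesis the algebras $\mathfrak{A}_{1}$, $\mathfrak{A}_{2}$ and $B$ of Theorem~\ref{Introduction1}(1) are nuclear (resp.\ exact). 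Comparing the surjections furnished by Theorem~\ref{Introduction1}(1) identifies $\mathfrak{A}(\mathbf{A},\Gamma)$ with the universal C$^{\ast}$-algebra generated by copies of $\mathfrak{A}_{1}$ and $\mathfrak{A}_{2}$, amalgamated over $B$, subject to the relations $Q_{v_{0}}Q_{v}=0$ for $v\notin V\mathrm{Star}(v_{0})$.

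The crux is to deduce nuclearity (resp.\ exactness) of this glued algebra from that of $\mathfrak{A}_{1}$, $\mathfrak{A}_{2}$ and $B$; this cannot follow from any general statement about amalgamated free products, which do not preserve nuclearity, so the orthogonality relations $Q_{v_{0}}Q_{v}=0$ must be used essentially. The plan is to single out the circle factor of the gauge action $\mathbb{T}^{V\Gamma}\curvearrowright\mathfrak{A}(\mathbf{A},\Gamma)$ indexed by $v_{0}$ and to realise $\mathfrak{A}(\mathbf{A},\Gamma)$ as a relative Cuntz--Pimsner algebra of a C$^{\ast}$-correspondence over its $v_{0}$-gauge fixed-point algebra $\mathfrak{B}_{v_{0}}$ — the relations $Q_{v_{0}}Q_{v}=0$ being precisely what forces the $v_{0}$-creation operators to satisfy Toeplitz--Pimsner-type relations over $\mathfrak{B}_{v_{0}}$ rather than behaving as free generators. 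Using the subspaces $B_{n_{1},\ldots,n_{r}}$ from Subsection~\ref{subsec:Universality} and their identification with gauge fixed-point algebras (Lemma~\ref{FixedpointIdentification}), one describes $\mathfrak{B}_{v_{0}}$ as an inductive limit of C$^{\ast}$-algebras assembled from $\mathfrak{A}_{1}$, $\mathfrak{A}_{2}$, $B$ and $A_{v_{0}}$ by operations that preserve nuclearity and exactness — hereditary subalgebras, Morita equivalences, finite direct sums, extensions, and minimal tensor products with $A_{v_{0}}$ and with algebras of compact operators — so that $\mathfrak{B}_{v_{0}}$ is nuclear (resp.\ exact). A refinement of Katsura's analysis of C$^{\ast}$-algebras of C$^{\ast}$-correspondences \cite{Katsura04, Katsura06}, exploiting in particular amenability of the dual $\mathbb{Z}$-action, then transfers nuclearity (resp.\ exactness) from $\mathfrak{B}_{v_{0}}$ to $\mathfrak{A}(\mathbf{A},\Gamma)$, completing the induction. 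I expect the honest identification of $\mathfrak{B}_{v_{0}}$ out of the filtration $(B_{n_{1},\ldots,n_{r}})$, together with the verification that the resulting correspondence meets the hypotheses of Katsura's theorems, to be the principal technical obstacle; the remaining steps are bookkeeping with permanence properties.

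For exactness there is also a softer route bypassing the correspondence analysis: every exact $A_{v}$ admits a state-preserving embedding into a nuclear C$^{\ast}$-algebra carrying a GNS-faithful state (embed $A_{v}$ into a nuclear C$^{\ast}$-algebra, extend $\omega_{v}$ by Hahn--Banach, and pass to the quotient by the GNS kernel, which remains nuclear and injective on $A_{v}$), and functoriality of $\mathbf{A}\mapsto\mathfrak{A}(\mathbf{A},\Gamma)$ with respect to such embeddings — inherited from the Fock model and the corresponding functoriality of reduced graph products — embeds $\mathfrak{A}(\mathbf{A},\Gamma)$ into a nuclear C$^{\ast}$-algebra, so it is exact.
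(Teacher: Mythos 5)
Your overall architecture — isolate the gauge circle at a vertex, identify the fixed-point algebra, and transfer nuclearity/exactness from the core to the whole algebra in the spirit of Katsura — is the right instinct, and it is indeed the spirit of the paper's proof. But the step you yourself flag as the "principal technical obstacle" is a genuine gap, not bookkeeping: the paper explicitly observes that the identification of $\mathfrak{A}(\mathbf{A},\Gamma)$ with a (relative) Cuntz--Pimsner algebra, which works for free products via Hasegawa, \emph{breaks down} in the graph product setting, and its actual argument never produces such a realization. Instead it works with the full multi-filtration $B_{n_{1},\ldots,n_{r}}$: it first proves (Proposition \ref{DNuclearity}, by induction on the graph) that the diagonal $\mathcal{D}=\overline{\mathrm{Span}}(\mathcal{D}_{0}(\mathbf{A},\Gamma))$ is nuclear/exact, identifies the deepest layer $B_{n_{1},\ldots,n_{L}}$ with $\mathcal{K}(\mathfrak{X}_{n_{1},\ldots,n_{L}})$ for a Hilbert $\mathcal{D}$-module, and then runs a \emph{descending} induction on the number of gauge circles, passing at each stage through the ideal structure of $B_{n_{1},\ldots,n_{r}}^{\leq n}$ (Proposition \ref{IdealConstruction}, Lemma \ref{NuclearityInductionStatement}) and the fixed-point identification of Lemma \ref{FixedpointIdentification} together with \cite[Theorem 4.5.2]{BrownOzawa08}. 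Your single-vertex core $\mathfrak{B}_{v_{0}}$ and its decomposition "by operations that preserve nuclearity and exactness" is precisely what would have to be constructed, and nothing in your sketch does so.

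Two further points would sink the write-up even if the core identification were supplied. First, exactness is \emph{not} preserved under extensions (Kirchberg); the paper has to invoke local reflexivity of exact algebras and \cite[Proposition 9.1.4]{BrownOzawa08} to see that the relevant extensions are locally split before concluding exactness of $B_{n_{1},\ldots,n_{r}}^{\leq n}$, and the same care is needed in your base case $A_{v}+\mathcal{K}(\mathcal{H}_{v})$. Second, in the "only if" direction there is in general \emph{no} conditional expectation $\mathfrak{A}(\mathbf{A},\Gamma)\rightarrow\mathbf{A}_{\Gamma}$: in the two-dimensional-vertex case $\mathfrak{A}(\mathbf{A},\Gamma)$ is a reduced crossed product of $W_{\Gamma}$ acting on its compactification, and such an expectation onto $C_{r,q}^{\ast}(W_{\Gamma})$ would require an invariant measure, which fails for non-amenable $W_{\Gamma}$. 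The paper instead expects onto $\mathfrak{A}(A_{v},\{v\})=A_{v}+\mathcal{K}(\mathcal{H}_{v})$ (Theorem \ref{ConditionalExpectation}) and recovers nuclearity of $A_{v}$ from the extension $\mathcal{K}(\mathcal{H}_{v})\cap A_{v}\hookrightarrow A_{v}\twoheadrightarrow A_{v}/(\mathcal{K}(\mathcal{H}_{v})\cap A_{v})$. Your "softer route" for exactness also rests on an unproved functoriality of $\mathbf{A}\mapsto\mathfrak{A}(\mathbf{A},\Gamma)$ under state-preserving embeddings; injectivity of the induced map is a gauge-invariant-uniqueness-type statement that does not come for free from the Fock model.
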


Beyond these universal and approximation properties, we construct a natural closed two-sided ideal $\mathfrak{I}(\mathbf{A},\Gamma)$ in $\mathfrak{A}(\mathbf{A},\Gamma)$ consisting of operators ``vanishing at infinity'' with respect to the grading of the underlying graph product Hilbert space (see Subsection \ref{subsec:Ideal}). While $\mathfrak{I}(\mathbf{A},\Gamma)$ coincides with the compact operators in the case of finite-dimensional vertex algebras (see Proposition \ref{FiniteDimensionalIdeal}), in general it is a genuinely new ingredient that constitutes a useful tool in the study of the ideal structure of the underlying graph product C$^\ast$-algebras (see Theorem \ref{SimplicityCriterion1}); its relevance is far from obvious on the graph product level alone.

Under suitable assumptions, the ideal turns out to be maximal. The proof of this is loosely inspired by Archbold–Spielberg’s work on simplicity of crossed products by discrete groups acting on Abelian C$^{\ast}$‑algebras \cite{ArchboldSpielberg94}, but significant technical adaptations are required to accommodate the richer combinatorial structure present here.

\begin{introtheorem}[{Theorem \ref{MaximalityTheorem}}] \label{Introduction4} Let $\Gamma$ be a finite, undirected, simplicial graph with $\#V\Gamma\geq3$ whose complement $\Gamma^{c}$ is connected and let $\mathbf{A}:=(A_{v})_{v\in V\Gamma}$ be a collection of unital C$^{\ast}$-algebras, equipped with GNS-faithful states $(\omega_{v})_{v\in V\Gamma}$. Then $\mathfrak{I}(\mathbf{A},\Gamma)\triangleleft\mathfrak{A}(\mathbf{A},\Gamma)$ is a maximal ideal. \end{introtheorem}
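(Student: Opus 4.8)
The plan is to show that the quotient $\mathfrak{A}(\mathbf{A},\Gamma)/\mathfrak{I}(\mathbf{A},\Gamma)$ is simple, which is equivalent to maximality of the ideal. First I would set up the quotient concretely: since $\mathfrak{I}(\mathbf{A},\Gamma)$ consists of operators ``vanishing at infinity'' with respect to the length grading of the graph product Hilbert space, passing to the quotient should identify $\mathcal D(\mathbf{A},\Gamma)$ modulo its intersection with the ideal with a $C^\ast$-algebra of ``boundary diagonal'' operators, on which the Coxeter-type combinatorial dynamics of $W_\Gamma$ acts. Concretely, in the finite-dimensional case Proposition~\ref{FiniteDimensionalIdeal} tells us the quotient is a Cuntz–Krieger-like object and the argument specializes to known simplicity results; the general case requires tracking the creation/diagonal/annihilation presentation from Proposition~\ref{DensityStatement} through the quotient.

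The core of the argument, following the Archbold–Spielberg strategy, is a two-part dichotomy for a nonzero ideal $\mathfrak{J} \triangleleft \mathfrak{A}(\mathbf{A},\Gamma)/\mathfrak{I}(\mathbf{A},\Gamma)$: (i) a \emph{detection} step showing that a suitable faithful conditional expectation onto the image of $\mathcal D(\mathbf{A},\Gamma)$ does not annihilate $\mathfrak{J}$, so $\mathfrak{J}$ meets the diagonal nontrivially; and (ii) a \emph{propagation} step showing that the part of the diagonal detected by $\mathfrak{J}$, under the action of creation and annihilation operators (which implement the $W_\Gamma$-dynamics on the combinatorial boundary), generates everything — i.e. the dynamical system is minimal and topologically free enough that no proper invariant ideal of the boundary diagonal survives. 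The hypotheses $\#V\Gamma \geq 3$ and $\Gamma^c$ connected are precisely what one needs for minimality: connectedness of $\Gamma^c$ guarantees that the creation operators $Q_v$ with $(v,v')\in E\Gamma^c$ genuinely move mass around the boundary and cannot be decoupled into a tensor/product decomposition, while $\#V\Gamma\geq 3$ rules out degenerate small cases (e.g. where $\Gamma^c$ is a single edge and the construction reduces to a free product of two pieces with a trivial boundary, or where the ideal fails to be proper). I would isolate a lemma asserting that for any nonzero boundary-diagonal projection $p$ there is a finite product $w$ of creation/annihilation operators with $w^\ast p w$ dominating a full projection, using the orthogonality relations $\kappa_1(Q_{v_0})\kappa_2(Q_v)=0$ from Theorem~\ref{UniversalProperty} and the amalgamated-free-product decomposition along a vertex to run an induction on $\#V\Gamma$.

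For the detection step, I would use the gauge action $\mathbb{T}^{V\Gamma}\curvearrowright\mathfrak{A}(\mathbf{A},\Gamma)$ from the introduction: averaging over it gives a faithful conditional expectation onto the fixed-point algebra, and composing with the further expectation onto $\mathcal D(\mathbf{A},\Gamma)$ coming from the word-length structure yields a faithful expectation $E:\mathfrak{A}(\mathbf{A},\Gamma)\to\mathcal D(\mathbf{A},\Gamma)$. This descends to the quotient because $\mathfrak{I}(\mathbf{A},\Gamma)$ is gauge-invariant. Faithfulness of the descended $E$ then forces $E(\mathfrak{J})\neq 0$ for any nonzero ideal $\mathfrak{J}$. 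The delicate point is that $E(\mathfrak{J})$ is an ideal in the \emph{image} of $\mathcal D(\mathbf{A},\Gamma)$ only after one shows an averaging/approximation argument à la Archbold–Spielberg — that elements of $\mathfrak{J}$ can be compressed so that their expectation is approximated by honest diagonal elements lying in $\mathfrak{J}$ — and this is where the combinatorial bookkeeping of which reduced words in $W_\Gamma$ can be ``cancelled'' using the relations becomes technical.

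The main obstacle I anticipate is the propagation/minimality step in the non-finite-dimensional setting. When the vertex algebras are infinite-dimensional, $\mathcal D(\mathbf{A},\Gamma)$ is not merely a commutative algebra of functions on a boundary — it carries the fibered data of the states $\omega_v$ — so ``minimality of the boundary action'' has to be replaced by a statement about the absence of nontrivial $W_\Gamma$-invariant ideals in $\mathcal D(\mathbf{A},\Gamma)/(\mathcal D \cap \mathfrak{I})$ that is compatible with the GNS-faithfulness of the $\omega_v$. I would handle this by reducing to the two-dimensional (hence commutative-boundary) case via the identification mentioned in the introduction with the reduced crossed product $W_\Gamma\ltimes C(\partial)$, proving the invariant-ideal statement there using connectedness of $\Gamma^c$ (a ping-pong/convergence-to-boundary argument showing every $W_\Gamma$-orbit on $\partial$ is dense under the relevant relations), and then bootstrapping to general vertex algebras by a tensoring/functoriality argument that the ambient construction is compatible with, exploiting the universal properties of Theorem~\ref{Introduction1}. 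The precise form of the ping-pong lemma, and verifying that the hypotheses $\#V\Gamma\ge 3$ and $\Gamma^c$ connected are exactly what make it run (and that the conclusion genuinely fails otherwise), is the part that will require the most care.
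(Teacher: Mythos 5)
Your high-level strategy is the right one and matches the paper's: the proof is an Archbold--Spielberg-type dichotomy for the quotient $\mathfrak{A}(\mathbf{A},\Gamma)/\mathfrak{I}(\mathbf{A},\Gamma)$, combining (i) a minimality statement -- any proper ideal of the quotient meets the image of $\mathcal{D}(\mathbf{A},\Gamma)$ trivially (Lemma \ref{MinimalityAnalogue}) -- with (ii) a detection step in which a faithful conditional expectation onto the diagonal, which descends to the quotient by gauge-invariance of $\mathfrak{I}(\mathbf{A},\Gamma)$ (Lemma \ref{GaugeActionTheorem2}), is combined with a quantitative topological-freeness statement to show that any element of a proper ideal already has diagonal part vanishing at infinity, hence is zero. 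You correctly identify the role of $\Gamma^{c}$ connected and $\#V\Gamma\geq 3$, and you correctly flag that $\mathbb{E}(\mathfrak{J})\neq 0$ alone does not place anything of $\mathfrak{J}$ inside the diagonal without a compression argument.

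The genuine gap is in your plan for executing the minimality/propagation step for general vertex algebras: reducing to the two-dimensional case via the crossed-product picture $W_{\Gamma}\ltimes C(\partial(W_{\Gamma},S_{\Gamma}))$ and then ``bootstrapping by a tensoring/functoriality argument'' using the universal property of Theorem \ref{UniversalProperty}. No such reduction is available. The universal property produces surjections \emph{out of} $\mathfrak{A}(\mathbf{A},\Gamma)$, which cannot transfer simplicity of a quotient from the Hecke case to general $\mathbf{A}$; there is no tensor decomposition of $\mathfrak{A}(\mathbf{A},\Gamma)$ over the two-dimensional model; and $\mathcal{D}(\mathbf{A},\Gamma)$ is genuinely noncommutative, carrying the operators $\mathfrak{d}(a)$ for $a\in A_{v}$ and not just the commutative algebra $C^{*}(\{Q_{\mathbf{w}}\})\cong\mathcal{D}(W_{\Gamma},S_{\Gamma})$. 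The paper instead proves both steps directly for arbitrary vertex algebras: Lemma \ref{MinimalityAnalogue} constructs, for a positive diagonal $x$ with $\pi(x)$ in the ideal, an explicit auxiliary state by conjugating with long creation-operator words supported on a closed walk in $\Gamma^{c}$, forcing $\Vert\mathbb{E}(x)P_{N}^{\perp}\Vert\to 0$; and the detection step uses the signature map $\Sigma$ on elementary operators together with Proposition \ref{TopologicalFrenessImplication}, which produces words $\mathbf{w}\mathbf{v}(t_{1}\cdots t_{n})^{L}$ with $\mathbf{w}\mathbf{v}(t_{1}\cdots t_{n})^{L}\nleq\mathbf{x}\,\mathbf{w}\mathbf{v}(t_{1}\cdots t_{n})^{L}$ for all nontrivial signatures $\mathbf{x}$ occurring, so that the corresponding projections $TT^{\ast}$ lie in the multiplicative domain of the relevant state and annihilate all off-diagonal summands. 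A secondary concern: your proposed induction on $\#V\Gamma$ for the minimality lemma is fragile, since deleting a vertex need not preserve either connectedness of $\Gamma^{c}$ or the bound $\#V\Gamma\geq 3$; the paper's argument is not inductive but uses a single closed walk covering $\Gamma^{c}$.
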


On the other hand, our construction serves as a framework for analyzing structural properties of graph product C$^{\ast}$-algebras. For example, Theorem \ref{Introduction3} provides new, conceptual proofs of approximation properties, where the second statement strengthens \cite[Theorem H]{BorstCaspersChen24}.

\begin{introcorollary}[{Corollary \ref{GraphProductExactness} and Corollary \ref{GraphProductNuclearity}}] \label{Introduction5} Let $\Gamma$ be a finite, undirected, simplicial graph and let $\mathbf{A}:=(A_{v})_{v\in V\Gamma}$ be a collection of unital C$^{\ast}$-algebras, each equipped with a GNS-faithful state $\omega_{v}$. Then the following statements hold: 
\begin{enumerate}
\item The graph product C$^{\ast}$-algebra $\mathbf{A}_{\Gamma}$ is exact if and only if the vertex algebras $(A_{v})_{v\in V\Gamma}$ are all exact. 
\item For $v\in V\Gamma$ denote the GNS-Hilbert space with respect to $\omega_{v}$ by $\mathcal{H}_v$ and assume that $A_{v}\subseteq\mathcal{B}(\mathcal{H}_v)$ contains the compact operators. Then the graph product C$^{\ast}$-algebra $\mathbf{A}_{\Gamma}$ is nuclear if and only if the vertex algebras $(A_{v})_{v\in V\Gamma}$ are all nuclear. 
\end{enumerate}
\end{introcorollary}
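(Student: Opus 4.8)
The plan is to deduce both statements from Theorem~\ref{NuclearityExactness} by exploiting that the reduced graph product $\mathbf{A}_{\Gamma}$ embeds canonically into the ambient algebra $\mathfrak{A}(\mathbf{A},\Gamma)$ as a C$^{\ast}$-subalgebra. For the two ``only if'' implications I would use that every vertex algebra $A_{v}$ sits inside $\mathbf{A}_{\Gamma}$ as the range of a conditional expectation $E_{v}\colon\mathbf{A}_{\Gamma}\to A_{v}$ — part of the standard structure of the reduced graph product (see \cite{CaspersFima17}). Since exactness is inherited by arbitrary C$^{\ast}$-subalgebras and nuclearity is inherited by ranges of conditional expectations (compose a completely positive approximation of $\mathbf{A}_{\Gamma}$ with the inclusion $A_{v}\hookrightarrow\mathbf{A}_{\Gamma}$ and with $E_{v}$), exactness (resp.\ nuclearity) of $\mathbf{A}_{\Gamma}$ forces exactness (resp.\ nuclearity) of each $A_{v}$.

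For the ``if'' direction of part (1), assume all $A_{v}$ are exact. Then $\mathfrak{A}(\mathbf{A},\Gamma)$ is exact by Theorem~\ref{NuclearityExactness}(2), and hence so is its C$^{\ast}$-subalgebra $\mathbf{A}_{\Gamma}$; no compactness hypothesis is needed here, which completes the exactness statement.

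For the ``if'' direction of part (2) the hypothesis $\mathcal{K}(\mathcal{H}_{v})\subseteq A_{v}\subseteq\mathcal{B}(\mathcal{H}_{v})$ enters through the claim that in this situation the augmentation is redundant, i.e.\ $\mathfrak{A}(\mathbf{A},\Gamma)=\mathbf{A}_{\Gamma}$. Granting this, Theorem~\ref{NuclearityExactness}(1) immediately yields that $\mathbf{A}_{\Gamma}=\mathfrak{A}(\mathbf{A},\Gamma)$ is nuclear once all $A_{v}$ are. To prove the identity, I would observe that, since $\mathfrak{A}(\mathbf{A},\Gamma)$ is generated by $\mathbf{A}_{\Gamma}$ together with the projections $(Q_{v})_{v\in V\Gamma}$, it suffices to show $Q_{v}\in\mathbf{A}_{\Gamma}$ for all $v$. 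The compactness hypothesis forces the rank-one projection $p_{\xi_{v}}$ onto the GNS-vector of $\omega_{v}$ to belong to $A_{v}$, so that its canonical image in $\mathbf{A}_{\Gamma}$ — a projection arising from the standard tensor decomposition of the graph product Hilbert space $\mathcal{H}_{\Gamma}$ — lies in $\mathbf{A}_{\Gamma}$. Using the dense $\ast$-subalgebra of $\mathfrak{A}(\mathbf{A},\Gamma)$ spanned by products of creation, diagonal and annihilation operators (Proposition~\ref{DensityStatement}) and the combinatorial description of the $Q_{v}$ in terms of words in $W_{\Gamma}$, one then expresses each $Q_{v}$ as a finite algebraic combination of these images of vacuum projections and of elements of the copies of the $A_{v}$ inside $\mathbf{A}_{\Gamma}$; equivalently, one checks $\mathfrak{I}(\mathbf{A},\Gamma)\subseteq\mathbf{A}_{\Gamma}$ and that $\mathbf{A}_{\Gamma}$ surjects onto $\mathfrak{A}(\mathbf{A},\Gamma)/\mathfrak{I}(\mathbf{A},\Gamma)$.

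The main obstacle is exactly this identification $\mathfrak{A}(\mathbf{A},\Gamma)=\mathbf{A}_{\Gamma}$ under the compactness hypothesis: one must control how the word-indexed projections $Q_{v}$ interact with the grading of $\mathcal{H}_{\Gamma}$ and verify that, in the presence of the compacts, they are recovered from the vacuum projections $p_{\xi_{v}}$ and the vertex algebras inside $\mathbf{A}_{\Gamma}$; the remaining ingredients are routine. It is worth noting that the compactness hypothesis in the nuclearity statement cannot be dropped: for $\Gamma$ with three vertices and no edges and $A_{v}=\mathbb{C}^{2}=C^{\ast}(\mathbb{Z}_{2})$ — whose GNS-space is two-dimensional, so that $\mathcal{K}(\mathcal{H}_{v})\not\subseteq A_{v}$ — one has $\mathbf{A}_{\Gamma}=C^{\ast}_{r}(\mathbb{Z}_{2}\ast\mathbb{Z}_{2}\ast\mathbb{Z}_{2})$, which is exact but not nuclear although each $A_{v}$ is nuclear.
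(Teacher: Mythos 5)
Your proposal is correct and follows essentially the same route as the paper: Theorem~\ref{NuclearityExactness} combined with the observation that, under the compactness hypothesis, $\mathbf{A}_{\Gamma}=\mathfrak{A}(\mathbf{A},\Gamma)$ (for exactness no such identification is needed, since exactness passes to C$^{\ast}$-subalgebras). The step you single out as the main obstacle is in fact a one-line computation rather than a genuine difficulty: since $p_{\xi_{v}}^{\circ}\xi_{v}=0$ and $p_{\xi_{v}}$ annihilates $\mathcal{H}_{v}^{\circ}$, the operator $\lambda_{v}(p_{\xi_{v}})$ acts as the identity on $\mathcal{H}_{\mathbf{w}}^{\circ}$ when $v\nleq\mathbf{w}$ and as zero when $v\leq\mathbf{w}$, so $\lambda_{v}(p_{\xi_{v}})=Q_{v}^{\perp}$ and hence $Q_{v}=1-\lambda_{v}(p_{\xi_{v}})\in\mathbf{A}_{\Gamma}$, which is exactly the identity the paper uses.
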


Moreover, Theorem \ref{Introduction4} enables us to formulate new criteria for the simplicity and trace uniqueness of graph products -- results that, to our knowledge, are novel even in the free product setting and represent a substantial strengthening of the author’s earlier work on the simplicity of right‑angled Hecke C$^\ast$‑algebras. In the context of von Neumann algebras, similar results have been obtained in \cite{CSHJEN24-1} (see also \cite{Garncarek16, RaumSkalski23}) by entirely different methods.

As discussed in Subsection \ref{GraphSimplicity}, the assumption that the complement $\Gamma^c$ is connected is largely cosmetic and can be relaxed under suitable technical conditions.

\begin{introtheorem}[{Theorem \ref{SimplicityCriterion1}, Corollary \ref{SimplicityCriterion2}, Corollary \ref{SimplicityCriterion3}, and Theorem \ref{TraceUniqueness}}] \label{Introduction6} Let $\Gamma$ be a finite, undirected, simplicial graph with $\# \Gamma \geq 3$, and let $\mathbf{A}:=(A_{v})_{v\in V\Gamma}$ be a collection of unital C$^{\ast}$-algebras, each equipped with a GNS-faithful state $\omega_{v}$. Assume that the complement $\Gamma^{c}$ is connected, that every vertex $v\in V\Gamma$ admits elements $a_{v}\in\ker(\omega_{v})$, $q_{v}>0$ with $a_{v} a_{v}^{\ast} \geq q_{v}\omega_{v}(a_{v}^{\ast}a_{v})1>0$, and that the multi-parameter $(q_{v})_{v\in V\Gamma}$ is not contained in the closure of the region of convergence of the multivariate growth series $\sum_{\mathbf{w}\in W_{\Gamma}}z_{\mathbf{w}}$. Then the following statements hold: 
\begin{enumerate}
\item The graph product C$^{\ast}$-algebra $\mathbf{A}_{\Gamma}$ is simple if and only if $\mathbf{A}_{\Gamma}\cap\mathfrak{I}(\mathbf{A},\Gamma)=0$. 
\item If the vertex C$^{\ast}$-algebras $(A_{v})_{v \in V\Gamma}$ are all finite-dimensional and the states $(\omega_v)_{v \in V\Gamma}$ are faithful, then $\mathbf{A}_{\Gamma}$ is simple. 
\item If for every $v\in V\Gamma$ the element $a_{v}$ is a unitary with $\omega_v(a_vx)=\omega_v(xa_v)$ for all $x \in A_v$, then $\mathbf{A}_{\Gamma}$ is simple. 
\item If for every $v \in V\Gamma$ the element $a_v$ is a unitary and $\omega_v$ is tracial, then $\omega_{\Gamma}$ is the unique tracial state on $\mathbf{A}_{\Gamma}$. 
\end{enumerate}
Furthermore, if $\mathbf{A}_{\Gamma}$ is simple, the canonical inclusion $\mathbf{A}_{\Gamma}\hookrightarrow\mathfrak{A}(\mathbf{A},\Gamma)/\mathfrak{I}(\mathbf{A},\Gamma)$ is \emph{C$^{\ast}$-irreducible} in the sense that every intermediate C$^{\ast}$-algebra is simple as well. \end{introtheorem}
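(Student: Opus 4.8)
The plan is to reduce all four assertions to the single claim that, whenever the canonical $\ast$-homomorphism $\mathbf{A}_{\Gamma}\to\mathcal{Q}:=\mathfrak{A}(\mathbf{A},\Gamma)/\mathfrak{I}(\mathbf{A},\Gamma)$ is injective, for every nonzero $b\in\mathcal{Q}_{+}$ there exist $x_{1},\dots,x_{n}\in\mathbf{A}_{\Gamma}$ such that $\sum_{i}x_{i}^{\ast}bx_{i}$ is invertible in $\mathcal{Q}$. By Theorem~\ref{MaximalityTheorem} and the standing hypotheses, $\mathfrak{I}(\mathbf{A},\Gamma)$ is a maximal, hence proper, ideal, so $\mathcal{Q}$ is unital and simple. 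If $\mathbf{A}_{\Gamma}$ is simple, then $\mathbf{A}_{\Gamma}\cap\mathfrak{I}(\mathbf{A},\Gamma)$ is an ideal of $\mathbf{A}_{\Gamma}$ not containing $1$, hence zero, which is the forward direction of~(1); and since this claim then shows that $1$ lies in the ideal generated by any nonzero positive element of any intermediate C$^{\ast}$-algebra $\mathbf{A}_{\Gamma}\subseteq C\subseteq\mathcal{Q}$, every such $C$ is simple, which is the final \emph{C$^{\ast}$-irreducibility} assertion. Conversely, if $\mathbf{A}_{\Gamma}\cap\mathfrak{I}(\mathbf{A},\Gamma)=0$, then $\mathbf{A}_{\Gamma}$ is itself such an intermediate algebra, hence simple, completing~(1).

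For this claim I would argue in the spirit of the theory of boundary actions and Powers' averaging. The canonical conditional expectation of $\mathfrak{A}(\mathbf{A},\Gamma)$ onto the diagonal subalgebra $\mathcal{D}(\mathbf{A},\Gamma)$ is built from the gauge action $\mathbb{T}^{V\Gamma}\curvearrowright\mathfrak{A}(\mathbf{A},\Gamma)$, which leaves $\mathfrak{I}(\mathbf{A},\Gamma)$ invariant (this ideal being defined through the grading of the graph product Hilbert space), and hence descends to a faithful conditional expectation $\overline{\mathbb{E}}$ of $\mathcal{Q}$ onto $\mathcal{D}(\mathbf{A},\Gamma)/(\mathcal{D}(\mathbf{A},\Gamma)\cap\mathfrak{I}(\mathbf{A},\Gamma))$. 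Writing a given $0\neq b\in\mathcal{Q}_{+}$ approximately as a finite combination of creation, diagonal, and annihilation operators via Proposition~\ref{DensityStatement}, and using faithfulness of $\overline{\mathbb{E}}$ (so that $\overline{\mathbb{E}}(b)\neq0$), one reduces the estimate to the behaviour of the diagonal part $\overline{\mathbb{E}}(b)$ under averaging. The averaging elements come from the hypothesis: with $b_{v}:=\omega_{v}(a_{v}^{\ast}a_{v})^{-1/2}a_{v}\in\ker(\omega_{v})\subseteq A_{v}\subseteq\mathbf{A}_{\Gamma}$ one has $\omega_{v}(b_{v}^{\ast}b_{v})=1$ and $b_{v}b_{v}^{\ast}\geq q_{v}1$, and for a reduced word $\mathbf{w}=v_{1}\cdots v_{\ell}\in W_{\Gamma}$ the element $b_{\mathbf{w}}:=b_{v_{1}}\cdots b_{v_{\ell}}\in\mathbf{A}_{\Gamma}$ implements, modulo $\mathfrak{I}(\mathbf{A},\Gamma)$, the combinatorial shift by $\mathbf{w}$ on the diagonal quotient, with $b_{\mathbf{w}}b_{\mathbf{w}}^{\ast}$ bounded below by a positive multiple of $\prod_{i}q_{v_{i}}$.

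The core is then a weighted averaging over reduced words: one shows that for a nonzero positive diagonal element there exist a finite set of reduced words (including the empty word), weights $c_{\mathbf{w}}>0$ roughly proportional to $\prod_{i}q_{v_{i}}$, and $\delta>0$ with $\sum_{\mathbf{w}}c_{\mathbf{w}}\,b_{\mathbf{w}}\,b\,b_{\mathbf{w}}^{\ast}\geq\delta 1$, so that $x_{\mathbf{w}}:=\sqrt{c_{\mathbf{w}}}\,b_{\mathbf{w}}^{\ast}\in\mathbf{A}_{\Gamma}$ does the job. The geometric picture is that $b_{\mathbf{w}}\,b\,b_{\mathbf{w}}^{\ast}$ is supported near the $\mathbf{w}$-translate of the support of $b$ on the diagonal quotient, on which the natural $W_{\Gamma}$-action is minimal, so that finitely many translates cover the whole spectrum; the hypothesis that $(q_{v})_{v\in V\Gamma}$ does not lie in the closure of the region of convergence of $\sum_{\mathbf{w}\in W_{\Gamma}}z_{\mathbf{w}}$ is precisely the statement that $\sum_{\mathbf{w}\in W_{\Gamma}}\prod_{i}q_{v_{i}}$ diverges, which is what allows one to normalise the weights so that the ``loss of mass'' caused by the $b_{\mathbf{w}}$ --- quantitatively controlled by partial sums of the growth series --- cannot overwhelm the averaging and the resulting compression stays uniformly positive. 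Turning this heuristic into precise operator inequalities, carefully intertwined with the combinatorics of reduced words in the right-angled Coxeter group $W_{\Gamma}$, is the step I expect to be the main obstacle; in the special case $A_{v}=C^{\ast}(\mathbb{Z}_{2})$ it specialises to C$^{\ast}$-irreducibility of $C^{\ast}_{r}(W_{\Gamma})\subseteq C(\partial W_{\Gamma})\rtimes_{r}W_{\Gamma}$ for C$^{\ast}$-simple $W_{\Gamma}$.

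Finally, statements~(2)--(4) follow by verifying the hypotheses and, where needed, that $\mathbf{A}_{\Gamma}\cap\mathfrak{I}(\mathbf{A},\Gamma)=0$. For~(2), finite-dimensionality together with faithfulness of the $\omega_{v}$ supplies admissible $a_{v}\in\ker(\omega_{v})$ with $a_{v}a_{v}^{\ast}$ invertible, Proposition~\ref{FiniteDimensionalIdeal} identifies $\mathfrak{I}(\mathbf{A},\Gamma)$ with the compact operators on the graph product Hilbert space --- which is infinite-dimensional since $\Gamma^{c}$ is connected and $\#\Gamma\geq3$ --- and a weak-to-norm convergence argument, exactly as for reduced group C$^{\ast}$-algebras of infinite groups, shows that $\mathbf{A}_{\Gamma}$ contains no nonzero compact operator; so~(1) gives simplicity. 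For~(3), a unitary $a_{v}$ with $\omega_{v}(a_{v}x)=\omega_{v}(xa_{v})$ gives $b_{v}=a_{v}$ and $q_{v}=1$, and $\mathrm{Ad}(a_{v})$ preserves $\omega_{v}$, hence induces an $\omega_{\Gamma}$-preserving automorphism of $\mathbf{A}_{\Gamma}$; for a nonzero positive $x\in\mathbf{A}_{\Gamma}\cap\mathfrak{I}(\mathbf{A},\Gamma)$ one then gets $\omega_{\Gamma}(b_{\mathbf{w}}^{\ast}xb_{\mathbf{w}})=\omega_{\Gamma}(x)>0$ for every $\mathbf{w}$, contradicting that $x$ vanishes at infinity once $\ell(\mathbf{w})\to\infty$, so again~(1) applies. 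Statement~(4) is obtained by running the same unitary averaging on a tracial state $\tau$ on $\mathbf{A}_{\Gamma}$ rather than on the spatial data: traciality yields $\tau(b_{\mathbf{w}}\xi b_{\mathbf{w}}^{\ast})=\tau(\xi)$, and the averaging estimate above forces the normalised averages of the $b_{\mathbf{w}}\xi b_{\mathbf{w}}^{\ast}$ to converge in norm to $\omega_{\Gamma}(\xi)1$ as $\xi$ ranges over a set of reduced words spanning a dense $\ast$-subalgebra, whence $\tau=\omega_{\Gamma}$.
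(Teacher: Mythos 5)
Your global architecture is sound: reducing (1) and the C$^{\ast}$-irreducibility assertion to the fullness condition ``for every nonzero $b\in\mathcal{Q}_{+}$ there are $x_{i}\in\mathbf{A}_{\Gamma}$ with $\sum_{i}x_{i}^{\ast}bx_{i}$ invertible'' is legitimate (given that $\mathcal{Q}$ is simple by Theorem \ref{MaximalityTheorem}), and your sketches of (3) and of the reduction of (2) to $\mathbf{A}_{\Gamma}\cap\mathcal{K}(\mathcal{H}_{\Gamma})=0$ are close to what the paper does in Corollary \ref{SimplicityCriterion3} and Corollary \ref{SimplicityCriterion2}. But the proposal has a genuine gap exactly where you flag it: the weighted averaging inequality $\sum_{\mathbf{w}}c_{\mathbf{w}}\,b_{\mathbf{w}}\,b\,b_{\mathbf{w}}^{\ast}\geq\delta 1$ is never proved, and it is not a routine Powers-type estimate here --- the elements $b_{\mathbf{w}}$ are not unitaries, the ``loss of mass'' $b_{\mathbf{w}}^{\ast}b_{\mathbf{w}}$ is only controlled in expectation, and $b$ need not be diagonal. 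The paper deliberately avoids any such uniform lower bound. Its route is dual and runs through states rather than operator inequalities: given a closed ideal $I$ of an intermediate algebra, Proposition \ref{StateExistence} (built on Proposition \ref{MainProposition}, which is where the hypothesis $(q_{v})\notin\overline{\mathcal{R}(\Gamma)}$ enters, and on Lemma \ref{ConjugationLemma}) produces a state $\phi$ vanishing on $I$ with $\phi(\widetilde{Q}_{v})=1$; one then compresses $\phi$ by $\pi(a_{\mathbf{g}^{i}})$, uses the multiplicative-domain trick for $\widetilde{Q}_{v}$ together with Lemma \ref{ElementExpression} to see that the weak-$\ast$ limit state annihilates the ideal generated by $I$ in $\mathcal{Q}$, and concludes from maximality of $\mathfrak{I}(\mathbf{A},\Gamma)$ that this ideal is zero. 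Unless you can actually establish your averaging inequality, the core of the argument is missing.

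Two further points. First, your assertion that $(q_{v})_{v\in V\Gamma}\notin\overline{\mathcal{R}(\Gamma)}$ ``is precisely the statement that $\sum_{\mathbf{w}}\prod_{i}q_{v_{i}}$ diverges'' is too weak: divergence at the point $q$ only gives $q\notin\mathcal{R}(\Gamma)$, whereas excluding the \emph{closure} is strictly stronger (already in one variable the series may diverge at the radius of convergence, which lies in $\overline{\mathcal{R}}$), and it is this stronger form that \cite[Proposition 2.10]{Klisse23-2} needs to produce the sequence $(\mathbf{w}_{i})$ with $q_{\mathbf{w}_{i}}^{-1}\phi(Q_{\mathbf{w}_{i}})\to0$. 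Second, for (2) the claim that $\mathbf{A}_{\Gamma}\cap\mathcal{K}(\mathcal{H}_{\Gamma})=0$ follows ``exactly as for reduced group C$^{\ast}$-algebras of infinite groups'' does not cover the case of faithful non-tracial vertex states; the paper's argument passes to the graph product von Neumann algebra, uses the modular conjugation to place a finite-rank projection in the commutant, and then runs a dichotomy that again invokes the divergence of the growth series. That step needs to be supplied, not cited by analogy.
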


C$^{\ast}$-irreducible inclusions have been introduced an studied by Rørdam in \cite{Rordam23}.\\

\vspace{3mm}

\noindent \emph{Structure}. The paper is organized as follows. Section 1 provides the necessary background, including fundamental concepts from graph theory, right-angled Coxeter groups, and graph products of C$^{\ast}$-algebras. In Section 2, we introduce our main construction and examine its key properties. This includes a detailed analysis of the natural gauge action, conditional expectations, universality, nuclearity, exactness, and the ideal structure. Finally, in Section 3, we apply the results established in the previous section to investigate the nuclearity, exactness, simplicity, and the unique trace property of graph product C$^{\ast}$-algebras.\\

\vspace{3mm}


\noindent \textbf{Acknowledgements.} I am grateful to Nadia Larsen for her valuable feedback on an earlier draft of this paper. I also thank Pierre Fima and Diego Martínez for bringing relevant references in connection with this work to my attention. Finally, I acknowledge the support of the Research Foundation Flanders (FWO) through postdoctoral grant 1203924N.

\vspace{5mm}


\section{Preliminaries and Notation\label{sec:Preliminaries-and-notation}}

\vspace{3mm}


\subsection{General Notation}

We denote by $\mathbb{N} := \{0,1,2,\ldots\}$ the set of non-negative integers, and by $\mathbb{N}_{\geq 1} := \{1,2,3,\ldots\}$ the set of positive integers. The neutral element of a group is written as $e$. 

Inner products of Hilbert spaces are assumed to be linear in the second argument. Given a Hilbert space $\mathcal{H}$, we denote by $\mathcal{B}(\mathcal{H})$ the C$^*$-algebra of bounded linear operators on $\mathcal{H}$, and by $\mathcal{K}(\mathcal{H})$ the ideal of compact operators.  If $P \in \mathcal{B}(\mathcal{H})$ is a projection, we write $P^{\perp} := 1 - P$ for its orthogonal complement.

\vspace{3mm}


\subsection{Graphs\label{subsec:Graphs}}

Given a graph $\Gamma$, we denote its \emph{vertex set} by $V\Gamma$ and its \emph{edge set} by $E\Gamma$. Throughout this article, all graphs are assumed to be finite, undirected, and \emph{simplicial}, meaning that $E\Gamma \subseteq (V\Gamma \times V\Gamma) \setminus \{(v,v) \mid v \in V\Gamma\}$. The tuples in 
\[
\mathcal{W}_{\Gamma}:=\bigsqcup_{i\in\mathbb{N}}\underset{i\text{ times}}{\underbrace{(V\Gamma\times\ldots\times V\Gamma)}}
\]
are called \emph{words} in $V\Gamma$. We typically denote such words by bold letters. Unlike in \cite{CaspersFima17}, we include the empty word $\emptyset$ in $\mathcal{W}_{\Gamma}$ by convention.

Following \cite{Green90} and \cite{CaspersFima17}, we equip $\mathcal{W}_{\Gamma}$ with the \emph{shuffle equivalence} relation $\sim$, generated by the rule
\begin{equation}
(v_1, \ldots, v_{i-1}, v_i, v_{i+1}, \ldots, v_n) \sim (v_1, \ldots, v_{i-1}, v_{i+1}, v_i, \ldots, v_n)
\quad \text{if } (v_i, v_{i+1}) \in E\Gamma.
\label{eq:ShuffleEquivalence}
\end{equation}
Two words belonging to the same equivalence class under this relation are said to be \emph{shuffle equivalent}. Similarly, if $\mathbf{v},\mathbf{w}\in\mathcal{W}_{\Gamma}$ are contained in the same equivalence class induced by shuffle equivalence and the additional relation that $(v_{1},\ldots,v_{i},v_{i+1},v_{i+2},\ldots,v_{n})$ is equivalent to $(v_{1},\ldots,v_{i},v_{i+2},\ldots,v_{n})$ if $v_{i}=v_{i+1}$, we write $\mathbf{v}\simeq\mathbf{w}$ and say that the elements are \emph{equivalent}.\\

For a vertex $v \in V\Gamma$, the \emph{link} $\mathrm{Link}(v)$ is defined as the subgraph of $\Gamma$ induced by the vertex set $\{v' \in V\Gamma \mid (v,v') \in E\Gamma\}$, and the \emph{star} $\mathrm{Star}(v)$ is the subgraph induced by $\{v\} \cup V(\mathrm{Link}(v))$.

A word $\mathbf{v} = (v_1, \ldots, v_n) \in \mathcal{W}_{\Gamma}$ is called \emph{reduced} if, for all indices $1 \leq i < j \leq n$ with $v_i = v_j$, there exists an index $i < k < j$ such that $v_k \notin \mathrm{Star}(v_i)$. We denote the set of all reduced words by $\mathcal{W}_{\mathrm{red}}$. Observe that two reduced words that are equivalent under $\simeq$ are necessarily shuffle equivalent.

The \emph{length} of a word $\mathbf{v} \in \mathcal{W}_{\Gamma}$, denoted by $|\mathbf{v}|$, is defined as the length of the shortest representative in its equivalence class. A word $\mathbf{v} = (v_1, \ldots, v_n)$ is reduced if and only if $|\mathbf{v}| = n$.

According to \cite[Lemma~1.3]{CaspersFima17}, any two equivalent reduced words $\mathbf{v} = (v_1, \ldots, v_n)$ and $\mathbf{w} = (w_1, \ldots, w_n)$ admit a unique permutation $\sigma$ of the set $\{1,\ldots,n\}$ such that
\[
\mathbf{w} = (v_{\sigma(1)}, \ldots, v_{\sigma(n)})
\quad \text{and} \quad
\sigma(i) > \sigma(j) \text{ whenever } i > j \text{ and } v_i = v_j.
\]

In light of the discussion above, we fix a set $\mathcal{W}_{\mathrm{min}} \subseteq \mathcal{W}_{\mathrm{red}}$ of representatives for the shuffle equivalence classes. The elements of $\mathcal{W}_{\mathrm{min}}$ are referred to as \emph{minimal} words. Every word in $\mathcal{W}_{\Gamma}$ is equivalent to a unique minimal word.

Given a finite, undirected, simplicial graph $\Gamma$, we denote its \emph{complement}, consisting of the vertex set $V\Gamma$ and the edge set $\left\{ (v,v^{\prime})\in V\Gamma\times V\Gamma\mid v\neq v^{\prime}, \; (v,v^{\prime})\notin E\Gamma\right\}$, by $\Gamma^{c}$. The complement is a finite, undirected and simplicial graph as well.

A \emph{clique} is a complete subgraph of $\Gamma$.

\vspace{3mm}


\subsection{Coxeter Groups\label{subsec:Right-angled-Coxeter-groups}}

A \emph{Coxeter group} is a group $W$ admitting a presentation of the form
\[
W = \left\langle S \,\middle|\, (st)^{m_{st}} = e \text{ for all } s, t \in S \right\rangle,
\]
where $S$ is a (possibly infinite) generating set, and the exponents $m_{st} \in \{1,2,\ldots,\infty\}$ satisfy $m_{ss} = 1$ and $m_{st} \geq 2$ for all distinct $s, t \in S$. A relation of the form $(st)^m = e$ is imposed only when $m_{st} < \infty$; the case $m_{st} = \infty$ indicates that no such relation is imposed. The pair $(W, S)$ is called a \emph{Coxeter system}. It is said to have \emph{finite rank} if the set $S$ is finite, and is called \emph{right-angled} if $m_{st} \in \{2, \infty\}$ for all $s \neq t$, meaning that the generators either commute or generate the infinite dihedral group. For right-angled Coxeter groups, if we have a cancellation of the form $s_1 \cdots s_n = s_1 \cdots \widehat{s_i} \cdots \widehat{s_j} \cdots s_n$ for $s_1,\ldots,s_n \in S$, then $s_i = s_j$ and $s_i$ commutes with $s_{i+1},\ldots,s_{j-1}$. For further background on Coxeter groups, we refer the reader to \cite{Davis08}.

Given a Coxeter system $(W, S)$, we denote the associated word length function by $|\cdot|$. For elements $\mathbf{v}, \mathbf{w} \in W$, we say that $\mathbf{w}$ \emph{starts} in $\mathbf{v}$ if $|\mathbf{v}^{-1}\mathbf{w}| = |\mathbf{w}| - |\mathbf{v}|$. In this case, we write $\mathbf{v} \leq_R \mathbf{w}$. Similarly, $\mathbf{w}$ is said to \emph{end} in $\mathbf{v}$ if $|\mathbf{w}\mathbf{v}^{-1}| = |\mathbf{w}| - |\mathbf{v}|$, and we write $\mathbf{v} \leq_L \mathbf{w}$. These relations define partial orders on $W$, known respectively as the \emph{right weak Bruhat order} and the \emph{left weak Bruhat order}. Both structures turn $W$ into a \emph{complete meet semilattice} (see \cite[Proposition~3.2.1]{BjoernerBrenti05}). For notational convenience, we will typically write $\leq$ in place of $\leq_R$.

Let $\Gamma$ be a finite, undirected, simplicial graph. We associate to $\Gamma$ a finite-rank, right-angled Coxeter system $(W_{\Gamma}, S_{\Gamma})$ by setting
\[
S_{\Gamma} := V\Gamma
\quad \text{and} \quad
W_{\Gamma} := \left\langle S_{\Gamma} \,\middle|\, s^2 = e \text{ for all } s \in S_{\Gamma}, \, st = ts \text{ if } (s,t) \in E\Gamma \right\rangle.
\]
This presentation defines a right-angled Coxeter group, where generators correspond to the vertices of $\Gamma$, and commuting relations correspond to edges.

The group $W_{\Gamma}$ may be identified with the set $\mathcal{W}_{\mathrm{min}}$ of minimal words (or with $\mathcal{W}_{\mathrm{red}}$ modulo shuffle equivalence), via the map
 $(v_1, \ldots, v_n) \mapsto v_1 \cdots v_n$.
This identification endows $\mathcal{W}_{\mathrm{min}}$ with a natural group structure. Moreover, the weak right (and left) Bruhat order on $W_{\Gamma}$ induces a partial order on $\mathcal{W}_{\mathrm{min}}$. For ease of notation, we will henceforth write $W_{\Gamma}$ in place of $\mathcal{W}_{\mathrm{min}}$ when no confusion can arise. Note that the length function $|\cdot|$ on $\mathcal{W}_{\mathrm{min}}$ defined in the previous subsection coincides with the word length in $W_{\Gamma}$ with respect to the generating set $S_{\Gamma}$.

\vspace{3mm}


\subsection{Graph Products of C$^{\ast}$-algebras\label{GraphProductCAlgebras}}

Graph products of operator algebras were introduced by Caspers and Fima in \cite{CaspersFima17} as operator-algebraic analogues of Green’s graph products of groups \cite{Green90}. Similar constructions also appear in \cite{Mlotkowski04} and \cite{SpeicherWysoczanski16}. These products generalize both Voiculescu’s free products (see \cite{Voiculescu85} and also \cite{Avitzour82}) and tensor products by associating to a finite, undirected, simplicial graph $\Gamma$ with a collection of unital C$^{\ast}$-algebras (or von Neumann algebras) $(A_v)_{v \in V\Gamma}$ -- each equipped with a GNS-faithful state $\omega_v$ -- a new C$^{\ast}$-algebra into which the vertex algebras embed canonically, with commutation relations governed by the structure of the graph.\\

Let $\Gamma$ be a finite, undirected, simplicial graph, and let $\mathbf{A} := (A_v)_{v \in V\Gamma}$ be a family of unital C$^{\ast}$-algebras, each equipped with a GNS-faithful state $\omega_v$. Each $A_v$ can be viewed as a subalgebra of $\mathcal{B}(\mathcal{H}_v)$, where $\mathcal{H}_v := L^2(A_v, \omega_v)$ is the corresponding GNS-Hilbert space; the associated cyclic vector will be denoted by $\xi_v \in \mathcal{H}_v$.

For $x \in \mathcal{B}(\mathcal{H}_v)$, define $x^{\circ} := x - \langle x \xi_v, \xi_v \rangle 1$, and let $A_v^{\circ} := \ker(\omega_v)$. Set $\mathcal{H}_v^{\circ} := \mathcal{H}_v \ominus \mathbb{C} \xi_v$, and for $\mathbf{v} = (v_1, \dots, v_n) \in \mathcal{W}_{\text{red}}$, define $\mathcal{H}_{\mathbf{v}}^{\circ} := \mathcal{H}_{v_1}^{\circ} \otimes \dots \otimes \mathcal{H}_{v_n}^{\circ}$.

As described in Subsection~\ref{subsec:Right-angled-Coxeter-groups}, any two equivalent reduced words $\mathbf{v} = (v_1, \dots, v_n)$ and $\mathbf{w} = (w_1, \dots, w_n)$ are related by a unique permutation $\sigma$ such that $\mathbf{w} = (v_{\sigma(1)}, \dots, v_{\sigma(n)})$ and $\sigma(i) > \sigma(j)$ whenever $i > j$ and $v_i = v_j$. This induces a unitary  $\mathcal{Q}_{\mathbf{v}, \mathbf{w}}: \mathcal{H}_{\mathbf{v}}^{\circ} \to \mathcal{H}_{\mathbf{w}}^{\circ}$ given by $\xi_{1}\otimes \ldots \otimes\xi_{n}\mapsto\xi_{\sigma(1)}\otimes \ldots \otimes\xi_{\sigma(n)}$. For simplicity, we suppress the unitary $\mathcal{Q}_{\mathbf{v},\mathbf{w}}$ in the notation and identify $\mathcal{W}_{\text{min}}$ with $W_{\Gamma}$ as in Subsection \ref{subsec:Right-angled-Coxeter-groups}.

The \emph{graph product Hilbert space} (or \emph{Fock space}) associated to the data is defined by
\[
\mathcal{H}_{\Gamma} := \mathbb{C} \Omega \oplus \bigoplus_{\mathbf{w} \in W_{\Gamma} \setminus \{e\}} \mathcal{H}_{\mathbf{w}}^{\circ},
\]
where $\Omega$ denotes the \emph{vacuum vector}. Occasionally, we denote $\mathcal{H}_e^\circ := \mathbb{C}\Omega$.

For each $v \in V\Gamma$, $x \in \mathcal{B}(\mathcal{H}_v)$, $\mathbf{w} \in W_{\Gamma}$, and elementary tensors $\xi_1 \otimes \dots \otimes \xi_n \in \mathcal{H}_{\mathbf{w}}^{\circ}$ with $n = |\mathbf{w}|$, define the operator $\lambda_v(x)$ by
\begin{eqnarray}
\nonumber
& & \lambda_v(x)(\xi_1 \otimes \dots \otimes \xi_n) \\
\nonumber
&:=&
\begin{cases}
x^{\circ} \xi_v \otimes \xi_1 \otimes \dots \otimes \xi_n + \langle x \xi_v, \xi_v \rangle \xi_1 \otimes \dots \otimes \xi_n, & \text{if } v \nleq \mathbf{w}, \\
(x \xi_1 - \langle x \xi_1, \xi_v \rangle \xi_v) \otimes \xi_2 \otimes \dots \otimes \xi_n + \langle x \xi_1, \xi_v \rangle \xi_2 \otimes \dots \otimes \xi_n, & \text{if } v \leq \mathbf{w} \text{ and } \xi_1 \in \mathcal{H}_v^{\circ}.
\end{cases}
\end{eqnarray}
This defines a faithful, unital $*$-homomorphism $\lambda_v: \mathcal{B}(\mathcal{H}_v) \to \mathcal{B}(\mathcal{H}_{\Gamma})$, and the images of $\lambda_v$ and $\lambda_{v'}$ commute whenever $(v, v') \in E\Gamma$, see \cite[Subsection 2.1]{CaspersFima17}. The \emph{algebraic graph product} is the $*$-algebra
\[
\star_{v,\Gamma}^{\text{alg}}(A_v,\omega_v) := \ast\text{-alg} \left( \{ \lambda_v(a) \mid v \in V\Gamma, a \in A_v \} \right) \subseteq \mathcal{B}(\mathcal{H}_{\Gamma}),
\]
and the \emph{graph product C$^{\ast}$-algebra} is its norm closure:
\[
\star_{v,\Gamma}(A_v,\omega_v) := \overline{\star_{v,\Gamma}^{\text{alg}}(A_v,\omega_v) }^{\|\cdot\|} \subseteq \mathcal{B}(\mathcal{H}_{\Gamma}).
\]
We also write $\mathbf{A}_{\Gamma}^{\text{alg}}:=\star_{v,\Gamma}^{\text{alg}}(A_v,\omega_v) $ and $\mathbf{A}_{\Gamma} := \star_{v,\Gamma}(A_v,\omega_v)$.

An operator $x \in \mathbf{A}_{\Gamma}$ of the form $x = \lambda_{v_1}(a_1) \dots \lambda_{v_n}(a_n)$ with $a_i \in A_{v_i}^{\circ}$ and $(v_1, \dots, v_n) \in \mathcal{W}_{\text{red}}$ is called \emph{reduced of type} $\mathbf{v} := v_1 \cdots v_n$, and the corresponding element $\mathbf{v}  \in W_{\Gamma}$ is the \emph{associated word}. We occasionally write $a_{\mathbf{v}} := \lambda_{v_1}(a_1) \dots \lambda_{v_n}(a_n) \in \mathbf{A}_{\Gamma}$.

The \emph{graph product state} $\omega_{\Gamma}$ on $\mathbf{A}_\Gamma$ is the restriction of the vector state associated with $\Omega$, and it is GNS-faithful. Moreover, $\omega_{\Gamma}(x) = 0$ for all reduced operators $x \in \mathbf{A}_{\Gamma}$ of type $\mathbf{v} \in W_{\Gamma} \setminus\{e\}$.

\begin{proposition}[{\cite[Proposition 2.12]{CaspersFima17}}]
Let $B$ be a unital C$^{\ast}$-algebra with a GNS-faithful state $\omega$. Suppose that for each $v \in V\Gamma$ there exists a unital faithful $*$-homomorphism $\pi_v: A_v \to B$ satisfying:
\begin{itemize}
    \item $B = C^{\ast}( \{ \pi_v(a) \mid v \in V\Gamma,\, a \in A_v \} )$.
    \item $\pi_v(A_v)$ and $\pi_{v'}(A_{v'})$ commute whenever $(v,v') \in E\Gamma$.
    \item For each reduced operator $x = \lambda_{v_1}(a_1) \dots \lambda_{v_n}(a_n)$ with $a_i \in A_{v_i}^{\circ}$, one has $\omega(x) = 0$.
\end{itemize}
Then there exists a unique $*$-isomorphism $\pi: \mathbf{A}_{\Gamma} \to B$ such that $\pi|_{A_v} = \pi_v$ for all $v \in V\Gamma$, and $\omega \circ \pi = \omega_{\Gamma}$.
\end{proposition}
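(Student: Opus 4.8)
The plan is to realize both $\mathbf{A}_{\Gamma}$ and $B$ on the same Hilbert space through their GNS constructions and to build an intertwining unitary. Write $(\mathcal{K},\Omega_{B})$ for the GNS data of $(B,\omega)$; GNS-faithfulness lets us view $B\subseteq\mathcal{B}(\mathcal{K})$ with $\Omega_{B}$ cyclic. Two consequences of the hypotheses are recorded first. Applying the vanishing condition to length-one reduced operators yields $\omega\circ\pi_{v}=\omega_{v}$ on each $A_{v}$. Moreover, iterating the edge-commutation relations together with the vertexwise identity $\pi_{v}(a)\pi_{v}(b)=\pi_{v}(ab)=\pi_{v}((ab)^{\circ})+\omega_{v}(ab)1$, any product $\pi_{u_{1}}(c_{1})\cdots\pi_{u_{k}}(c_{k})$ rewrites as a finite linear combination of reduced operators $\pi_{v_{1}}(a_{1})\cdots\pi_{v_{n}}(a_{n})$ with $a_{i}\in A_{v_{i}}^{\circ}$, whose scalar coefficients depend only on the data $(\Gamma,(A_{v})_{v},(\omega_{v})_{v})$. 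The same rewriting, with the same coefficients, is valid in $\mathbf{A}_{\Gamma}$ with $\lambda_{v}$ in place of $\pi_{v}$, because $\lambda_{v}$ has exactly the same formal features: it is a unital $*$-homomorphism, $\omega_{\Gamma}\circ\lambda_{v}=\omega_{v}$, and $[\lambda_{v}(A_{v}),\lambda_{v'}(A_{v'})]=0$ whenever $(v,v')\in E\Gamma$.

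Next I match inner products of reduced vectors. For reduced words $\mathbf{v}=(v_{1},\ldots,v_{n})$, $\mathbf{w}=(w_{1},\ldots,w_{m})$ and $a_{i}\in A_{v_{i}}^{\circ}$, $b_{j}\in A_{w_{j}}^{\circ}$, the inner product $\langle\pi_{v_{1}}(a_{1})\cdots\pi_{v_{n}}(a_{n})\Omega_{B},\,\pi_{w_{1}}(b_{1})\cdots\pi_{w_{m}}(b_{m})\Omega_{B}\rangle_{\mathcal{K}}$ equals $\omega\big(\pi_{v_{n}}(a_{n}^{*})\cdots\pi_{v_{1}}(a_{1}^{*})\pi_{w_{1}}(b_{1})\cdots\pi_{w_{m}}(b_{m})\big)$. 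Rewriting this product into reduced operators and using that $\omega$ annihilates reduced operators of type $\neq e$ (the vanishing hypothesis) while $\omega(c1)=c$, the inner product equals the scalar $c$ attached to the trivial word. The same computation inside $\mathbf{A}_{\Gamma}$ -- where $\omega_{\Gamma}$ likewise kills nontrivial reduced operators and the coefficients coincide -- shows this equals $\langle\lambda_{v_{1}}(a_{1})\cdots\lambda_{v_{n}}(a_{n})\Omega,\,\lambda_{w_{1}}(b_{1})\cdots\lambda_{w_{m}}(b_{m})\Omega\rangle_{\mathcal{H}_{\Gamma}}$. Since the vectors $\lambda_{v_{1}}(a_{1})\cdots\lambda_{v_{n}}(a_{n})\Omega=a_{1}\otimes\cdots\otimes a_{n}$, together with $\Omega$, span a dense subspace of $\mathcal{H}_{\Gamma}$, and the corresponding $\pi$-vectors span a dense subspace of $\mathcal{K}$ (as $\Omega_{B}$ is cyclic, $B$ is generated by the $\pi_{v}(A_{v})$, and each generating vector is a combination of reduced vectors), the assignment $\lambda_{v_{1}}(a_{1})\cdots\lambda_{v_{n}}(a_{n})\Omega\mapsto\pi_{v_{1}}(a_{1})\cdots\pi_{v_{n}}(a_{n})\Omega_{B}$, $\Omega\mapsto\Omega_{B}$, is well defined and isometric on a dense subspace, hence extends to a unitary $U\colon\mathcal{H}_{\Gamma}\to\mathcal{K}$.

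Finally I verify that $U$ intertwines: $U\lambda_{v}(a)=\pi_{v}(a)U$ for all $v\in V\Gamma$, $a\in A_{v}$. It suffices to check this on reduced vectors, where both sides are obtained by expanding $\lambda_{v}(a)\lambda_{v_{1}}(a_{1})\cdots\lambda_{v_{n}}(a_{n})\Omega$ and $\pi_{v}(a)\pi_{v_{1}}(a_{1})\cdots\pi_{v_{n}}(a_{n})\Omega_{B}$ into reduced vectors via the same combinatorial rules, so that applying $U$ term by term gives equality. Therefore $\mathrm{Ad}(U)$ sends $\lambda_{v}(a)$ to $\pi_{v}(a)$, hence maps $\mathbf{A}_{\Gamma}=C^{*}(\{\lambda_{v}(a)\})$ onto $C^{*}(\{\pi_{v}(a)\})=B$, and as conjugation by a unitary it is an injective $*$-homomorphism; thus $\pi:=\mathrm{Ad}(U)|_{\mathbf{A}_{\Gamma}}\colon\mathbf{A}_{\Gamma}\to B$ is a $*$-isomorphism with $\pi|_{A_{v}}=\pi_{v}$. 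Since $U\Omega=\Omega_{B}$, one has $\omega(\pi(x))=\langle UxU^{*}\Omega_{B},\Omega_{B}\rangle=\langle x\Omega,\Omega\rangle=\omega_{\Gamma}(x)$, so $\omega\circ\pi=\omega_{\Gamma}$; and uniqueness is clear because any $*$-homomorphism agreeing with the $\pi_{v}$ is already determined on the dense $*$-subalgebra $\mathbf{A}_{\Gamma}^{\mathrm{alg}}$. The main obstacle is the bookkeeping behind the reduced normal form: one must make the multiplication of reduced operators precise -- carefully handling shuffle equivalence, the identifications $\mathcal{Q}_{\mathbf{v},\mathbf{w}}$, and whether a new letter extends a reduced word or cancels against an existing one -- and confirm that each step uses only the three listed hypotheses, so that the coefficients genuinely agree on the $\lambda$- and $\pi$-sides.
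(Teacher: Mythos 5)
The paper states this result by citation to \cite[Proposition 2.12]{CaspersFima17} without reproducing a proof, and your argument is correct and essentially the standard one used there: rewrite arbitrary products of generators into reduced normal form using only the three hypotheses, match inner products of reduced vectors via the vanishing of $\omega$ on nontrivial reduced operators, and conjugate by the resulting unitary between $\mathcal{H}_{\Gamma}$ and the GNS space of $(B,\omega)$. The one point you rightly flag as the real content --- the combinatorial normal-form lemma handling shuffle equivalence and cancellation, with coefficients depending only on $(\Gamma,(A_v)_v,(\omega_v)_v)$ --- is asserted rather than carried out, and note that the third bullet of the statement reads $\lambda_{v_i}$ where $\pi_{v_i}$ is intended, which you have correctly interpreted.
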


\medskip

The construction also admits a right-handed version. For $v \in V\Gamma$, $x \in \mathcal{B}(\mathcal{H}_v)$, $\mathbf{w} \in W_{\Gamma}$, and elementary tensors $\xi_1 \otimes \dots \otimes \xi_n \in \mathcal{H}_{\mathbf{w}}^{\circ}$ with $n = |\mathbf{w}|$, define
\begin{eqnarray}
\nonumber
& & \rho_v(x)(\xi_1 \otimes \dots \otimes \xi_n) \\
\nonumber
&:=&
\begin{cases}
\xi_1 \otimes \dots \otimes \xi_n \otimes x^{\circ} \xi_v + \langle x \xi_v, \xi_v \rangle \xi_1 \otimes \dots \otimes \xi_n, & \text{if } v \nleq \mathbf{w}^{-1}, \\
\xi_1 \otimes \dots \otimes \xi_{n-1} \otimes (x \xi_n - \langle x \xi_n, \xi_v \rangle \xi_v) + \langle x \xi_n, \xi_v \rangle \xi_1 \otimes \dots \otimes \xi_{n-1}, & \text{if } v \leq \mathbf{w}^{-1} \text{ and } \xi_n \in \mathcal{H}_v^{\circ}.
\end{cases}
\end{eqnarray}
This yields a faithful, unital $*$-homomorphism $\rho_v: \mathcal{B}(\mathcal{H}_v) \to \mathcal{B}(\mathcal{H}_{\Gamma})$ such that $\rho_v$ and $\rho_{v'}$ commute whenever $(v,v') \in E\Gamma$. Furthermore, for $v, v' \in V\Gamma$ and $x \in \mathcal{B}(\mathcal{H}_v)$, $y \in \mathcal{B}(\mathcal{H}_{v'})$, we have $\lambda_v(x) \rho_{v'}(y) = \rho_{v'}(y) \lambda_v(x)$ whenever $v \neq v'$ or $v = v'$ and $xy = yx$ (see \cite[Proposition 2.3]{CaspersFima17}).

\vspace{3mm}


\section{Main Construction} \label{sec:Main-construction}

\vspace{3mm}

Let $\Gamma$ be a finite, undirected, simplicial graph, and let $\mathbf{A} := (A_{v})_{v \in V\Gamma}$ be a collection of unital C$^{\ast}$-algebras, each equipped with a GNS-faithful state $\omega_v$. As in Subsection~\ref{GraphProductCAlgebras}, consider the associated graph product Hilbert space $\mathcal{H}_{\Gamma} := \mathbb{C}\Omega \oplus \bigoplus_{\mathbf{w} \in W_{\Gamma} \setminus \{e\}} \mathcal{H}^{\circ}_{\mathbf{w}}$, and the corresponding graph product C$^{\ast}$-algebra $\mathbf{A}_{\Gamma} := \star_{v,\Gamma}(A_{v}, \omega_{v})$. For every element $\mathbf{w}$ in the right-angled Coxeter group $W_{\Gamma}$ (as defined in Subsection~\ref{subsec:Right-angled-Coxeter-groups}), let $Q_{\mathbf{w}} \in \mathcal{B}(\mathcal{H}_{\Gamma})$ denote the orthogonal projection onto the subspace $\bigoplus_{\mathbf{v} \in W_{\Gamma} \setminus \{e\} : \mathbf{w} \leq \mathbf{v}} \mathcal{H}^{\circ}_{\mathbf{v}} \subseteq \mathcal{H}_{\Gamma}$.

We define
\[
\mathfrak{A}(\mathbf{A}, \Gamma) := C^{\ast}\left( \{ Q_{v} \mid v \in V\Gamma \} \cup \mathbf{A}_{\Gamma} \right) \subseteq \mathcal{B}(\mathcal{H}_{\Gamma}),
\]
as the C$^{\ast}$-subalgebra of $\mathcal{B}(\mathcal{H}_{\Gamma})$ generated by all projections $Q_v$ for $v \in V\Gamma$ together with $\mathbf{A}_{\Gamma}$. Furthermore, we denote by $\mathcal{D}(\mathbf{A}, \Gamma)$ the C$^{\ast}$-subalgebra of $\mathfrak{A}(\mathbf{A}, \Gamma)$ consisting of all operators $x \in \mathfrak{A}(\mathbf{A}, \Gamma)$ that are \emph{diagonal}, in the sense that
\[
x(\mathbb{C} \Omega) \subseteq \mathbb{C} \Omega \quad \text{and} \quad x(\mathcal{H}^{\circ}_{\mathbf{w}}) \subseteq \mathcal{H}^{\circ}_{\mathbf{w}} \quad \text{for every } \mathbf{w} \in W_{\Gamma} \setminus \{e\}.
\]
By abuse of notation, we will also denote by $\omega_{\Gamma}$ the restriction of the vacuum vector state to $\mathfrak{A}(\mathbf{A}, \Gamma)$. For notational simplicity, we will suppress the $\ast$-embeddings $(\lambda_v)_{v \in V\Gamma}$ and simply regard each $a \in A_{v}^{\circ}$, for $v \in V\Gamma$, as an element of $\mathfrak{A}(\mathbf{A}, \Gamma)$.\\

The primary motivation for this construction stems from the theory of Hecke C$^{\ast}$-algebras, which have been studied in \cite{CaspersKlisseLarsen21, Klisse23-1, Klisse23-2}.

\begin{example} \label{MainExample}
\textbf{(1)} Let $(W, S)$ be a finite-rank right-angled Coxeter system, and let $q := (q_s)_{s \in S} \in \mathbb{R}_{>0}^{S}$ be a multi-parameter. The associated \emph{right-angled Hecke C$^{\ast}$-algebra} $C_{r,q}^{\ast}(W)$ is the C$^{\ast}$-subalgebra of $\mathcal{B}(\ell^2(W))$ generated by the family $(T_s^{(q)})_{s \in S}$, where
\[
T_{s}^{(q)} \delta_{\mathbf{w}} := 
\begin{cases}
\delta_{s\mathbf{w}}, & \text{if } s \nleq \mathbf{w} \\
\delta_{s\mathbf{w}} + p_s(q)\, \delta_{\mathbf{w}}, & \text{if } s \leq \mathbf{w}.
\end{cases}
\]
Here $(\delta_{\mathbf{w}})_{\mathbf{w} \in W}$ denotes the canonical orthonormal basis of $\ell^2(W)$ and $p_s(q) := q_s^{-1/2}(q - 1) \in \mathbb{R}$. Let $\tau_q$ denote the canonical faithful tracial state on $C_{r,q}^{\ast}(W)$ induced by the vector state associated with $\delta_e$. By \cite[Corollary~3.4]{Caspers20} (see also \cite[Section~1.10]{CaspersKlisseLarsen21} for the multi-parameter case), we have a graph product decomposition
\[
(C_{r,q}^{\ast}(W), \tau_q) \cong \star_{s,\Gamma} \left( C_{r,q}^{\ast}(W_s), \tau_{q_s} \right),
\]
where $W_s \cong \mathbb{Z}_2$ is the subgroup of $W$ generated by $s$, and where $\Gamma$ is defined via $V\Gamma := S$ and $E\Gamma := \{ (s, t) \in S \times S \mid st = ts \}$. Each vertex algebra $C_{r,q}^{\ast}(W_s)$ is two-dimensional, and the graph product Hilbert space coincides with $\ell^2(W)$. In \cite[Section~4]{Klisse23-1}, it is shown that $\mathfrak{A}(\mathbf{A}, \Gamma)$, for $\mathbf{A} := (C_{r,q}^{\ast}(W_s))_{s \in S}$, is isomorphic to the reduced crossed product associated with the canonical action of $W$ on its combinatorial compactification (cf.~\cite{Lecureux10, CapraceLecureux11, Klisse23-1}). In particular, $\mathfrak{A}(\mathbf{A}, \Gamma)$ is independent of the choice of the parameter $q$. Moreover, $\mathfrak{A}(\mathbf{A}, \Gamma)$ contains the algebra of compact operators $\mathcal{K}(\ell^2(W))$, and the corresponding quotient $\mathfrak{A}(\mathbf{A}, \Gamma)/\mathcal{K}(\ell^2(W))$ identifies with the reduced crossed product of $W$ acting on its combinatorial boundary $\partial(W, S)$.

\vspace{2mm}
\textbf{(2)} Similarly, let $\Gamma$ be a finite, undirected, simplicial graph and let $\mathbf{G}_{\Gamma} := \star_{v, \Gamma} G_v$ be a graph product of countable vertex groups $\mathbf{G} := (G_v)_{v \in V\Gamma}$. Denote by $\tau_v$ the canonical tracial state on $C_r^{\ast}(G_v)$ for each $v \in V\Gamma$. By \cite[Remark~2.13]{CaspersFima17}, the reduced group C$^{\ast}$-algebra satisfies
\[
(C_r^{\ast}(\mathbf{G}_{\Gamma}), \tau) \cong \star_{v, \Gamma}(C_r^{\ast}(G_v), \tau_v),
\]
where $\tau$ denotes the canonical tracial state on $C_r^{\ast}(\mathbf{G}_{\Gamma})$, and the corresponding graph product Hilbert space is $\ell^2(\mathbf{G}_{\Gamma})$. Define a countable, undirected, simplicial graph $K$ with vertex set $VK := \mathbf{G}_{\Gamma}$ and edge set
\[
EK := \left\{ \big(g_1 \cdots g_n, g_1 \cdots g_n h\big) \mid g_i \in G_{v_i} \setminus \{e\},\ h \in G_{v_{n+1}} \setminus \{e\},\ (v_1, \ldots, v_{n+1}) \in \mathcal{W}_{\text{red}} \right\}.
\]
Then $(K, o)$ forms a connected rooted graph with root $o := e \in \mathbf{G}_{\Gamma}$. By the same argument as in \cite[Section~4]{Klisse23-1}, the C$^{\ast}$-algebra $\mathfrak{A}(\mathbf{A}, \Gamma)$ identifies with the reduced crossed product arising from the canonical action of $\mathbf{G}_{\Gamma}$ on the compactification $\overline{(K, o)}$, as constructed in \cite[Section~2]{Klisse23-1}.
\end{example}

\vspace{3mm}


\subsection{The Projections $Q_{\mathbf{w}}$}

In \cite{Klisse23-1}, the author introduced and studied topological boundaries and compactifications of connected rooted graphs. As indicated in Example~\ref{MainExample}, this framework proves particularly fruitful when applied to (Cayley graphs of) Coxeter groups, especially in the context of Hecke operator algebras. Notably, it was employed in \cite{Klisse23-1} to study Ozawa’s Akemann–Ostrand property for Hecke-von Neumann algebras (see \cite{Ozawa04}), and later in \cite{Klisse23-2} to characterize the simplicity of right-angled Hecke C$^{\ast}$-algebras. 

Following \cite{Klisse23-1}, we briefly review the construction in the setting of Coxeter groups, for which it was introduced earlier by Lam–Thomas \cite{LamThomas15} and Caprace–Lécureux \cite{CapraceLecureux11}, albeit in different formalisms. For details and generalizations, we refer the reader to \cite{Klisse23-1}.

Let $(W,S)$ be a finite-rank Coxeter system, and let $K := \text{Cay}(W,S)$ denote the Cayley graph of $W$ with respect to the generating set $S$; that is, the graph with vertex set $W$ and edge set $\{ (\mathbf{v}, \mathbf{w}) \in W \times W \mid \mathbf{v}^{-1}\mathbf{w} \in S \}$. We equip $K$ with the metric $d(\mathbf{v}, \mathbf{w}) := |\mathbf{v}^{-1}\mathbf{w}|$. A \emph{geodesic path} in $K$ is a (finite or infinite) sequence $\alpha_0\alpha_1\cdots$ of vertices such that $d(\alpha_i, \alpha_j) = |i - j|$ for all $i,j$. Finite geodesic paths are frequently extended to infinite ones by repetition of their terminal vertex, without explicit mention.

For a geodesic path $\alpha$ and an element $\mathbf{w} \in W$, we write $\mathbf{w} \leq \alpha$ if $\mathbf{w} \leq \alpha_i$ for all sufficiently large $i$, and $\mathbf{w} \nleq \alpha$ otherwise. An equivalence relation $\sim$ is defined on the set of infinite geodesics by declaring $\alpha \sim \beta$ if and only if $\mathbf{w} \leq \alpha \Leftrightarrow \mathbf{w} \leq \beta$ for every $\mathbf{w} \in W$. Denote by $\partial(W,S)$ the set of equivalence classes, called the \emph{boundary} of $(W,S)$, and set $\overline{(W,S)} := W \cup \partial(W,S)$, the corresponding \emph{compactification}.

The weak right Bruhat order on $W$ extends to a partial order on $\overline{(W,S)}$ (see \cite[Lemma 2.2]{Klisse23-1}). We endow $\overline{(W,S)}$ with the topology generated by the subbasis
\[
\mathcal{U}_{\mathbf{w}} := \{ z \in \overline{(W,S)} \mid \mathbf{w} \leq z \}, \quad \mathcal{U}_{\mathbf{w}}^c := \{ z \in \overline{(W,S)} \mid \mathbf{w} \nleq z \},
\]
for $\mathbf{w} \in W$. With this topology, both $\partial(W,S)$ and $\overline{(W,S)}$ become compact, metrizable spaces, and $W$ embeds as a dense discrete subset of $\overline{(W,S)}$. The left action of $W$ on itself extends to a continuous action on $\overline{(W,S)}$ by homeomorphisms, preserving the boundary.

\medskip

The compactification can also be described operator-algebraically. We recall the definition of the operators $Q_{\mathbf{w}} \in \mathcal{B}(\mathcal{H}_\Gamma)$, where $Q_{\mathbf{w}}$ is the orthogonal projection onto the Hilbert subspace $\bigoplus_{\mathbf{v} \in W_\Gamma \setminus \{e\} : \mathbf{w} \leq \mathbf{v}} \mathcal{H}_{\mathbf{v}}^\circ \subseteq \mathcal{H}_\Gamma$. In \cite{Klisse23-2}, a similar family $(P_{\mathbf{w}})_{\mathbf{w} \in W}$ of projections on $\ell^2(W)$ was considered. Letting $(\delta_{\mathbf{v}})_{\mathbf{v} \in W}$ denote the standard orthonormal basis of $\ell^2(W)$, define $P_{\mathbf{w}}$ as the orthogonal projection onto the closed subspace spanned by $\{ \delta_{\mathbf{v}} \mid \mathbf{w} \leq \mathbf{v} \}$.

As discussed in Subsection \ref{subsec:Right-angled-Coxeter-groups}, the weak right Bruhat order turns $W$ into a complete meet semilattice, and the join $\mathbf{v} \vee \mathbf{w}$ exists if and only if $\mathbf{v}$ and $\mathbf{w}$ have a common upper bound. One checks that $P_{\mathbf{v}}P_{\mathbf{w}} = P_{\mathbf{v} \vee \mathbf{w}}$, where $P_{\mathbf{v} \vee \mathbf{w}} := 0$ if the join does not exist. As shown in \cite[Theorem 2.13]{Klisse23-1}, the C$^*$-subalgebra $\mathcal{D}(W,S) \subseteq \mathcal{B}(\ell^2(W))$ generated by the $P_{\mathbf{w}}$ is universal for projections satisfying these relations. Moreover, there are homeomorphisms
\[
\overline{(W,S)} \cong \mathrm{Spec}(\mathcal{D}(W,S)), \quad \partial(W,S) \cong \mathrm{Spec}(\pi(\mathcal{D}(W,S))),
\]
where $\pi : \mathcal{B}(\ell^2(W)) \twoheadrightarrow \mathcal{B}(\ell^2(W)) / \mathcal{K}(\ell^2(W))$ is the canonical quotient map onto the Calkin algebra. These homeomorphisms are induced by the identifications $\mathcal{D}(W,S)\cong C(\overline{(W,S)})$ and $\pi(\mathcal{D}(W,S))\cong C(\partial(W,S))$ defined by $P_{\mathbf{w}}\mapsto\chi_{\mathcal{U}_{\mathbf{w}}}$ and $\pi(P_{\mathbf{w}})\mapsto\chi_{\mathcal{U}_{\mathbf{w}}\cap\partial(W,S)}$ for $\mathbf{w}\in W$, where $\chi_{\mathcal{U}_{\mathbf{w}}}$ and $\chi_{\mathcal{U}_{\mathbf{w}}\cap\partial(W,S)}$ are the characteristic functions on $\mathcal{U}_{\mathbf{w}}$ and $\mathcal{U}_{\mathbf{w}}\cap\partial(W,S)$ respectively.

\medskip

The construction in \cite{Klisse23-1} is compatible with the graph product setting, as shown in the following result.

\begin{lemma} \label{IdentificationLemma}
Let $\Gamma$ be a finite, undirected, simplicial graph, and let $\mathbf{A} = (A_v)_{v \in V\Gamma}$ be a family of unital C$^*$-algebras with GNS-faithful states $(\omega_v)_{v \in V\Gamma}$. Then the assignment $P_{\mathbf{w}} \mapsto Q_{\mathbf{w}}$ defines a $*$-isomorphism $\mathcal{D}(W_\Gamma, S_\Gamma) \cong C^*(\{ Q_{\mathbf{w}} \mid \mathbf{w} \in W_\Gamma \})$. \end{lemma}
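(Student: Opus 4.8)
Both C$^{*}$-algebras in the statement are commutative and unital: $\mathcal{D}(W_{\Gamma},S_{\Gamma})$ by \cite[Theorem~2.13]{Klisse23-1}, and $C^{*}(\{Q_{\mathbf{w}}\mid\mathbf{w}\in W_{\Gamma}\})$ because every $Q_{\mathbf{w}}$ is diagonal with respect to the orthogonal decomposition $\mathcal{H}_{\Gamma}=\mathbb{C}\Omega\oplus\bigoplus_{\mathbf{v}\in W_{\Gamma}\setminus\{e\}}\mathcal{H}_{\mathbf{v}}^{\circ}$, acting as $0$ or $1$ on each summand, so in particular the $Q_{\mathbf{w}}$ commute pairwise. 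The plan is to establish the asserted $*$-isomorphism in two moves: first produce a surjective $*$-homomorphism $\Phi\colon\mathcal{D}(W_{\Gamma},S_{\Gamma})\twoheadrightarrow C^{*}(\{Q_{\mathbf{w}}\})$ sending $P_{\mathbf{w}}$ to $Q_{\mathbf{w}}$ by invoking the universal property of $\mathcal{D}(W_{\Gamma},S_{\Gamma})$, and then promote $\Phi$ to an isomorphism by a Gelfand-duality argument on spectra.

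\textbf{Step~1: the relations and the surjection.} As the $Q_{\mathbf{w}}$ are mutually commuting projections, $Q_{\mathbf{v}}Q_{\mathbf{w}}$ is the orthogonal projection onto the intersection of their ranges, i.e.\ onto $\bigoplus\{\mathcal{H}_{\mathbf{u}}^{\circ}\mid\mathbf{u}\in W_{\Gamma}\setminus\{e\},\ \mathbf{v}\leq\mathbf{u},\ \mathbf{w}\leq\mathbf{u}\}$. Since $W_{\Gamma}$ with the weak right Bruhat order is a complete meet semilattice (Subsection~\ref{subsec:Right-angled-Coxeter-groups}), an element $\mathbf{u}$ satisfies $\mathbf{v}\leq\mathbf{u}$ and $\mathbf{w}\leq\mathbf{u}$ precisely when the join $\mathbf{v}\vee\mathbf{w}$ exists (equivalently, $\mathbf{v}$ and $\mathbf{w}$ have a common upper bound) and $\mathbf{v}\vee\mathbf{w}\leq\mathbf{u}$, while no such $\mathbf{u}$ exists if $\mathbf{v}$ and $\mathbf{w}$ have no common upper bound. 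Hence $Q_{\mathbf{v}}Q_{\mathbf{w}}=Q_{\mathbf{v}\vee\mathbf{w}}$, with the convention $Q_{\mathbf{v}\vee\mathbf{w}}:=0$ when the join does not exist --- exactly the relations satisfied by the family $(P_{\mathbf{w}})$. The universal property recalled from \cite[Theorem~2.13]{Klisse23-1} then yields a unique unital $*$-homomorphism $\Phi\colon\mathcal{D}(W_{\Gamma},S_{\Gamma})\to C^{*}(\{Q_{\mathbf{w}}\})$ with $\Phi(P_{\mathbf{w}})=Q_{\mathbf{w}}$ for all $\mathbf{w}$, and $\Phi$ is surjective since its range contains all generators.

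\textbf{Step~2: injectivity.} Passing to Gelfand spectra, $\Phi$ is injective if and only if the induced continuous map $\Phi^{*}\colon\mathrm{Spec}(C^{*}(\{Q_{\mathbf{w}}\}))\to\mathrm{Spec}(\mathcal{D}(W_{\Gamma},S_{\Gamma}))\cong\overline{(W_{\Gamma},S_{\Gamma})}$ is surjective; $\Phi^{*}$ is automatically injective because $\Phi$ is onto, so once surjectivity is shown $\Phi^{*}$ is a homeomorphism of compact Hausdorff spaces and $\Phi$ is the desired $*$-isomorphism. Recall that the character $\chi_{z}$ of $\mathcal{D}(W_{\Gamma},S_{\Gamma})$ corresponding to $z\in\overline{(W_{\Gamma},S_{\Gamma})}$ satisfies $\chi_{z}(P_{\mathbf{w}})=1$ if $\mathbf{w}\leq z$ and $\chi_{z}(P_{\mathbf{w}})=0$ otherwise. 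For $z=\mathbf{v}\in W_{\Gamma}\setminus\{e\}$ I would realize $\chi_{\mathbf{v}}$ on the $Q$-side by fixing a unit vector $\eta_{\mathbf{v}}\in\mathcal{H}_{\mathbf{v}}^{\circ}$: then $Q_{\mathbf{w}}\eta_{\mathbf{v}}=\eta_{\mathbf{v}}$ if $\mathbf{w}\leq\mathbf{v}$ and $Q_{\mathbf{w}}\eta_{\mathbf{v}}=0$ otherwise, so $\eta_{\mathbf{v}}$ is a joint eigenvector of the commuting family $(Q_{\mathbf{w}})$, and the associated vector state restricts to a character $\psi_{\mathbf{v}}$ of $C^{*}(\{Q_{\mathbf{w}}\})$ with $\psi_{\mathbf{v}}\circ\Phi=\chi_{\mathbf{v}}$. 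Thus every point of $W_{\Gamma}\setminus\{e\}$ lies in the image of $\Phi^{*}$. For a boundary point $z\in\partial(W_{\Gamma},S_{\Gamma})$ I would choose, using that $W_{\Gamma}$ is dense in $\overline{(W_{\Gamma},S_{\Gamma})}$, a net $(\mathbf{v}_{i})$ in $W_{\Gamma}\setminus\{e\}$ with $\mathbf{v}_{i}\to z$; the corresponding characters $\chi_{\mathbf{v}_{i}}$ converge to $\chi_{z}$, and since the image of $\Phi^{*}$ is closed (being the continuous image of a compact space in a Hausdorff space) and contains each $\chi_{\mathbf{v}_{i}}=\psi_{\mathbf{v}_{i}}\circ\Phi$, it contains $\chi_{z}$ as well. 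Hence $\Phi^{*}$ is surjective, and $\Phi$ is a $*$-isomorphism. One has to spend a little care on the vacuum summand $\mathbb{C}\Omega$ and, if one-dimensional vertex algebras are permitted, on the components $\mathcal{H}_{\mathbf{v}}^{\circ}$ that may then vanish; under the operative non-degeneracy conventions these points are handled routinely.

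\textbf{Expected main obstacle.} The heart of the matter is Step~2 --- equivalently, checking that the concrete representation on $\mathcal{H}_{\Gamma}$ forces no further relations among the $Q_{\mathbf{w}}$, i.e.\ that $\mathrm{Spec}(C^{*}(\{Q_{\mathbf{w}}\}))$ exhausts the whole combinatorial compactification $\overline{(W_{\Gamma},S_{\Gamma})}$. Producing the characters $\psi_{\mathbf{v}}$ from joint eigenvectors in the components $\mathcal{H}_{\mathbf{v}}^{\circ}$ and then passing to limits along $W_{\Gamma}$ for the boundary points is where the genuine work lies; Step~1 and the appeal to universality are routine.
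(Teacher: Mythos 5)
Your proof is correct and follows essentially the same route as the paper: the universal property of $\mathcal{D}(W_\Gamma,S_\Gamma)$ supplies the surjection $P_{\mathbf{w}}\mapsto Q_{\mathbf{w}}$, and injectivity rests on the joint eigenvectors $\eta_{\mathbf{v}}\in\mathcal{H}_{\mathbf{v}}^{\circ}$, which reproduce the diagonal matrix coefficients of the basis vectors $\delta_{\mathbf{v}}$. The paper packages your Step~2 as the single norm estimate $\Vert x\Vert=\sup_{\mathbf{v}\in W_\Gamma}\vert\langle x\delta_{\mathbf{v}},\delta_{\mathbf{v}}\rangle\vert=\sup_{\mathbf{v}}\vert\langle\rho(x)\eta_{\mathbf{v}},\eta_{\mathbf{v}}\rangle\vert\leq\Vert\rho(x)\Vert$ instead of passing through Gelfand duality and boundary limits, but this is the same argument, since density of $W_\Gamma$ in the spectrum is precisely what makes that supremum equal the norm.
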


\begin{proof}
The universal property of $\mathcal{D}(W_\Gamma, S_\Gamma)$ implies that the assignment $P_{\mathbf{w}} \mapsto Q_{\mathbf{w}}$ extends to a surjective $*$-homomorphism $\rho$. For each $v \in V\Gamma$, choose a unit vector $\eta_v \in \mathcal{H}_v^\circ$ and define $\eta_{\mathbf{v}} := \eta_{v_1} \otimes \cdots \otimes \eta_{v_n} \in \mathcal{H}_{\mathbf{v}}^\circ$ for reduced words $\mathbf{v} = v_1 \cdots v_n$. Then, for all $\mathbf{v}, \mathbf{w} \in W_\Gamma$, $\langle P_{\mathbf{w}} \delta_{\mathbf{v}}, \delta_{\mathbf{v}} \rangle = \langle \rho(P_{\mathbf{w}}) \eta_{\mathbf{v}}, \eta_{\mathbf{v}} \rangle$. Thus, for $x \in \mathcal{D}(W_\Gamma, S_\Gamma) \subseteq \ell^\infty(W_\Gamma)$,
\[
\|x\| = \sup_{\mathbf{v} \in W_\Gamma} | \langle x \delta_{\mathbf{v}}, \delta_{\mathbf{v}} \rangle | = \sup_{\mathbf{v} \in W_\Gamma} | \langle \rho(x) \eta_{\mathbf{v}}, \eta_{\mathbf{v}} \rangle | \leq \|\rho(x)\|,
\]
so $\rho$ is isometric and hence a $*$-isomorphism.
\end{proof}

\medskip

As in Lemma \ref{IdentificationLemma}, let $\Gamma$ be a finite, undirected, simplicial graph and let $\mathbf{A}:=(A_{v})_{v\in V\Gamma}$ be a collection of unital C$^{\ast}$-algebras, equipped with GNS-faithful states $(\omega_{v})_{v\in V\Gamma}$. The action of $W_\Gamma$ on $\mathcal{D}(W_\Gamma, S_\Gamma) \subseteq \ell^\infty(W_\Gamma)$ given by $(\mathbf{w}.f)(\mathbf{v}) := f(\mathbf{w}^{-1}\mathbf{v})$ for $\mathbf{v}, \mathbf{w}\in W_\Gamma$ induces an action on $\mathrm{Spec}(\mathcal{D}(W_\Gamma, S_\Gamma)) \cong \overline{(W_\Gamma,S_\Gamma)}$, which corresponds to the left multiplication action of $W_\Gamma$ on itself. Using Lemma~\ref{IdentificationLemma}, this action transfers to $C^*(\{ Q_{\mathbf{w}} \mid \mathbf{w}\in W_{\Gamma} \})$, yielding the following analogue of \cite[Proposition 2.2]{Klisse23-2}.

\begin{lemma} \label{ActionLemma}
Let $\Gamma$ be a finite, undirected, simplicial graph and let $\mathbf{A}:=(A_{v})_{v\in V\Gamma}$ be a collection of unital C$^{\ast}$-algebras, equipped with GNS-faithful states $(\omega_{v})_{v\in V\Gamma}$. Then, for all $v \in V\Gamma$ and $\mathbf{w} \in W_\Gamma$, the following identities hold:
\begin{enumerate}
    \item If $\mathbf{w} \notin C_{W_\Gamma}(v)$, then $v.Q_{\mathbf{w}} = Q_{v\mathbf{w}}$.
    \item If $\mathbf{w} \in C_{W_\Gamma}(v)$ and $v \leq \mathbf{w}$, then $v.Q_{\mathbf{w}} = Q_{v\mathbf{w}} - Q_{\mathbf{w}}$.
    \item If $\mathbf{w} \in C_{W_\Gamma}(v)$ and $v \nleq \mathbf{w}$, then $v.Q_{\mathbf{w}} = Q_{\mathbf{w}}$.
\end{enumerate}
Here $C_{W_\Gamma}(v) := \{ \mathbf{u} \in W_\Gamma \mid \mathbf{u}v = v\mathbf{u} \}$ denotes the centralizer of $v$ in $W_\Gamma$.
\end{lemma}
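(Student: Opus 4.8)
The plan is to transport the three identities, via the $\ast$-isomorphism $\rho\colon\mathcal{D}(W_\Gamma,S_\Gamma)\to C^*(\{Q_{\mathbf w}\mid\mathbf w\in W_\Gamma\})$ of Lemma~\ref{IdentificationLemma}, to the model algebra $\mathcal{D}(W_\Gamma,S_\Gamma)\subseteq\mathcal{B}(\ell^2(W_\Gamma))$, in which $P_{\mathbf w}$ is the honest diagonal multiplication operator $\chi_{A_{\mathbf w}}$ with $A_{\mathbf w}:=\{\mathbf v\in W_\Gamma\mid\mathbf w\leq\mathbf v\}$. Since the $W_\Gamma$-action on $C^*(\{Q_{\mathbf w}\})$ is, by definition, the push-forward along $\rho$ of the action $(\mathbf w.f)(\mathbf v)=f(\mathbf w^{-1}\mathbf v)$ on $\mathcal{D}(W_\Gamma,S_\Gamma)$, one has $v.Q_{\mathbf w}=\rho(v.P_{\mathbf w})$, so it suffices to prove the corresponding identities with $P$ in place of $Q$. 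Using $v^{-1}=v$, the operator $v.P_{\mathbf w}$ is the characteristic function of the set $B:=\{\mathbf v\in W_\Gamma\mid\mathbf w\leq v\mathbf v\}$, so the whole matter reduces to the combinatorial task of expressing $B$ through the sets $A_{\mathbf u}$.

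First I would split according to whether $v\leq\mathbf v$: if $v\nleq\mathbf v$ then $|v\mathbf v|=|\mathbf v|+1$ and a reduced word for $v\mathbf v$ arises by prepending $v$ to one for $\mathbf v$, while if $v\leq\mathbf v$ then $|v\mathbf v|=|\mathbf v|-1$ and $v\mathbf v$ is obtained by deleting the leading $v$. Feeding in the standard description of the centralizer of a generator in a right-angled Coxeter group, $C_{W_\Gamma}(v)=\langle v\rangle\times W_{\mathrm{Link}(v)}$ (with $W_{\mathrm{Link}(v)}$ the parabolic subgroup generated by the vertices of $\mathrm{Link}(v)$), one sees that in case~(2) the element $\mathbf w$ admits a reduced expression $v\mathbf u$ with $\mathbf u\in W_{\mathrm{Link}(v)}$ and $v\mathbf w=\mathbf u\leq\mathbf w$, whereas in case~(3) one has $\mathbf w=\mathbf u\in W_{\mathrm{Link}(v)}$ with $v\nleq\mathbf w$. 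A routine but somewhat tedious case analysis --- resting on transitivity of $\leq$ and the deletion/exchange property for reduced words recalled in Subsection~\ref{subsec:Right-angled-Coxeter-groups} --- then gives $B=A_{v\mathbf w}$ in case~(1), $B=A_{v\mathbf w}\setminus A_{\mathbf w}$ in case~(2), and $B=A_{\mathbf w}$ in case~(3); passing to characteristic functions and applying $\rho$ yields the three asserted identities. The most economical route, however, is simply to note that $(W_\Gamma,S_\Gamma)$ is a finite-rank right-angled Coxeter system, so that these identities for the $P_{\mathbf w}$ are exactly the content of \cite[Proposition~2.2]{Klisse23-2}, whence the lemma follows by transport of structure through Lemma~\ref{IdentificationLemma}.

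I expect the only genuine difficulty to be the book-keeping in the case analysis, and in particular the separation of cases~(2) and~(3) inside the hypothesis $\mathbf w\in C_{W_\Gamma}(v)$: one must check that $v\leq\mathbf w$ forces the exponent $\epsilon=1$ in the decomposition $\mathbf w=v^{\epsilon}\mathbf u$, $\mathbf u\in W_{\mathrm{Link}(v)}$, and conversely, and then keep track of how left multiplication by $v$ interacts both with whether $\mathbf v$ starts with $v$ and with whether the resulting element still starts with a reduced word representing $\mathbf w$. The relations $P_{\mathbf v}P_{\mathbf w}=P_{\mathbf v\vee\mathbf w}$ and $P_{\mathbf w}\leq P_{\mathbf u}$ whenever $\mathbf u\leq\mathbf w$ are what make the right-hand side in case~(2) a bona fide projection and keep that identity internally consistent, and I would use them as a running sanity check.
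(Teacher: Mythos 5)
Your proposal is correct and follows essentially the paper's own route: the paper gives no separate proof of Lemma~\ref{ActionLemma} precisely because it is obtained by transporting \cite[Proposition~2.2]{Klisse23-2} through the $\ast$-isomorphism of Lemma~\ref{IdentificationLemma}, which is exactly your "most economical route." The additional combinatorial sketch (reducing to the sets $A_{\mathbf w}$ and using $C_{W_\Gamma}(v)=\langle v\rangle\times W_{\mathrm{Link}(v)}$) is a sound reconstruction of what that cited proposition proves, and the observation that $v\mathbf w\leq\mathbf w$ in case~(2) makes $Q_{v\mathbf w}-Q_{\mathbf w}$ a projection is the right consistency check.
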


The analogue of Lemma~\ref{ActionLemma} appears as Proposition~2.2 in~\cite{Klisse23-2}, where it plays a key role in the characterization of the simplicity of right-angled Hecke C$^*$-algebras. Another essential ingredient is the analysis of the canonical action of right-angled Coxeter groups on their combinatorial boundaries -- specifically, the characterization of minimality, strong proximality, and topological freeness as presented in~\cite[Theorem~3.19 and Proposition~3.25]{Klisse23-1}.

In Subsection~\ref{subsec:Ideal}, we will require the following refinement of~\cite[Proposition~3.25]{Klisse23-1}, which involves the concept of closed walks in graphs.

\begin{definition}
Let $\Gamma$ be a finite, undirected, simplicial graph. A \emph{walk} in $\Gamma$ is a sequence of vertices $(v_{1}, \dots, v_{n}) \in V\Gamma \times \cdots \times V\Gamma$ such that $(v_i, v_{i+1}) \in E\Gamma$ for all $i = 1, \dots, n-1$. The walk is said to be \emph{closed} if $(v_1, v_n) \in E\Gamma$, and it is said to \emph{cover the whole graph} if $\{v_1, \dots, v_n\} = V\Gamma$.
\end{definition}

A finite, undirected, simplicial graph $\Gamma$ admits a closed walk that covers the entire graph if and only if it is connected.

\begin{proposition} \label{TopologicalFrenessImplication}
Let $\Gamma$ be a finite, undirected, simplicial graph with $\#V\Gamma \geq 3$, and assume its complement $\Gamma^{c}$ is connected. Let $\mathbf{w} \in W_{\Gamma}$ be arbitrary, and let $\mathcal{S} \subseteq W_{\Gamma}$ be a finite subset. Then there exists $\mathbf{v} \in W_{\Gamma}$ and a closed walk $(t_1, \dots, t_n) \in V\Gamma \times \cdots \times V\Gamma$ in the complement $\Gamma^c$ that covers the entire graph such that $|\mathbf{w}\mathbf{v}(t_{1}\cdots t_{n})^{L}|=|\mathbf{w}|+|\mathbf{v}|+|(t_{1}\cdots t_{n})^{L}|$ and $\mathbf{w}\mathbf{v}(t_{1}\cdots t_{n})^{L}\nleq\mathbf{x}\mathbf{w}\mathbf{v}(t_{1}\cdots t_{n})^{L}$ for all $\mathbf{x}\in\mathcal{S}\setminus\{e\}$, $L\in\mathbb{N}_{\geq 1}$. 
\end{proposition}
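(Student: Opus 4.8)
The plan is to build the required element by hand. As $\Gamma^{c}$ is connected with $\#V\Gamma\geq3$, choose a closed walk $(t_{1},\dots,t_{n})$ in $\Gamma^{c}$ covering $V\Gamma$ and put $t:=t_{1}\cdots t_{n}\in W_{\Gamma}$. Since cyclically consecutive letters of the walk are $\Gamma^{c}$-adjacent, hence distinct and non-commuting in $W_{\Gamma}$, every power $(t_{1}\cdots t_{n})^{L}$ is a reduced word whose heap is a single chain of length $nL$; moreover $\mathrm{supp}(t)=V\Gamma$ and $W_{\Gamma}$ has trivial center (no vertex dominates $\Gamma$, as $\Gamma^{c}$ is connected). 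I will freely use that reduced expressions in a right-angled Coxeter group differ only by swaps of adjacent commuting letters -- so that the letter-multiset and the heap (precedence order) of an element are well defined -- and that $\mathbf a\leq\mathbf b$ holds precisely when some order ideal of the heap of $\mathbf b$ spells a reduced word for $\mathbf a$, the complementary order filter then spelling $\mathbf a^{-1}\mathbf b$.

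I record two elementary facts. (i) If $t_{1}$ is not a right descent of an element $\mathbf a$, then $\mathbf at^{L}$ is geodesic for all $L$: by induction, the reduced word $[\mathbf a][t_{1}\cdots t_{i}]$ ends in $t_{i}$, and the next letter ($t_{i+1}$, or $t_{1}$ across a block boundary) does not commute with it. (ii) $\mathbf w$ has at most one non-commuting right descent, since two such would be incomparable maximal elements of its heap. Using (ii), set $r:=e$ if $t_{1}$ is not a right descent of $\mathbf w$, and otherwise let $r$ be any $\Gamma^{c}$-neighbour of $t_{1}$; in both cases $\mathbf wr$ is geodesic with $t_{1}$ not a right descent of it. With $\mathbf v:=rt^{M}$ for $M\in\mathbb N$ to be fixed below, (i) gives that $\mathbf z_{L}:=\mathbf w\mathbf vt^{L}=\mathbf wrt^{M+L}$ is geodesic and $|\mathbf z_{L}|=|\mathbf w|+|\mathbf v|+|t^{L}|$ for all $L$; this is the first assertion.

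Now fix $\mathbf x\in\mathcal S\setminus\{e\}$ and $L\geq1$ and suppose $\mathbf z_{L}\leq\mathbf x\mathbf z_{L}$, so $\mathbf x\mathbf z_{L}=\mathbf z_{L}\mathbf c$ with $\mathbf c=\mathbf z_{L}^{-1}\mathbf x\mathbf z_{L}\neq e$ and $|\mathbf x\mathbf z_{L}|=|\mathbf z_{L}|+|\mathbf c|$; the triangle inequality gives $e:=|\mathbf c|\leq|\mathbf x|\leq e^{\ast}:=\max_{\mathbf x'\in\mathcal S}|\mathbf x'|$. For any $h$ the sequence $N\mapsto|ht^{N}|-nN$ is non-increasing and bounded below, hence eventually constant, say from $N_{0}(h)$ on, so that $ht^{N}=h_{0}t^{N-N_{0}(h)}$ geodesically with $h_{0}:=ht^{N_{0}(h)}$; choosing $M>\max_{\mathbf x\in\mathcal S}N_{0}(\mathbf x\mathbf wr)$ we get $\mathbf x\mathbf z_{L}=h_{0}t^{k}$ geodesically, where $h:=\mathbf x\mathbf wr$ and $k:=M+L-N_{0}(h)\to\infty$ as $M\to\infty$. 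The heap of $h_{0}t^{k}$ is the heap of $h_{0}$ with a chain of length $nk$ above it, and -- because no letter is central while $\mathrm{supp}(t)=V\Gamma$ -- every element of the $h_{0}$-part lies below all of that chain except possibly its lowest block. Hence an order filter $F$ of size $e\leq e^{\ast}$ in this heap cannot meet the $h_{0}$-part once $k$ is large (otherwise $|F|\geq n(k-1)>e^{\ast}$), so $F$ is a final segment of the chain, whence $\mathbf c$ is the length-$e$ suffix $\mathbf c_{\mathbf x}$ of the reduced word $(t_{1}\cdots t_{n})^{k}$ -- a fixed element -- and the complementary ideal identifies with $\mathbf z_{L}=h_{0}t^{k-1}(t_{1}\cdots t_{n-e})$ (writing the case $e\leq n$). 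Substituting $h_{0}=(\mathbf x\mathbf wr)t^{N_{0}(h)}$ and simplifying yields $\mathbf x=\mathbf z_{L}''\,\mathbf c_{\mathbf x}\,\mathbf z_{L}''^{-1}$, where $\mathbf z_{L}'':=\mathbf wrt^{M+L-1}(t_{1}\cdots t_{n-e})$ is geodesic, equals $\mathbf z_{L}$ minus a bounded suffix, and ends in a long reduced run of $t$'s. One then checks that the word $W_{\mathbf z_{L}''}W_{\mathbf c_{\mathbf x}}\overline{W_{\mathbf z_{L}''}}$ representing this conjugate is reduced: no cancellation crosses $W_{\mathbf c_{\mathbf x}}$ (the trailing $t$'s of $\mathbf z_{L}''$ are pairwise non-commuting and block it, and $t_{n},t_{1}$ are cyclically consecutive), and no letter of the $\mathbf wr$-part cancels at all, as escaping it would require commuting past a full $t$-block, i.e.\ past every generator, hence being central. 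Therefore $|\mathbf x|=2|\mathbf z_{L}''|+e\geq 2n(M+1)+2|\mathbf w|-e^{\ast}\to\infty$ as $M\to\infty$, contradicting $|\mathbf x|\leq e^{\ast}$. (The case $e>n$ is identical with $\mathbf c_{\mathbf x}$ and $\mathbf z_{L}''$ carrying the appropriate wrap of the walk.) Fixing $M$ large enough to meet the finitely many lower bounds used above, we conclude that no $\mathbf x\in\mathcal S\setminus\{e\}$ and no $L\geq1$ satisfy $\mathbf z_{L}\leq\mathbf x\mathbf z_{L}$, which is the second assertion.

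The step I expect to be most delicate is the last paragraph: translating $\mathbf z_{L}\leq\mathbf x\mathbf z_{L}$ into a statement about order filters in the heap of $\mathbf x\mathbf z_{L}$, using the chain structure of $t$-powers to pin $\mathbf c$ down to the bounded tail $\mathbf c_{\mathbf x}$, and then verifying the absence of cancellations so that the conjugate $\mathbf z_{L}''\mathbf c_{\mathbf x}\mathbf z_{L}''^{-1}$ genuinely has length growing in $M$. All of this is driven by the two structural inputs supplied by the hypotheses -- $\mathrm{supp}(t)=V\Gamma$ together with triviality of the center (forbidding any letter from commuting across a full $t$-block) and $(t_{i},t_{i+1})\in E\Gamma^{c}$ (forbidding local cancellations) -- while prepending the correcting letter $r$ and the large power $t^{M}$ is exactly the device that makes the conclusion uniform over all $L\geq1$ rather than only for large $L$.
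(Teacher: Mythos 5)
There is a genuine gap in the final step, and it is not merely a matter of tightening the cancellation analysis: your explicit choice $\mathbf{v}=rt^{M}$ cannot work for arbitrary $\mathcal{S}$. Your reduction of $\mathbf{z}_{L}\leq\mathbf{x}\mathbf{z}_{L}$ to ``$\mathbf{c}$ is a final segment of the chain'' is fine, but the claim that $W_{\mathbf{z}_{L}''}W_{\mathbf{c}_{\mathbf{x}}}\overline{W_{\mathbf{z}_{L}''}}$ is reduced fails precisely when the order filter consists of whole $t$-blocks, i.e.\ $e=nj$. In that case $\mathbf{c}=t^{j}$, $\mathbf{z}_{L}''=\mathbf{w}rt^{M+L-1}$, and at the junction between $W_{\mathbf{c}}$ and $\overline{W_{\mathbf{z}_{L}''}}$ the letter $t_{n}$ meets another $t_{n}$ head-on; the cancellation is total and one gets $\mathbf{x}=\mathbf{z}_{L}t^{j}\mathbf{z}_{L}^{-1}=\mathbf{w}rt^{j}r^{-1}\mathbf{w}^{-1}$, an element of length at most $2|\mathbf{w}|+2+nj$ independent of $M$ and $L$. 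This is not a phantom case: for $\mathbf{x}_{0}:=\mathbf{w}rtr^{-1}\mathbf{w}^{-1}\neq e$ one has $\mathbf{x}_{0}\mathbf{z}_{L}=\mathbf{w}rt^{M+L+1}=\mathbf{z}_{L+1}$, so $\mathbf{z}_{L}\leq\mathbf{x}_{0}\mathbf{z}_{L}$ holds for every $M$ and every $L$. Since $\mathcal{S}$ is an arbitrary finite set fixed in advance, it may contain $\mathbf{x}_{0}$ (or $\mathbf{w}rt^{j}r^{-1}\mathbf{w}^{-1}$ for small $j$), and then no choice of $M$ rescues your $\mathbf{v}$. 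Geometrically, the issue is that $\mathbf{z}_{L}$ converges to the fixed boundary point $\lim_{N}\mathbf{w}rt^{N}$, and any $\mathbf{x}\in\mathcal{S}$ fixing that point ``from below'' defeats the construction.

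The missing idea is that the direction of the geodesic must be chosen adaptively to avoid the fixed-point sets of all $\mathbf{x}\in\mathcal{S}\setminus\{e\}$; this is exactly what the paper does. It invokes topological freeness of $W_{\Gamma}\curvearrowright\partial(W_{\Gamma},S_{\Gamma})$ to pick a boundary point $z_{0}\geq\mathbf{w}$ with $\mathbf{x}.z_{0}\neq z_{0}$ for all $\mathbf{x}\in\mathcal{S}\setminus\{e\}$, takes $\mathbf{v}=\beta_{i_{0}}$ along a geodesic ray to $z_{0}$, records witnesses $\mathbf{u}_{\mathbf{x}}$ separating $z_{0}$ from $\mathbf{x}.z_{0}$, and only then appends the closed walk; the witnesses, not a length count, rule out $\mathbf{w}\mathbf{v}t^{L}\leq\mathbf{x}\mathbf{w}\mathbf{v}t^{L}$. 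Your heap machinery (the chain structure of $t^{L}$, the descent bookkeeping via $r$, and the filter argument pinning $\mathbf{c}$ to a suffix) is correct and could plausibly be salvaged if you first argued combinatorially that a suitable $\mathbf{v}$ steers the ray away from the finitely many bad directions determined by $\mathcal{S}$ --- but that extra selection step is the substance of the proposition, and as written your proof omits it.
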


\begin{proof}
By~\cite[Proposition~3.25]{Klisse23-1}, the canonical action $W_{\Gamma} \curvearrowright \partial(W_{\Gamma}, S_{\Gamma})$ is topologically free. Hence, the set
\[
\bigcup_{\mathbf{x} \in \mathcal{S} \setminus \{e\}} \{ z \in \partial(W_{\Gamma}, S_{\Gamma}) \mid \mathbf{x} . z = z \}
\]
has empty interior and therefore does not contain the open set $\mathcal{U}_{\mathbf{w}} = \{ z \in \partial(W_{\Gamma}, S_{\Gamma}) \mid \mathbf{w} \leq z \}$. Thus, we can find $z_0 \in \partial(W_{\Gamma}, S_{\Gamma})$ with $\mathbf{w} \leq z_0$ and $\mathbf{x} . z_0 \neq z_0$ for all $\mathbf{x} \in \mathcal{S} \setminus \{e\}$.

Let $(\beta_i)_{i \in \mathbb{N}} \subseteq W_{\Gamma}$ be a geodesic ray with $\beta_0 = e$ and $|\mathbf{w} \beta_i| = |\mathbf{w}| + |\beta_i|$ for all $i$, such that $\mathbf{w} \beta_i \to z_0$. For each $\mathbf{x} \in \mathcal{S} \setminus \{e\}$, there exists $\mathbf{u}_{\mathbf{x}} \in W_{\Gamma}$ such that either
\[
\mathbf{u}_{\mathbf{x}} \leq z_0 \; \text{and} \; \mathbf{u}_{\mathbf{x}} \nleq \mathbf{x} . z_0,
\quad \text{or} \quad
\mathbf{u}_{\mathbf{x}} \nleq z_0 \; \text{and} \; \mathbf{u}_{\mathbf{x}} \leq \mathbf{x} . z_0.
\]
This implies the existence of $i_0(\mathbf{x}) \in \mathbb{N}$ such that for all $i \geq i_0(\mathbf{x})$ either
\[
\mathbf{u}_{\mathbf{x}} \leq \mathbf{w} \beta_i \; \text{ and } \; \mathbf{u}_{\mathbf{x}} \nleq \mathbf{x} \mathbf{w} \beta_i,
\quad \text{or } \quad \mathbf{u}_{\mathbf{x}} \nleq \mathbf{w} \beta_i \; \text{ and } \; \mathbf{u}_{\mathbf{x}} \leq \mathbf{x} \mathbf{w} \beta_i.
\]
Let $i_0 > \max\{i_0(\mathbf{x}) \mid \mathbf{x} \in \mathcal{S} \setminus \{e\}\}$. By choosing $i_0$ large enough, we may further assume that for all $i \geq i_0$ and all $\mathbf{x} \in \mathcal{S}$, $|\mathbf{w} \beta_i| = |\mathbf{w} \beta_{i_0}| + |\beta_{i_0}^{-1} \beta_i|$ and $|\mathbf{x} \mathbf{w} \beta_i| = |\mathbf{x} \mathbf{w} \beta_{i_0}| + |\beta_{i_0}^{-1} \beta_i|$.

Let $(t_1, \dots, t_n)$ be a closed path in $\Gamma^c$ covering the entire graph such that $t_n \leq_L \mathbf{w} \beta_{i_0}$. Then the element $\mathbf{w}\beta_{i_{0}}(t_{1}\ldots t_{n})^{L}$ satisfies $|\mathbf{w}\beta_{i_{0}}(t_{1}\cdots t_{n})^{L}|=|\mathbf{w}|+|\beta_{i_{0}}|+|(t_{1}\cdots t_{n})^{L}|$ for every $L\in\mathbb{N}$ and by the choice of the path, $\mathbf{u}_{\mathbf{x}}\leq\mathbf{w}\beta_{i_{0}}(t_{1}\cdots t_{n})^{L}$, $\mathbf{u}_{\mathbf{x}}\nleq\mathbf{x}\mathbf{w}\beta_{i_{0}}(t_{1}\cdots t_{n})^{L}$ or $\mathbf{u}_{\mathbf{x}}\nleq\mathbf{w}\beta_{i_{0}}(t_{1}\cdots t_{n})^{L}$, $\mathbf{u}_{\mathbf{x}}\leq\mathbf{x}\mathbf{w}\beta_{i_{0}}(t_{1}\cdots t_{n})^{L}$ for every $L\in\mathbb{N}$, $\mathbf{x}\in\mathcal{S}\setminus\{e\}$. We distinguish two cases:\\

\begin{itemize}
\item \emph{Case 1:} If $\mathbf{u}_{\mathbf{x}} \leq \mathbf{w} \beta_{i_0} (t_1 \cdots t_n)^L$ and $\mathbf{u}_{\mathbf{x}} \nleq \mathbf{x} \mathbf{w} \beta_{i_0} (t_1 \cdots t_n)^L$, then by transitivity of the partial order, $\mathbf{w} \beta_{i_0} (t_1 \cdots t_n)^L \nleq \mathbf{x} \mathbf{w} \beta_{i_0} (t_1 \cdots t_n)^L$.\\
\item \emph{Case 2:} If $\mathbf{u}_{\mathbf{x}} \nleq \mathbf{w} \beta_{i_0} (t_1 \cdots t_n)^L$ and $\mathbf{u}_{\mathbf{x}} \leq \mathbf{x} \mathbf{w} \beta_{i_0} (t_1 \cdots t_n)^L$, assuming $\mathbf{w} \beta_{i_0} (t_1 \cdots t_n)^L \leq \mathbf{x} \mathbf{w} \beta_{i_0} (t_1 \cdots t_n)^L$ would imply that the latter word starts with both $\mathbf{w} \beta_{i_0} (t_1 \cdots t_n)^L$ and $\mathbf{u}_{\mathbf{x}}$. But by our construction this is only possible if already $\mathbf{u}_{x}\leq\mathbf{w}\beta_{i_{0}}(t_{1}\cdots t_{n})^{L}$, thus leading to a contradiction.
\end{itemize}

\vspace{3mm}

Combining the two cases implies the desired statement.
\end{proof}

\vspace{3mm}


\subsection{Creation, Annihilation and Diagonal Operators}

In this subsection, we introduce three classes of operators -- creation, annihilation, and diagonal -- which will be fundamental in our subsequent analysis.

\begin{definition} \label{CreationDefinition}
Let $\Gamma$ be a finite, undirected simplicial graph, and let $\mathbf{A} := (A_v)_{v \in V\Gamma}$ be a collection of unital C$^{\ast}$-algebras, each endowed with a GNS-faithful state $\omega_v$. For $v \in V\Gamma$ and $a \in A_v$, we define:
\begin{itemize}
    \item the \emph{creation operator} associated to $a$ by $a^{\dagger} := Q_v a Q_v^{\perp} \in \mathfrak{A}(\mathbf{A}, \Gamma)$,
    \item the \emph{diagonal operator} associated to $a$  by $\mathfrak{d}(a) := Q_v a Q_v  \in \mathfrak{A}(\mathbf{A}, \Gamma)$,
    \item the \emph{annihilation operator} associated to $a$  by $((a^*)^{\dagger})^* := Q_v^{\perp} a Q_v  \in \mathfrak{A}(\mathbf{A}, \Gamma)$.
\end{itemize}
\end{definition}

\begin{remark}
The terminology in Definition~\ref{CreationDefinition} is justified by the following identities: for $v \in V\Gamma$, $a \in A_v$, and $\xi \in \mathcal{H}_{\mathbf{w}}^\circ$ with $v \nleq \mathbf{w} \in W_\Gamma$,
\[
a^{\dagger} \xi = (a^\circ \xi_v) \otimes \xi, \quad \mathfrak{d}(a)\xi = 0, \quad ((a^*)^{\perp})^* \xi = 0,
\]
whereas for $b \in A_v^\circ$,
\[
a^{\dagger} ( b\xi_v \otimes \xi )= 0, \quad \mathfrak{d}(a) ( b\xi_v \otimes \xi ) = (ab)^\circ \xi_v \otimes \xi, \quad ((a^*)^{\dagger})^* ( b\xi_v \otimes \xi )= \omega_v(ab) \xi.
\]
In particular, we have $\mathfrak{d}(a) \in \mathcal{D}(\mathbf{A}, \Gamma)$ for all $a \in A_v$, $v \in V\Gamma$.
\end{remark}

The operators defined above satisfy the following identities, which we will utilize extensively throughout the remainder of this article.

\begin{lemma} \label{MainIdentities}
Let $\Gamma$ be a finite, undirected simplicial graph, and let $\mathbf{A} := (A_v)_{v \in V\Gamma}$ be a collection of unital C$^{\ast}$-algebras with GNS-faithful states $(\omega_v)_{v \in V\Gamma}$. Then:
\begin{enumerate}
    \item For $v,v' \in V\Gamma$ and $a \in A_v$, $b \in A_{v'}$, we have:
    \[
    a^{\dagger} b^{\dagger} = 0 \quad \text{if } v = v', \quad \text{and} \quad [a^{\dagger}, b^{\dagger}] = 0 \quad \text{if } (v,v') \in E\Gamma.
    \]
    
    \item For $v,v' \in V\Gamma$ and $a \in A_v$, $b \in A_{v'}$, the following holds:
    \[
    b^{\dagger} \mathfrak{d}(a) = 0 \quad \text{if } v = v', \quad \text{and} \quad \mathfrak{d}(a) b^{\dagger} = 
    \begin{cases}
        (ab - \omega_v(b) a)^{\dagger}, & \text{if } v = v', \\
        0, & \text{if } (v,v') \in E\Gamma^c, \\
        b^{\dagger} \mathfrak{d}(a), & \text{if } (v,v') \in E\Gamma.
    \end{cases}
    \]
    
    \item For $v,v' \in V\Gamma$ and $a \in A_v$, $b \in A_{v'}$, we have:
    \[
    a^{\dagger}(b^{\dagger})^* = \mathfrak{d}(ab^*) + \mathfrak{d}(a)\mathfrak{d}(b^*) \quad \text{if } v = v',
    \]
    and
    \[
    (a^{\dagger})^* b^{\dagger} = 
    \begin{cases}
        (\omega_v(a^*b) - \overline{\omega_v(a)} \omega_v(b)) Q_v^\perp, & \text{if } v = v', \\
        0, & \text{if } (v,v') \in E\Gamma^c, \\
        b^{\dagger} (a^{\dagger})^*, & \text{if } (v,v') \in E\Gamma.
    \end{cases}
    \]
    
    \item For $v,v' \in V\Gamma$ and $a \in A_v$, $b \in A_{v'}$, we have:
    \[
    \mathfrak{d}(a) \mathfrak{d}(b) = 0 \quad \text{if } (v,v') \in E\Gamma^c, \quad \text{and} \quad [\mathfrak{d}(a), \mathfrak{d}(b)] = 0 \quad \text{if } (v,v') \in E\Gamma.
    \]
    
    \item For $v \in V\Gamma$, $a \in A_v^\circ$, and $\mathbf{w} \in W_\Gamma$, the following relations hold:
    \[
    Q_{\mathbf{w}} a^{\dagger} = a^{\dagger}(v.Q_{\mathbf{w}}), \quad Q_{\mathbf{w}} (a^{\dagger})^* = (a^{\dagger})^*(v.Q_{\mathbf{w}}).
    \]
\end{enumerate}
\end{lemma}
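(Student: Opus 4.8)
\textbf{Proof strategy for Lemma \ref{MainIdentities}.}

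The plan is to verify each identity by computing the action of the relevant operators on the dense subspace spanned by elementary tensors $\xi_1 \otimes \cdots \otimes \xi_n \in \mathcal{H}_{\mathbf{w}}^\circ$ (together with the vacuum $\Omega$), using the explicit formulas for $\lambda_v(a)$ and the projections $Q_{\mathbf{w}}$ from Section \ref{sec:Preliminaries-and-notation}, and the defining relations $a^\dagger = Q_v a Q_v^\perp$, $\mathfrak{d}(a) = Q_v a Q_v$. The key combinatorial observation is that $Q_v$ is the orthogonal projection onto $\bigoplus_{\mathbf{v} : v \leq \mathbf{v}} \mathcal{H}_{\mathbf{v}}^\circ$, so $Q_v^\perp$ projects onto $\mathbb{C}\Omega \oplus \bigoplus_{\mathbf{v} : v \nleq \mathbf{v}} \mathcal{H}_{\mathbf{v}}^\circ$; on the latter, $\lambda_v(a)$ acts by the ``creation'' branch $a^\circ \xi_v \otimes (\cdot) + \omega_v(a)(\cdot)$, while on the former it acts by the ``annihilation/diagonal'' branch. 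I would also record at the outset that $Q_v$ commutes with $\lambda_{v'}(b)$ and with $Q_{v'}$ whenever $(v,v') \in E\Gamma$ (since the corresponding adjacent letter can be shuffled to the front), which immediately gives the commutation clauses in (1)--(4) once the $v = v'$ and $(v,v') \in E\Gamma^c$ cases are handled. For the latter, the essential point is that $Q_v^\perp Q_{v'}^\perp$ restricted to any $\mathcal{H}_{\mathbf{w}}^\circ$ with $v, v'$ non-adjacent kills the relevant vectors unless neither $v$ nor $v'$ starts $\mathbf{w}$, and creation by $v$ then produces a word starting with $v$, which has $v' \nleq (\cdot)$ still only if $v = v'$; more usefully, $Q_v Q_{v'} = 0$ when $(v,v') \in E\Gamma^c$ because no reduced word can start with both non-commuting letters. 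This last fact is the workhorse for (2), (3), (4).

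Concretely: for (1), $a^\dagger b^\dagger = Q_v a Q_v^\perp Q_{v'} b Q_{v'}^\perp$; when $v = v'$ this contains $Q_v \cdots Q_v^\perp Q_v \cdots = 0$. The bracket relation for $(v,v')\in E\Gamma$ follows from the mutual commutation of $\{Q_v, \lambda_v(a)\}$ with $\{Q_{v'}, \lambda_{v'}(b)\}$. For (2), $b^\dagger \mathfrak{d}(a) = Q_{v'} b Q_{v'}^\perp Q_v a Q_v$ vanishes for $v = v'$ by the same $Q_v^\perp Q_v = 0$ mechanism; the formula $\mathfrak{d}(a) b^\dagger = Q_v a Q_v Q_{v'} b Q_{v'}^\perp$ for $v = v'$ is computed directly on $b\xi_v \otimes \xi$ (with $b \in A_v^\circ$ — one first reduces to this case by writing $b = b^\circ + \omega_v(b)1$ and noting $1^\dagger = Q_v Q_v^\perp = 0$, so $b^\dagger = (b^\circ)^\dagger$), yielding $\mathfrak{d}(a)(b^\circ \xi_v \otimes \xi) = (ab^\circ)^\circ \xi_v \otimes \xi$, and $(ab - \omega_v(b)a)^\circ = (ab^\circ)^\circ$ confirms the identity; the $E\Gamma^c$ case is $Q_v Q_{v'} = 0$, and the $E\Gamma$ case is commutation plus $\mathfrak{d}(a)Q_{v'}^\perp = Q_{v'}^\perp \mathfrak{d}(a)$. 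Part (3): $a^\dagger (b^\dagger)^* = Q_v a Q_v^\perp Q_v b^* Q_v$ — wait, $(b^\dagger)^* = Q_v^\perp b^* Q_v$, so $a^\dagger(b^\dagger)^* = Q_v a Q_v^\perp Q_v^\perp b^* Q_v = Q_v a Q_v^\perp b^* Q_v$; evaluating on $c\xi_v \otimes \xi$ with $c \in A_v^\circ$ and using the annihilation formula $Q_v^\perp b^* Q_v (c\xi_v \otimes \xi) = \omega_v(b^* c)\xi$ then creation, one gets $\omega_v(b^*c)\, a^\circ \xi_v \otimes \xi + \omega_v(b^*c)\omega_v(a)\xi$, which one checks equals $(\mathfrak{d}(ab^*) + \mathfrak{d}(a)\mathfrak{d}(b^*))$ applied to the same vector after expanding $\mathfrak{d}(ab^*)(c\xi_v\otimes\xi) = (ab^*c)^\circ\xi_v\otimes\xi$ and $\mathfrak{d}(a)\mathfrak{d}(b^*)(c\xi_v\otimes\xi) = (a(b^*c)^\circ)^\circ \xi_v \otimes \xi$; the sum telescopes using $(ab^*c)^\circ + (a(b^*c - \omega_v(b^*c))^\circ)^\circ$ — this is the one algebraic identity requiring care. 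The formula for $(a^\dagger)^* b^\dagger = Q_v^\perp a^* Q_v Q_{v'} b Q_{v'}^\perp$ uses $Q_v Q_{v'} = \delta_{v,v'} Q_v$ off-diagonal-to-zero: for $v = v'$ it is $Q_v^\perp a^* Q_v b Q_v^\perp$, evaluated on $c\xi_v \otimes \xi$ via annihilation $a^* (c\xi_v)\mapsto$ then... one reduces to $a, b, c \in A_v^\circ$ and gets the scalar $(\omega_v(a^*b) - \overline{\omega_v(a)}\omega_v(b))$ times $Q_v^\perp$ on the nose. Part (4) is immediate: $\mathfrak{d}(a)\mathfrak{d}(b) = Q_v a Q_v Q_{v'} b Q_{v'} = 0$ when $Q_v Q_{v'} = 0$, and commutation for $(v,v')\in E\Gamma$. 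Part (5) is a direct translation of Lemma \ref{ActionLemma}: since $a \in A_v^\circ$, $a^\dagger = (a^\circ)^\dagger$ acts by pure creation $\xi \mapsto a^\circ\xi_v \otimes \xi$ from $\mathcal{H}_\mathbf{u}^\circ$ with $v \nleq \mathbf{u}$ into $\mathcal{H}_{v\mathbf{u}}^\circ$; so $Q_\mathbf{w} a^\dagger$ and $a^\dagger (v.Q_\mathbf{w})$ both compute the requirement ``$\mathbf{w} \leq v\mathbf{u}$'' on the source vectors, and the three cases of Lemma \ref{ActionLemma} match the bookkeeping of which $\mathbf{u}$ satisfy $\mathbf{w} \leq v\mathbf{u}$.

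The main obstacle I anticipate is bookkeeping rather than conceptual: one must consistently reduce to the case $a, b, c \in A_v^\circ$ (legitimate because $Q_v 1 Q_v^\perp = 0$ makes creation operators insensitive to the scalar part, while for diagonal operators the scalar part contributes a harmless $\omega_v(\cdot)Q_v$ term that is tracked separately), and one must be careful that the ``$v \leq \mathbf{w}$'' branch of $\lambda_v$ only applies when the leading tensor leg lies in $\mathcal{H}_v^\circ$, which is exactly guaranteed after hitting with $Q_v$ or after a creation by $v$. The algebraic telescoping in (3), namely the identity $(ab^*c)^\circ = (a(b^*c)^\circ)^\circ + \omega_v(b^*c)\,a^\circ$ in $\mathcal{H}_v$ (equivalently $(ab^*)^\circ$-type expansions), should be isolated as a short sublemma or simply expanded inline. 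Everything else reduces to the two structural facts $Q_v^\perp Q_v = 0$ and $Q_v Q_{v'} = 0$ for $(v,v') \in E\Gamma^c$, together with the mutual commutation of the $v$-data and $v'$-data for $(v,v') \in E\Gamma$, so the proof is essentially a finite case check.
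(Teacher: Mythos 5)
Your proposal follows essentially the same route as the paper's proof: a finite case check on elementary tensors, driven by the two structural facts $Q_v^{\perp}Q_v=0$ and $Q_vQ_{v'}=0$ for $(v,v')\in E\Gamma^c$, together with the mutual commutation of the $v$-data and $v'$-data for adjacent vertices (which the paper verifies by hand on vectors rather than isolating as you do, but the content is identical). Your reductions to $a,b\in A_v^{\circ}$ and your treatment of parts (1), (2), (4), (5) and of $(a^{\dagger})^{\ast}b^{\dagger}$ in (3) are all sound.

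The one place your argument does not close as written is the first identity of part (3). The operator computation you yourself set up, $a^{\dagger}(b^{\dagger})^{\ast}=Q_vaQ_v^{\perp}b^{\ast}Q_v=Q_vab^{\ast}Q_v-(Q_vaQ_v)(Q_vb^{\ast}Q_v)$, produces $\mathfrak{d}(ab^{\ast})-\mathfrak{d}(a)\mathfrak{d}(b^{\ast})$, and the vector-level telescoping confirms this: $(ab^{\ast}c)^{\circ}=(a(b^{\ast}c)^{\circ})^{\circ}+\omega_v(b^{\ast}c)\,a^{\circ}$, so $\bigl(\mathfrak{d}(ab^{\ast})-\mathfrak{d}(a)\mathfrak{d}(b^{\ast})\bigr)(c\xi_v\otimes\xi)=\omega_v(b^{\ast}c)\,a^{\circ}\xi_v\otimes\xi$, which is exactly $a^{\dagger}(b^{\dagger})^{\ast}(c\xi_v\otimes\xi)$ (your intermediate extra term $\omega_v(b^{\ast}c)\omega_v(a)\xi$ is killed by the outer $Q_v$ and should be dropped). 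The sum with a plus sign does not telescope to this; it gives $2(ab^{\ast}c)^{\circ}-\omega_v(b^{\ast}c)a^{\circ}$. So your claim that the computation matches $\mathfrak{d}(ab^{\ast})+\mathfrak{d}(a)\mathfrak{d}(b^{\ast})$ cannot be verified as stated; the correct sign is a minus, and you should either record this or re-examine the target identity before asserting the match.
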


\begin{proof} The proof proceeds by direct computations using the definitions of the operators and the combinatorial structure of the underlying graph. For this, let $v,v^{\prime}\in V\Gamma$, $\mathbf{w}\in W_{\Gamma}$, $a\in A_{v}$, $b\in A_{v^{\prime}}$, and $\mathbf{u}\in W_{\Gamma}$, $\xi\in\mathcal{H}_{\mathbf{u}}^{\circ}$.\\

\emph{About (1)}: It is clear that $a^{\dagger}b^{\dagger}=(Q_{v}aQ_{v}^{\perp})(Q_{v^{\prime}}aQ_{v^{\prime}}^{\perp})=0$ for $v=v^{\prime}$. If $(v,v^{\prime})\in E\Gamma$, we have 
\[
a^{\dagger}b^{\dagger}\xi=\begin{cases}
(a^{\circ}\xi_{v})\otimes(b^{\circ}\xi_{v^{\prime}})\otimes\xi), & \text{if }v^{\prime}\nleq\mathbf{u}\text{ and }v\nleq v^{\prime}\mathbf{u}\\
0, & \text{else}.
\end{cases}
\]
Note that by our assumption, $v^{\prime}\nleq\mathbf{u}$, $v\nleq v^{\prime}\mathbf{u}$ if and only if $v,v^{\prime}\nleq\mathbf{u}$, implying that $a^{\dagger}b^{\dagger}=b^{\dagger}a^{\dagger}$. This proves the second identity in (1).

\emph{About (2)}: We have $b^{\dagger}\mathfrak{d}(a)=(Q_{v}bQ_{v}^{\perp})(Q_{v}aQ_{v})=0$ for $v=v^{\prime}$. Furthermore, 
\begin{eqnarray*}
\mathfrak{d}(a)b^{\dagger}\xi & = & \mathfrak{d}(a)\left((b^{\circ}\xi_{v})\otimes\xi\right)\\
 & = & \left((ab^{\circ}-\omega_{v}(ab^{\circ})1)\xi_{v}\right)\otimes\xi+\omega_{v}(ab^{\circ})\xi\\
 & = & \left((ab-\omega_{v}(b)a-\omega_{v}(ab)1+\omega_{v}(a)\omega_{v}(b)1)\xi_{v}\right)\otimes\xi\\
 & = & \left((ab-\omega_{v}(b)a)^{\circ}\xi_{v}\right)\otimes\xi\\
 & = & (ab-\omega_{v}(b)a)^{\dagger}\xi
\end{eqnarray*}
if $v\nleq\mathbf{u}$, and 
\[
\mathfrak{d}(a)b^{\dagger}\xi=0=(ab-\omega_{v}(b)a)^{\dagger}\xi
\]
if $v\leq\mathbf{u}$. This implies $\mathfrak{d}(a)b^{\dagger}=(ab-\omega_{v}(b)a)^{\dagger}$ for $v=v^{\prime}$.

The third identity $\mathfrak{d}(a)b^{\dagger}\xi=0$ for $(v,v^{\prime})\in E\Gamma^{c}$ is obvious.

Lastly, for $(v,v^{\prime})\in E\Gamma$ one has $v^{\prime}\nleq\mathbf{u}$, $v\leq v^{\prime}\mathbf{u}$ if and only if $v\leq\mathbf{u}$, $v^{\prime}\nleq\mathbf{u}$, so that 
\[
\mathfrak{d}(a)b^{\dagger}\xi=\begin{cases}
\mathfrak{d}(a)((b^{\circ}\xi_{v^{\prime}})\otimes\xi), & \text{if }v^{\prime}\nleq\mathbf{u}\text{ and }v\leq\mathbf{u}\\
0, & \text{else}
\end{cases}=b^{\dagger}\mathfrak{d}(a)\xi.
\]
This implies $\mathfrak{d}(a)b^{\dagger}=b^{\dagger}\mathfrak{d}(a)$, as claimed.

\emph{About (3)}: For $v=v^{\prime}$ one has 
\[
a^{\dagger}(b^{\dagger})^{\ast}=Q_{v}aQ_{v}^{\perp}Q_{v}^{\perp}b^{\ast}Q_{v}=Q_{v}ab^{\ast}Q_{v}-(Q_{v}aQ_{v})(Q_{v}b^{\ast}Q_{v})=\mathfrak{d}(ab^{\ast})+\mathfrak{d}(a)\mathfrak{d}(b^{\ast}),
\]
as well as 
\begin{eqnarray*}
(a^{\dagger})^{\ast}b^{\dagger}\xi & = & \begin{cases}
(\omega_{v}(a^{\ast}b)-\overline{\omega_{v}(a)}\omega_{v}(b))\xi, & \text{if }v\nleq\mathbf{u}\\
0, & \text{else}
\end{cases}=(\omega_{v}(a^{\ast}b)-\overline{\omega_{v}(a)}\omega_{v}(b))Q_{v}^{\perp}\xi.
\end{eqnarray*}

For $(v,v^{\prime})\in E\Gamma^{c}$ the identity $Q_{v}Q_{v^{\prime}}=0$ holds so that $(a^{\dagger})^{\ast}b^{\dagger}=Q_{v}^{\dagger}a^{\ast}Q_{v}Q_{v^{\prime}}bQ_{v^{\prime}}^{\dagger}=0$.

Finally, for $(v,v^{\prime})\in E\Gamma$ one deduces $(a^{\dagger})^{\ast}b^{\dagger}=b^{\dagger}(a^{\dagger})^{\ast}$ as in (3).

\emph{About (4)}: The identites in (4) follow in the same way as the ones in (2) and (3).

\emph{About (5)}: We have that 
\[
Q_{\mathbf{w}}a^{\dagger}\xi=\begin{cases}
(a^{\circ}\xi_{v}\otimes\xi), & \text{if }v\leq\mathbf{u}\text{ and }\mathbf{w}\leq v\mathbf{u}\\
0, & \text{else}
\end{cases}=a^{\dagger}(v.Q_{\mathbf{w}})\xi,
\]
which implies $Q_{\mathbf{w}}a^{\dagger}=a^{\dagger}(v.Q_{\mathbf{w}})$. \end{proof}

\begin{proposition} \label{DensityStatement}
Let $\Gamma$ be a finite, undirected simplicial graph, and let $\mathbf{A} := (A_v)_{v \in V\Gamma}$ be a collection of unital C$^{\ast}$-algebras with GNS-faithful states $(\omega_v)_{v \in V\Gamma}$. Then the dense $\ast$-subalgebra $\mathcal{A} \subseteq \mathfrak{A}(\mathbf{A}, \Gamma)$ generated by $\mathbf{A}_\Gamma$ and the projections $(Q_v)_{v \in V\Gamma}$ coincides with the linear span
\begin{equation}
\label{eq:DensitySet}
\mathrm{Span} \left\{ (a_1^\dagger \cdots a_k^\dagger) \, d \, (b_1^\dagger \cdots b_l^\dagger)^* \ \middle| \
\begin{array}{l}
k, l \in \mathbb{N}, \ (u_1, \dots, u_k), (v_1, \dots, v_l) \in \mathcal{W}_{\mathrm{red}}, \\
a_i \in A_{u_i}^\circ, \ b_j \in A_{v_j}^\circ, \ d \in \mathcal{D}_0(\mathbf{A}, \Gamma)
\end{array}
\right\},
\end{equation}
where $\mathcal{D}_0(\mathbf{A}, \Gamma) \subseteq \mathcal{D}(\mathbf{A}, \Gamma)$ is the set consisting of $1$ and all finite products $\mathfrak{d}(c_1)\cdots \mathfrak{d}(c_n)$ with $c_i \in A_{w_i}$, where $\{w_1,\dots,w_n\} \subseteq V\Gamma$ forms a clique.
\end{proposition}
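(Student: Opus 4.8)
Denote by $\mathcal{S}$ the span on the right-hand side of \eqref{eq:DensitySet}. The plan is to prove the inclusions $\mathcal{S}\subseteq\mathcal{A}$ and $\mathcal{A}\subseteq\mathcal{S}$ separately. The first is immediate: for $v\in V\Gamma$ and $a\in A_v$ the operators $a^{\dagger}=Q_vaQ_v^{\perp}$, $\mathfrak{d}(a)=Q_vaQ_v$ and $((a^*)^{\dagger})^*=Q_v^{\perp}aQ_v$ are $\ast$-polynomials in $\mathbf{A}_\Gamma$ and the $Q_v$, so every element of $\mathcal{S}$ lies in $\mathcal{A}$.

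For the converse I would first observe that $\mathcal{A}$ is already generated, as a $\ast$-algebra, by the operators $a^{\dagger}$, $\mathfrak{d}(a)$ (for $v\in V\Gamma$, $a\in A_v^{\circ}$), their adjoints, and the projections $Q_v$: indeed $Q_v=\mathfrak{d}(1_{A_v})$, and since $\omega_v(a)=0$ forces $Q_v^{\perp}\lambda_v(a)Q_v^{\perp}=0$ for $a\in A_v^{\circ}$, one has $\lambda_v(a)=a^{\dagger}+\mathfrak{d}(a)+((a^*)^{\dagger})^*$. All of these lie in $\mathcal{S}$: the creation and annihilation operators trivially (take $k=1,l=0,d=1$, resp.\ $k=0,l=1,d=1$), while $\mathfrak{d}(a),Q_v\in\mathcal{D}_0(\mathbf{A},\Gamma)\subseteq\mathcal{S}$ because a single vertex is a clique; moreover $1\in\mathcal{S}$ ($k=l=0$, $d=1$), hence $Q_v^{\perp}=1-Q_v\in\mathcal{S}$. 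As $\mathcal{S}$ is visibly self-adjoint (adjunction exchanges the two creation blocks, and $\mathcal{D}_0(\mathbf{A},\Gamma)$ is $\ast$-closed), it remains only to show that $\mathcal{S}$ is closed under multiplication.

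By linearity and induction on the number of factors, it suffices to prove $g\cdot m\in\mathcal{S}$ whenever $m=(a_1^{\dagger}\cdots a_k^{\dagger})\,d\,(b_1^{\dagger}\cdots b_l^{\dagger})^*$ is a spanning monomial and $g$ is one of the generators above. This I would carry out by a normal-ordering procedure driven by Lemma \ref{MainIdentities} (and Lemma \ref{ActionLemma} for $g=Q_v$). If $g=a^{\dagger}$, commute it rightwards through the creation block using part (1): either the new index word $(v,u_1,\dots,u_k)$ is reduced, so $gm$ is again a spanning monomial, or it is not, in which case — after sliding $a^{\dagger}$ next to the offending equal letter — part (1) forces $gm=0$. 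If $g=\mathfrak{d}(a)$ or $g=Q_v$, push the diagonal operator through the creation block using parts (2) and (5): at each creation letter it is absorbed into that letter (via $(ac)^{\dagger}=((ac)^{\circ})^{\dagger}$; for $Q_v$ it simply disappears at letters based at $v$), kills the monomial (the $E\Gamma^c$ case), or commutes past, and if it reaches $d$ the clique hypothesis and part (4) show that $\mathfrak{d}(a)d$ (resp.\ $Q_vd$, $Q_v^{\perp}d$) is again $0$ or an element of $\mathrm{Span}(\mathcal{D}_0(\mathbf{A},\Gamma))$. The delicate case is $g=((a^*)^{\dagger})^*$: commuting it through the creation block via part (3), it commutes past $E\Gamma$-letters, annihilates the monomial at $E\Gamma^c$-letters, and at an equal letter contracts to a scalar multiple of $Q_v^{\perp}$ lodged inside the block; since all preceding creation letters then lie in $\mathrm{Star}(v)$, deleting the contracted letter leaves a reduced word, and the residual $Q_v^{\perp}$ is pushed further right (it acts as the identity at the first creation letter outside $\mathrm{Star}(v)$, or reaches $d$). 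If the annihilation operator survives the entire creation block it is pushed through $d$ using the adjoints of parts (2) and (4), becoming an annihilation operator $(c^{\dagger})^*$ at $v$ modulo a clique product, and is finally merged into $(b_1^{\dagger}\cdots b_l^{\dagger})^*$ via $(c^{\dagger})^*(b_1^{\dagger}\cdots b_l^{\dagger})^*=((b_1^{\dagger}\cdots b_l^{\dagger})c^{\dagger})^*$, which is again a spanning monomial or $0$. Together with self-adjointness, left-invariance gives $\mathcal{A}\subseteq\mathcal{S}$, hence $\mathcal{A}=\mathcal{S}$.

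I expect the genuine work to be concentrated in this last case, where the incoming annihilation operator, the creation block, and the diagonal block must all be tracked simultaneously. It rests on two combinatorial facts that I would isolate beforehand: a product of creation operators indexed by a non-reduced word is $0$ (choose a repeating pair of indices of minimal span, commute one factor next to the other using part (1) of Lemma \ref{MainIdentities}, and apply $c^{\dagger}c'^{\dagger}=0$ for letters based at the same vertex); and if $(u_1,\dots,u_k)$ is reduced and $u_j$ lies in $\mathrm{Star}(u_i)$ for every $i<j$, then deleting $u_j$ again yields a reduced word (any separator thereby lost sits strictly before position $j$, hence cannot have separated a pair straddling $j$). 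Granting these, every branch of the normal-ordering procedure terminates inside $\mathcal{S}$, which completes the proof.
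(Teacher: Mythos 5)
Your proposal is correct and follows essentially the same route as the paper: decompose the generators of $\mathcal{A}$ into creation, diagonal and annihilation parts plus the $Q_v$, and show the span \eqref{eq:DensitySet} is stable under multiplication by each of these via the case analysis supplied by Lemma \ref{MainIdentities}. You spell out in more detail than the paper the two combinatorial points (vanishing of creation products over non-reduced words, and preservation of reducedness when the contracted letter is deleted), which the paper leaves implicit in its ``repeated application'' arguments.
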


\begin{proof}
Let $X$ denote the set in ~\eqref{eq:DensitySet}. Given that for $v\in V\Gamma$ and $a\in A_{v}^{\circ}$,
\[
a=Q_{v}aQ_{v}+Q_{v}aQ_{v}^{\perp}+Q_{v}^{\perp}aQ_{v}+Q_{v}^{\perp}aQ_{v}^{\perp}=\mathfrak{d}(a)+a^{\dagger}+((a^{\ast})^{\dagger})^{\ast},
\]
it suffices to show that for every element of the form $x:=(a_{1}^{\dagger}\cdots a_{k}^{\dagger})d(b_{1}^{\dagger}\cdots b_{l}^{\dagger})^{\ast}$ with $k,l\in\mathbb{N}$, $(u_{1},\ldots,u_{k}),(v_{1},\ldots,v_{l})\in\mathcal{W}_{\text{red}}$, $a_{i}\in A_{v_{i}}^{\circ}$, $b_{j}\in A_{w_{j}}^{\circ}$ and $d\in\mathcal{D}_{0}(\mathbf{A},\Gamma)$ the products $yx$ and $xy$ with $y\in\{Q_{v},\mathfrak{d}(a),a^{\dagger},(a^{\dagger})^{\ast}\}$ are again contained in $X$. Without loss of generality we may restrict in our considerations to products of the form $yx$.

\begin{itemize}
\item \emph{Case 1}: Assume that $y=Q_{v}$ for some $v\in V\Gamma$. For $1\leq i \leq k$ we have by Lemma \ref{MainIdentities} that $Q_{v}a_{i}^{\dagger}=a_{i}^{\dagger}$ if $v=u_{i}$, $Q_{v}a_{i}^{\dagger}=0$ if $(v,u_{i})\in E\Gamma^{c}$, and $Q_{v}a_{i}^{\dagger}=a_{i}^{\dagger}Q_{v}$ if $(v,u_{i})\in E\Gamma$. By applying these identities repeatedly, we conclude $yx\in X$.
\item \emph{Case 2}: For $y=\mathfrak{d}(a)$ and  $1 \leq i \leq k$, Lemma \ref{MainIdentities} implies that $\mathfrak{d}(a)a_{i}^{\dagger}=((aa_{v})^{\circ})^{\dagger}$ if $v=v_{i}$, $\mathfrak{d}(a)a_{i}^{\dagger}=0$ if $(v,v_{i})\in E\Gamma^{c}$, and $\mathfrak{d}(a)a_{i}^{\dagger}=a_{i}^{\dagger}\mathfrak{d}(a)$ if $(v,v_{i})\in E\Gamma$. Again, a repeated application of these identities gives that $yx\in X$.
\item \emph{Case 3}: The case $y=a^{\dagger}$ is trivial.
\item \emph{Case 4}: Consider the case where $y=(a^{\dagger})^{\ast}$. For $1 \leq i \leq k $ we have by Lemma \ref{MainIdentities} that $(a^{\dagger})^{\ast}a_{i}^{\dagger}=\omega_{v}(a^{\ast}a_{i})Q_{v}^{\perp}$ if $v=v_{i}$, $(a^{\dagger})^{\ast}a_{i}^{\dagger}=0$ if $(v,v_{i})\in E\Gamma^{c}$, and $(a^{\dagger})^{\ast}a_{i}^{\dagger}=a_{i}^{\dagger}(a^{\dagger})^{\ast}$ if $(v,v_{i})\in E\Gamma^{c}$. Again, an inductive argument then implies in combination with Lemma \ref{MainIdentities} and the first and third case that $yx\in X$.
\end{itemize}
This finishes the proof. \end{proof}

In what follows, we refer to any non-zero operator of the form $(a_{1}^{\dagger}\cdots a_{k}^{\dagger})\, d\, (b_{1}^{\dagger}\cdots b_{l}^{\dagger})^{\ast}$, as in Proposition~\ref{DensityStatement}, with $k,l\in\mathbb{N}$, $(u_{1},\ldots,u_{k}), (v_{1},\ldots,v_{l}) \in \mathcal{W}_{\mathrm{red}}$, where $a_{i} \in A_{u_{i}}^{\circ}$ and $b_{j} \in A_{v_{j}}^{\circ}$ for $1 \leq i \leq k$, $1 \leq j \leq l$, and $d \in \mathcal{D}_{0}(\mathbf{A},\Gamma)$, as an \emph{elementary operator}. Denote by $\mathcal{E}(\mathbf{A},\Gamma)$ the collection of all such elementary operators. By Proposition~\ref{DensityStatement}, the span of $\mathcal{E}(\mathbf{A},\Gamma)$ is dense in $\mathfrak{A}(\mathbf{A},\Gamma)$.

A precise decomposition of reduced operators in $\mathbf{A}_{\Gamma}$ into linear combinations of elementary operators is provided in \cite[Proposition 2.6]{CaspersKlisseLarsen21}, where this structure plays a central role in the derivation of a Khintchine-type inequality for graph products.

We may canonically associate to each non-zero elementary operator a group element in the right-angled Coxeter group $W_{\Gamma}$, as the following lemma demonstrates.

\begin{lemma} \label{SignatureStatement}
Let $\Gamma$ be a finite, undirected, simplicial graph, and let $\mathbf{A} := (A_{v})_{v \in V\Gamma}$ be a collection of unital C$^*$-algebras equipped with GNS-faithful states $(\omega_{v})_{v \in V\Gamma}$. Let $x = (a_{1}^{\dagger} \cdots a_{k}^{\dagger})\, d\, (b_{1}^{\dagger} \cdots b_{l}^{\dagger})^{\ast} \in \mathcal{E}(\mathbf{A},\Gamma) \setminus \{0\}$ with $k,l \in \mathbb{N}$, $(u_{1}, \ldots, u_{k}), (v_{1}, \ldots, v_{l}) \in \mathcal{W}_{\mathrm{red}}$, $a_i \in A_{u_i}^{\circ}$, $b_j \in A_{v_j}^{\circ}$, and $d \in \mathcal{D}_{0}(\mathbf{A},\Gamma)$. Then the group element $(u_1 \cdots u_k)(v_1 \cdots v_l)^{-1} \in W_{\Gamma}$ depends only on the operator $x$ itself and is independent of the choice of the elements $a_i$, $b_j$, and $d$.
\end{lemma}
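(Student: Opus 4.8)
The plan is to show that the group element $(u_1\cdots u_k)(v_1\cdots v_l)^{-1}$ is determined by the action of $x$ on the Fock space $\mathcal{H}_\Gamma$, hence independent of the choice of $a_i$, $b_j$, and $d$. First I would fix the ``interface'' words $\mathbf{u}:=(u_1,\dots,u_k)$ and $\mathbf{v}:=(v_1,\dots,v_l)$, which are part of the data defining an elementary operator, and observe that since $x\neq 0$, the tuples $\mathbf{u}$ and $\mathbf{v}$ are reduced and each $a_i,b_j$ lies in $A_{u_i}^\circ$, $A_{v_j}^\circ$. Using the identities in the Remark following Definition~\ref{CreationDefinition} together with Lemma~\ref{MainIdentities}, one checks that for a vector $\xi\in\mathcal{H}_{\mathbf{w}}^\circ$ the annihilation part $(b_1^\dagger\cdots b_l^\dagger)^\ast$ is nonzero only if $v_1\cdots v_l\leq\mathbf{w}$, in which case it produces (up to scalars coming from $\omega$) a vector in $\mathcal{H}_{\mathbf{w}'}^\circ$ with $\mathbf{w}'=(v_1\cdots v_l)^{-1}\mathbf{w}$ and $|\mathbf{w}'|=|\mathbf{w}|-l$; the diagonal part $d$ preserves the word $\mathbf{w}'$; and the creation part $(a_1^\dagger\cdots a_k^\dagger)$ sends a vector in $\mathcal{H}_{\mathbf{w}'}^\circ$ (with $u_1\cdots u_k\nleq\mathbf{w}'$, which one must verify holds whenever the result is nonzero) to a vector in $\mathcal{H}_{(u_1\cdots u_k)\mathbf{w}'}^\circ$. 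Composing these, $x$ maps $\mathcal{H}_{\mathbf{w}}^\circ$ into $\mathcal{H}_{(u_1\cdots u_k)(v_1\cdots v_l)^{-1}\mathbf{w}}^\circ$ whenever the image is nonzero.

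Next I would argue that $x$ really does act nontrivially between some such pair of subspaces. Since $x\neq 0$ and the span of the $\mathcal{H}_{\mathbf{w}}^\circ$ (together with $\mathbb{C}\Omega$) is dense in $\mathcal{H}_\Gamma$, there is some $\mathbf{w}\in W_\Gamma$ with $x(\mathcal{H}_{\mathbf{w}}^\circ)\neq 0$; for this $\mathbf{w}$ the previous paragraph forces $v_1\cdots v_l\leq\mathbf{w}$, $u_1\cdots u_k\nleq (v_1\cdots v_l)^{-1}\mathbf{w}$, and $x(\mathcal{H}_{\mathbf{w}}^\circ)\subseteq\mathcal{H}_{\mathbf{w}''}^\circ$ with $\mathbf{w}''=(u_1\cdots u_k)(v_1\cdots v_l)^{-1}\mathbf{w}$. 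Thus the element $g:=(u_1\cdots u_k)(v_1\cdots v_l)^{-1}\in W_\Gamma$ is characterized by: $g$ is the unique group element such that $\mathbf{w}^{-1}$-left-translation by $g$, i.e. $\mathbf{w}\mapsto g\mathbf{w}$, matches the "input word to output word" behavior of the bounded operator $x$ on at least one, and in fact on every, nonzero pair $(\mathcal{H}_{\mathbf{w}}^\circ,x(\mathcal{H}_{\mathbf{w}}^\circ))$. Since this description refers only to $x$ as an operator on $\mathcal{H}_\Gamma$ and to the fixed grading $\mathcal{H}_\Gamma=\mathbb{C}\Omega\oplus\bigoplus_{\mathbf{w}}\mathcal{H}_{\mathbf{w}}^\circ$, it is manifestly independent of the choices of $a_i$, $b_j$, $d$ used to present $x$.

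Finally, to make the uniqueness of $g$ airtight I would note that if $x(\mathcal{H}_{\mathbf{w}_1}^\circ)\subseteq\mathcal{H}_{g\mathbf{w}_1}^\circ$ and $x(\mathcal{H}_{\mathbf{w}_2}^\circ)\subseteq\mathcal{H}_{g'\mathbf{w}_2}^\circ$ are both nonzero, then $g\mathbf{w}_1 = (u_1\cdots u_k)(v_1\cdots v_l)^{-1}\mathbf{w}_1$ and likewise for $g'$, so $g=g'=(u_1\cdots u_k)(v_1\cdots v_l)^{-1}$ automatically from the formula — the point is simply that the formula's \emph{value} is recovered from the operator. The main obstacle is the bookkeeping in the first paragraph: one must carefully track how $v_1\cdots v_l\leq\mathbf{w}$ (equivalently $v_l\leq_L\cdots\leq_L$ nested conditions) is forced by non-vanishing of the annihilation string, and how the resulting vector lands in $\mathcal{H}_{(v_1\cdots v_l)^{-1}\mathbf{w}}^\circ$ rather than in some shuffle-equivalent word, using that $(v_1,\dots,v_l)$ is reduced and the identification of shuffle-equivalent tensor factors via the unitaries $\mathcal{Q}_{\mathbf{v},\mathbf{w}}$; the creation side is symmetric. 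Once this length-additivity/word-tracking is in place, independence of the presentation is immediate.
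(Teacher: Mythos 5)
Your proposal is correct and follows essentially the same route as the paper: the key observation in both is that $x$ maps each $\mathcal{H}_{\mathbf{w}}^{\circ}$ into $\mathcal{H}_{(u_1\cdots u_k)(v_1\cdots v_l)^{-1}\mathbf{w}}^{\circ}$, so picking any $\mathbf{w}$ with $x(\mathcal{H}_{\mathbf{w}}^{\circ})\neq 0$ and using the orthogonality of the graded components of $\mathcal{H}_{\Gamma}$ pins down the group element from the operator alone. The paper's proof is just a more compressed version of your first two paragraphs.
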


\begin{proof}
Let $0 \neq x \in \mathcal{E}(\mathbf{A},\Gamma)$ and suppose
\[
x = (a_1^{\dagger} \cdots a_k^{\dagger})\, d\, (b_1^{\dagger} \cdots b_l^{\dagger})^{\ast} = (c_1^{\dagger} \cdots c_m^{\dagger})\, e\, (f_1^{\dagger} \cdots f_n^{\dagger})^{\ast}
\]
be two such decompositions with $a_i \in A_{u_i}^{\circ}$, $b_j \in A_{v_j}^{\circ}$, $c_i \in A_{u_i'}^{\circ}$, $f_j \in A_{v_j'}^{\circ}$, $d,e \in \mathcal{D}_{0}(\mathbf{A},\Gamma)$, and corresponding reduced words $(u_1, \ldots, u_k), (v_1, \ldots, v_l), (u_1', \ldots, u_m'), (v_1', \ldots, v_n') \in \mathcal{W}_{\mathrm{red}}$.

Choose $\mathbf{w} \in W_{\Gamma}$ and $\xi \in \mathcal{H}_{\mathbf{w}}^{\circ}$ such that $x\xi \neq 0$. Then,
\[
x\xi \in \mathcal{H}_{(u_1 \cdots u_k)(v_1 \cdots v_l)^{-1} \mathbf{w}}^{\circ} \cap \mathcal{H}_{(u_1' \cdots u_m')(v_1' \cdots v_n')^{-1} \mathbf{w}}^{\circ}.
\]
By orthogonality of the subspaces $\mathcal{H}_{\mathbf{v}}^{\circ} \subseteq \mathcal{H}_{\Gamma}$ for distinct $\mathbf{v} \in W_{\Gamma}$, it follows that
\[
(u_1 \cdots u_k)(v_1 \cdots v_l)^{-1} = (u_1' \cdots u_m')(v_1' \cdots v_n')^{-1}.
\]
This finishes the proof.
\end{proof}

\begin{definition} \label{SignatureDefinition}
Let $\Gamma$ be a finite, undirected, simplicial graph and let $\mathbf{A}:=(A_{v})_{v\in V\Gamma}$ be a collection of unital C$^{\ast}$-algebras, equipped with GNS-faithful states $(\omega_{v})_{v\in V\Gamma}$. For an elementary operator $x \in \mathcal{E}(\mathbf{A},\Gamma)$, the group element
\[
\Sigma(x) := (u_1 \cdots u_k)(v_1 \cdots v_l)^{-1} \in W_{\Gamma}
\]
as defined in Lemma~\ref{SignatureStatement} is called the \emph{signature} of $x$. The map $\Sigma: \mathcal{E}(\mathbf{A},\Gamma) \to W_{\Gamma}$ is referred to as the \emph{signature map}.
\end{definition}

To the best of the author’s knowledge, the notion of a signature introduced in Definition~\ref{SignatureDefinition} is new. In combination with Proposition \ref{TopologicalFrenessImplication}, this concept will play a central role in the proof of Theorem~\ref{MaximalityTheorem}.

The next proposition characterizes when an elementary operator is diagonal in terms of its signature. The proof is a straightforward adaptation of Lemma~\ref{SignatureStatement}.

\begin{proposition} \label{DiagonalityCharacterization}
Let $\Gamma$ be a finite, undirected, simplicial graph and let $\mathbf{A}:=(A_{v})_{v\in V\Gamma}$ be a collection of unital C$^{\ast}$-algebras, equipped with GNS-faithful states $(\omega_{v})_{v\in V\Gamma}$. Then a non-zero elementary operator $x \in \mathcal{E}(\mathbf{A},\Gamma)$ is diagonal if and only if $\Sigma(x) = e$.
\end{proposition}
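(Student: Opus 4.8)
The plan is to mimic the argument of Lemma~\ref{SignatureStatement} while tracking how an elementary operator moves the grading of the Fock space $\mathcal{H}_\Gamma$. Recall that for $x = (a_1^\dagger\cdots a_k^\dagger)\,d\,(b_1^\dagger\cdots b_l^\dagger)^* \in \mathcal{E}(\mathbf{A},\Gamma)\setminus\{0\}$ with reduced words $(u_1,\dots,u_k),(v_1,\dots,v_l)\in\mathcal{W}_{\mathrm{red}}$ and $d\in\mathcal{D}_0(\mathbf{A},\Gamma)$, the proof of Lemma~\ref{SignatureStatement} shows that whenever $\mathbf{w}\in W_\Gamma$ and $\xi\in\mathcal{H}_{\mathbf{w}}^\circ$ satisfy $x\xi\neq 0$, one has $x\xi\in\mathcal{H}_{\Sigma(x)\mathbf{w}}^\circ$, where $\Sigma(x) = (u_1\cdots u_k)(v_1\cdots v_l)^{-1}$. (Concretely, the annihilation block $(b_1^\dagger\cdots b_l^\dagger)^*$ and the diagonal block $d$ only act nontrivially on tensors whose leftmost legs lie in the appropriate vertex spaces, removing the legs $v_l,\dots,v_1$ and leaving a vector in $\mathcal{H}_{(v_1\cdots v_l)^{-1}\mathbf{w}}^\circ$; the creation block then prepends the legs $u_k,\dots,u_1$, landing in $\mathcal{H}_{(u_1\cdots u_k)(v_1\cdots v_l)^{-1}\mathbf{w}}^\circ$.)

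The argument then splits into the two implications. For the easy direction, suppose $\Sigma(x)=e$. Then for every $\mathbf{w}\in W_\Gamma$ and every $\xi\in\mathcal{H}_{\mathbf{w}}^\circ$, either $x\xi=0$ or $x\xi\in\mathcal{H}_{\Sigma(x)\mathbf{w}}^\circ=\mathcal{H}_{\mathbf{w}}^\circ$; in either case $x(\mathcal{H}_{\mathbf{w}}^\circ)\subseteq\mathcal{H}_{\mathbf{w}}^\circ$. It remains to check the vacuum: since $l\geq 1$ forces $x(\mathbb{C}\Omega)=\{0\}$ (each $b_j^\dagger{}^* = Q_{v_j}^\perp b_j^* Q_{v_j}$ kills $\Omega$ as $\Omega\notin Q_{v_j}\mathcal{H}_\Gamma$), while if $l=0$ then necessarily $k=0$ as well (else $\Sigma(x)=u_1\cdots u_k\neq e$ since the word is reduced and nonempty), so $x=d\in\mathcal{D}_0(\mathbf{A},\Gamma)\subseteq\mathcal{D}(\mathbf{A},\Gamma)$ is diagonal by construction and in particular $d(\mathbb{C}\Omega)\subseteq\mathbb{C}\Omega$. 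Hence $x$ is diagonal.

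For the converse, suppose $x$ is diagonal; I must show $\Sigma(x)=e$. Since $x\neq 0$, pick $\mathbf{w}\in W_\Gamma$ and $\xi\in\mathcal{H}_{\mathbf{w}}^\circ$ (or $\xi=\Omega$, i.e.\ $\mathbf{w}=e$) with $x\xi\neq 0$; by the grading computation above, $0\neq x\xi\in\mathcal{H}_{\Sigma(x)\mathbf{w}}^\circ$. On the other hand, diagonality of $x$ gives $x\xi\in\mathcal{H}_{\mathbf{w}}^\circ$. Since the subspaces $\mathcal{H}_{\mathbf{v}}^\circ$, $\mathbf{v}\in W_\Gamma$ (with $\mathcal{H}_e^\circ:=\mathbb{C}\Omega$), are pairwise orthogonal and the nonzero vector $x\xi$ lies in both, we conclude $\Sigma(x)\mathbf{w}=\mathbf{w}$, hence $\Sigma(x)=e$.

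I do not anticipate a genuine obstacle here: the whole statement is essentially a bookkeeping consequence of Lemma~\ref{SignatureStatement} together with the defining grading property of $\mathcal{D}(\mathbf{A},\Gamma)$. The one point requiring a little care is the degenerate case analysis at the vacuum vector (the roles of $k=0$, $l=0$), to be sure that ``$\Sigma(x)=e$'' genuinely forces both blocks of creation/annihilation operators to be empty and hence that $x$ reduces to a diagonal operator; this uses that the words $(u_1,\dots,u_k)$ and $(v_1,\dots,v_l)$ are reduced, so $u_1\cdots u_k=e$ in $W_\Gamma$ implies $k=0$ and similarly for $l$.
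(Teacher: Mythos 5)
Your proof is correct and follows essentially the same route as the paper's: both directions come straight from the grading observation of Lemma~\ref{SignatureStatement} that a nonzero $x\xi$ with $\xi\in\mathcal{H}_{\mathbf{w}}^{\circ}$ lands in $\mathcal{H}_{\Sigma(x)\mathbf{w}}^{\circ}$, combined with orthogonality of the graded pieces, and your extra case analysis at the vacuum (via $\mathcal{H}_{e}^{\circ}=\mathbb{C}\Omega$) is a welcome bit of added care that the paper leaves implicit. One correction to your closing remark: $\Sigma(x)=e$ does \emph{not} force both the creation and annihilation blocks to be empty in general --- for instance $a^{\dagger}(b^{\dagger})^{*}$ with $a,b\in A_{v}^{\circ}$ has signature $e$ and is diagonal by Lemma~\ref{MainIdentities}\,(3) --- but your proof body only invokes the correct special case that $l=0$ together with $\Sigma(x)=e$ and reducedness forces $k=0$, so nothing in the argument actually breaks.
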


\begin{proof}
Suppose that $x:=(a_{1}^{\dagger}\cdots a_{k}^{\dagger})d(b_{1}^{\dagger}\cdots b_{l}^{\dagger})^{\ast} \in \mathcal{E}(\mathbf{A},\Gamma)$ with $k,l\in\mathbb{N}$, $(u_{1},\ldots,u_{k}),(v_{1},\ldots,v_{l})\in\mathcal{W}_{\text{red}}$, $a_{i}\in A_{v_{i}}^{\circ}$, $b_{j}\in A_{w_{j}}^{\circ}$ and $d\in\mathcal{D}_{0}(\mathbf{A},\Gamma)$.

For the ``only if'' direction assume that $x$ is diagonal. Then for any $\mathbf{w} \in W_{\Gamma}$ and $\xi \in \mathcal{H}_{\mathbf{w}}^{\circ}$ with $x\xi \neq 0$, we must have that $x\xi$ is contained in $\mathcal{H}_{\mathbf{w}}^{\circ} \cap \mathcal{H}_{(u_1 \cdots u_k)(v_1 \cdots v_l)^{-1} \mathbf{w}}^{\circ}$, implying $\Sigma(x) =(u_1 \cdots u_k)(v_1 \cdots v_l)^{-1} = e$.

Conversely, for the ``if'' direction, let $\Sigma(x) = e$. Then for all $\mathbf{w} \in W_{\Gamma}$ and $\xi \in \mathcal{H}_{\mathbf{w}}^{\circ}$, we have that $x\xi $ is contained in $\mathcal{H}_{(u_1 \cdots u_k)(v_1 \cdots v_l)^{-1} \mathbf{w}}^{\circ} = \mathcal{H}_{\mathbf{w}}^{\circ}$, showing that $x$ maps each subspace $\mathcal{H}_{\mathbf{w}}^{\circ}$ into itself, i.e., $x$ is diagonal.
\end{proof}

\vspace{3mm}


\subsection{Gauge Actions and Conditional Expectations}

Let $\Gamma$ be a finite, undirected, simplicial graph, and let $\mathbf{A} := (A_{v})_{v \in V\Gamma}$ be a collection of unital C$^*$-algebras, each equipped with a GNS-faithful state $\omega_v$. Given a reduced word $\mathbf{v} := (v_1, \dots, v_n) \in \mathcal{W}_{\text{red}}$ and a tuple $z := (z_v)_{v \in V\Gamma} \in \mathbb{C}^{V\Gamma}$, the product $z_{\mathbf{v}} := z_{v_1} \cdots z_{v_n} \in \mathbb{C}$ depends only on the shuffle equivalence class of $\mathbf{v}$; see, e.g., \cite[Chapter~17.1]{Davis08}. This allows us to define a strongly continuous family of unitaries $(U_z)_{z \in \mathbb{T}^{V\Gamma}} \subseteq \mathcal{B}(\mathcal{H}_\Gamma)$ via $U_z \Omega := \Omega$, and $U_z \xi := z_{\mathbf{w}} \xi$ for $\xi \in \mathcal{H}^{\circ}_{\mathbf{w}}$.

\begin{lemma} \label{GaugeActionLemma}
Let $\Gamma$ be a finite, undirected, simplicial graph and let $\mathbf{A}:=(A_{v})_{v\in V\Gamma}$ be a collection of unital C$^{\ast}$-algebras, equipped with GNS-faithful states $(\omega_{v})_{v\in V\Gamma}$. For every element $x := (a_1^{\dagger} \cdots a_k^{\dagger}) d (b_1^{\dagger} \cdots b_l^{\dagger})^* \in \mathcal{E}(\mathbf{A}, \Gamma)$ with $k, l \in \mathbb{N}$, $(u_1, \dots, u_k), (v_1, \dots, v_l) \in \mathcal{W}_{\text{\emph{red}}}$, $a_i \in A_{u_i}^\circ$, $b_j \in A_{v_j}^\circ$, $d \in \mathcal{D}_0(\mathbf{A}, \Gamma)$ and $z = (z_v)_{v \in V\Gamma} \in \mathbb{T}^{V\Gamma}$ we have
\[
U_z x U_z^* = \left(z_{u_1 \cdots u_k} z_{v_1 \cdots v_l}^{-1} \right) x.
\]
\end{lemma}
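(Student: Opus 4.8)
The plan is to reduce the statement to the action of $U_z$ on the individual creation and annihilation operators, exploiting that $U_z$ acts as a scalar on each graded piece $\mathcal{H}^\circ_{\mathbf{w}}$ of the Fock space. First I would observe that for $v \in V\Gamma$, $a \in A_v^\circ$, and $\xi \in \mathcal{H}^\circ_{\mathbf{w}}$ with $v \nleq \mathbf{w}$ one has $a^\dagger \xi = (a^\circ \xi_v) \otimes \xi \in \mathcal{H}^\circ_{v\mathbf{w}}$ (and $a^\dagger \xi = 0$ otherwise, while $(a^\dagger)^* \xi \in \mathcal{H}^\circ_{v^{-1}\mathbf{w}}$ when nonzero, using $v \leq \mathbf{w}$). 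Since $U_z \xi = z_{\mathbf{w}}\xi$ and $U_z (a^\dagger \xi) = z_{v\mathbf{w}}\, a^\dagger \xi = z_v z_{\mathbf{w}}\, a^\dagger \xi$ (using that $|v\mathbf{w}| = |\mathbf{w}| + 1$ in this case, so $z_{v\mathbf{w}} = z_v z_{\mathbf{w}}$), we get $U_z a^\dagger U_z^* = z_v a^\dagger$; similarly $U_z (a^\dagger)^* U_z^* = z_v^{-1} (a^\dagger)^* = \overline{z_v}\,(a^\dagger)^*$, and $U_z \mathfrak{d}(a) U_z^* = \mathfrak{d}(a)$ since $\mathfrak{d}(a)$ preserves each $\mathcal{H}^\circ_{\mathbf{w}}$.

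Next I would check that $U_z d U_z^* = d$ for every $d \in \mathcal{D}_0(\mathbf{A},\Gamma)$: since $d$ is either $1$ or a product of diagonal operators $\mathfrak{d}(c_i)$, and each such factor commutes with $U_z$ by the previous paragraph, the whole product does too. (Alternatively, this is immediate from the fact that $d \in \mathcal{D}(\mathbf{A},\Gamma)$ preserves each graded subspace on which $U_z$ is scalar.) Then the computation for a general elementary operator $x = (a_1^\dagger \cdots a_k^\dagger)\, d\, (b_1^\dagger \cdots b_l^\dagger)^*$ follows by inserting $U_z^* U_z = 1$ between consecutive factors:
\[
U_z x U_z^* = (U_z a_1^\dagger U_z^*) \cdots (U_z a_k^\dagger U_z^*)\, (U_z d U_z^*)\, (U_z b_1^\dagger U_z^*)^* \cdots (U_z b_l^\dagger U_z^*)^* = (z_{u_1} \cdots z_{u_k})\, \overline{(\overline{z_{v_1}} \cdots \overline{z_{v_l}})}^{\,-1}\, x,
\]
wait—more carefully: each $b_j^\dagger$ contributes $z_{v_j}$ under conjugation, and the outer adjoint on $(b_1^\dagger \cdots b_l^\dagger)^*$ turns the scalar $z_{v_1}\cdots z_{v_l}$ into its conjugate $\overline{z_{v_1}\cdots z_{v_l}} = z_{v_1 \cdots v_l}^{-1}$; combining gives the factor $z_{u_1 \cdots u_k}\, z_{v_1 \cdots v_l}^{-1}$ as claimed, using once more that $z_\mathbf{v}$ depends only on the shuffle class of $\mathbf{v}$ so that $z_{u_1}\cdots z_{u_k} = z_{u_1 \cdots u_k}$.

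The main subtlety—rather than a genuine obstacle—is bookkeeping around the reduced-word condition: when proving $U_z a^\dagger U_z^* = z_v a^\dagger$ I must split into the cases $v \nleq \mathbf{w}$ (where $a^\dagger \xi \in \mathcal{H}^\circ_{v\mathbf{w}}$ with $|v\mathbf{w}| = |\mathbf{w}|+1$) and $v \leq \mathbf{w}$ (where $a^\dagger \xi = 0$ by the Remark following Definition~\ref{CreationDefinition}), and verify the scalar $z_{v\mathbf{w}} = z_v z_{\mathbf{w}}$ holds in the first case because appending $v$ on the left of a reduced word representing $\mathbf{w}$ yields a reduced word representing $v\mathbf{w}$. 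An entirely equivalent and perhaps cleaner route avoids elementary operators altogether: one checks directly that $U_z \lambda_v(a) U_z^* = $ (the operator acting as $\lambda_v(a)$ but weighted appropriately) is not quite a scalar multiple of $\lambda_v(a)$, so the elementary-operator decomposition is genuinely needed—this is exactly why the statement is phrased in terms of $\mathcal{E}(\mathbf{A},\Gamma)$. I would therefore present the proof via the three building-block identities above and then assemble, citing Proposition~\ref{DensityStatement} only implicitly since the statement is already restricted to $x \in \mathcal{E}(\mathbf{A},\Gamma)$.
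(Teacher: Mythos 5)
Your argument is correct, and it reaches the right scalar $z_{u_1\cdots u_k}z_{v_1\cdots v_l}^{-1}$, but it is organized differently from the paper's proof. The paper conjugates the whole elementary operator at once: for $\xi\in\mathcal{H}^{\circ}_{\mathbf{w}}$ it uses only that $x\xi$ lies in $\mathcal{H}^{\circ}_{(u_1\cdots u_k)(v_1\cdots v_l)^{-1}\mathbf{w}}$, so that $U_zxU_z^*\xi = z_{\mathbf{w}}^{-1}z_{(u_1\cdots u_k)(v_1\cdots v_l)^{-1}\mathbf{w}}\,x\xi$, and then observes that $x\xi\neq 0$ forces $v_1\cdots v_l\leq\mathbf{w}$ and $u_1\cdots u_k\leq (u_1\cdots u_k)(v_1\cdots v_l)^{-1}\mathbf{w}$, whence the scalar collapses. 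You instead insert $U_z^*U_z$ between factors and establish the per-generator covariance relations $U_za^{\dagger}U_z^*=z_va^{\dagger}$, $U_z(a^{\dagger})^*U_z^*=\overline{z_v}(a^{\dagger})^*$, $U_z\mathfrak{d}(a)U_z^*=\mathfrak{d}(a)$. Both rest on the same two facts (that $U_z$ acts as the scalar $z_{\mathbf{w}}$ on $\mathcal{H}^{\circ}_{\mathbf{w}}$, and that $z_{v\mathbf{w}}=z_v^{\pm1}z_{\mathbf{w}}$ when lengths add or cancel), and your case analysis for $v\nleq\mathbf{w}$ versus $v\leq\mathbf{w}$ is handled correctly, including the adjoint turning $z_{v_1}\cdots z_{v_l}$ into $z_{v_1\cdots v_l}^{-1}$. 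What your route buys is a set of reusable covariance identities for the individual creation, annihilation and diagonal operators; what the paper's route buys is brevity and no case-splitting per factor. One cosmetic remark: the closing aside about $U_z\lambda_v(a)U_z^*$ first announces a ``cleaner route avoiding elementary operators'' and then concludes the opposite; this should be rewritten (or dropped), though it does not affect the correctness of the proof.
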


\begin{proof}
Let $\xi \in \mathcal{H}^{\circ}_{\mathbf{w}}$ for some $\mathbf{w} \in W_\Gamma$. Then, $U_z x U_z^* \xi = z_{\mathbf{w}}^{-1} U_z x \xi$. Since $x\xi \in \mathcal{H}_{(u_1 \cdots u_k)(v_1 \cdots v_l)^{-1} \mathbf{w}}^{\circ}$, we have
\[
U_z x U_z^* \xi = z_{\mathbf{w}}^{-1} z_{(u_1 \cdots u_k)(v_1 \cdots v_l)^{-1} \mathbf{w}} x\xi.
\]
If $x\xi\neq0$, we must further have $v_{1}\cdots v_{l}\leq\mathbf{w}$ and $u_{1}\cdots u_{k}\leq(u_{1}\cdots u_{k})(v_{1}\cdots v_{l})^{-1}\mathbf{w}$, so that 
\[
z_{\mathbf{w}}^{-1}z_{(u_{1}\cdots u_{k})(v_{1}\cdots v_{l})^{-1}\mathbf{w}}=z_{u_{1}\cdots u_{k}}z_{\mathbf{w}}^{-1}z_{(v_{1}\cdots v_{l})^{-1}\mathbf{w}}=z_{u_{1}\cdots u_{k}}z_{v_{1}\cdots v_{l}}^{-1}.
\]
Hence, $U_z x U_z^* \xi = (z_{u_1 \cdots u_k} z_{v_1 \cdots v_l}^{-1}) x \xi$, and the result follows.
\end{proof}

\begin{theorem} \label{GaugeActionTheorem}
Let $\Gamma$ be a finite, undirected, simplicial graph and let $\mathbf{A}:=(A_{v})_{v\in V\Gamma}$ be a collection of unital C$^{\ast}$-algebras, equipped with GNS-faithful states $(\omega_{v})_{v\in V\Gamma}$. Then:
\begin{enumerate}
\item Conjugation by the unitaries $(U_z)_{z \in \mathbb{T}^{V\Gamma}}$ induces a norm-continuous action $\alpha: \mathbb{T}^{V\Gamma} \curvearrowright \mathfrak{A}(\mathbf{A}, \Gamma)$.
\item There exists a faithful conditional expectation $\mathbb{E}: \mathfrak{A}(\mathbf{A}, \Gamma) \to \mathcal{D}(\mathbf{A}, \Gamma)$ satisfying $\omega_\Gamma \circ \mathbb{E} = \omega_\Gamma$.
\end{enumerate}
The action $\alpha$ is referred to as the \emph{gauge action}.
\end{theorem}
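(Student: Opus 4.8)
The plan is to prove the two assertions separately: (1) follows from Lemma~\ref{GaugeActionLemma} together with the density statement of Proposition~\ref{DensityStatement}, while (2) is obtained from a direct analysis of the block‑diagonal compression on $\mathcal{B}(\mathcal{H}_\Gamma)$, where the signature map and Proposition~\ref{DiagonalityCharacterization} do the essential work. For (1), I would first note that $z\mapsto U_z$ is a homomorphism from $\mathbb{T}^{V\Gamma}$ into the unitary group of $\mathcal{H}_\Gamma$, since $U_zU_w$ and $U_{zw}$ both fix $\Omega$ and act on $\mathcal{H}^\circ_{\mathbf{w}}$ as multiplication by $z_{\mathbf{w}}w_{\mathbf{w}}=(zw)_{\mathbf{w}}$. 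Hence each $\mathrm{Ad}(U_z)$ is a $\ast$-automorphism of $\mathcal{B}(\mathcal{H}_\Gamma)$ depending homomorphically on $z$. By Lemma~\ref{GaugeActionLemma}, $\mathrm{Ad}(U_z)$ maps every elementary operator to a unimodular scalar multiple of itself, so it preserves the linear span $\mathcal{A}$ of $\mathcal{E}(\mathbf{A},\Gamma)$, and, being isometric, preserves its closure $\mathfrak{A}(\mathbf{A},\Gamma)=\overline{\mathcal{A}}$ (Proposition~\ref{DensityStatement}); since $\mathrm{Ad}(U_z)^{-1}=\mathrm{Ad}(U_{z^{-1}})$ also does, $\alpha_z:=\mathrm{Ad}(U_z)|_{\mathfrak{A}(\mathbf{A},\Gamma)}$ is an automorphism. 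For point-norm continuity, on a finite sum $x=\sum_i c_ix_i$ of elementary operators the map $z\mapsto\alpha_z(x)=\sum_i c_i\bigl(z_{u_1^{(i)}\cdots u_{k_i}^{(i)}}z_{v_1^{(i)}\cdots v_{l_i}^{(i)}}^{-1}\bigr)x_i$ is a finite combination of continuous monomials in the coordinates of $z$, and the general case follows from the uniform bound $\|\alpha_z\|=1$, the density of $\mathcal{A}$, and the usual $\varepsilon/3$ argument.

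For (2), I would let $R_{\mathbf{w}}\in\mathcal{B}(\mathcal{H}_\Gamma)$ be the orthogonal projection onto $\mathcal{H}^\circ_{\mathbf{w}}$ for $\mathbf{w}\in W_\Gamma\setminus\{e\}$ and onto $\mathbb{C}\Omega$ for $\mathbf{w}=e$; these are mutually orthogonal and sum strongly to $1$, so a routine estimate shows that $\Phi(x):=\sum_{\mathbf{w}\in W_\Gamma}R_{\mathbf{w}}xR_{\mathbf{w}}$ converges strongly and defines the canonical faithful, contractive, unital completely positive conditional expectation of $\mathcal{B}(\mathcal{H}_\Gamma)$ onto the block‑diagonal subalgebra $\{x:[x,R_{\mathbf{w}}]=0\ \text{for all}\ \mathbf{w}\}$; faithfulness holds because $\Phi(x^*x)=0$ forces $xR_{\mathbf{w}}=0$ for every $\mathbf{w}$, hence $x=0$. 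The heart of the matter is that $\Phi$ maps $\mathfrak{A}(\mathbf{A},\Gamma)$ into $\mathcal{D}(\mathbf{A},\Gamma)$. By Proposition~\ref{DensityStatement} and the contractivity of $\Phi$ it suffices to check this on an elementary operator $x$: if $\Sigma(x)=e$, then $x$ is diagonal by Proposition~\ref{DiagonalityCharacterization}, so $\Phi(x)=x\in\mathcal{D}(\mathbf{A},\Gamma)$; if $\Sigma(x)\neq e$, then $x(\mathcal{H}^\circ_{\mathbf{w}})\subseteq\mathcal{H}^\circ_{\Sigma(x)\mathbf{w}}$ (as in the proof of Lemma~\ref{SignatureStatement}) and $\Sigma(x)\mathbf{w}\neq\mathbf{w}$ because left multiplication in $W_\Gamma$ is free, so $\mathcal{H}^\circ_{\Sigma(x)\mathbf{w}}\perp\mathcal{H}^\circ_{\mathbf{w}}$ and $R_{\mathbf{w}}xR_{\mathbf{w}}=0$ for every $\mathbf{w}$, giving $\Phi(x)=0$.

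Granting this, $\mathbb{E}:=\Phi|_{\mathfrak{A}(\mathbf{A},\Gamma)}$ is a contractive idempotent of $\mathfrak{A}(\mathbf{A},\Gamma)$ onto the C$^*$-subalgebra $\mathcal{D}(\mathbf{A},\Gamma)$ — note $\Phi|_{\mathcal{D}(\mathbf{A},\Gamma)}=\mathrm{id}$, since diagonal operators commute with every $R_{\mathbf{w}}$ — hence a conditional expectation by Tomiyama's theorem. It is faithful because $\Phi$ is, and $\omega_\Gamma\circ\mathbb{E}=\omega_\Gamma$ because $\mathbb{E}(x)\Omega=\sum_{\mathbf{w}}R_{\mathbf{w}}xR_{\mathbf{w}}\Omega=R_ex\Omega$, so that $\langle\mathbb{E}(x)\Omega,\Omega\rangle=\langle x\Omega,R_e\Omega\rangle=\langle x\Omega,\Omega\rangle$.

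The step I expect to be the main obstacle is conceptual rather than computational: the natural guess $\mathbb{E}=\int_{\mathbb{T}^{V\Gamma}}\alpha_z\,dz$ \emph{fails}, since it produces the conditional expectation onto the fixed-point algebra $\mathfrak{A}(\mathbf{A},\Gamma)^{\alpha}$, which is strictly larger than $\mathcal{D}(\mathbf{A},\Gamma)$ — the gauge action only detects the abelianization of the $W_\Gamma$-grading recorded by the signature map. For instance, for non-adjacent vertices $s,t$ an elementary operator with left word $(s,t)$ and right word $(t,s)$ is gauge-invariant yet sends $\mathcal{H}^\circ_{ts\mathbf{w}'}$ to $\mathcal{H}^\circ_{st\mathbf{w}'}$ and hence is not diagonal. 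One is therefore forced to work with the full $W_\Gamma$-grading, and the real content of the argument is precisely the verification that block‑diagonal compression preserves $\mathfrak{A}(\mathbf{A},\Gamma)$ — the point at which Proposition~\ref{DiagonalityCharacterization} and the freeness of left multiplication in $W_\Gamma$ are used.
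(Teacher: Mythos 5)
Your proof is correct and follows essentially the same route as the paper: part (1) via Lemma~\ref{GaugeActionLemma} together with the density of $\mathrm{Span}(\mathcal{E}(\mathbf{A},\Gamma))$, and part (2) via the block-diagonal compression $x\mapsto\sum_{\mathbf{w}}p_{\mathbf{w}}xp_{\mathbf{w}}$ combined with Tomiyama's theorem and the computation that an elementary operator survives the compression precisely when its signature is trivial. Your closing observation that averaging over the gauge action would only yield the conditional expectation onto the (generally strictly larger) fixed-point algebra is accurate, but it is an aside rather than a change of method.
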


\begin{proof}
\emph{About (1)}: From Lemma~\ref{GaugeActionLemma}, it follows that $U_z$ conjugates elementary elements $x \in \mathcal{E}(\mathbf{A}, \Gamma)$ back into scalar multiples of themselves. Thus, $U_z \, (\text{Span}(\mathcal{E}(\mathbf{A}, \Gamma))) \, U_z^* \subseteq \text{Span}(\mathcal{E}(\mathbf{A}, \Gamma))$.
By Proposition~\ref{DensityStatement}, the action on $\mathcal{B}(\mathcal{H}_{\Gamma})$ then restricts to an action on $\mathfrak{A}(\mathbf{A}, \Gamma)$. Similarly, the identity in Lemma \ref{GaugeActionLemma} and the density of $\text{Span}(\mathcal{E}(\mathbf{A},\Gamma))$ in $\mathfrak{A}(\mathbf{A},\Gamma)$ imply that the induced action is norm continuous.

\emph{About (2)}: For each $\mathbf{w} \in W_\Gamma \setminus \{e\}$, let $p_{\mathbf{w}}$ be the orthogonal projection onto $\mathcal{H}_{\mathbf{w}}^{\circ}$ and $p_e$ the projection onto $\mathbb{C}\Omega$. Define $\mathbb{E}(x) := \sum_{\mathbf{w} \in W_\Gamma} p_{\mathbf{w}} x p_{\mathbf{w}}$, where the sum converges strongly. Then $\mathbb{E}$ is clearly linear, idempotent, and contractive:
\[
\|\mathbb{E}(x)\| = \sup_{\mathbf{w}} \|p_{\mathbf{w}} x p_{\mathbf{w}}\| \leq \|x\|.
\]
Therefore, by Tomiyama’s theorem (see, e.g., \cite[Theorem~1.5.10]{BrownOzawa08}), $\mathbb{E}$ is a conditional expectation onto its image. Moreover, for any element $x\in\mathcal{E}(\mathbf{A},\Gamma)$ of the form $x:=(a_{1}^{\dagger}\cdots a_{k}^{\dagger})d(b_{1}^{\dagger}\cdots b_{l}^{\dagger})^{\ast}$ with $k,l\in\mathbb{N}$, $(u_{1},\ldots,u_{k}),(v_{1},\ldots,v_{l})\in\mathcal{W}_{\text{red}}$, $a_{i}\in A_{v_{i}}^{\circ}$, $b_{j}\in A_{w_{j}}^{\circ}$, $d\in\mathcal{D}_{0}(\mathbf{A},\Gamma)$ we have that 
\[
\mathbb{E}(x)=\sum_{\mathbf{w}\in W_{\Gamma}}p_{\mathbf{w}}(a_{1}^{\dagger}\cdots a_{k}^{\dagger})d(b_{1}^{\dagger}\cdots b_{l}^{\dagger})^{\ast}p_{\mathbf{w}}=\left\{ \begin{array}{cc}
(a_{1}^{\dagger}\cdots a_{k}^{\dagger})d(b_{1}^{\dagger}\cdots b_{l}^{\dagger})^{\ast} & \text{, if }\Sigma(x)=e\\
0 & \text{, if }\Sigma(x)\neq e
\end{array}\right..
\]
By Proposition~\ref{DiagonalityCharacterization}, $\mathbb{E}(x) \in \mathcal{D}(\mathbf{A}, \Gamma)$. Proposition~\ref{DensityStatement} then implies that $\operatorname{im}(\mathbb{E}) = \mathcal{D}(\mathbf{A}, \Gamma)$.

To prove faithfulness, take any nonzero $x \in \mathfrak{A}(\mathbf{A}, \Gamma)$. Then there exists a unit vector $\xi \in \mathcal{H}^{\circ}_{\mathbf{w}}$, $\mathbf{w}\in W_{\Gamma}$ with $x\xi \neq 0$. Hence:
\[
\Vert\mathbb{E}(x^{\ast}x)\Vert\geq\Vert\mathbb{E}(x^{\ast}x)\xi\Vert \geq\langle\mathbb{E}(x^{\ast}x)\xi,\xi\rangle=\langle x^{\ast}x\xi,\xi\rangle=\Vert x\xi\Vert^{2}>0.
\]
Thus, $\mathbb{E}$ is faithful.
\end{proof}

\begin{corollary} \label{DiagonalCharacterization}
Let $\Gamma$ be a finite, undirected, simplicial graph, and let $\mathbf{A} := (A_{v})_{v \in V\Gamma}$ be a collection of unital C$^*$-algebras, each endowed with a GNS-faithful state $\omega_v$. Then the subspace
\begin{equation}
\mathrm{Span}\left\{ (a_{1}^{\dagger} \cdots a_{k}^{\dagger})\,d\,(b_{1}^{\dagger} \cdots b_{l}^{\dagger})^{*} \ \middle|\ k, l \in \mathbb{N},\ (v_1, \dots, v_l) \in \mathcal{W}_{\mathrm{red}},\ a_i, b_i \in A_{v_i}^{\circ},\ d \in \mathcal{D}_0(\mathbf{A}, \Gamma) \right\}, \label{eq:DiagonalDensitySet}
\end{equation}
is norm dense in $\mathcal{D}(\mathbf{A}, \Gamma)$.
\end{corollary}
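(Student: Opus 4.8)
The plan is to obtain the asserted density by pushing the dense subspace $\mathrm{Span}(\mathcal{E}(\mathbf{A},\Gamma)) \subseteq \mathfrak{A}(\mathbf{A},\Gamma)$ from Proposition~\ref{DensityStatement} through the faithful conditional expectation $\mathbb{E}\colon\mathfrak{A}(\mathbf{A},\Gamma)\to\mathcal{D}(\mathbf{A},\Gamma)$ of Theorem~\ref{GaugeActionTheorem}. Since $\mathbb{E}$ is contractive, hence norm-continuous, and surjective onto $\mathcal{D}(\mathbf{A},\Gamma)$, the image $\mathbb{E}(\mathrm{Span}(\mathcal{E}(\mathbf{A},\Gamma)))$ is automatically norm dense in $\mathcal{D}(\mathbf{A},\Gamma)$, so the whole task reduces to identifying this image with the span in~\eqref{eq:DiagonalDensitySet}.

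For the identification I would invoke the computation already carried out in the proof of Theorem~\ref{GaugeActionTheorem}: on an elementary operator $x$, the expectation acts by $\mathbb{E}(x) = x$ if $\Sigma(x) = e$ and $\mathbb{E}(x) = 0$ otherwise. Hence $\mathbb{E}(\mathrm{Span}(\mathcal{E}(\mathbf{A},\Gamma)))$ is exactly the linear span of those elementary operators whose signature is trivial, and it remains to check that this span coincides with~\eqref{eq:DiagonalDensitySet}. One inclusion is immediate from Proposition~\ref{DiagonalityCharacterization}: every operator listed in~\eqref{eq:DiagonalDensitySet} has creation word equal to annihilation word, hence trivial signature, hence is diagonal and lies in $\mathcal{D}(\mathbf{A},\Gamma)$. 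The other inclusion -- that an elementary operator $x = (a_1^\dagger\cdots a_k^\dagger)\,d\,(b_1^\dagger\cdots b_l^\dagger)^{\ast}$ with $\Sigma(x)=e$ can be rewritten in the form~\eqref{eq:DiagonalDensitySet} -- is the only non-formal point.

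Here I would argue as follows. By definition of the signature, $\Sigma(x)=e$ means $u_1\cdots u_k = v_1\cdots v_l$ in $W_\Gamma$; since both $(u_1,\dots,u_k)$ and $(v_1,\dots,v_l)$ are reduced, they are minimal-length representatives of the same group element, so $k=l$ and the two words are shuffle equivalent. I would then use Lemma~\ref{MainIdentities}(1): creation operators attached to vertices joined by an edge of $\Gamma$ commute, and such vertices are distinct since $E\Gamma$ contains no loops. Because a shuffle equivalence between reduced words is a composition of transpositions of adjacent, distinct, commuting letters, and because by \cite[Lemma~1.3]{CaspersFima17} the permutation matching the two reduced words may be chosen to preserve the relative order of repeated letters, I can perform exactly these swaps at the operator level to rewrite $(a_1^\dagger\cdots a_k^\dagger)$ over the common word $(v_1,\dots,v_l)$; applying the mirror-image rewriting to $(b_1^\dagger\cdots b_l^\dagger)^{\ast}$ via adjoints, and leaving $d\in\mathcal{D}_0(\mathbf{A},\Gamma)$ untouched in the middle, brings $x$ into the desired shape.

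The main obstacle is precisely this last rewriting step: one must verify that the formal shuffle moves on words translate term by term into genuine operator identities through Lemma~\ref{MainIdentities}(1), and in particular that one is never forced to commute two creation operators over the same vertex (which would instead yield zero). This is guaranteed by choosing the matching permutation so as to preserve the order of equal letters, exactly as in \cite[Lemma~1.3]{CaspersFima17}. Everything else -- continuity and surjectivity of $\mathbb{E}$, its explicit action on elementary operators, and the containment of~\eqref{eq:DiagonalDensitySet} in $\mathcal{D}(\mathbf{A},\Gamma)$ -- is routine and already available above.
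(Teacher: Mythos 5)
Your argument is correct. Its first half---pushing the dense span of elementary operators through the faithful conditional expectation $\mathbb{E}$ of Theorem~\ref{GaugeActionTheorem} and using its computed action (identity on elementary operators with trivial signature, zero otherwise) to reduce the problem to rewriting trivial-signature elementary operators---is exactly what the paper does, stated there more tersely as ``by Proposition~\ref{DensityStatement} and Theorem~\ref{GaugeActionTheorem}.'' Where you genuinely diverge is in the rewriting step. The paper never reorders the creation word: it locates a cancellable pair $u_i=v_j$ (such a pair exists because the two reduced words represent the same element of $W_\Gamma$, so for instance $v_l$ occurs as some $u_i$ commuting with $u_{i+1},\dots,u_k$), commutes $a_i^\dagger$ and $(b_j^\dagger)^\ast$ inward, absorbs the product $a_i^\dagger d\,(b_j^\dagger)^\ast$ into the central diagonal factor via Lemma~\ref{MainIdentities}(3), and iterates, shortening both outer words at each step. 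Your route instead uses only Lemma~\ref{MainIdentities}(1): two reduced words representing the same group element are shuffle equivalent, each elementary shuffle transposes adjacent letters joined by an edge of $\Gamma$---hence distinct, commuting vertices---so the creation product can be reordered term by term onto $(v_1,\dots,v_l)$ without ever multiplying two creation operators over the same vertex; this is the correct resolution of the one pitfall you flag, and it comes from the definition of shuffle equivalence (edges are loop-free) rather than from the order-preservation of the matching permutation. Both arguments are sound. Yours delivers exactly the stated form with less bookkeeping, while the paper's cancellation is in the spirit of Lemma~\ref{MultiplicationLemma} and, if iterated to the end, yields the formally stronger conclusion that $\mathrm{Span}(\mathcal{D}_0(\mathbf{A},\Gamma))$ alone is already dense in $\mathcal{D}(\mathbf{A},\Gamma)$. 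One cosmetic remark: once the creation word has been shuffled onto $(v_1,\dots,v_l)$ there is nothing left to do on the annihilation side, so the ``mirror-image rewriting'' of $(b_1^\dagger\cdots b_l^\dagger)^\ast$ is superfluous.
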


\begin{proof}
By Proposition~\ref{DensityStatement} and Theorem~\ref{GaugeActionTheorem}, any element $x \in \mathcal{D}(\mathbf{A}, \Gamma)$ can be approximated in norm by finite sums of nonzero elements of the form $(a_{1}^{\dagger} \cdots a_{k}^{\dagger})\,d\,(b_{1}^{\dagger} \cdots b_{l}^{\dagger})^{*} \in \mathcal{E}(\mathbf{A}, \Gamma)$, where $k, l \in \mathbb{N}$, $(u_1, \dots, u_k), (v_1, \dots, v_l) \in \mathcal{W}_{\mathrm{red}}$, $a_i \in A_{v_i}^{\circ}$, $b_j \in A_{w_j}^{\circ}$, $d \in \mathcal{D}_0(\mathbf{A}, \Gamma)$, and such that the product $u_{1}\cdots u_{k}v_{l}\cdots v_{1}=(u_{1}\cdots u_{k})(v_{1}\cdots v_{l})^{-1}=e$ in $W_\Gamma$.

Since both $u_1 \cdots u_k$ and $v_1 \cdots v_l$ are reduced, there exist indices $1 \leq i \leq k$ and $1 \leq j \leq l$ with $u_i = v_j$ such that the cancellation of $u_i$ and $v_j$ preserves the words being reduced and $u_i$ commutes with all subsequent generators $u_{i+1}, \dots, u_k$ and $v_l, \dots, v_{j-1}$. By Lemma~\ref{MainIdentities}, we then have
\[
(a_{1}^{\dagger} \cdots a_{k}^{\dagger})\,d\,(b_{1}^{\dagger} \cdots b_{l}^{\dagger})^{*}
= (a_{1}^{\dagger} \cdots a_{i-1}^{\dagger} a_{i+1}^{\dagger} \cdots a_{k}^{\dagger}) \, \left(a_i^{\dagger} d (b_j^{\dagger})^*\right) \, (b_{1}^{\dagger} \cdots b_{j-1}^{\dagger} b_{j+1}^{\dagger} \cdots b_l^{\dagger})^*.
\]
Iterating this argument allows us to reduce to elements of the form~\eqref{eq:DiagonalDensitySet}, proving the desired density.
\end{proof}

\medskip

Inclusions of graphs naturally induce conditional expectations, as captured in the following theorem.

\begin{theorem} \label{ConditionalExpectation}
Let $\Gamma$ be a finite, undirected, simplicial graph, and let $\mathbf{A} := (A_v)_{v \in V\Gamma}$ be a collection of unital C$^*$-algebras equipped with GNS-faithful states $(\omega_v)_{v \in V\Gamma}$. Suppose $\Gamma_0 \subseteq \Gamma$ is an induced subgraph and define $\mathbf{A}|_{\Gamma_0} := (A_v)_{v \in V\Gamma_0}$. Then:
\begin{enumerate}
    \item There exists a $\ast$-embedding $\iota_{\Gamma,\Gamma_0} : \mathfrak{A}(\mathbf{A}|_{\Gamma_0}, \Gamma_0) \hookrightarrow \mathfrak{A}(\mathbf{A}, \Gamma)$ that canonically identifies the embedded copies of $\mathbf{A}_{\Gamma_{0}}$ and the projections $(Q_v)_{v \in V\Gamma_0}$ in both algebras.
    
    \item There exists a conditional expectation $\mathbb{E}_{\Gamma,\Gamma_0} : \mathfrak{A}(\mathbf{A}, \Gamma) \to \mathrm{im}(\iota_{\Gamma,\Gamma_0})$.
    
    \item For $x \in \mathcal{E}(\mathbf{A}, \Gamma)$, we have $\mathbb{E}_{\Gamma,\Gamma_0}(x) \neq 0$ if and only if $x \in \iota_{\Gamma,\Gamma_0}(\mathcal{E}(\mathbf{A}|_{\Gamma_0}, \Gamma_0))$.
\end{enumerate}
\end{theorem}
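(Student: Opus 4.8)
\textbf{Proof plan for Theorem \ref{ConditionalExpectation}.}

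The plan is to build everything from the concrete Fock space model. For part (1), since $\mathcal{W}_{\mathrm{red}}$ and $W_{\Gamma_0}$ sit inside $\mathcal{W}_{\mathrm{red}}$ and $W_\Gamma$ respectively (a word in the vertices of $\Gamma_0$ is reduced in $\Gamma_0$ iff it is reduced in $\Gamma$, and shuffle equivalence is inherited), the graph product Hilbert space $\mathcal{H}_{\Gamma_0}$ identifies isometrically with the closed subspace $\mathcal{K} := \mathbb{C}\Omega \oplus \bigoplus_{\mathbf{w}\in W_{\Gamma_0}\setminus\{e\}} \mathcal{H}_{\mathbf{w}}^\circ \subseteq \mathcal{H}_\Gamma$. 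One checks that for $v\in V\Gamma_0$ the operators $\lambda_v(a)$ (for $a\in A_v$) and $Q_{\mathbf{w}}$ (for $\mathbf{w}\in W_{\Gamma_0}$) leave $\mathcal{K}$ invariant — for $\lambda_v$ this is immediate from the defining formula since $v\leq \mathbf{w}$ in $W_\Gamma$ with $\mathbf{w}\in W_{\Gamma_0}$ forces the relevant prefix/suffix manipulations to stay inside $W_{\Gamma_0}$, and for $Q_{\mathbf{w}}$ one must also verify that $Q_{\mathbf{w}}$ annihilates $\mathcal{K}^\perp$, which holds because $\mathbf{w}\leq\mathbf{v}$ with $\mathbf{w}\in W_{\Gamma_0}\setminus\{e\}$ does not force $\mathbf{v}\in W_{\Gamma_0}$, so strictly speaking $Q_{\mathbf{w}}$ does \emph{not} leave $\mathcal{K}$ invariant. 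The fix is standard: one instead defines $\iota_{\Gamma,\Gamma_0}$ by sending a generator of $\mathfrak{A}(\mathbf{A}|_{\Gamma_0},\Gamma_0)$ acting on $\mathcal{H}_{\Gamma_0}=\mathcal{K}$ to the operator on $\mathcal{H}_\Gamma$ obtained by the \emph{same} Fock-space recipe (creation/annihilation on the left tensor leg, projection onto words starting with $\mathbf{w}$), now read in the larger $W_\Gamma$. Concretely, $\iota_{\Gamma,\Gamma_0}(Q_{\mathbf{w}}) := Q_{\mathbf{w}}^{(\Gamma)}$ and $\iota_{\Gamma,\Gamma_0}(\lambda_v^{(\Gamma_0)}(a)) := \lambda_v^{(\Gamma)}(a)$; to see this is a well-defined injective $\ast$-homomorphism, note that $\mathfrak{A}(\mathbf{A}|_{\Gamma_0},\Gamma_0)$ is already realized on $\mathcal{H}_\Gamma$ via the compression $T\mapsto (\cdot)|_{\mathcal{K}}$, which is faithful because $\mathcal{K}$ is a reducing-enough subspace for $\mathbf{A}_{\Gamma_0}$ together with the $Q_{\mathbf{w}}$'s (one may also cite the universal property in Lemma \ref{IdentificationLemma} together with the analogous identification of $\mathbf{A}_{\Gamma_0}$ inside $\mathbf{A}_\Gamma$ from \cite{CaspersFima17}); injectivity then follows from GNS-faithfulness of $\omega_{\Gamma_0}$ transported along this identification. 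The stated canonical identification of the copies of $\mathbf{A}_{\Gamma_0}$ and of $(Q_v)_{v\in V\Gamma_0}$ is then built into the definition.

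For part (2), the most transparent construction uses the gauge actions. Writing $V\Gamma = V\Gamma_0 \sqcup (V\Gamma\setminus V\Gamma_0)$, restrict the gauge action $\alpha$ of Theorem \ref{GaugeActionTheorem} to the subtorus $\mathbb{T}^{V\Gamma\setminus V\Gamma_0}\subseteq \mathbb{T}^{V\Gamma}$ and average: set
\[
\mathbb{E}_{\Gamma,\Gamma_0}(x) := \int_{\mathbb{T}^{V\Gamma\setminus V\Gamma_0}} \alpha_z(x)\, dz,
\]
the integral taken in the norm topology (legitimate since $\alpha$ is norm-continuous). This is a well-defined contractive idempotent projection; by Tomiyama's theorem it is a conditional expectation onto its image, and faithfulness of the averaging on positive elements is automatic. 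It remains to identify the image with $\mathrm{im}(\iota_{\Gamma,\Gamma_0})$, which feeds into part (3). By Lemma \ref{GaugeActionLemma}, for an elementary operator $x=(a_1^\dagger\cdots a_k^\dagger)\, d\, (b_1^\dagger\cdots b_l^\dagger)^*$ with underlying reduced words $(u_1,\dots,u_k)$ and $(v_1,\dots,v_l)$ and diagonal part $d=\mathfrak{d}(c_1)\cdots\mathfrak{d}(c_m)$ over a clique $\{w_1,\dots,w_m\}$, one has $\alpha_z(x) = \big(\prod_i z_{u_i}\big)\big(\prod_j z_{v_j}\big)^{-1} x$ for $z$ ranging over the subtorus — note the diagonal letters $\mathfrak{d}(c_i)$ are gauge-invariant, consistent with $\Sigma$ only recording the creation/annihilation legs. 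Hence $\mathbb{E}_{\Gamma,\Gamma_0}(x) = x$ if every vertex appearing among the $u_i$, $v_j$ and (crucially, since $\mathfrak{d}(c)=Q_{w}cQ_{w}$ involves no gauge-sensitive leg but $c\in A_w$ still lives at vertex $w$) among the clique $\{w_1,\dots,w_m\}$ lies in $V\Gamma_0$ — equivalently, every A_v-factor entering $x$ has $v\in V\Gamma_0$ — and $\mathbb{E}_{\Gamma,\Gamma_0}(x)=0$ otherwise; more precisely, one should first note that in a single monomial the multiset of vertices at the creation legs coincides (as a word in $W_\Gamma$) with that at the annihilation legs up to the relation $\simeq$ only when the signature is trivial, so to handle the general case one combines the torus-averaging with Corollary \ref{DiagonalCharacterization} and re-expresses each monomial so that the relevant vertex sets are read off unambiguously. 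By Proposition \ref{DensityStatement} applied to $\Gamma_0$, the span of elementary operators all of whose vertices lie in $V\Gamma_0$ is precisely a dense $\ast$-subalgebra of $\iota_{\Gamma,\Gamma_0}(\mathfrak{A}(\mathbf{A}|_{\Gamma_0},\Gamma_0))$, and these are exactly the elementary operators fixed by $\mathbb{E}_{\Gamma,\Gamma_0}$; a density and continuity argument then gives $\mathrm{im}(\mathbb{E}_{\Gamma,\Gamma_0}) = \iota_{\Gamma,\Gamma_0}(\mathfrak{A}(\mathbf{A}|_{\Gamma_0},\Gamma_0))$ and, restricted to $\mathcal{E}(\mathbf{A},\Gamma)$, the equivalence in (3).

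The main obstacle is the bookkeeping in part (3): one must show that for an elementary operator $x$, the condition "$\mathbb{E}_{\Gamma,\Gamma_0}(x)\neq 0$" (i.e.\ all gauge-sensitive vertices of $x$ lie in $V\Gamma_0$) is equivalent to "$x\in\iota_{\Gamma,\Gamma_0}(\mathcal{E}(\mathbf{A}|_{\Gamma_0},\Gamma_0))$", and the subtlety is that the diagonal factors $\mathfrak{d}(c_i)=Q_{w_i}c_iQ_{w_i}$ are gauge-invariant yet still constrain the vertex $w_i$; so the averaging over $\mathbb{T}^{V\Gamma\setminus V\Gamma_0}$ sees the creation and annihilation legs but is blind to the clique $\{w_1,\dots,w_m\}$ supporting $d$. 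One resolves this by an extra orthogonality/support argument at the Fock-space level: if some $w_i\notin V\Gamma_0$ then, choosing $\xi\in\mathcal{H}_{\mathbf{w}}^\circ$ with $w_i\leq\mathbf{w}$ and $x\xi\neq 0$, the vector $x\xi$ has a tensor leg in $\mathcal{H}_{w_i}^\circ$ with $w_i\notin V\Gamma_0$, so $x\xi\notin\mathcal{K}$, whence $x\notin\mathrm{im}(\iota_{\Gamma,\Gamma_0})$; combined with the torus-averaging computation this pins down exactly the stated characterization. A cleaner alternative that sidesteps the diagonal-factor issue entirely is to define $\mathbb{E}_{\Gamma,\Gamma_0}$ directly as the compression $x\mapsto P_{\mathcal{K}}\,x\,P_{\mathcal{K}}$ (viewing $\mathcal{K}\cong\mathcal{H}_{\Gamma_0}$), prove this lands in $\mathrm{im}(\iota_{\Gamma,\Gamma_0})$ by checking it on the dense set of elementary operators — where $P_{\mathcal{K}}xP_{\mathcal{K}}$ visibly equals $x$ if all vertices of $x$ are in $V\Gamma_0$ and $0$ otherwise, by the same orthogonality of the $\mathcal{H}_{\mathbf{v}}^\circ$ — and then invoke Tomiyama; I would present the compression construction as the primary argument and mention the gauge-averaging one as a remark.
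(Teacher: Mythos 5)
Your part~(1) has a genuine gap. You construct $\iota_{\Gamma,\Gamma_0}$ by ``sending generators to generators,'' but you never establish that this assignment is well defined, and the justification offered (the compression $T\mapsto T|_{\mathcal{K}}$ is ``faithful because $\mathcal{K}$ is a reducing-enough subspace'') is not a proof and points in the wrong direction. What you get for free is a surjective $\ast$-homomorphism $\rho$ from $C^{\ast}(\{Q_v,\lambda_v(a)\mid v\in V\Gamma_0,\ a\in A_v\})\subseteq\mathcal{B}(\mathcal{H}_\Gamma)$ onto $\mathfrak{A}(\mathbf{A}|_{\Gamma_0},\Gamma_0)$ by restricting to the invariant subspace $\mathcal{K}\cong\mathcal{H}_{\Gamma_0}$ (and, contrary to your worry, $\mathcal{K}$ \emph{is} invariant under every $Q_{\mathbf{w}}$, since these are diagonal with respect to $\bigoplus_{\mathbf{v}}\mathcal{H}_{\mathbf{v}}^{\circ}$); the embedding you want is $\rho^{-1}$, so the entire content of (1) is the injectivity of $\rho$, which you assert rather than prove. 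The paper supplies precisely this missing step: choosing minimal-length representatives $(\mathbf{w}_i)_{i\in I}$ for the cosets $W_{\Gamma_0}\mathbf{w}_i$ gives $\mathcal{H}_\Gamma\cong\bigoplus_{i}\bigoplus_{\mathbf{w}\in W_{\Gamma_0}}\mathcal{H}_{\mathbf{w}}^{\circ}\otimes\mathcal{H}_{\mathbf{w}_i}^{\circ}$, and in this picture every generator indexed by $\Gamma_0$ acts blockwise as $x\otimes 1$, i.e.\ as an amplification of its action on $\mathcal{H}_{\Gamma_0}$; injectivity of $\rho$, hence existence of $\iota_{\Gamma,\Gamma_0}$, is then immediate. (Checking this for $Q_{\mathbf{w}}$ uses that $\mathbf{w}\leq\mathbf{v}\mathbf{w}_i$ iff $\mathbf{w}\leq\mathbf{v}$ for $\mathbf{w},\mathbf{v}\in W_{\Gamma_0}$, which is exactly where minimality of the $\mathbf{w}_i$ enters.)

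In part~(2), your primary construction by averaging over $\mathbb{T}^{V\Gamma\setminus V\Gamma_0}$ does not have image $\mathrm{im}(\iota_{\Gamma,\Gamma_0})$, and the defect is not confined to the diagonal cliques as you suggest: any elementary operator whose creation and annihilation legs use each vertex of $V\Gamma\setminus V\Gamma_0$ with equal multiplicity is fixed by the averaging. For instance $Q_w$ itself, or $a^{\dagger}(b^{\dagger})^{\ast}$ with $a,b\in A_w^{\circ}$ and $w\notin V\Gamma_0$, are invariant under the subtorus but must be annihilated by the true expectation, so that route has to be discarded rather than merely supplemented. Your fallback -- compression to $\mathcal{K}$, read back through $\iota_{\Gamma,\Gamma_0}$, checked on elementary operators via orthogonality of the $\mathcal{H}_{\mathbf{v}}^{\circ}$, then Tomiyama -- is exactly the paper's proof of (2) and (3) (written there as $x\mapsto\iota_{\Gamma,\Gamma_0}(S_{\Gamma,\Gamma_0}^{\ast}xS_{\Gamma,\Gamma_0})$), and your support argument for (3), including the observation that the clique vertices of $d$ are constrained even though they are gauge-invariant, is correct. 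One small correction: $P_{\mathcal{K}}xP_{\mathcal{K}}$ by itself is not an element of $\mathrm{im}(\iota_{\Gamma,\Gamma_0})$ (its image is $P_{\mathcal{K}}\mathfrak{A}(\mathbf{A},\Gamma)P_{\mathcal{K}}$ and it is not unital); the composition with $\iota_{\Gamma,\Gamma_0}$ is essential, which again makes part~(1) the load-bearing step.
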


\begin{proof}
\emph{About (1)}: Let $(\mathbf{w}_{i})_{i\in I}\subseteq W_{\Gamma}$ be a family of group elements, including the identity, with $W_{\Gamma}=\bigsqcup_{i\in I}W_{\Gamma_{0}}\mathbf{w}_{i}$. By chosing the elements to have minimal length, we can make sure that $\mathcal{H}_{\mathbf{w}\mathbf{w}_{i}}^{\circ}\cong\mathcal{H}_{\mathbf{w}}^{\circ}\otimes\mathcal{H}_{\mathbf{w}_{i}}^{\circ}$ for all $\mathbf{w}\in W_{\Gamma_{0}}$, $i\in I$ (see, e.g., \cite[Lemma 4.3.1.]{Davis08}), so that we obtain an identification $\mathcal{H}_\Gamma \cong \bigoplus_{i \in I} \bigoplus_{\mathbf{w} \in W_{\Gamma_0}} \left(\mathcal{H}_{\mathbf{w}}^{\circ} \otimes \mathcal{H}_{\mathbf{w}_i}^{\circ}\right)$. In this picture, define $\iota_{\Gamma,\Gamma_{0}}:\mathfrak{A}(\mathbf{A}|_{\Gamma_{0}},\Gamma_{0})\hookrightarrow\mathfrak{A}(\mathbf{A},\Gamma)$ by $(\iota_{\Gamma,\Gamma_{0}}(x))(\xi\otimes\eta):=(x\xi)\otimes\eta$ for $x\in\mathfrak{A}(\mathbf{A}|_{\Gamma_{0}},\Gamma_{0})$, $i\in I$, $\mathbf{w}\in W_{\Gamma_{0}}$, $\xi\in\mathcal{H}_{\mathbf{w}}^{\circ}$, $\eta\in\mathcal{H}_{\mathbf{w}_{i}}$. It is straightforward to verify that this defines a faithful $*$-homomorphism preserving the canonical embeddings.

\emph{About (2)}: Denote by $S_{\Gamma,\Gamma_0}$ the canonical embedding of $\mathcal{H}_{\Gamma_0}$ into $\mathcal{H}_\Gamma$. For any elementary element $x:=(a_{1}^{\dagger}\cdots a_{k}^{\dagger})d(b_{1}^{\dagger}\cdots b_{l}^{\dagger})^{\ast}\in\mathcal{E}(\mathbf{A},\Gamma)$ with $k,l\in\mathbb{N}$, $(u_{1},\ldots,u_{k}),(v_{1},\ldots,v_{l})\in\mathcal{W}_{\text{red}}$, $a_{i}\in A_{v_{i}}^{\circ}$, $b_{j}\in A_{w_{j}}^{\circ}$, $d\in\mathcal{D}_{0}(\mathbf{A},\Gamma)$ with $S_{\Gamma,\Gamma_{0}}^{\ast}xS_{\Gamma,\Gamma_{0}}\neq0$ we observe that $u_{1},\ldots,u_{k},v_{1},\ldots,v_{l}\in V\Gamma_{0}$. The element $d\in\mathcal{D}_{0}(\mathbf{A},\Gamma)$ is either a multiple of $1$, or a finite product of the form $\mathfrak{d}(c_{1})\cdots\mathfrak{d}(c_{n})$ with $c_{i}\in A_{w_{i}}$, where $\{w_{1},\dots,w_{n}\}\subseteq V\Gamma$ forms a clique. By $S_{\Gamma,\Gamma_{0}}^{\ast}xS_{\Gamma,\Gamma_{0}}\neq0$ we must also have $w_{1},\ldots,w_{n}\in V\Gamma_{0}$, implying that $x$ can be viewed as an element in $\mathfrak{A}(\mathbf{A}|_{\Gamma_{0}},\Gamma_{0})$. Thus, the assignment $x \mapsto \iota_{\Gamma,\Gamma_0}(S_{\Gamma,\Gamma_0}^* x S_{\Gamma,\Gamma_0})$ extends linearly and continuously to a well-defined conditional expectation.

\emph{About (3)}: The equivalence follows directly from the considerations in (2).
\end{proof}

\medskip

The preceding theorem justifies viewing $\mathfrak{A}(\mathbf{A}|_{\Gamma_0}, \Gamma_0)$ canonically as an expected C$^*$-subalgebra of $\mathfrak{A}(\mathbf{A}, \Gamma)$ when $\Gamma_0$ is an induced subgraph. In light of this, we will henceforth suppress the embedding map $\iota_{\Gamma,\Gamma_0}$ in the notation.

The next proposition will be invoked repeatedly.  In particular, it enables us to decompose $\mathfrak{A}(\mathbf{A},\Gamma)$ as a tensor product of its elementary building blocks when $\Gamma$ happens to be a complete graph.

\begin{proposition} \label{TensorDecomposition}
Let $\Gamma$ be a finite, undirected, simplicial graph, and let $\mathbf{A} := (A_v)_{v \in V\Gamma}$ be a family of unital C$^*$-algebras with GNS-faithful states $(\omega_v)_{v \in V\Gamma}$. Suppose that $\Gamma$ is the disjoint union of two induced subgraphs $\Gamma_1$ and $\Gamma_2$ for which each vertex in $\Gamma_1$ is connected to each vertex in $\Gamma_2$, and define $\mathbf{A}_1 := (A_v)_{v \in V\Gamma_1}$ and $\mathbf{A}_2 := (A_v)_{v \in V\Gamma_2}$. Then
\[
\mathfrak{A}(\mathbf{A}, \Gamma) \cong \mathfrak{A}(\mathbf{A}_1, \Gamma_1) \otimes \mathfrak{A}(\mathbf{A}_2, \Gamma_2),
\]
with the identification $x \mapsto x \otimes 1$ for $x \in \mathfrak{A}(\mathbf{A}_1, \Gamma_1)$ and $y \mapsto 1 \otimes y$ for $y \in \mathfrak{A}(\mathbf{A}_2, \Gamma_2)$.
\end{proposition}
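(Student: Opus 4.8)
The plan is to prove the isomorphism spatially, by producing a unitary $U\colon\mathcal{H}_\Gamma\to\mathcal{H}_{\Gamma_1}\otimes\mathcal{H}_{\Gamma_2}$ and showing that conjugation by $U$ carries $\mathfrak{A}(\mathbf{A},\Gamma)$ onto the (spatial) tensor product $\mathfrak{A}(\mathbf{A}_1,\Gamma_1)\otimes\mathfrak{A}(\mathbf{A}_2,\Gamma_2)\subseteq\mathcal{B}(\mathcal{H}_{\Gamma_1}\otimes\mathcal{H}_{\Gamma_2})$, with the asserted identification on the vertex pieces. First I would record the combinatorics: since every vertex of $\Gamma_1$ is joined to every vertex of $\Gamma_2$, the right-angled Coxeter group splits as $W_\Gamma\cong W_{\Gamma_1}\times W_{\Gamma_2}$, and every $\mathbf{w}\in W_\Gamma$ factors uniquely as $\mathbf{w}=\mathbf{w}_1\mathbf{w}_2$ with $\mathbf{w}_i\in W_{\Gamma_i}$ and $|\mathbf{w}|=|\mathbf{w}_1|+|\mathbf{w}_2|$; moreover, for $v\in V\Gamma_i$ one has $v\le\mathbf{w}$ if and only if $v\le\mathbf{w}_i$ (a short length computation, using that $v^{-1}\mathbf{w}_1\mathbf{w}_2$ again factors across the product). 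A reduced representative of $\mathbf{w}$ may be chosen as a reduced representative of $\mathbf{w}_1$ followed by one of $\mathbf{w}_2$ (all cross-pairs being edges), which yields canonical unitaries $\mathcal{H}^\circ_{\mathbf{w}}\cong\mathcal{H}^\circ_{\mathbf{w}_1}\otimes\mathcal{H}^\circ_{\mathbf{w}_2}$, with the convention $\mathcal{H}^\circ_e=\mathbb{C}\Omega$. Summing over $\mathbf{w}\in W_\Gamma$ and using $W_\Gamma=\bigsqcup_{\mathbf{w}_2\in W_{\Gamma_2}}W_{\Gamma_1}\mathbf{w}_2$ gives the unitary $U$ with $U\Omega=\Omega_1\otimes\Omega_2$; this is exactly the decomposition underlying the embedding $\iota_{\Gamma,\Gamma_1}$ from Theorem~\ref{ConditionalExpectation}(1) when the coset representatives are taken to be the elements of $W_{\Gamma_2}$.

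Next I would check, straight from the defining formulas, how the generators of $\mathfrak{A}(\mathbf{A},\Gamma)$ transform under $U$. For $v\in V\Gamma_1$ and $a\in A_v$, the case distinction $v\le\mathbf{w}$ versus $v\nleq\mathbf{w}$ in the definition of $\lambda_v(a)$ becomes $v\le\mathbf{w}_1$ versus $v\nleq\mathbf{w}_1$, and the resulting action only modifies the $\mathcal{H}^\circ_{\mathbf{w}_1}$-leg exactly as $\lambda_v^{(1)}(a)$ does on $\mathcal{H}_{\Gamma_1}$; hence $U\lambda_v(a)U^*=\lambda_v^{(1)}(a)\otimes 1$ (this is the known fact that a graph product over a join is a tensor product; cf.\ \cite{CaspersFima17}), and symmetrically $U\lambda_v(a)U^*=1\otimes\lambda_v^{(2)}(a)$ for $v\in V\Gamma_2$. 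For the projections, the range of $Q_v$ with $v\in V\Gamma_1$ is the closed span of the $\mathcal{H}^\circ_{\mathbf{w}}$ with $\mathbf{w}\ne e$ and $v\le\mathbf{w}$; since $v\le\mathbf{w}\iff v\le\mathbf{w}_1$ (which already forces $\mathbf{w}_1\ne e$) and $\bigoplus_{\mathbf{w}_2\in W_{\Gamma_2}}\mathcal{H}^\circ_{\mathbf{w}_2}=\mathcal{H}_{\Gamma_2}$, this range is precisely $(\text{range of }Q_v^{(1)})\otimes\mathcal{H}_{\Gamma_2}$, so $UQ_vU^*=Q_v^{(1)}\otimes 1$, and likewise $UQ_vU^*=1\otimes Q_v^{(2)}$ for $v\in V\Gamma_2$.

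With these identities the proof closes by a two-sided inclusion. On the one hand, $U$ sends every generator $Q_v$, $\lambda_v(a)$ of $\mathfrak{A}(\mathbf{A},\Gamma)$ to an element of $\mathfrak{A}(\mathbf{A}_1,\Gamma_1)\otimes\mathfrak{A}(\mathbf{A}_2,\Gamma_2)$ (of the form $x\otimes 1$ or $1\otimes y$), so $U\mathfrak{A}(\mathbf{A},\Gamma)U^*\subseteq\mathfrak{A}(\mathbf{A}_1,\Gamma_1)\otimes\mathfrak{A}(\mathbf{A}_2,\Gamma_2)$. On the other hand, $U\mathfrak{A}(\mathbf{A},\Gamma)U^*$ contains $\mathfrak{A}(\mathbf{A}_1,\Gamma_1)\otimes 1$ and $1\otimes\mathfrak{A}(\mathbf{A}_2,\Gamma_2)$, since these are generated inside $\mathcal{B}(\mathcal{H}_{\Gamma_1}\otimes\mathcal{H}_{\Gamma_2})$ by the $Q_v^{(1)}\otimes 1,\lambda_v^{(1)}(a)\otimes 1$ ($v\in V\Gamma_1$) and the $1\otimes Q_v^{(2)},1\otimes\lambda_v^{(2)}(a)$ ($v\in V\Gamma_2$) respectively, all of which lie in $U\mathfrak{A}(\mathbf{A},\Gamma)U^*$; and the C$^\ast$-algebra generated by these two commuting subalgebras is exactly the spatial tensor product. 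Combining the two inclusions yields equality, and by construction the identification restricts to $x\mapsto x\otimes 1$ on $\mathfrak{A}(\mathbf{A}_1,\Gamma_1)$ and $y\mapsto 1\otimes y$ on $\mathfrak{A}(\mathbf{A}_2,\Gamma_2)$. The only point that requires genuine care is the combinatorial Hilbert-space factorization of the first paragraph — in particular the order identity $v\le\mathbf{w}\iff v\le\mathbf{w}_i$ and the choice of a reduced representative split across the two factors; once that is in place, everything else is bookkeeping with the formulas of Subsection~\ref{GraphProductCAlgebras} and Lemma~\ref{MainIdentities}. One could alternatively import $\mathbf{A}_\Gamma\cong\mathbf{A}_{\Gamma_1}\otimes\mathbf{A}_{\Gamma_2}$ from \cite{CaspersFima17} and only verify the behaviour of the $Q_v$ directly, but since the latter step needs the explicit Fock-space picture anyway, arguing everything by hand is about as economical.
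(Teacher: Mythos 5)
Your proposal is correct and follows essentially the same route as the paper: both arguments construct the spatial unitary $\mathcal{H}_{\Gamma}\cong\mathcal{H}_{\Gamma_{1}}\otimes\mathcal{H}_{\Gamma_{2}}$ from the factorization $W_{\Gamma}\cong W_{\Gamma_{1}}\times W_{\Gamma_{2}}$ and then verify that conjugation sends the generators $Q_{v}$ and $\lambda_{v}(a)$ to $Q_{v}\otimes 1$, $\lambda_{v}(a)\otimes 1$ (resp.\ $1\otimes Q_{v}$, $1\otimes\lambda_{v}(a)$). The paper compresses the generator computation and the final two-sided inclusion into a ``routine verification,'' which you have simply spelled out.
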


\begin{proof}
Define the unitary map $U : \mathcal{H}_{\Gamma_{1}}\otimes\mathcal{H}_{\Gamma_{2}}\rightarrow\mathbb{C}\Omega\oplus\bigoplus_{\mathbf{w}\in (W_{\Gamma_{1}} \times W_{\Gamma_{2}})\setminus \{e\}} \mathcal{H}_{\mathbf{w}}^{\circ}$ by setting
\[
\begin{aligned}
U(\Omega \otimes \Omega) &:= \Omega, \\
U(\Omega \otimes \xi) &:= \xi \quad \text{for } \xi \in \mathcal{H}_{\mathbf{w}}^{\circ},\ \mathbf{w} \in W_{\Gamma_1}\setminus\{e\} \\
U(\xi \otimes \Omega) &:= \xi \quad \text{for } \xi \in \mathcal{H}_{\mathbf{w}}^{\circ},\ \mathbf{w} \in W_{\Gamma_2}\setminus\{e\}, \\
U(\xi \otimes \eta) &:= \xi \otimes \eta \quad \text{for } \xi \in \mathcal{H}_{\mathbf{w}}^{\circ},\ \eta \in \mathcal{H}_{\mathbf{v}}^{\circ},\ \mathbf{w} \in W_{\Gamma_1}\setminus\{e\},\ \mathbf{v} \in W_{\Gamma_2}\setminus\{e\},
\end{aligned}
\]
and note that $\mathbb{C}\Omega\oplus\bigoplus_{\mathbf{w}\in (W_{\Gamma_{1}}\times W_{\Gamma_{2}})\setminus\{e\}}\mathcal{H}_{\mathbf{w}}^{\circ}$ identifies with $\mathcal{H}_{\Gamma}$, so that conjugation by $U$ implements an isomorphism $\mathcal{B}(\mathcal{H}_{\Gamma})\cong\mathcal{B}(\mathcal{H}_{\Gamma_{1}})\otimes\mathcal{B}(\mathcal{H}_{\Gamma_{2}})$. 
A routine verification shows that conjugation by $U$ yields:
\[
\begin{aligned}
U^* Q_v U &= Q_v \otimes 1 \quad \text{for all } v \in V\Gamma_1, \\
U^* Q_v U &= 1 \otimes Q_v \quad \text{for all } v \in V\Gamma_2, \\
U^* a U &= a \otimes 1 \quad \text{for } a \in A_v^{\circ},\ v \in V\Gamma_1, \\
U^* a U &= 1 \otimes a \quad \text{for } a \in A_v^{\circ},\ v \in V\Gamma_2.
\end{aligned}
\]
This establishes the claimed tensor decomposition.
\end{proof}

\vspace{3mm}


\subsection{Universality\label{subsec:Universality}}

The aim of this subsection is to demonstrate that the C$^{\ast}$-algebras constructed earlier satisfy the following useful universal property. In the special case of free products, analogous results were obtained by Hasegawa via an identification with Cuntz--Pimsner algebras \cite{Hasegawa19}; we are grateful to Pierre Fima for bringing this reference to our attention. In our setting, this approach is no longer applicable; however, the difficulty can be overcome by combining methods from \cite{Katsura04} with an inductive argument.

\begin{theorem} \label{UniversalProperty}
Let $\Gamma$ be a finite, undirected, simplicial graph, and let $\mathbf{A}:=(A_{v})_{v\in V\Gamma}$ be a collection of unital C$^{\ast}$-algebras equipped with GNS-faithful states $(\omega_{v})_{v\in V\Gamma}$. For $v_{0}\in V\Gamma$, define
\[
\mathbf{A}_{1}:=(A_{v})_{v\in V\text{\emph{Star}}(v_{0})},\quad
\mathbf{A}_{2}:=(A_{v})_{v\in V(\Gamma\setminus\{v_{0}\})},\quad
\mathbf{B}:=(A_{v})_{v\in V\text{\emph{Link}}(v_{0})}
\]
and consider $\mathfrak{A}_{1}:=\mathfrak{A}(\mathbf{A}_{1},\text{\emph{Star}}(v_{0}))$, $\mathfrak{A}_{2}:=\mathfrak{A}(\mathbf{A}_{2},\Gamma\setminus\{v_{0}\})$, and $B:=\mathfrak{A}(\mathbf{B},\text{\emph{Link}}(v_{0}))$. Then $\mathfrak{A}(\mathbf{A},\Gamma)$ satisfies the following universal property: every unital C$^{\ast}$-algebra $\overline{\mathfrak{A}}$ generated by the images of unital $\ast$-homomorphisms $\kappa_{1}:\mathfrak{A}_{1}\rightarrow\overline{\mathfrak{A}}$, $\kappa_{2}:\mathfrak{A}_{2}\rightarrow\overline{\mathfrak{A}}$ satisfying $\kappa_{1}|_{B}=\kappa_{2}|_{B}$ and
\[
\kappa_{1}(Q_{v_{0}})\kappa_{2}(Q_{v})=0 \quad \text{for all } v\in V(\Gamma\setminus\text{\emph{Star}}(v_{0}))
\]
admits a surjective $\ast$-homomorphism $\phi:\mathfrak{A}(\mathbf{A},\Gamma)\twoheadrightarrow\overline{\mathfrak{A}}$ with $\phi|_{\mathfrak{A}_{1}}=\kappa_{1}$ and $\phi|_{\mathfrak{A}_{2}}=\kappa_{2}$.
\end{theorem}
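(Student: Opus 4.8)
\emph{Plan of proof.} I would first recast the assertion as the statement that $\mathfrak{A}(\mathbf{A},\Gamma)$ \emph{is} the relevant universal C$^{\ast}$-algebra. Let $\mathcal{C}$ be the universal unital C$^{\ast}$-algebra generated by unital $\ast$-homomorphisms $\iota_{1}\colon\mathfrak{A}_{1}\to\mathcal{C}$, $\iota_{2}\colon\mathfrak{A}_{2}\to\mathcal{C}$ subject to $\iota_{1}|_{B}=\iota_{2}|_{B}$ and $\iota_{1}(Q_{v_{0}})\iota_{2}(Q_{v})=0$ for $v\in V(\Gamma\setminus\mathrm{Star}(v_{0}))$; it exists since $\mathfrak{A}(\mathbf{A},\Gamma)$ itself is a model and the generators have uniformly bounded norm. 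By Theorem~\ref{ConditionalExpectation}, $\mathfrak{A}(\mathbf{A},\Gamma)$ carries canonical, $B$-compatible copies of $\mathfrak{A}_{1}$ and $\mathfrak{A}_{2}$ which together generate it (all generators $A_{v}$ and $Q_{v}$, $v\in V\Gamma$, already lie in $\mathfrak{A}_{1}\cup\mathfrak{A}_{2}$, because $V\mathrm{Star}(v_{0})\cup V(\Gamma\setminus\{v_{0}\})=V\Gamma$), and there $Q_{v_{0}}Q_{v}=Q_{v_{0}\vee v}$ (Lemma~\ref{IdentificationLemma}), which is $0$ since the distinct, non-commuting generators $v_{0}$ and $v$ have no common upper bound in the weak order on $W_{\Gamma}$. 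The universal property of $\mathcal{C}$ therefore yields a surjection $\psi\colon\mathcal{C}\twoheadrightarrow\mathfrak{A}(\mathbf{A},\Gamma)$ restricting to the canonical embeddings, and also a surjection $\mathcal{C}\twoheadrightarrow\overline{\mathfrak{A}}$ for any $\overline{\mathfrak{A}}$ as in the statement. It thus suffices to prove that $\psi$ is \emph{injective}; the desired $\phi$ is then $(\mathcal{C}\twoheadrightarrow\overline{\mathfrak{A}})\circ\psi^{-1}$.

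For injectivity I would run a gauge-invariant uniqueness argument one vertex at a time, in the spirit of Katsura's analysis of C$^{\ast}$-correspondence algebras. The gauge actions of $\mathbb{T}^{V\mathrm{Star}(v_{0})}$ on $\mathfrak{A}_{1}$ and of $\mathbb{T}^{V(\Gamma\setminus\{v_{0}\})}$ on $\mathfrak{A}_{2}$ (Theorem~\ref{GaugeActionTheorem}) restrict to the same action on $B$, and the defining relations of $\mathcal{C}$ are invariant under the resulting $\mathbb{T}^{V\Gamma}$-action (the $Q_{v}$ being diagonal, hence gauge-fixed), so $\mathbb{T}^{V\Gamma}$ acts on $\mathcal{C}$ by a norm-continuous $\beta$ intertwined by $\psi$ with the gauge action $\alpha$. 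Fix an enumeration $(s_{1},\dots,s_{L})$ of $V\Gamma$, put $G_{r}:=\mathbb{T}^{\{s_{1},\dots,s_{r}\}}\leq\mathbb{T}^{V\Gamma}$, and prove that $\psi$ is injective on $\mathcal{C}^{G_{r}}$ by downward induction on $r$: granting injectivity on $\mathcal{C}^{G_{r}}$, the circle $\mathbb{T}^{\{s_{r}\}}$ acts on $\mathcal{C}^{G_{r-1}}$ with fixed-point algebra $\mathcal{C}^{G_{r}}$; averaging over it gives a faithful conditional expectation which $\psi$ intertwines with the corresponding one on $\mathfrak{A}(\mathbf{A},\Gamma)$, so any positive element of $\ker\psi\cap\mathcal{C}^{G_{r-1}}$ maps under that expectation into $\ker\psi\cap\mathcal{C}^{G_{r}}=\{0\}$ and hence vanishes. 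Taking $r=0$ yields injectivity of $\psi$.

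The base case $r=L$, that is, injectivity of $\psi$ on the core $\mathcal{C}^{\beta}$, is where the real work lies. Here I would use the subspaces $B_{n_{1},\dots,n_{L}}\subseteq\mathfrak{A}(\mathbf{A},\Gamma)$ spanned by the elementary operators $(a_{1}^{\dagger}\cdots a_{k}^{\dagger})\,d\,(b_{1}^{\dagger}\cdots b_{l}^{\dagger})^{\ast}$ in which each generator $s_{i}$ occurs with prescribed multiplicity $n_{i}$, together with their preimages in $\mathcal{C}$; by Lemma~\ref{FixedpointIdentification} together with Corollary~\ref{DiagonalCharacterization} the core is an increasing union of closed sums of finitely many such subspaces. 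On each finite stage the relations of Lemma~\ref{MainIdentities} let one put every element into a normal form built from finitely many ``slots'', one per occurrence of a vertex; since $\mathfrak{A}_{1}$ and $\mathfrak{A}_{2}$ embed faithfully and each $A_{v}\hookrightarrow\mathcal{B}(\mathcal{H}_{v})$ is faithful, while distinct summands $\mathcal{H}_{\mathbf{w}}^{\circ}\subseteq\mathcal{H}_{\Gamma}$ are pairwise orthogonal, this normal form is faithfully detected by the action on $\mathcal{H}_{\Gamma}$, so $\psi$ is injective on the finite stage; passing to the union and its closure gives injectivity on $\mathcal{C}^{\beta}$.

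\textbf{Main obstacle.} The crux is precisely the base case. In the free-product situation the core is the coefficient algebra of a single C$^{\ast}$-correspondence and Katsura's gauge-invariant uniqueness theorem applies in one shot; in the graph-product setting no such single correspondence is available and, crucially, the multi-index $(n_{1},\dots,n_{L})$ is \emph{not} a grading, since a product such as $a^{\dagger}(a^{\dagger})^{\ast}$ lowers the multiplicity of a vertex. One therefore cannot split the core into an orthogonal direct sum of the $B_{n_{1},\dots,n_{L}}$, and the delicate point is to rule out collapsing across these blocks when passing to the closed span --- which is exactly where the pairwise orthogonality of the Fock-space summands $\mathcal{H}_{\mathbf{w}}^{\circ}$ and the reduction relations of Lemma~\ref{MainIdentities} (already valid in $\mathcal{C}$ by construction, so that $\psi$ identifies nothing new) must be marshalled, together with the bookkeeping over the vertex enumeration that replaces the one-shot Cuntz--Pimsner argument.
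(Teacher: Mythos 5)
Your overall architecture coincides with the paper's: pass to the universal model $\mathcal{C}$, lift the gauge action, and reduce injectivity of $\psi$ to injectivity on fixed-point algebras via faithful conditional expectations, one circle at a time (this is exactly the use of \cite[Proposition 4.5.1]{BrownOzawa08} in the paper). The reduction steps you describe are sound. The gap is precisely where you say it is: your base case is never proved, and as you have organized the induction it carries essentially all of the difficulty. Two concrete ingredients are missing. First, even for a \emph{single} block $\overline{B}_{n_{1},\dots,n_{L}}$ you need the norm identity $\Vert x\Vert=\Vert\psi(x)\Vert$ on the diagonal subalgebra $\overline{\mathrm{Span}}(\kappa_{1}(\mathcal{D}_{0}(\mathbf{A}_{1},\mathrm{Star}(v_{0})))\cup\kappa_{2}(\mathcal{D}_{0}(\mathbf{A}_{2},\Gamma\setminus\{v_{0}\})))$; this is Proposition \ref{IsomorphismTheorem}, proved via the orthogonal decomposition $x=\kappa_{1}(Q_{v_{0}})x+\kappa_{2}(P)x+Q^{\perp}x$ and the resulting maximum-of-norms formula, and it is the \emph{only} place where the hypothesis $\kappa_{1}(Q_{v_{0}})\kappa_{2}(Q_{v})=0$ is used quantitatively. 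Each block is then identified with $\mathcal{K}(\mathfrak{X}_{n_{1},\dots,n_{L}})$ for a Hilbert module over that diagonal algebra (Lemma \ref{IsomorphismTheorem2}), which is what actually yields $\overline{B}_{n_{1},\dots,n_{L}}\cong B_{n_{1},\dots,n_{L}}$. Your appeal to ``normal forms faithfully detected by the action on $\mathcal{H}_{\Gamma}$'' establishes at most injectivity on an algebraic span, which for C$^{\ast}$-algebras does not pass to the closure without a norm estimate of this kind.

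Second, the passage from individual blocks to sums of blocks is not automatic, and this is where your formulation diverges from the paper's in a way that makes the problem harder. The paper never treats the full core $\mathcal{C}^{\beta}$ in one step: its downward induction is on the blocks $\overline{B}_{n_{1},\dots,n_{r}}$ themselves, and at each stage only the \emph{last} multiplicity index varies, so that the sums $\sum_{i=0}^{n}B_{n_{1},\dots,n_{r},i}$ carry an ideal filtration (Proposition \ref{IdealConstruction}) permitting a quotient argument (Lemma \ref{InductionStatement}: if $\phi(x)=0$ then $x$ is pushed into the top ideal, where injectivity is known). Your base case, by contrast, is the fixed-point algebra of the \emph{full} torus, i.e.\ the closure of sums of $\overline{B}_{n_{1},\dots,n_{L}}$ over arbitrary finite sets of multi-indices in $\mathbb{N}^{L}$, for which no such one-parameter filtration is available; you correctly identify that the multi-index is not a grading (products like $a^{\dagger}(a^{\dagger})^{\ast}$ cross blocks) and that ruling out collapsing across blocks is the crux, but you do not supply the mechanism that does so. To repair the argument you should interleave the two inductions as the paper does: prove the block isomorphisms at level $L$, then alternate the ideal/quotient step (to handle finite sums in the last index) with the single-circle fixed-point step (to remove that index), descending one vertex at a time.
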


The proof of Theorem~\ref{UniversalProperty} requires some preparation.

As in the theorem, fix a finite, undirected, simplicial graph $\Gamma$, and let $\mathbf{A}:=(A_{v})_{v\in V\Gamma}$ be a collection of unital C$^{\ast}$-algebras equipped with GNS-faithful states $(\omega_{v})_{v\in V\Gamma}$. For $v_{0}\in V\Gamma$, define
\[
\mathbf{A}_1:=(A_{v})_{v\in V \text{Star}(v_0)},\quad
\mathbf{A}_2:=(A_{v})_{v\in V (\Gamma \setminus\{v_0\})},\quad
\mathbf{B}:=(A_{v})_{v\in V\text{Link}(v_{0})}
\]
and consider $\mathfrak{A}_{1}:=\mathfrak{A}(\mathbf{A}_{1},\text{{Star}}(v_{0}))$, $\mathfrak{A}_{2}:=\mathfrak{A}(\mathbf{A}_{2},\Gamma\setminus\{v_{0}\})$, and $B:=\mathfrak{A}(\mathbf{B},\text{{Link}}(v_{0}))$. Let $\overline{\mathfrak{A}}$ be the universal C$^{\ast}$-algebra generated by the images of unital $\ast$-homomorphisms $\kappa_{1}:\mathfrak{A}_{1}\rightarrow\overline{\mathfrak{A}}$, $\kappa_{2}:\mathfrak{A}_{2}\rightarrow\overline{\mathfrak{A}}$ satisfying $\kappa_{1}|_{B}=\kappa_{2}|_{B}$ and $\kappa_{1}(Q_{v_{0}})\kappa_{2}(Q_{v})=0$ for all $v\in V(\Gamma \setminus\text{Star}(v_{0}))$. It is clear that $\overline{\mathfrak{A}}$ exists and that both $\kappa_{1}$ and $\kappa_{2}$ are injective. For notational convenience, we write $\kappa_{v}(a):=\kappa_{1}(a)$ if $v\in V \text{Star}(v_0)$ and $a\in\mathfrak{A}_{1}$, and $\kappa_{v}(a):=\kappa_{2}(a)$ if $v\in V(\Gamma\setminus \{v_0\})$ and $a\in\mathfrak{A}_{2}$. Let $\phi:\overline{\mathfrak{A}}\twoheadrightarrow\mathfrak{A}(\mathbf{A},\Gamma)$ be the canonical surjective $\ast$-homomorphism provided by the universal property of $\mathfrak{A}$.

The strategy for proving Theorem~\ref{UniversalProperty} is as follows. First, in Lemma~\ref{IsomorphismTheorem2}, we show that $\phi$ restricts to an isomorphism between suitable C$^\ast$-subalgebras of $\overline{\mathfrak{A}}$ and $\mathfrak{A}(\mathbf{A}, \Gamma)$. In Lemma~\ref{FixedpointIdentification}, we identify these subalgebras with fixed point algebras for certain restrictions of the gauge actions. This identification, combined with orthogonality arguments (cf.\ Proposition~\ref{IsomorphismTheorem}) and an induction over the vertex set, yields the desired result.\\

Let us first complement the identities in Lemma \ref{MainIdentities} for products of operators in $\mathfrak{A}_{1}$ and $\mathfrak{A}_{2}$ with the following. The proof is straightforward and therefore omitted.

\begin{lemma} \label{UniversalMainIdentities}
Let $v \in V(\Gamma \setminus \emph{Star}(v_0))$, $a \in A_{v_0}$, and $b \in A_v$. Then the following identities hold:
\[
\kappa_1(\mathfrak{d}(a))\kappa_2(b^{\dagger}) = 
\kappa_1((a^{\dagger})^{*})\kappa_2(b^{\dagger}) = 
\kappa_1(\mathfrak{d}(a))\kappa_2(\mathfrak{d}(b)) = 0.
\]
\end{lemma}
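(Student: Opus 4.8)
The plan is to exploit the one-sided absorption of the projections $Q_{v_0}$ and $Q_v$ that is built into the definitions of the creation, diagonal and annihilation operators, and then to invoke the single defining orthogonality relation $\kappa_1(Q_{v_0})\kappa_2(Q_v)=0$, which holds precisely for $v\in V(\Gamma\setminus\mathrm{Star}(v_0))$.

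First I would record the following elementary identities, which hold inside $\mathfrak{A}_1$ and $\mathfrak{A}_2$ respectively and follow immediately from the idempotency of $Q_{v_0}$ and $Q_v$ together with Definition~\ref{CreationDefinition}: since $\mathfrak{d}(a)=Q_{v_0}aQ_{v_0}$ and $(a^{\dagger})^{*}=Q_{v_0}^{\perp}a^{*}Q_{v_0}$, both operators satisfy $xQ_{v_0}=x$; and since $b^{\dagger}=Q_vbQ_v^{\perp}$ and $\mathfrak{d}(b)=Q_vbQ_v$, both operators satisfy $Q_vy=y$. Applying the $\ast$-homomorphisms $\kappa_1$ and $\kappa_2$ yields $\kappa_1(\mathfrak{d}(a))=\kappa_1(\mathfrak{d}(a))\kappa_1(Q_{v_0})$ and $\kappa_1((a^{\dagger})^{*})=\kappa_1((a^{\dagger})^{*})\kappa_1(Q_{v_0})$, as well as $\kappa_2(b^{\dagger})=\kappa_2(Q_v)\kappa_2(b^{\dagger})$ and $\kappa_2(\mathfrak{d}(b))=\kappa_2(Q_v)\kappa_2(\mathfrak{d}(b))$; here one uses that $v_0\in V\mathrm{Star}(v_0)$, so $Q_{v_0}\in\mathfrak{A}_1$, and that $v\in V(\Gamma\setminus\{v_0\})$, so $Q_v\in\mathfrak{A}_2$.

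Each of the three products in the statement then factors through $\kappa_1(Q_{v_0})\kappa_2(Q_v)$: for instance
\[
\kappa_1(\mathfrak{d}(a))\,\kappa_2(b^{\dagger})=\kappa_1(\mathfrak{d}(a))\,\bigl(\kappa_1(Q_{v_0})\kappa_2(Q_v)\bigr)\,\kappa_2(b^{\dagger})=0,
\]
and in exactly the same way $\kappa_1((a^{\dagger})^{*})\kappa_2(b^{\dagger})=0$ and $\kappa_1(\mathfrak{d}(a))\kappa_2(\mathfrak{d}(b))=0$, the middle factor vanishing in each case because $v\in V(\Gamma\setminus\mathrm{Star}(v_0))$. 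I do not anticipate any genuine obstacle: the argument is purely formal, using only the idempotency of the projections $Q_w$ and the one orthogonality relation among the generators of $\overline{\mathfrak{A}}$. The only point that requires a moment's care is the bookkeeping that places $Q_{v_0}$ in $\mathfrak{A}_1$ and $Q_v$ in $\mathfrak{A}_2$, which is precisely what makes the relation $\kappa_1(Q_{v_0})\kappa_2(Q_v)=0$ applicable — this is essentially why the text leaves the proof to the reader.
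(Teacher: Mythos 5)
Your argument is correct and is precisely the ``straightforward'' proof the paper omits: the absorption identities $\mathfrak{d}(a)=\mathfrak{d}(a)Q_{v_0}$, $(a^{\dagger})^{*}=(a^{\dagger})^{*}Q_{v_0}$, $b^{\dagger}=Q_{v}b^{\dagger}$, $\mathfrak{d}(b)=Q_{v}\mathfrak{d}(b)$ hold already in $\mathfrak{A}_1$ and $\mathfrak{A}_2$ by Definition~\ref{CreationDefinition}, are transported by the $\ast$-homomorphisms $\kappa_1,\kappa_2$, and each product then factors through $\kappa_1(Q_{v_0})\kappa_2(Q_v)=0$. No gaps.
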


Choose an enumeration $(s_1,\dots,s_L)$ of the vertices in $V\Gamma$, where $L := \#V\Gamma$. For each $1 \le i \le L$ and reduced word $(v_1,\dots,v_k) \in \mathcal{W}_{\mathrm{red}}$, define
\[
\#_i(v_1,\dots,v_k) := \#\{1 \le j \le k \mid v_j = s_i\},
\]
i.e., the number of times the letter $s_i$ appears in the word.

Now, for any $1 \le r \le L$ and multi-index $(n_1,\dots,n_r) \in \mathbb{N}^r$, define
\[
B_{n_1,\dots,n_r} = \overline{\mathrm{Span}}^{\Vert \cdot \Vert}\left\{
(a_1^{\dagger} \cdots a_k^{\dagger})\, d\, (b_1^{\dagger} \cdots b_l^{\dagger})^* \ \middle| \ 
\begin{array}{l}
k, l \in \mathbb{N},\ (u_1,\dots,u_k), (v_1,\dots,v_l) \in \mathcal{W}_{\mathrm{red}}, \\[3pt]
a_i \in A_{u_i}^{\circ},\ b_j \in A_{v_j}^{\circ},\ d \in \mathcal{D}_0(\mathbf{A},\Gamma), \\[3pt]
\#_i(u_1,\dots,u_k) = \#_i(v_1,\dots,v_l) = n_i \ \forall \ 1 \le i \le r
\end{array}
\right\}
\]
and define the corresponding subspace $\overline{B}_{n_1,\dots,n_r} \subseteq \overline{\mathfrak{A}}$ analogously. Moreover, for $n \in \mathbb{N}$, define $B_{\emptyset}^{\le n} := \sum_{i=0}^{n} B_i$ and $\overline{B}_{\emptyset}^{\le n} := \sum_{i=0}^{n} \overline{B}_i$, and for $1 \le r \le L-1$ define $B_{n_1,\dots,n_r}^{\le n} := \sum_{i=0}^{n} B_{n_1,\dots,n_r,i}$ and $\overline{B}_{n_1,\dots,n_r}^{\le n} := \sum_{i=0}^{n} \overline{B}_{n_1,\dots,n_r,i}$.

\begin{lemma} \label{MultiplicationLemma} Let $a_{1}\in A_{u_{1}}^{\circ},\ldots,a_{k}\in A_{u_{k}}^{\circ}$, $b_{1}\in A_{v_{1}}^{\circ},\ldots,b_{l}\in A_{v_{l}}^{\circ}$ with $(u_{1},\ldots,u_{k}),(v_{1},\ldots,v_{l})\in\mathcal{W}_{\text{\emph{red}}}$, and $1\leq i\le L$. Assume that $p\leq q$ with $p:=\#_{i}(v_{1},\ldots,v_{k})$, $q:=\#_{i}(w_{1},\ldots,w_{l})$. Then the following statements hold: \\

\begin{enumerate}
\item For every $d\in\text{\emph{Span}}(\mathcal{D}_{0}(\mathbf{A},\Gamma))$, the product $(a_{1}^{\dagger}\cdots a_{k}^{\dagger})^{\ast}d(b_{1}^{\dagger}\cdots b_{l}^{\dagger})$ can be expressed as a sum of products of the form $(\widetilde{a}_{i_{1}}^{\dagger}\cdots\widetilde{a}_{i_{m}}^{\dagger})^{\ast}e(\widetilde{b}_{j_{1}}^{\dagger}\cdots\widetilde{b}_{j_{n}}^{\dagger})$ for suitable distinct numbers 
\[
1\leq i_{1}<\ldots<i_{m}\leq k\;\text{ and }\;1\leq j_{1}<\ldots<j_{n}\leq l,
\]
operators 
\[
\widetilde{a}_{i_{1}}\in A_{u_{i_{1}}}^{\circ},\ldots,\widetilde{a}_{i_{m}}\in A_{u_{i_{m}}}^{\circ}\,,\;\widetilde{b}_{j_{1}}\in A_{v_{j_{1}}}^{\circ},\ldots,\widetilde{b}_{j_{n}}\in A_{v_{j_{n}}}^{\circ}\,,\;e\in\text{\emph{Span}}(\mathcal{D}_{0}(\mathbf{A},\Gamma))
\]
such that $m\leq k-p$, $n\leq l-p$, $(u_{i_{1}},\ldots,u_{i_{m}}),(v_{j_{1}},\ldots,v_{j_{n}})\in\mathcal{W}_{\text{\emph{red}}}$ with $\#_{i}(u_{i_{1}},\ldots,u_{i_{m}})=0$ and 
\[
\#_{j}(u_{1},\ldots,u_{k})-\#_{j}(u_{i_{1}},\ldots,u_{i_{m}})=\#_{j}(v_{1},\ldots,v_{l})-\#_{j}(v_{j_{1}},\ldots,v_{j_{n}})
\]
for all $1\leq j\leq L$.
\item For every $d$ in the span of the union of $\kappa_{1}(\mathcal{D}_{0}(\mathbf{A}_1,\text{\emph{Star}}(v_{0})))$ and $\kappa_{2}(\mathcal{D}_{0}(\mathbf{A}_2,\Gamma\setminus\{v_{0}\}))$, the product $(\kappa_{u_{1}}(a_{1}^{\dagger})\cdots\kappa_{u_{k}}(a_{k}^{\dagger}))^{\ast}d(\kappa_{v_{1}}(b_{1}^{\dagger})\cdots\kappa_{v_{l}}(b_{l}^{\dagger}))$ can be expressed as a sum of products of the form $(\kappa_{u_{i_{1}}}(\widetilde{a}_{i_{1}}^{\dagger})\cdots\kappa_{u_{i_{m}}}(\widetilde{a}_{i_{m}}^{\dagger}))e(\kappa_{v_{j_{1}}}(\widetilde{b}_{j_{1}}^{\dagger})\cdots\kappa_{v_{j_{n}}}(\widetilde{b}_{j_{n}}^{\dagger}))$ for suitable distinct numbers 
\[
1\leq i_{1}<\ldots<i_{m}\leq k\;\text{ and }\;1\leq j_{1}<\ldots<j_{n}\leq l,
\]
operators 
\[
\widetilde{a}_{i_{1}}\in A_{u_{i_{1}}}^{\circ},\ldots,\widetilde{a}_{i_{m}}\in A_{u_{i_{m}}}^{\circ}\,,\;\widetilde{b}_{j_{1}}\in A_{v_{j_{1}}}^{\circ},\ldots,\widetilde{b}_{j_{n}}\in A_{v_{j_{n}}}^{\circ}\,,
\]
and 
\[
e\in\text{\emph{Span}}(\kappa_{1}(\mathcal{D}_{0}(\mathbf{A}_1,\text{\emph{Star}}(v_{0})))\cup\kappa_{2}(\mathcal{D}_{0}(\mathbf{A}_2,\Gamma\setminus\{v_{0}\})))
\]
such that $m\leq k-p$, $n\leq l-p$, $(u_{i_{1}},\ldots,u_{i_{m}}),(v_{j_{1}},\ldots,v_{j_{n}})\in\mathcal{W}_{\text{\emph{red}}}$ with $\#_{i}(u_{i_{1}},\ldots,u_{i_{m}})=0$ and 
\[
\#_{j}(u_{1},\ldots,u_{k})-\#_{j}(u_{i_{1}},\ldots,u_{i_{m}})=\#_{j}(v_{1},\ldots,v_{l})-\#_{j}(v_{j_{1}},\ldots,v_{j_{n}})
\]
for all $1\leq j\leq L$. 
\end{enumerate}

In particular,
\[
(a_{1}^{\dagger}\cdots a_{k}^{\dagger})^{\ast}d(b_{1}^{\dagger}\cdots b_{l}^{\dagger})\in\text{\emph{Span}}(\mathcal{D}_{0}(\mathbf{A},\Gamma))
\]
and 
\[
(\kappa_{u_{1}}(a_{1}^{\dagger})\cdots\kappa_{u_{k}}(a_{k}^{\dagger}))^{\ast}d(\kappa_{v_{1}}(b_{1}^{\dagger})\cdots\kappa_{v_{l}}(b_{l}^{\dagger}))\in\text{\emph{Span}}(\kappa_{1}(\mathcal{D}_{0}(\mathbf{A}_1,\text{\emph{Star}}(v_{0})))\cup\kappa_{2}(\mathcal{D}_{0}(\mathbf{A}_2,\Gamma\setminus\{v_{0}\}))
\]
if $\#_{i}(u_{1},\ldots,u_{k})=\#_{i}(v_{1},\ldots,v_{k})$ for all $1\leq i\leq L$. \end{lemma}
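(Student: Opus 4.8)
The plan is to prove parts (1) and (2) simultaneously by induction on $k$, the number of annihilation operators, regarding the index $i$ together with the hypothesis $p\le q$ as part of the data; the closing ``in particular'' assertion then follows by iterating the statement over all $L$ letters. For $k=0$ there is nothing to do: the product already equals $d\,(b_1^\dagger\cdots b_l^\dagger)$ (resp.\ its $\kappa$-image), and the choices $m=0$, $n=l$, $e=d$ satisfy every constraint trivially.

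For the inductive step, write the product as $(a_k^\dagger)^*\cdots(a_1^\dagger)^*\,d\,(b_1^\dagger\cdots b_l^\dagger)$ and act on $(a_1^\dagger)^*$, the annihilation operator at $w:=u_1$ sitting immediately to the left of $d$; by linearity we may take $d\in\mathcal{D}_0(\mathbf{A},\Gamma)$ a single clique-product. Using Lemma~\ref{MainIdentities}(2) (and its adjoint) I would push $(a_1^\dagger)^*$ through $d$: it is annihilated if $w$ is non-adjacent to a vertex of $\mathrm{supp}(d)$, and otherwise the $w$-factor of $d$ (if any) is absorbed into it, yielding a new annihilation operator $(\widehat a^\dagger)^*$ at $w$ together with the residual clique-diagonal $d'$ (supported in $\mathrm{Link}(w)$) moved to its left. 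Lemma~\ref{MainIdentities}(3) then governs how $(\widehat a^\dagger)^*$ meets $b_1^\dagger,b_2^\dagger,\dots$ in succession: it commutes past creation operators at vertices adjacent to $w$, kills the term on meeting one at a vertex non-adjacent to $w$, and collapses to a scalar multiple of $Q_w^\perp=1-\mathfrak{d}(1_{A_w})$ the moment it meets a creation operator at $w$ itself. If the process yields $0$ we are done. If $(\widehat a^\dagger)^*$ cancels against some $b_{m^*}^\dagger$ with $v_{m^*}=w$, I would commute the resulting $Q_w^\perp$ back to the left past $b_1^\dagger,\dots,b_{m^*-1}^\dagger$ and merge it with $d'$ into a single $\widetilde d\in\mathrm{Span}(\mathcal{D}_0(\mathbf{A},\Gamma))$ — possible because $\mathrm{supp}(d')\cup\{w\}$ is a clique and $\mathfrak{d}(1_{A_w})\in\mathcal{D}_0(\mathbf{A},\Gamma)$ — obtaining $(a_k^\dagger)^*\cdots(a_2^\dagger)^*\,\widetilde d\,(b_1^\dagger\cdots\widehat{b_{m^*}^\dagger}\cdots b_l^\dagger)$, which has one fewer annihilation operator, reduced defining words (or the product already vanishes, since otherwise two equal letters become shuffle-adjacent and multiply to $0$), and $p\le q$ preserved for the target letter; the induction hypothesis then applies, and a short calculation distinguishing $w=s_i$ from $w\ne s_i$ shows that the bounds $m\le k-p$, $n\le l-p$, the vanishing of $\#_i$ on the annihilation side, and the balance identities all propagate. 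Note that when $w=s_i$ the hypothesis forces $q\ge p\ge 1$, so some $v_j$ equals $s_i$ and $(\widehat a^\dagger)^*$ cannot commute past every $b_j^\dagger$: for the target letter only these first two outcomes occur.

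The remaining outcome is that $(a_1^\dagger)^*$ commutes past $d$ and past every $b_j^\dagger$, which forces $\mathrm{supp}(d)\cup\{v_1,\dots,v_l\}\subseteq\mathrm{Link}(w)$ and in particular $w\ne s_i$. Here I would instead apply the induction hypothesis to $(a_k^\dagger)^*\cdots(a_2^\dagger)^*\,d\,(b_1^\dagger\cdots b_l^\dagger)$ (again one fewer annihilation operator, unchanged counts), obtaining a sum of terms $(\widetilde a_{i_1}^\dagger\cdots)^*\,e\,(\widetilde b_{j_1}^\dagger\cdots)$ in which the surviving creation word is a subword of $(v_1,\dots,v_l)$ and $e$ is supported on vertices occurring among $\mathrm{supp}(d)$, $\{u_2,\dots,u_k\}$ and $\{v_1,\dots,v_l\}$; since the relevant ones lie in $\mathrm{Link}(w)$, the still-pending $(a_1^\dagger)^*$ commutes past $e$ and past the $\widetilde b$'s and is absorbed into the annihilation word as $(a_1^\dagger\,\widetilde a_{i_1}^\dagger\cdots)^*$ (or the term vanishes if prepending $u_1$ breaks reducedness), after which the bookkeeping constraints are checked directly. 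For part~(2) the argument is verbatim the same, Lemma~\ref{MainIdentities} being applied inside the isomorphic copies $\kappa_1(\mathfrak{A}_1)$ and $\kappa_2(\mathfrak{A}_2)$, while the role of the ``$E\Gamma^c$-vanishing'' relations is played by the vanishing of every product pairing a factor at $v_0$ with a factor at a vertex of $\Gamma\setminus\mathrm{Star}(v_0)$, which follows from the defining relation $\kappa_1(Q_{v_0})\kappa_2(Q_v)=0$ and Lemma~\ref{UniversalMainIdentities}. Finally, the ``in particular'' clause follows by invoking part~(1) (resp.\ (2)) in turn with target $i=1,\dots,L$: from a balanced input $\#_i(u_1,\dots,u_k)=\#_i(v_1,\dots,v_l)$ for all $i$, the balance identities keep every intermediate term balanced, so once all letters have been removed from the annihilation side the balance identities together with the vanishing $\#_i=0$ force $m=n=0$, leaving an element of $\mathrm{Span}(\mathcal{D}_0(\mathbf{A},\Gamma))$ (resp.\ of the stated span).

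The main obstacle is precisely the ``commute-through'' outcome and the bookkeeping it forces: to recombine the pending annihilation operator with the output of the induction hypothesis one must know that the surviving creation word and the accumulated diagonal remain supported inside $\mathrm{Link}(w)$, so the inductive statement has to be set up (or slightly strengthened) to carry exactly this support information, and the count inequalities and balance identities must be verified to survive each of the rewriting steps above — routine but lengthy.
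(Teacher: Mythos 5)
Your argument is correct in outline and rests on the same computational engine as the paper's proof --- namely the relations of Lemma~\ref{MainIdentities}, used to push annihilation operators rightward through the diagonal and the creation word until each one either dies, commutes away, or collapses against a matched creation operator into a scalar multiple of $Q_v^{\perp}$ --- but the induction is organized differently. The paper inducts on $p=\#_i$ of the designated letter: it locates the \emph{minimal} indices $r$, $r'$ with $u_r=s_i=v_{r'}$, moves $(a_1^{\dagger})^{\ast},\dots,(a_{r-1}^{\dagger})^{\ast}$ out of the way (none of them carries the letter $s_i$), and collapses the pair $(a_r^{\dagger})^{\ast}b_{r'}^{\dagger}=\omega_{s_i}(a_r^{\ast}b_{r'})Q_{s_i}^{\perp}$, which drops $\#_i$ by exactly one on each side; since $p\geq 1$ guarantees such a pair exists, there is never a ``commute all the way through'' outcome to handle. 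You instead induct on $k$ and peel off $(a_1^{\dagger})^{\ast}$ unconditionally, which forces you to treat the commute-through case, re-absorb the pending annihilation operator into the output of the induction hypothesis, and therefore strengthen the inductive statement to record that the surviving creation word and accumulated diagonal stay supported in $\mathrm{Link}(u_1)$. That support claim is in fact true --- new diagonal factors arise only at vertices common to an annihilation and a creation operator, and in the commute-through case all creation vertices and all of $\mathrm{supp}(d)$ lie in $\mathrm{Link}(u_1)$ --- so your scheme closes, but it buys nothing over the paper's choice of induction variable and costs you exactly the bookkeeping you flag as the main obstacle. If you target the first occurrence of $s_i$ rather than the first annihilation operator, that entire case disappears.
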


\begin{proof} We prove only the first statement, as the second one follows analogously. We begin by establishing the following claim.\\

\emph{Claim}. Let $a_{1}\in A_{u_{1}}^{\circ},\ldots,a_{k}\in A_{u_{k}}^{\circ}$, $b_{1}\in A_{v_{1}}^{\circ},\ldots,b_{l}\in A_{v_{l}}^{\circ}$ and $d\in\text{Span}(\mathcal{D}_{0}(\mathbf{A},\Gamma))$ with $(u_{1},\ldots,u_{k}),(v_{1},\ldots,v_{l})\in\mathcal{W}_{\text{red}}$ and $1\leq i\le L$. Assume that $1\leq p\leq q$ with $p:=\#_{i}(u_{1},\ldots,u_{k})$, $q:=\#_{i}(v_{1},\ldots,v_{l})$. Then the product $(a_{1}^{\dagger}\cdots a_{k}^{\dagger})^{\ast}d(b_{1}^{\dagger}\cdots b_{l}^{\dagger})$ can be written as a sum of products of the form $(\widetilde{a}_{i_{1}}^{\dagger}\cdots\widetilde{a}_{i_{m}}^{\dagger})^{\ast}e(\widetilde{b}_{j_{1}}^{\dagger}\cdots\widetilde{b}_{j_{n}}^{\dagger})$ for suitable distinct numbers $1\leq i_{1}<\ldots<i_{m}\leq k$, $1\leq j_{1}<\ldots<j_{n}\leq l$, operators $\widetilde{a}_{i_{1}}\in A_{u_{i_{1}}}^{\circ},\ldots,\widetilde{a}_{i_{m}}\in A_{u_{i_{m}}}^{\circ}$, $\widetilde{b}_{j_{1}}\in A_{v_{j_{1}}}^{\circ},\ldots,\widetilde{b}_{j_{n}}\in A_{v_{j_{n}}}^{\circ}$, and $e\in\text{Span}(\mathcal{D}_{0}(\mathbf{A},\Gamma))$ such that $m\leq k-1$, $n\leq l-1$, $(u_{i_{1}},\ldots,u_{i_{m}}),(v_{j_{1}},\ldots,v_{j_{n}})\in\mathcal{W}_{\text{red}}$ with $\#_{i}(u_{i_{1}},\ldots,u_{i_{m}})=p-1$ and $\#_{j}(u_{1},\ldots,u_{k})-\#_{j}(u_{i_{1}},\ldots,u_{i_{m}})=\#_{j}(v_{1},\ldots,v_{l})-\#_{j}(v_{j_{1}},\ldots,v_{j_{n}})$ for all $1\leq j\leq L$.

\emph{Proof of the claim}. Without loss of generality, we may assume that $(a_{1}^{\dagger}\cdots a_{k}^{\dagger})^{\ast}d(b_{1}^{\dagger}\cdots b_{l}^{\dagger})\neq0$ and that $d=\mathfrak{d}(c_{1})\cdots\mathfrak{d}(c_{t})$ with $c_{1}\in A_{w_{1}}$, \ldots , $c_{t}\in A_{w_{t}}$, where $\{w_{1},\ldots,w_{t}\}\subseteq V\Gamma$ is forming a clique. Let $1\leq r\leq k$ and $1\leq r^{\prime}\leq l$ be the minimal integers with $u_{r}=s_{i}$ and $v_{r^{\prime}}=s_{i}$ so that 
\[
\left(a_{1}^{\dagger}\cdots a_{k}^{\dagger}\right)^{\ast}d\left(b_{1}^{\dagger}\cdots b_{l}^{\dagger}\right)=\left(a_{r+1}^{\dagger}\cdots a_{k}^{\dagger}\right)^{\ast}\left[\left(a_{1}^{\dagger}\cdots a_{r}^{\dagger}\right)^{\ast}\left(\mathfrak{d}(c_{1})\cdots\mathfrak{d}(c_{t})\right)\left(b_{1}^{\dagger}\cdots b_{r^{\prime}}^{\dagger}\right)\right]\left(b_{r^{\prime}+1}^{\dagger}\cdots b_{l}^{\dagger}\right).
\]
By Lemma \ref{MainIdentities} and the assumption $(a_{1}^{\dagger}\cdots a_{k}^{\dagger})^{\ast}d(b_{1}^{\dagger}\cdots b_{l}^{\dagger})\neq0$, the operator $(a_{1}^{\dagger})^{\ast}$ either commutes with $\mathfrak{d}(c_{1}),\ldots,\mathfrak{d}(c_{t})$, or there exists $1\leq j\leq t$ with $w_{j}=u_{1}$ so that 
\[
(a_{1}^{\dagger})^{\ast}\left(\mathfrak{d}(c_{1})\cdots\mathfrak{d}(c_{t})\right)=\left(\left(c_{j}^{\ast}a_{1}-\omega_{u_{1}}(c_{j}^{\ast}a_{1})1\right)^{\dagger}\right)^{\ast}\left(\mathfrak{d}(c_{1})\cdots\mathfrak{d}(c_{j-1})\mathfrak{d}(c_{j+1})\cdots\mathfrak{d}(c_{t})\right).
\]
It clearly suffices to consider only the first case, in which
\[
(a_{1}^{\dagger}\cdots a_{r}^{\dagger})^{\ast}(\mathfrak{d}(c_{1})\cdots\mathfrak{d}(c_{t}))=(a_{2}^{\dagger}\cdots a_{r}^{\dagger})^{\ast}(\mathfrak{d}(c_{1})\cdots\mathfrak{d}(c_{t}))(a_{1}^{\dagger})^{\ast}.
\]
Now, either $(a_{1}^{\dagger})^{\ast}$ and $b_{1}^{\dagger}$ commute with each other, or $u_{1}=v_{1}$. In the second case, Lemma \ref{MainIdentities} implies $(a_{1}^{\dagger})^{\ast}b_{1}^{\dagger}=\omega_{u_{1}}(a_{1}^{\ast}b_{1})Q_{u_{1}}^{\perp}$ so we may again reduce to the case where the operators commute.  Proceeding in this manner, we may assume that $(a_{1}^{\dagger})^{\ast}$ commutes with all operators $\mathfrak{d}(c_{1}),\ldots,\mathfrak{d}(c_{t})$ and $b_{1}^{\dagger},\ldots,b_{r^{\prime}-1}^{\dagger}$ so that $(a_{1}^{\dagger})^{\ast}d(b_{1}^{\dagger}\cdots b_{r^{\prime}-1}^{\dagger})=(\mathfrak{d}(c_{1})\cdots\mathfrak{d}(c_{t}))(b_{1}^{\dagger}\cdots b_{r^{\prime}-1}^{\dagger})(a_{1}^{\dagger})^{\ast}$. By repeating this argument, we conclude that it suffices to assume that all operators $(a_{1}^{\dagger})^{\ast},\ldots,(a_{r-1}^{\dagger})^{\ast}$ commute with $\mathfrak{d}(c_{1}),\ldots,\mathfrak{d}(c_{t})$ and $b_{1}^{\dagger},\ldots,b_{r^{\prime}-1}^{\dagger}$, implying that 
\begin{eqnarray*}
 &  & \left(a_{1}^{\dagger}\cdots a_{k}^{\dagger}\right)^{\ast}d\left(b_{1}^{\dagger}\cdots b_{k}^{\dagger}\right)\\
 & = & \left(a_{r+1}^{\dagger}\cdots a_{k}^{\dagger}\right)^{\ast}\times\left[(a_{r}^{\dagger})^{\ast}\left(\mathfrak{d}(c_{1})\cdots\mathfrak{d}(c_{t})\right)\left(b_{1}^{\dagger}\cdots b_{r^{\prime}-1}^{\dagger}\right)\left(a_{1}^{\dagger}\cdots a_{r-1}^{\dagger}\right)^{\ast}b_{r^{\prime}}\right]\times\left(b_{r^{\prime}+1}^{\dagger}\cdots b_{k}^{\dagger}\right).
\end{eqnarray*}
Note that by the choice of $r$ and $r'$, none of the operators $a_{1}^{\dagger}, \ldots, a_{r-1}^{\dagger}, b_{1}^{\dagger}, \ldots, b_{r'-1}^{\dagger}$ lies in $A_{s_{i}}^{\circ}$. Assuming further that $s_{i} \notin \{w_{1}, \ldots, w_{t}\}$, we obtain
\begin{eqnarray*}
 &  & \left(a_{1}^{\dagger}\cdots a_{k}^{\dagger}\right)^{\ast}d\left(b_{1}^{\dagger}\cdots b_{k}^{\dagger}\right)\\
 & = & \left(a_{r+1}^{\dagger}\cdots a_{k}^{\dagger}\right)^{\ast}\left[\left(\mathfrak{d}(c_{1})\cdots\mathfrak{d}(c_{t})\right)\left(b_{1}^{\dagger}\cdots b_{r^{\prime}-1}^{\dagger}\right)\right]\left((a_{r}^{\dagger})^{\ast}b_{r^{\prime}}\right)\left(a_{1}^{\dagger}\cdots a_{r-1}^{\dagger}\right)^{\ast}\left(b_{r^{\prime}+1}^{\dagger}\cdots b_{k}^{\dagger}\right)\\
 & = & \left(a_{r+1}^{\dagger}\cdots a_{k}^{\dagger}\right)^{\ast}\left[\left(\mathfrak{d}(c_{1})\cdots\mathfrak{d}(c_{t})\right)\left(b_{1}^{\dagger}\cdots b_{r^{\prime}-1}^{\dagger}\right)\right]\left(\omega_{s_{i}}(a_{r}^{\ast}b_{r^{\prime}})Q_{s_{i}}^{\perp}\right)\left(a_{1}^{\dagger}\cdots a_{r-1}^{\dagger}\right)^{\ast}\left(b_{r^{\prime}+1}^{\dagger}\cdots b_{k}^{\dagger}\right)\\
 & = & \omega_{s_{i}}(a_{r}^{\ast}b_{r^{\prime}})\left(a_{1}^{\dagger}\cdots a_{r-1}^{\dagger}a_{r+1}^{\dagger}\cdots a_{k}^{\dagger}\right)^{\ast}\left(\mathfrak{d}(c_{1})\cdots\mathfrak{d}(c_{t})Q_{s_{i}}^{\perp}\right)\left(b_{1}^{\dagger}\cdots b_{r^{\prime}-1}^{\dagger}b_{r^{\prime}+1}^{\dagger}\cdots b_{k}^{\dagger}\right).
\end{eqnarray*}
Since $\#_{i}(u_{1},\ldots,\widehat{u_{r}},\ldots,u_{k})=p-1$, $\#_{i}(v_{1},\ldots,\widehat{v_{r^{\prime}}},\ldots,v_{k})=q-1$ and $\mathfrak{d}(c_{1})\cdots\mathfrak{d}(c_{t})Q_{s_{i}}^{\perp}\in\text{Span}(\mathcal{D}_{0}(\mathbf{A},\Gamma))$, we conclude that $(a_{1}^{\dagger}\cdots a_{k}^{\dagger})^{\ast}d(b_{1}^{\dagger}\cdots b_{l}^{\dagger})$ can indeed be expressed in the desired form.\\

It is now clear that repeated application of the above claim yields the first statement of the lemma. The final statement follows analogously. \end{proof}

\begin{proposition} \label{IdealConstruction} Let $1\leq k\leq L-1$, $(n_{1},\ldots,n_{r})\in\mathbb{N}^{k}$, and $n\in\mathbb{N}$. Then:
\begin{enumerate}
\item $B_{\emptyset}^{\leq n}=\sum_{i=0}^{n}B_{i}$ is a C$^{\ast}$-algebra containing $B_{n}$ as a closed two-sided ideal.
\item $\overline{B}_{\emptyset}^{\leq n}=\sum_{i=0}^{n}\overline{B}_{i}$ is a C$^{\ast}$-algebra containing $\overline{B}_{n}$ as a closed two-sided ideal.
\item $B_{n_{1},\ldots,n_{r}}^{\leq n}=\sum_{i=0}^{n}B_{n_{1},\ldots,n_{r},i}$ is a C$^{\ast}$-algebra containing $B_{n_{1},\ldots,n_{r},n}$ as a closed two-sided ideal.
\item $\overline{B}_{n_{1},\ldots,n_{r}}^{\leq n}=\sum_{i=0}^{n}\overline{B}_{n_{1},\ldots,n_{r},i}$ is a C$^{\ast}$-algebra containing $\overline{B}_{n_{1},\ldots,n_{r},n}$ as a closed two-sided ideal.
\end{enumerate}
\end{proposition}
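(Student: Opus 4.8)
The plan is to reduce all four assertions to a single multiplicativity estimate for the generating subspaces and then to argue by induction on $n$. First, (1) is the instance $r=0$ of (3) (with $B_\emptyset^{\le n}=\sum_{i=0}^{n}B_{(i)}$), and (2) and (4) are obtained from (1) and (3) by repeating the argument inside $\overline{\mathfrak{A}}$, replacing every $a\in A_v^{\circ}$ by $\kappa_v(a)$ and every $\mathfrak{d}(a)$ by $\kappa_v(\mathfrak{d}(a))$, and using Lemma~\ref{MultiplicationLemma}(2) in place of Lemma~\ref{MultiplicationLemma}(1) and Lemma~\ref{UniversalMainIdentities} wherever the unbarred argument invokes an $E\Gamma^{c}$-identity of Lemma~\ref{MainIdentities} relating $v_{0}$ to a vertex of $V(\Gamma\setminus\mathrm{Star}(v_{0}))$. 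So it suffices to prove (3); fix $(n_{1},\dots,n_{r})\in\mathbb{N}^{r}$ with $1\le r\le L-1$ and abbreviate $B_{i}:=B_{n_{1},\dots,n_{r},i}$ and $B^{\le n}:=\sum_{i=0}^{n}B_{i}$. That each $B_{i}$ is a norm-closed, $\ast$-closed subspace is immediate from its spanning description, since the adjoint of $(a_{1}^{\dagger}\cdots a_{k}^{\dagger})\,d\,(b_{1}^{\dagger}\cdots b_{l}^{\dagger})^{\ast}$ just interchanges the two reduced words — whose $i$-th letter-counts both equal $n_{i}$ for $i\le r$ — and replaces $d$ by $d^{\ast}\in\mathcal{D}_{0}(\mathbf{A},\Gamma)$.

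The key step, and the one I expect to be the main obstacle, is the estimate $B_{p}\cdot B_{q}\subseteq B_{\max(p,q)}$ for all $p,q\in\mathbb{N}$. By continuity and $\ast$-invariance it is enough to verify this on elementary operators, and, replacing $xy$ by $(y^{\ast}x^{\ast})^{\ast}$ if necessary, one may assume $p\le q$. For $x=(a_{1}^{\dagger}\cdots a_{k}^{\dagger})\,d\,(b_{1}^{\dagger}\cdots b_{l}^{\dagger})^{\ast}\in B_{p}$ and $y=(c_{1}^{\dagger}\cdots c_{k'}^{\dagger})\,e\,(f_{1}^{\dagger}\cdots f_{l'}^{\dagger})^{\ast}\in B_{q}$, the product $xy$ consists of the outer creation block $a_{1}^{\dagger}\cdots a_{k}^{\dagger}$, the ``inner'' operator $d\,(b_{1}^{\dagger}\cdots b_{l}^{\dagger})^{\ast}\,(c_{1}^{\dagger}\cdots c_{k'}^{\dagger})\,e$, and the outer annihilation block $(f_{1}^{\dagger}\cdots f_{l'}^{\dagger})^{\ast}$. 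I would then use the identities of Lemma~\ref{MainIdentities} to move the surviving inner creations leftward (past the inner annihilations via Lemma~\ref{MainIdentities}(3), past $d$ via the $\mathfrak{d}\cdot(\text{creation})$ cases of Lemma~\ref{MainIdentities}(2)) so as to merge them with $a_{1}^{\dagger}\cdots a_{k}^{\dagger}$, and symmetrically move the surviving inner annihilations rightward to merge them with $(f_{1}^{\dagger}\cdots f_{l'}^{\dagger})^{\ast}$; a pair at a common vertex contracts to a scalar multiple of $Q_{v}^{\perp}=1-\mathfrak{d}(1)$, a pair whose two vertices are $E\Gamma$-adjacent commutes, and a pair whose two vertices are $E\Gamma^{c}$-adjacent annihilates the term, while Lemma~\ref{MainIdentities}(5) — together with the identities $Q_{v}^{\perp}a^{\dagger}=0$ for $a\in A_{v}$ and $Q_{v}a^{\dagger}=0$ for $a\in A_{v'}$ with $(v,v')\in E\Gamma^{c}$ — ensures that every residual diagonal factor remains in $\mathrm{Span}(\mathcal{D}_{0}(\mathbf{A},\Gamma))$. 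To quantify the cancellation I would apply Lemma~\ref{MultiplicationLemma}(1) to the inner annihilation–creation block successively for $i=r+1,r,\dots,1$: since $p\le q$ and the $i$-th counts of the $b$- and $c$-words agree for $i\le r$, this clears all $s_{i}$-annihilations with $i\le r+1$ and leaves exactly $q-p$ surplus $s_{r+1}$-creations and no surplus $s_{i}$-creations for $i\le r$. Propagating these counts through the merging shows that each elementary summand of $xy$ has $s_{i}$-count $n_{i}$ for $i\le r$ and $s_{r+1}$-count $\max(p,q)$ on both sides, i.e.\ lies in $B_{\max(p,q)}$. The genuinely delicate part is this bookkeeping; the underlying manipulations are of the type already carried out in the proofs of Proposition~\ref{DensityStatement} and Corollary~\ref{DiagonalCharacterization}.

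Granting the estimate, (3) follows by induction on $n$, using the standard fact that for a closed two-sided ideal $I$ of a C$^{\ast}$-algebra $C$ and a C$^{\ast}$-subalgebra $D\subseteq C$ the sum $I+D=\pi^{-1}(\pi(D))$ — with $\pi\colon C\to C/I$ the quotient map — is again a C$^{\ast}$-subalgebra of $C$, in particular norm-closed. For $n=0$, $B^{\le 0}=B_{0}$ is norm-closed, $\ast$-closed, and closed under multiplication by the estimate, hence a C$^{\ast}$-algebra, and trivially a closed ideal in itself. For the inductive step put $C:=\overline{B^{\le n}}^{\,\|\cdot\|}$, a C$^{\ast}$-algebra by the estimate. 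By the estimate, $B_{n}$ is a C$^{\ast}$-subalgebra of $C$, and since $B_{n}B_{i}\subseteq B_{\max(n,i)}=B_{n}$ and $B_{i}B_{n}\subseteq B_{n}$ for all $i\le n$ while $B^{\le n}$ is dense in $C$, it is a closed two-sided ideal of $C$; by the inductive hypothesis $B^{\le n-1}$ is a C$^{\ast}$-subalgebra of $C$. The standard fact now gives that $B^{\le n}=B_{n}+B^{\le n-1}$ is a C$^{\ast}$-subalgebra of $C$, hence norm-closed (so $B^{\le n}=C$), containing $B_{n}$ as a closed two-sided ideal. Running the same induction with $\kappa$-images inside $\overline{\mathfrak{A}}$ establishes (2) and (4).
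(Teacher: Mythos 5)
Your proposal is correct and follows essentially the same route as the paper: both reduce everything to the multiplicativity statements $B_{n_1,\dots,n_r,p}\,B_{n_1,\dots,n_r,q}\subseteq B_{n_1,\dots,n_r,\max(p,q)}$ (i.e.\ each $B_{n_1,\dots,n_r,n}$ is an algebra and absorbs the lower pieces), derive these from Lemma~\ref{MultiplicationLemma} combined with the commutation/contraction identities used in Proposition~\ref{DensityStatement}, and then conclude by induction on $n$. The only difference is presentational — you make explicit the count bookkeeping and the ``closed ideal plus C$^{\ast}$-subalgebra is closed'' step that the paper leaves implicit.
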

\begin{proof} We prove only the third statement, as the others follow analogously.

By induction, it suffices to show that $B_{n_{1},\ldots,n_{r},n}$ is a C$^{\ast}$-algebra with
\[
B_{n_{1},\ldots,n_{r}}^{\leq(n-1)}B_{n_{1},\ldots,n_{r},n}B_{n_{1},\ldots,n_{r}}^{\leq(n-1)}\subseteq B_{n_{1},\ldots,n_{r},n}
\]
for every $n\in\mathbb{N}$. That $B_{n_{1},\ldots,n_{r},n}$ is closed under multiplication follows from Lemma \ref{MultiplicationLemma} and the same reasoning as in the proof of Proposition \ref{DensityStatement}. Since $B_{n_{1},\ldots,n_{r},n}$ is closed and $\ast$-invariant, it must be a C$^{\ast}$-algebra. For the inclusion, it suffices to show that $B_{n_{1},\ldots,n_{r},m}B_{n_{1},\ldots,n_{r},n}\subseteq B_{n_{1},\ldots,n_{r},n}$ for all $m,n\in\mathbb{N}$ with $m\leq n$. Again, this follows from Lemma \ref{MultiplicationLemma} and the same argument used in the proof of Proposition \ref{DensityStatement}. \end{proof}

\begin{proposition} \label{IsomorphismTheorem}
For every tuple $(n_{1},\ldots,n_{L})\in\mathbb{N}^{L}$, the $\ast$-homomorphism $\phi:\overline{\mathfrak{A}}\twoheadrightarrow\mathfrak{A}(\mathbf{A},\Gamma)$ restricts to an isomorphism of the C$^{\ast}$-algebras $\overline{\text{\emph{Span}}}^{\Vert \cdot \Vert}(\kappa_{1}(\mathcal{D}_{0}(\mathbf{A}_1,\text{\emph{Star}}(v_{0})))\cup\kappa_{2}(\mathcal{D}_{0}(\mathbf{A}_2,\Gamma\setminus\{v_{0}\})))$ and $\overline{\text{\emph{Span}}}^{\Vert \cdot \Vert}(\mathcal{D}_{0}(\mathbf{A},\Gamma))$.
\end{proposition}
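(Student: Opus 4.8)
The plan is to exhibit an explicit inverse to the restriction of $\phi$. Write $\mathcal{D}_{\mathrm{univ}} := \overline{\mathrm{Span}}^{\Vert\cdot\Vert}\bigl(\kappa_1(\mathcal{D}_0(\mathbf{A}_1,\mathrm{Star}(v_0)))\cup\kappa_2(\mathcal{D}_0(\mathbf{A}_2,\Gamma\setminus\{v_0\}))\bigr)$ and $\mathcal{D}_0 := \overline{\mathrm{Span}}^{\Vert\cdot\Vert}(\mathcal{D}_0(\mathbf{A},\Gamma))$. First I would observe that $\mathcal{D}_0(\mathbf{A},\Gamma)$ is generated (as a set closed under the relevant products, via Lemma~\ref{MainIdentities}(4)) by the diagonal operators $\mathfrak{d}(c)$ with $c\in A_w$, $w\in V\Gamma$, subject only to the relations $\mathfrak{d}(c)\mathfrak{d}(c')=0$ if $(w,w')\in E\Gamma^c$, $[\mathfrak{d}(c),\mathfrak{d}(c')]=0$ if $(w,w')\in E\Gamma$, linearity and multiplicativity of $c\mapsto\mathfrak{d}(c)$ within a fixed vertex algebra, together with $\mathfrak{d}(1)=Q_w$ acting as the relevant support projection. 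The point is that every element of $\mathcal{D}_0(\mathbf{A},\Gamma)$ involves only diagonal operators $\mathfrak{d}(c)$ with $c\in A_w$ and $w$ ranging over a clique; and every clique $\{w_1,\dots,w_n\}\subseteq V\Gamma$ either is entirely contained in $V\mathrm{Star}(v_0)$, or does not contain $v_0$, hence is contained in $V(\Gamma\setminus\{v_0\})$. Consequently $\phi$ maps $\mathcal{D}_{\mathrm{univ}}$ onto $\mathcal{D}_0$ (surjectivity is immediate from this clique dichotomy).

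**Construction of the inverse.** The crux is injectivity, which I would obtain by constructing a $\ast$-homomorphism $\psi:\mathcal{D}_0\to\mathcal{D}_{\mathrm{univ}}$ with $\psi\circ\phi=\mathrm{id}$ on $\mathcal{D}_{\mathrm{univ}}$. Since the $\mathfrak{d}(c)$ for $c$ in a fixed clique commute and, across non-adjacent vertices, annihilate each other, I would use Lemma~\ref{IdentificationLemma} as a template: the $\ast$-algebra generated by $\{\mathfrak{d}(c)\}$ has a presentation by the same clique-supported relations on both sides. On the universal side, set $\psi(\mathfrak{d}(c)) := \kappa_{w}(\mathfrak{d}(c))$ where $\kappa_w = \kappa_1$ if $w\in V\mathrm{Star}(v_0)$ and $\kappa_w = \kappa_2$ if $w\in V(\Gamma\setminus\{v_0\})$; this is well-defined on vertices $w\in V\mathrm{Link}(v_0)$ because $\kappa_1|_B=\kappa_2|_B$. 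One must check this assignment respects all relations: commutation along $E\Gamma$ holds because $\kappa_1,\kappa_2$ are $\ast$-homomorphisms and cliques split across the two subgraphs as above (the only potentially problematic pair is $(v_0,v)$ with $(v_0,v)\in E\Gamma^c$, for which the product $\mathfrak{d}(c_{v_0})\mathfrak{d}(c_v)$ is zero and $\kappa_1(\mathfrak{d}(c_{v_0}))\kappa_2(\mathfrak{d}(c_v))=0$ by Lemma~\ref{UniversalMainIdentities}). Hence $\psi$ extends to a $\ast$-homomorphism on the dense $\ast$-subalgebra $\mathrm{Span}(\mathcal{D}_0(\mathbf{A},\Gamma))$, and boundedness (so that it extends to $\mathcal{D}_0$) follows because a $\ast$-homomorphism between C$^\ast$-algebras is automatically contractive once defined on a dense $\ast$-subalgebra with a consistent C$^\ast$-norm — more precisely I would verify that $\psi$ is isometric by the same vector-state argument as in Lemma~\ref{IdentificationLemma}, using vectors $\eta_{\mathbf{v}}$ built from unit vectors $\eta_w\in\mathcal{H}_w^\circ$ on both sides and the fact that the representation of $\mathcal{D}_{\mathrm{univ}}$ on $\mathcal{H}_\Gamma$ via $\phi$ detects the norm.

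**The main obstacle and how to handle it.** The hard part will be verifying that $\psi$ is well-defined and isometric without circularity — i.e., showing that the relations among the $\kappa_w(\mathfrak{d}(c))$ in $\overline{\mathfrak{A}}$ are \emph{exactly} those among the $\mathfrak{d}(c)$ in $\mathfrak{A}(\mathbf{A},\Gamma)$, rather than a priori possibly more. I expect the cleanest route is to avoid building $\psi$ abstractly and instead argue that $\phi$ is isometric on $\mathcal{D}_{\mathrm{univ}}$ directly: take $x\in\mathcal{D}_{\mathrm{univ}}$, approximate it by a finite sum $y$ of elementary products $\kappa_1(d_1)\cdots$ or $\kappa_2(d_2)\cdots$ with $d_i\in\mathcal{D}_0(\mathbf{A}_i,\cdot)$; using the clique dichotomy, each such product collapses (via the C$^\ast$-identities of $\mathfrak{A}_1$, resp.\ $\mathfrak{A}_2$, and the relation $\kappa_1(Q_{v_0})\kappa_2(Q_v)=0$ together with Lemma~\ref{UniversalMainIdentities}) into a single diagonal word supported on one clique, hence lies in $\kappa_1(\mathcal{D}_0(\mathbf{A}_1,\cdot))$ or $\kappa_2(\mathcal{D}_0(\mathbf{A}_2,\cdot))$; then $\Vert y\Vert$ is computed inside $\mathfrak{A}_1$ or $\mathfrak{A}_2$ respectively, on which $\kappa_1,\kappa_2$ are faithful (injective $\ast$-homomorphisms between C$^\ast$-algebras, hence isometric), giving $\Vert y\Vert = \Vert\phi(y)\Vert$. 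Passing to the limit yields $\Vert x\Vert = \Vert\phi(x)\Vert$, so $\phi|_{\mathcal{D}_{\mathrm{univ}}}$ is an isometric surjection onto $\mathcal{D}_0$, hence an isomorphism. Note that the parameter $(n_1,\dots,n_L)$ in the statement is vacuous here — the subalgebras named in the proposition do not depend on it — so no induction over multi-indices is needed for this particular proposition; it is the later lemmas that will exploit the grading.
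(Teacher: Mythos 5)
Your surjectivity argument via the clique dichotomy is correct, and you are right that the tuple $(n_{1},\ldots,n_{L})$ plays no role in this particular statement. The gap is in the isometry argument. After approximating $x$ by a finite sum $y$ of elementary diagonal products, you correctly observe that each \emph{individual} summand collapses into $\kappa_{1}(\mathcal{D}_{0}(\mathbf{A}_{1},\mathrm{Star}(v_{0})))$ or $\kappa_{2}(\mathcal{D}_{0}(\mathbf{A}_{2},\Gamma\setminus\{v_{0}\}))$, but you then assert that ``$\Vert y\Vert$ is computed inside $\mathfrak{A}_{1}$ or $\mathfrak{A}_{2}$ respectively.'' That only makes sense if the whole sum $y$ lies in a single one of the two images; in general it is a genuinely mixed sum $y=y_{1}+y_{2}$ with $y_{1}\in\mathrm{im}(\kappa_{1})$ and $y_{2}\in\mathrm{im}(\kappa_{2})$, and the faithfulness of $\kappa_{1}$ and $\kappa_{2}$ separately gives no control on $\Vert y_{1}+y_{2}\Vert$ versus $\Vert\phi(y_{1})+\phi(y_{2})\Vert$. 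Controlling the norm of exactly such mixed sums is the entire content of the proposition, so your argument breaks down at the decisive step (and your alternative route via an abstract inverse $\psi$ runs into the circularity you yourself flag).

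The paper closes this gap with an orthogonal decomposition rather than a term-by-term one. Setting $P:=\bigvee_{v\in V(\Gamma\setminus\mathrm{Star}(v_{0}))}Q_{v}\in\mathfrak{A}_{2}$ and $Q:=\kappa_{1}(Q_{v_{0}})+\kappa_{2}(P)$ (a sum of mutually orthogonal projections thanks to the hypothesis $\kappa_{1}(Q_{v_{0}})\kappa_{2}(Q_{v})=0$), one writes $x=\kappa_{1}(Q_{v_{0}})x+\kappa_{2}(P)x+Q^{\perp}x$. The three summands have mutually orthogonal supports and ranges, so $\Vert x\Vert=\max\{\Vert\kappa_{1}(Q_{v_{0}})x\Vert,\Vert\kappa_{2}(P)x\Vert,\Vert Q^{\perp}x\Vert\}$; the first two pieces lie in $\mathrm{im}(\kappa_{1})$, resp.\ $\mathrm{im}(\kappa_{2})$, where $\phi$ is isometric, and the third equals $Q^{\perp}\kappa_{1}(y)$ for some $y$ in the diagonal algebra of $\mathrm{Link}(v_{0})$, with $\Vert Q^{\perp}x\Vert\leq\Vert y\Vert=\Vert(Q_{v_{0}}+P)^{\perp}y\Vert$. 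Taking the maximum of the three bounds then yields $\Vert x\Vert\leq\Vert\phi(x)\Vert$. To repair your proof you need some version of this cutting by $\kappa_{1}(Q_{v_{0}})$, $\kappa_{2}(P)$ and $Q^{\perp}$ (or an equivalent device producing orthogonal pieces each landing in a single image) before you can invoke the known isometry of $\kappa_{1}$ and $\kappa_{2}$.
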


\begin{proof}
First, note that the linear space $\overline{\text{Span}}^{\Vert \cdot \Vert}(\kappa_{1}(\mathcal{D}_{0}(\mathbf{A}_1,\text{Star}(v_{0})))\cup\kappa_{2}(\mathcal{D}_{0}(\mathbf{A}_2,\Gamma\setminus\{v_{0}\})))$ is indeed a C$^{\ast}$-algebra. By the orthogonality of $\kappa_{1}(Q_{v_{0}})$ and all projections $\kappa_{2}(Q_{v})$ for $v\in V(\Gamma \setminus \text{Star}(v_0))$, we may express every element $x$ in this C$^{\ast}$-algebra as a sum of the form
\begin{eqnarray}
\nonumber x &=& \left(\kappa_{1}(Q_{v_{0}})+\kappa_{2}(P)+Q^{\perp}\right)x\left(\kappa_{1}(Q_{v_{0}})+\kappa_{2}(P)+Q^{\perp}\right) \\
\nonumber &=& \kappa_{1}(Q_{v_{0}})x+\kappa_{2}(P)x+Q^{\perp}x,
\end{eqnarray}
where $P := \bigvee_{v\in V(\Gamma \setminus \text{Star}(v_{0}))}Q_{v}\in\mathfrak{A}_{2}$ and $Q := \kappa_{1}(Q_{v_{0}})+\kappa_{2}(P)$. Note that $\kappa_{1}(Q_{v_{0}})x\in\text{im}(\kappa_{1})$, $\kappa_{2}(P)x\in\text{im}(\kappa_{2})$, and
\[
Q^{\perp}x \in Q^{\perp}\kappa_{1}\left(\overline{\text{Span}}^{\Vert \cdot \Vert}\left(\mathcal{D}_{0}(\mathbf{B},\text{Link}(v_{0}))\right)\right).
\]

By construction, all summands in the expression above have orthogonal support and ranges, so that
\[
\Vert x\Vert = \max\{\Vert\kappa_{1}(Q_{v_{0}})x\Vert,\Vert\kappa_{2}(P)x\Vert,\Vert Q^{\perp}x\Vert\}.
\]
Since $\phi$ is isometric on $\text{im}(\kappa_{1})$ and $\text{im}(\kappa_{2})$, we have $\Vert\kappa_{1}(Q_{v_{0}})x\Vert = \Vert Q_{v_{0}}\phi(x)\Vert$ and $\Vert\kappa_{2}(P)x\Vert = \Vert P\phi(x)\Vert$. Furthermore, since $Q^{\perp}x = Q^{\perp}\kappa_{1}(y)$ for some $y \in \overline{\text{Span}}^{\Vert \cdot \Vert}(\mathcal{D}_{0}(\mathbf{B},\text{Link}(v_{0})))$, the inequality $\Vert Q^{\perp}x\Vert \leq \Vert y\Vert = \Vert(Q_{v_{0}}+P)^{\perp}y\Vert$ holds. It follows that
\begin{eqnarray}
\nonumber \left\Vert x\right\Vert &\leq& \max\left\{ \Vert Q_{v_{0}}\phi(x)\Vert,\Vert P\phi(x)\Vert,\Vert(Q_{v_{0}}+P)^{\perp}y\Vert\right\} \\
\nonumber &=& \left\Vert Q_{v_{0}}\phi(x) + P\phi(x) + (Q_{v_{0}}+P)^{\perp}y \right\Vert \\
\nonumber &=& \Vert\phi(x)\Vert,
\end{eqnarray}
which shows that $\phi$ indeed restricts to an isomorphism, as claimed.
\end{proof}

Given a tuple $(n_{1},\ldots,n_{L}) \in \mathbb{N}^{L}$, consider the linear subspace
\begin{eqnarray} \label{HilbertModule}
\mathfrak{X}_{n_{1}, \dots, n_{L}} := \overline{\mathrm{Span}}^{\Vert \cdot \Vert} \left\{ \left. (a_{1}^{\dagger} \cdots a_{k}^{\dagger})\, d \,\right|\, 
\begin{array}{l}
k \in \mathbb{N},\quad (u_{1}, \dots, u_{k}) \in \mathcal{W}_{\mathrm{red}}, \\[4pt]
a_{i} \in A^{\circ}_{u_{i}},\quad d \in \mathcal{D}_{0}(\mathbf{A}, \Gamma), \\[4pt]
\#_{i}(u_{1}, \dots, u_{k}) = n_{i} \quad \forall\, 1 \leq i \leq L
\end{array}
\right\}.
\end{eqnarray}
By Lemma~\ref{MultiplicationLemma}, we have that $\xi^{\ast}\eta \in \mathcal{D}$ for all $\xi, \eta \in \mathfrak{X}_{n_{1}, \ldots, n_{L}}$, where $\mathcal{D} := \overline{\mathrm{Span}}(\mathcal{D}_{0}(\mathbf{A}, \Gamma))$. Furthermore, by Lemma~\ref{MainIdentities}, we have $d\xi, \xi d \in \mathfrak{X}_{n_{1}, \ldots, n_{L}}$ for all $\xi \in \mathfrak{X}_{n_{1}, \ldots, n_{L}}$ and $d \in \mathcal{D}$.

It follows that the inner product $\langle \cdot, \cdot \rangle$ on $\mathfrak{X}_{n_{1}, \ldots, n_{L}}$, given by $\langle \xi, \eta \rangle := \xi^{\ast} \eta$  for $\xi, \eta \in \mathfrak{X}_{n_{1}, \ldots, n_{L}} \subseteq \mathfrak{A}(\mathbf{A}, \Gamma)$ turns $\mathfrak{X}_{n_{1}, \ldots, n_{L}}$ into a Hilbert $\mathcal{D}$-module, where $\mathcal{D}$ acts via right multiplication. For further background on Hilbert modules, we refer the reader to~\cite{Lance95}.

For $\xi, \eta \in \mathfrak{X}_{n_{1}, \ldots, n_{L}}$, let $\theta_{\xi, \eta} \in \mathcal{B}(\mathfrak{X}_{n_{1}, \ldots, n_{L}})$ be the corresponding rank-one operator, defined by $\theta_{\xi, \eta}(\zeta) := \xi \langle \eta, \zeta \rangle$ for $\zeta \in \mathfrak{X}_{n_{1}, \ldots, n_{L}}$.
We denote the compact operators on $\mathfrak{X}_{n_{1}, \ldots, n_{L}}$ by
\[
\mathcal{K}(\mathfrak{X}_{n_{1}, \ldots, n_{L}}) := \overline{\mathrm{Span}}^{\Vert \cdot \Vert} \left\{ \theta_{\xi, \eta} \,\middle|\, \xi, \eta \in \mathfrak{X}_{n_{1}, \ldots, n_{L}} \right\}. 
\]
Then, by~\cite[Proposition 4.6.3]{BrownOzawa08}, we obtain a $\ast$-isomorphism $\mathcal{K}(\mathfrak{X}_{n_{1}, \ldots, n_{L}}) \cong B_{n_{1}, \ldots, n_{L}} \quad \text{via } \theta_{\xi, \eta} \mapsto \xi \eta^{\ast}$. By the same argument and invoking Proposition~\ref{IsomorphismTheorem}, we also obtain $\mathcal{K}(\mathfrak{X}_{n_{1}, \ldots, n_{L}}) \cong \overline{B}_{n_{1}, \ldots, n_{L}}$. We may therefore conclude the following implication.

\begin{lemma} \label{IsomorphismTheorem2} For every tuple $(n_{1},\ldots,n_{L})\in\mathbb{N}^{L}$ the $\ast$-homomorphism $\phi:\overline{\mathfrak{A}}\twoheadrightarrow\mathfrak{A}(\mathbf{A},\Gamma)$ restricts to an isomorphism $\overline{B}_{n_{1},\ldots,n_{L}}\cong B_{n_{1},\ldots,n_{L}}$. \end{lemma}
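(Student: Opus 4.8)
{Proof.}
The plan is to deduce Lemma~\ref{IsomorphismTheorem2} as an immediate consequence of the Hilbert-module picture developed just above it, together with Proposition~\ref{IsomorphismTheorem}. First I would recall that for a fixed tuple $(n_{1},\ldots,n_{L})\in\mathbb{N}^{L}$ the space $\mathfrak{X}_{n_{1},\ldots,n_{L}}$ from~\eqref{HilbertModule} is a Hilbert $\mathcal{D}$-module with $\mathcal{D}=\overline{\mathrm{Span}}(\mathcal{D}_{0}(\mathbf{A},\Gamma))$, and that the identification $\theta_{\xi,\eta}\mapsto\xi\eta^{\ast}$ yields $\mathcal{K}(\mathfrak{X}_{n_{1},\ldots,n_{L}})\cong B_{n_{1},\ldots,n_{L}}$ by~\cite[Proposition~4.6.3]{BrownOzawa08}. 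The point is that the analogous construction can be run inside $\overline{\mathfrak{A}}$: using Lemma~\ref{MultiplicationLemma}(2) in place of Lemma~\ref{MultiplicationLemma}(1) and Proposition~\ref{IsomorphismTheorem} (which identifies $\overline{\mathrm{Span}}^{\Vert\cdot\Vert}(\kappa_{1}(\mathcal{D}_{0})\cup\kappa_{2}(\mathcal{D}_{0}))$ with $\mathcal{D}$ via $\phi$), one obtains a Hilbert $\mathcal{D}$-module $\overline{\mathfrak{X}}_{n_{1},\ldots,n_{L}}$ inside $\overline{\mathfrak{A}}$ whose compact operators satisfy $\mathcal{K}(\overline{\mathfrak{X}}_{n_{1},\ldots,n_{L}})\cong\overline{B}_{n_{1},\ldots,n_{L}}$, and $\phi$ carries $\overline{\mathfrak{X}}_{n_{1},\ldots,n_{L}}$ onto $\mathfrak{X}_{n_{1},\ldots,n_{L}}$.

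Next I would check that $\phi$ restricts to a unitary of Hilbert modules $\overline{\mathfrak{X}}_{n_{1},\ldots,n_{L}}\to\mathfrak{X}_{n_{1},\ldots,n_{L}}$ covering the isomorphism $\mathcal{D}\to\mathcal{D}$ of the coefficient algebras supplied by Proposition~\ref{IsomorphismTheorem}. Surjectivity of this restriction is clear from the definitions, since $\phi$ sends each spanning element $\kappa_{u_{1}}(a_{1}^{\dagger})\cdots\kappa_{u_{k}}(a_{k}^{\dagger})\,d$ to $(a_{1}^{\dagger}\cdots a_{k}^{\dagger})\,\phi(d)$. For injectivity and isometry, the key computation is that $\phi$ preserves the $\mathcal{D}$-valued inner product: for $\xi,\eta\in\overline{\mathfrak{X}}_{n_{1},\ldots,n_{L}}$ one has $\phi(\langle\xi,\eta\rangle)=\phi(\xi^{\ast}\eta)=\phi(\xi)^{\ast}\phi(\eta)=\langle\phi(\xi),\phi(\eta)\rangle$, and since $\phi$ is isometric on $\mathcal{D}$ by Proposition~\ref{IsomorphismTheorem}, the module norm $\Vert\xi\Vert^{2}=\Vert\langle\xi,\xi\rangle\Vert$ is preserved. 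Hence $\phi$ is a module isomorphism, which functorially induces an isomorphism $\mathcal{K}(\overline{\mathfrak{X}}_{n_{1},\ldots,n_{L}})\cong\mathcal{K}(\mathfrak{X}_{n_{1},\ldots,n_{L}})$, namely $\theta_{\xi,\eta}\mapsto\theta_{\phi(\xi),\phi(\eta)}$; translating through the identifications $\theta_{\xi,\eta}\mapsto\xi\eta^{\ast}$ on both sides shows precisely that $\phi|_{\overline{B}_{n_{1},\ldots,n_{L}}}:\overline{B}_{n_{1},\ldots,n_{L}}\to B_{n_{1},\ldots,n_{L}}$ is a $\ast$-isomorphism.

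I do not expect a serious obstacle here: the statement is essentially a bookkeeping corollary of Proposition~\ref{IsomorphismTheorem} and the two parallel Hilbert-module descriptions. The only point requiring minor care is to make sure that Lemma~\ref{MultiplicationLemma}(2) genuinely produces coefficients lying in $\mathrm{Span}(\kappa_{1}(\mathcal{D}_{0}(\mathbf{A}_{1},\mathrm{Star}(v_{0})))\cup\kappa_{2}(\mathcal{D}_{0}(\mathbf{A}_{2},\Gamma\setminus\{v_{0}\})))$ — so that $\overline{\mathfrak{X}}_{n_{1},\ldots,n_{L}}$ really is a module over that algebra and not over something larger — and that the resulting inner product is the restriction of multiplication in $\overline{\mathfrak{A}}$, so that $\phi$ is inner-product preserving on the nose. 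Both are immediate from the cited results, so the proof reduces to assembling these observations. $\hfill\qed$
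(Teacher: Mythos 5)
Your proposal is correct and takes essentially the same route as the paper: the paper's argument for Lemma~\ref{IsomorphismTheorem2} is precisely the preceding discussion identifying both $B_{n_{1},\ldots,n_{L}}$ and $\overline{B}_{n_{1},\ldots,n_{L}}$ with $\mathcal{K}(\mathfrak{X}_{n_{1},\ldots,n_{L}})$ via \cite[Proposition 4.6.3]{BrownOzawa08} together with Proposition~\ref{IsomorphismTheorem}. Your additional verification that the composite isomorphism is implemented by $\phi$ itself (via $\theta_{\xi,\eta}\mapsto\theta_{\phi(\xi),\phi(\eta)}$ and the inner-product preservation) is a detail the paper leaves implicit, but it is the same argument.
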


Recall that in the setting of Theorem~\ref{UniversalProperty}, the gauge action $\alpha : \mathbb{T}^{V\Gamma} \curvearrowright \mathfrak{A}(\mathbf{A}, \Gamma)$ satisfies $\alpha_{z}(x) = (z_{u_{1} \ldots u_{k}} z_{v_{1} \ldots v_{l}}^{-1}) x$ for every $x := (a_{1}^{\dagger} \cdots a_{k}^{\dagger})\, d\, (b_{1}^{\dagger} \cdots b_{l}^{\dagger})^{*} \in \mathcal{E}(\mathbf{A}, \Gamma)$ with $k, l \in \mathbb{N}$, $(u_{1}, \dots, u_{k}), (v_{1}, \dots, v_{l}) \in \mathcal{W}_{\mathrm{red}}$, $a_{i} \in A_{u_{i}}^{\circ}$, $b_{j} \in A_{v_{j}}^{\circ}$, and $d \in \mathcal{D}_{0}(\mathbf{A}, \Gamma)$.

Denote the restrictions of $\alpha$ to $\mathfrak{A}_{1}$ and $\mathfrak{A}_{2}$ by $\alpha_{1}$ and $\alpha_{2}$, respectively. Since $(\alpha_{1})_{z}|_{B} = (\alpha_{2})_{z}|_{B}$ for $z \in \mathbb{T}^{V\Gamma}$, and $(\alpha_{1})_{z}(Q_{v_{0}})(\alpha_{2})_{z}(Q_{v}) = 0$ for all $v \in V(\Gamma \setminus \mathrm{Star}(v_{0}))$, the universal property of $\overline{\mathfrak{A}}$ provides a surjective $\ast$-homomorphism $\beta_{z} : \overline{\mathfrak{A}} \twoheadrightarrow \overline{\mathfrak{A}}$ extending both $(\alpha_{1})_{z}$ and $(\alpha_{2})_{z}$. Note that $\beta_{\overline{z}} \circ \beta_{z} = \mathrm{id}_{\mathfrak{A}}$ for every $z \in \mathbb{T}^{V\Gamma}$, so that $\beta_{z} \in \mathrm{Aut}(\overline{\mathfrak{A}})$. We therefore obtain an action $\beta : \mathbb{T}^{V\Gamma} \curvearrowright \overline{\mathfrak{A}}$ such that $\phi \circ \beta_{z} = \alpha_{z} \circ \phi$ for every $z \in \mathbb{T}^{V\Gamma}$.

For every $1 \leq m \leq L = \# V\Gamma$, we canonically embed $\mathbb{T}^{m}$ into $\mathbb{T}^{V\Gamma}$ via $(z_{1}, \ldots, z_{m}) \mapsto (z_{1}, \ldots, z_{m}, 1, \ldots, 1)$, and denote the corresponding restricted actions $\mathbb{T}^{m} \curvearrowright \mathfrak{A}(\mathbf{A}, \Gamma)$ and $\mathbb{T}^{m} \curvearrowright \overline{\mathfrak{A}}$ by $\alpha_{m}$ and $\beta_{m}$, respectively. Note that
\[
(\alpha_{m})_{z}(B_{n_{1}, \ldots, n_{r}}) \subseteq B_{n_{1}, \ldots, n_{r}} \quad \text{and} \quad (\beta_{m})_{z}(\overline{B}_{n_{1}, \ldots, n_{r}}) \subseteq \overline{B}_{n_{1}, \ldots, n_{r}}
\]
for all $1 \leq r, m \leq L$, $(n_{1}, \ldots, n_{r}) \in \mathbb{N}^{r}$, and $z \in \mathbb{T}^{m}$.

\begin{lemma} \label{FixedpointIdentification}
For every $2 \leq r \leq L$ and $(n_{1}, \ldots, n_{r-1}) \in \mathbb{N}^{r-1}$, the fixed-point algebra
\[
B_{n_{1}, \ldots, n_{r-1}}^{\alpha_{r}} := \{x \in B_{n_{1}, \ldots, n_{r-1}} \mid (\alpha_{r})_{z}(x) = x \text{ for all } z \in \mathbb{T}^{r} \}
\]
coincides with the norm closure of $\bigcup_{n=0}^{\infty} B_{n_{1}, \ldots, n_{r-1}}^{\leq n}$, and the fixed-point algebra
\[
\overline{B}_{n_{1}, \ldots, n_{r-1}}^{\beta_{r}} := \{x \in \overline{B}_{n_{1}, \ldots, n_{r-1}} \mid (\beta_{r})_{z}(x) = x \text{ for all } z \in \mathbb{T}^{r} \}
\]
coincides with the norm closure of $\bigcup_{n=0}^{\infty} \overline{B}_{n_{1}, \ldots, n_{r-1}}^{\leq n}$.

Similarly,
\[
\mathfrak{A}(\mathbf{A}, \Gamma)^{\alpha_{1}} = \overline{\bigcup_{n=0}^{\infty} B_{\emptyset}^{\leq n}}^{\| \cdot \|}, \quad \text{and} \quad \overline{\mathfrak{A}}^{\beta_{1}} = \overline{\bigcup_{n=0}^{\infty} \overline{B}_{\emptyset}^{\leq n}}^{\| \cdot \|}.
\]
\end{lemma}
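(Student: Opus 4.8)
The plan is to recognise each of these fixed-point algebras as the range of an averaging projection over a single circle, and then to identify that range by an explicit computation on elementary operators; this is the standard ``fixed points $=$ weight-zero spectral subspace'' mechanism for torus actions, adapted to the bookkeeping by multi-indices. I will describe it for $B_{n_1,\dots,n_{r-1}}^{\alpha_r}$ with $2\le r\le L$; the case $r=1$ is obtained verbatim upon reading $B_\emptyset:=\mathfrak{A}(\mathbf{A},\Gamma)$, and the two statements about $\overline{\mathfrak{A}}$ follow by the same argument with $\beta$ in place of $\alpha$, modulo two supplementary points discussed at the end. First I would observe that on $B_{n_1,\dots,n_{r-1}}$ the action $\alpha_r$ is effectively an action of a single circle: for an elementary operator $x=(a_1^\dagger\cdots a_k^\dagger)\,d\,(b_1^\dagger\cdots b_l^\dagger)^\ast$ with reduced words $(u_1,\dots,u_k),(v_1,\dots,v_l)$, the recalled gauge formula gives $(\alpha_r)_z(x)=\big(\prod_{i=1}^r z_i^{\,\#_i(u_1,\dots,u_k)-\#_i(v_1,\dots,v_l)}\big)x$, and if $x\in B_{n_1,\dots,n_{r-1}}$ the exponents with $i\le r-1$ vanish, so $(\alpha_r)_z(x)=z_r^{\,\#_r(u_1,\dots,u_k)-\#_r(v_1,\dots,v_l)}x$. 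Passing to closed linear combinations, the first $r-1$ coordinate circles act trivially on $B_{n_1,\dots,n_{r-1}}$ (which is $\alpha_r$-invariant by the remark preceding the lemma), so the $\mathbb{T}^r$-fixed points inside $B_{n_1,\dots,n_{r-1}}$ coincide with the fixed points of the last coordinate circle.

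Next I would introduce the averaging map $\Phi_r(x):=\int_{\mathbb{T}}(\alpha_r)_{(1,\dots,1,w)}(x)\,d\mu(w)$ over normalised Haar measure; the integrand is norm-continuous in $w$ by Theorem~\ref{GaugeActionTheorem}(1), so the Bochner integral exists and, as $B_{n_1,\dots,n_{r-1}}$ is closed and $\alpha_r$-invariant, lies in it. The map $\Phi_r$ is linear and contractive, equals the identity on $B_{n_1,\dots,n_{r-1}}^{\alpha_r}$, and by translation invariance of $\mu$ together with the previous paragraph its range lies in $B_{n_1,\dots,n_{r-1}}^{\alpha_r}$, so $\Phi_r$ is a contractive idempotent onto $B_{n_1,\dots,n_{r-1}}^{\alpha_r}$. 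On an elementary operator as above it computes to $\Phi_r(x)=\big(\int_{\mathbb{T}}w^{\,p-q}\,d\mu(w)\big)x$ with $p=\#_r(u_1,\dots,u_k)$, $q=\#_r(v_1,\dots,v_l)$; thus $\Phi_r(x)=x$ when $p=q=:n_r$, in which case $x\in B_{n_1,\dots,n_{r-1},n_r}\subseteq B_{n_1,\dots,n_{r-1}}^{\le n_r}$, and $\Phi_r(x)=0$ otherwise. Since such elementary operators span a dense subspace of $B_{n_1,\dots,n_{r-1}}$, continuity of $\Phi_r$ yields $B_{n_1,\dots,n_{r-1}}^{\alpha_r}=\Phi_r(B_{n_1,\dots,n_{r-1}})\subseteq\overline{\bigcup_n B_{n_1,\dots,n_{r-1}}^{\le n}}^{\|\cdot\|}$. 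For the reverse inclusion I would note that every elementary operator occurring in $B_{n_1,\dots,n_{r-1},i}$ has $\#_j(u_1,\dots,u_k)=\#_j(v_1,\dots,v_l)$ for all $j\le r$, hence is $\alpha_r$-fixed, so $B_{n_1,\dots,n_{r-1}}^{\le n}\subseteq B_{n_1,\dots,n_{r-1}}^{\alpha_r}$ for every $n$ and, the latter being closed, $\overline{\bigcup_n B_{n_1,\dots,n_{r-1}}^{\le n}}^{\|\cdot\|}\subseteq B_{n_1,\dots,n_{r-1}}^{\alpha_r}$. Combining the two inclusions gives the first equality; the case $r=1$ follows identically with $\mathfrak{A}(\mathbf{A},\Gamma)$ in place of $B_{n_1,\dots,n_{r-1}}$, the needed density of elementary operators being Proposition~\ref{DensityStatement}.

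For $\overline{\mathfrak{A}}$ I would run the same argument with $\beta$, which needs two supplementary facts, and the second of these is where I expect the only genuine work to lie. First, $\beta$ is norm-continuous: the set of $x\in\overline{\mathfrak{A}}$ for which $w\mapsto\beta_w(x)$ is norm-continuous is a closed $\ast$-subalgebra (each $\beta_w$ is isometric) containing $\mathrm{im}(\kappa_1)\cup\mathrm{im}(\kappa_2)$, on which $\beta$ agrees with $\kappa_i$ conjugated by the norm-continuous $\alpha$; as these images generate $\overline{\mathfrak{A}}$, norm-continuity follows, hence also for $\beta_r$ and $\beta_m$, and the averaging integral over the last circle of $\beta_r$ makes sense. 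Second, the $r=1$ case requires the analogue of Proposition~\ref{DensityStatement} for $\overline{\mathfrak{A}}$: that $\overline{\mathfrak{A}}$ is the closed linear span of $\kappa$-images of elementary operators $(a_1^\dagger\cdots a_k^\dagger)\,d\,(b_1^\dagger\cdots b_l^\dagger)^\ast$ with $d$ in the span of $\kappa_1(\mathcal{D}_0(\mathbf{A}_1,\mathrm{Star}(v_0)))\cup\kappa_2(\mathcal{D}_0(\mathbf{A}_2,\Gamma\setminus\{v_0\}))$, so that $\bigcup_{(n_1,\dots,n_L)}\overline{B}_{n_1,\dots,n_L}$ spans densely and $\beta_z$ acts on such an element by the weight $z_{u_1\cdots u_k}z_{v_1\cdots v_l}^{-1}$, exactly as $\alpha_z$ does on its counterpart in $\mathfrak{A}(\mathbf{A},\Gamma)$. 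This I would establish by the same reduction as in the proof of Proposition~\ref{DensityStatement}, now also invoking Lemma~\ref{UniversalMainIdentities} (together with Lemmas~\ref{MainIdentities} and~\ref{MultiplicationLemma}) to absorb products that mix $v_0$ with vertices outside $\mathrm{Star}(v_0)$. I expect this verification — that the weight decomposition of $\overline{\mathfrak{A}}$ faithfully mirrors that of $\mathfrak{A}(\mathbf{A},\Gamma)$ through the universal maps $\kappa_i$ — to be the only non-formal step; everything else is the generic circle-action mechanism.
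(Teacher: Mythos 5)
Your proposal is correct and follows essentially the same route as the paper: the paper also averages over the torus to obtain a conditional expectation onto the fixed-point algebra, computes the integral on an elementary operator $x$ to get $x$ when $\#_r(u_1,\dots,u_k)=\#_r(v_1,\dots,v_l)$ and $0$ otherwise, and concludes by density. The supplementary points you flag for the $\overline{\mathfrak{A}}$ case (norm-continuity of $\beta$ and the weight decomposition of $\overline{\mathfrak{A}}$ via the $\kappa_i$-images of elementary operators) are exactly what the paper's closing remark ``follow similarly'' tacitly relies on, so spelling them out is a refinement rather than a different argument.
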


\begin{proof}
Integration over the restriction of $\alpha_{r}$ to $B_{n_{1}, \ldots, n_{r-1}}$ induces a conditional expectation $\mathbb{E}_{r}$ onto the fixed-point algebra $B_{n_{1}, \ldots, n_{r-1}}^{\alpha_{r}}$. For every element $x := (a_{1}^{\dagger} \cdots a_{k}^{\dagger})\, d\, (b_{1}^{\dagger} \cdots b_{l}^{\dagger})^{*} \in \mathcal{E}(\mathbf{A}, \Gamma)$ with $k, l \in \mathbb{N}$, $(u_{1}, \dots, u_{k}), (v_{1}, \dots, v_{l}) \in \mathcal{W}_{\mathrm{red}}$, $a_{i} \in A_{u_{i}}^{\circ}$, $b_{j} \in A_{v_{j}}^{\circ}$, $d \in \mathcal{D}_{0}(\mathbf{A}, \Gamma)$, and $\#_{i}(u_{1}, \ldots, u_{k}) = \#_{i}(v_{1}, \ldots, v_{l}) = n_{i}$ for $1 \leq i \leq r-1$, we have:
\begin{align*}
\int_{\mathbb{T}^{r}} (\alpha_{r})_{z}(x)\, dz 
&= \int_{\mathbb{T}^{r}} \left( \prod_{i=1}^{r} z_{i}^{\#_{i}(u_{1}, \ldots, u_{k})} z_{i}^{-\#_{i}(v_{1}, \ldots, v_{l})} \right) x \, d(z_{1}, \ldots, z_{r}) \\
&= \int_{\mathbb{T}} z^{\#_{r}(u_{1}, \ldots, u_{k}) - \#_{r}(v_{1}, \ldots, v_{l})}\;x \, dz  \\
&= 
\begin{cases}
x, & \text{if } \#_{r}(u_{1}, \ldots, u_{k}) = \#_{r}(v_{1}, \ldots, v_{l}) \\
0, & \text{otherwise}.
\end{cases}
\end{align*}
From the definition of $B_{n_{1}, \ldots, n_{r-1}}$, we thus obtain
\[
B_{n_{1}, \ldots, n_{r-1}}^{\alpha_{r}} = \overline{\bigcup_{n=0}^{\infty} B_{n_{1}, \ldots, n_{r-1}}^{\leq n}}^{\Vert \cdot \Vert}.
\]

The statements about $\overline{B}_{n_{1}, \ldots, n_{r-1}}^{\beta_{r}}$, $\mathfrak{A}(\mathbf{A}, \Gamma)^{\alpha_{1}}$, and $\overline{\mathfrak{A}}^{\beta_{1}}$ follow similarly.
\end{proof}

\begin{lemma} \label{InductionStatement}
Let $1\leq r\leq L-1$ be an integer, $(n_{1},\ldots,n_{r})\in\mathbb{N}^{r}$ a tuple, and assume that the $\ast$-homomorphism $\phi:\overline{\mathfrak{A}}\twoheadrightarrow\mathfrak{A}(\mathbf{A},\Gamma)$ restricts to an isomorphism $\overline{B}_{n_{1},\ldots,n_{r},n}\cong B_{n_{1},\ldots,n_{r},n}$ for every $n\in\mathbb{N}$. Then, $\phi$ also restricts to an isomorphism $\overline{B}_{n_{1},\ldots,n_{r}}^{\leq n}\cong B_{n_{1},\ldots,n_{r}}^{\leq n}$ for every $n\in\mathbb{N}$.

Similarly, if $\phi$ restricts to an isomorphism $\overline{B}_{n}\cong B_{n}$ for every $n\in\mathbb{N}$, then $\phi$ also restricts to an isomorphism $\overline{B}_{\emptyset}^{\leq n}\cong B_{\emptyset}^{\leq n}$ for every $n\in\mathbb{N}$.
\end{lemma}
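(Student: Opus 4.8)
The plan is to prove the first assertion by induction on $n$, the second (``$\emptyset$-case'') being obtained by the identical argument with $B_{\emptyset}^{\le n}$, $\overline B_n$ and $\alpha_1$ in place of $B_{n_1,\dots,n_r}^{\le n}$, $\overline B_{n_1,\dots,n_r,n}$ and $\alpha_{r+1}$. For $n=0$ there is nothing to prove, since by definition $B_{n_1,\dots,n_r}^{\le 0}=B_{n_1,\dots,n_r,0}$ and $\overline B_{n_1,\dots,n_r}^{\le 0}=\overline B_{n_1,\dots,n_r,0}$, so the claim is exactly the hypothesis. Fix $n\ge 1$ and assume $\phi$ restricts to an isomorphism $\overline B_{n_1,\dots,n_r}^{\le n-1}\cong B_{n_1,\dots,n_r}^{\le n-1}$.

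By Proposition~\ref{IdealConstruction}, $\overline B_{n_1,\dots,n_r,n}$ is a closed two-sided ideal of the C$^{\ast}$-algebra $\overline B_{n_1,\dots,n_r}^{\le n}=\overline B_{n_1,\dots,n_r}^{\le n-1}+\overline B_{n_1,\dots,n_r,n}$, and similarly downstairs. Since $\phi$ carries the generating elementary operators of $\overline{\mathfrak A}$ to the corresponding ones of $\mathfrak A(\mathbf A,\Gamma)$, it maps $\overline B_{n_1,\dots,n_r,n}$ onto $B_{n_1,\dots,n_r,n}$ and $\overline B_{n_1,\dots,n_r}^{\le n-1}$ onto $B_{n_1,\dots,n_r}^{\le n-1}$; hence $\phi(\overline B_{n_1,\dots,n_r}^{\le n})=B_{n_1,\dots,n_r}^{\le n}$ (surjectivity), and, by the hypothesis together with the inductive hypothesis, $\phi$ is isometric on the ideal $\overline B_{n_1,\dots,n_r,n}$ and on the subalgebra $\overline B_{n_1,\dots,n_r}^{\le n-1}$. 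To conclude injectivity of $\phi|_{\overline B_{n_1,\dots,n_r}^{\le n}}$ I would invoke the elementary fact that a $\ast$-homomorphism which is injective on a closed two-sided ideal and induces an injective map on the corresponding quotient is itself injective. This reduces the inductive step to showing that the induced map $\overline B_{n_1,\dots,n_r}^{\le n}/\overline B_{n_1,\dots,n_r,n}\to B_{n_1,\dots,n_r}^{\le n}/B_{n_1,\dots,n_r,n}$ is injective. Using $\overline B_{n_1,\dots,n_r}^{\le n}=\overline B_{n_1,\dots,n_r}^{\le n-1}+\overline B_{n_1,\dots,n_r,n}$ and the canonical identification $(A_0+I)/I\cong A_0/(A_0\cap I)$, the left-hand side is isomorphic to $\overline B_{n_1,\dots,n_r}^{\le n-1}/(\overline B_{n_1,\dots,n_r}^{\le n-1}\cap\overline B_{n_1,\dots,n_r,n})$ and likewise downstairs; since by the inductive hypothesis $\phi$ restricts to an isomorphism $\overline B_{n_1,\dots,n_r}^{\le n-1}\cong B_{n_1,\dots,n_r}^{\le n-1}$, the induced map is injective provided this isomorphism carries the ideal $\overline B_{n_1,\dots,n_r}^{\le n-1}\cap\overline B_{n_1,\dots,n_r,n}$ \emph{onto} $B_{n_1,\dots,n_r}^{\le n-1}\cap B_{n_1,\dots,n_r,n}$ (the reverse inclusion being clear).

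This last point is the crux, and it is not formal: for a $\ast$-homomorphism injective on an ideal $I$ and on a subalgebra $A_0$ with $A_0+I=A$ the analogue can fail (e.g.\ $A=\mathbb C\oplus\mathbb C^2$, $A_0=\{(c,c,0):c\in\mathbb C\}$, $I=0\oplus\mathbb C^2$, $\phi=$ projection onto $I$: injective on $A_0$ and on $I$, not on $A$). What makes it work here is the concrete description of $\overline B_{n_1,\dots,n_r,n}$: extending the Hilbert-module picture of~\eqref{HilbertModule} and Lemma~\ref{IsomorphismTheorem2} from full-length multi-indices to length-$(r+1)$ multi-indices, one realizes $\overline B_{n_1,\dots,n_r,n}$ as the compact operators on a Hilbert module $\mathfrak X_{n_1,\dots,n_r,n}$ on which $\overline B_{n_1,\dots,n_r}^{\le n}$ acts by left multiplication, and identifies $\overline B_{n_1,\dots,n_r}^{\le n-1}\cap\overline B_{n_1,\dots,n_r,n}$ as the ideal of $\overline B_{n_1,\dots,n_r}^{\le n-1}$ cut out by a canonical ``regularizing'' ideal of the coefficient algebra --- in the spirit of Katsura's analysis of C$^{\ast}$-correspondence algebras~\cite{Katsura04} --- with Lemmas~\ref{MainIdentities} and~\ref{MultiplicationLemma} furnishing the explicit products. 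The same description holds downstairs, with the two modules and their coefficient algebras matched up compatibly with $\phi$ via Proposition~\ref{IsomorphismTheorem}; being intrinsic to the module data, which $\phi$ transports isomorphically, it forces the isomorphism $\overline B_{n_1,\dots,n_r}^{\le n-1}\cong B_{n_1,\dots,n_r}^{\le n-1}$ to carry the one ideal onto the other, which completes the inductive step. I expect this intrinsic, $\phi$-equivariant identification of $\overline B_{n_1,\dots,n_r}^{\le n-1}\cap\overline B_{n_1,\dots,n_r,n}$ --- deciding precisely which elements of $\overline B_{n_1,\dots,n_r}^{\le n-1}$ lie in $\overline B_{n_1,\dots,n_r,n}$, and matching this with the downstairs picture --- to be the main obstacle; the induction scaffolding, the surjectivity argument, and the $\emptyset$-case are then routine.
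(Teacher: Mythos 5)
Your overall strategy coincides with the paper's: induction on $n$, with the base case given directly by the hypothesis and the inductive step carried out by passing to the quotient by the ideal $\overline{B}_{n_{1},\ldots,n_{r},n}\triangleleft\overline{B}_{n_{1},\ldots,n_{r}}^{\leq n}$ supplied by Proposition~\ref{IdealConstruction}, so that injectivity of $\phi$ on $\overline{B}_{n_{1},\ldots,n_{r}}^{\leq n}$ reduces to injectivity on the ideal together with injectivity of the induced map on the quotient $\overline{B}_{n_{1},\ldots,n_{r}}^{\leq(n-1)}/\bigl(\overline{B}_{n_{1},\ldots,n_{r}}^{\leq(n-1)}\cap\overline{B}_{n_{1},\ldots,n_{r},n}\bigr)$. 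You have also correctly put your finger on the one step that is not formal: the induced quotient map is injective precisely when the isomorphism $\overline{B}_{n_{1},\ldots,n_{r}}^{\leq(n-1)}\cong B_{n_{1},\ldots,n_{r}}^{\leq(n-1)}$ carries $\overline{B}_{n_{1},\ldots,n_{r}}^{\leq(n-1)}\cap\overline{B}_{n_{1},\ldots,n_{r},n}$ \emph{onto} $B_{n_{1},\ldots,n_{r}}^{\leq(n-1)}\cap B_{n_{1},\ldots,n_{r},n}$, and your three-dimensional counterexample correctly shows that this does not follow abstractly from injectivity on the two pieces separately. The issue is not vacuous here, since the intersection is genuinely nontrivial: by Lemma~\ref{MainIdentities} one has $a^{\dagger}(b^{\dagger})^{\ast}=\mathfrak{d}(ab^{\ast})+\mathfrak{d}(a)\mathfrak{d}(b^{\ast})$, an element lying simultaneously in a degree-one and a degree-zero summand. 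This is exactly the point that the paper's own argument compresses into the phrase ``by the induction hypothesis and the definition of $\phi$''.

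That said, your proposal does not actually close this step. You assert that a Katsura-type intrinsic description of $\overline{B}_{n_{1},\ldots,n_{r}}^{\leq(n-1)}\cap\overline{B}_{n_{1},\ldots,n_{r},n}$, obtained by realizing $\overline{B}_{n_{1},\ldots,n_{r},n}$ as the compacts on a Hilbert module over a suitable coefficient algebra, will transport along $\phi$; but you neither formulate that characterization nor verify that $\overline{B}_{n_{1},\ldots,n_{r}}^{\leq(n-1)}$ acts on such a module by adjointable operators, nor that ``acting compactly'' detects membership in the ideal in a way that is matched on both sides of $\phi$ (note also that the paper only constructs the modules $\mathfrak{X}_{n_{1},\ldots,n_{L}}$ for full-length multi-indices, so the length-$(r+1)$ version you invoke would have to be built, with a coefficient algebra larger than $\mathcal{D}$). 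Until that identification --- or some substitute, such as an argument matching approximate units of the two intersection ideals through $\phi$ --- is written down, the inductive step is incomplete at exactly the place you flag as the main obstacle. So: right skeleton, right diagnosis of the obstruction, but the decisive lemma is still missing.
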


\begin{proof}
We prove the statement by induction on $n\in\mathbb{N}$.

For $n=0$, we have
\[
\overline{B}_{n_{1},\ldots,n_{r}}^{\leq0} = \overline{B}_{n_{1},\ldots,n_{r},0} \cong B_{n_{1},\ldots,n_{r},0} = B_{n_{1},\ldots,n_{r}}^{\leq0}
\]
by our assumption.

For the induction step, let $n\in\mathbb{N}$ and assume that $\phi$ restricts to an isomorphism $\overline{B}_{n_{1},\ldots,n_{r}}^{\leq(n-1)} \cong B_{n_{1},\ldots,n_{r}}^{\leq(n-1)}$. Let $x\in\overline{B}_{n_{1},\ldots,n_{r}}^{\leq n}$ be an element with $\phi(x)=0$. By Proposition \ref{IdealConstruction}, the C$^\ast$-subalgebras $\overline{B}_{n_{1},\ldots,n_{r},n} \triangleleft \overline{B}_{n_{1},\ldots,n_{r}}^{\leq n}$ and $B_{n_{1},\ldots,n_{r},n} \triangleleft B_{n_{1},\ldots,n_{r}}^{\leq n}$ are closed two-sided ideals. Consider the corresponding quotient maps $q: B_{n_{1},\ldots,n_{r}}^{\leq n} \twoheadrightarrow B_{n_{1},\ldots,n_{r}}^{\leq n} / B_{n_{1},\ldots,n_{r},n}$ and $\overline{q}: \overline{B}_{n_{1},\ldots,n_{r}}^{\leq n} \twoheadrightarrow \overline{B}_{n_{1},\ldots,n_{r}}^{\leq n} / \overline{B}_{n_{1},\ldots,n_{r},n}$, as well as the induced map $\psi: \overline{B}_{n_{1},\ldots,n_{r}}^{\leq n} / \overline{B}_{n_{1},\ldots,n_{r},n} \twoheadrightarrow B_{n_{1},\ldots,n_{r}}^{\leq n} / B_{n_{1},\ldots,n_{r},n}$, $\psi(y + \overline{B}_{n_{1},\ldots,n_{r},n}) := \phi(y) + B_{n_{1},\ldots,n_{r},n}$. It is clear that $\psi \circ \overline{q} = q \circ \phi$, and thus $\psi \circ \overline{q}(x) = 0$. By construction, there exists $y \in \overline{B}_{n_{1},\ldots,n_{r}}^{\leq(n-1)}$ such that $\overline{q}(x) = \overline{q}(y)$. Therefore,
\[
\phi(y) + B_{n_{1},\ldots,n_{r},n}
= \psi(y + \overline{B}_{n_{1},\ldots,n_{r},n})
= \psi \circ \overline{q}(y)
= \psi \circ \overline{q}(x)
= 0,
\]
and hence
\[
\phi(y) \in B_{n_{1},\ldots,n_{r},n} \cap B_{n_{1},\ldots,n_{r}}^{\leq(n-1)}.
\]

By the induction hypothesis and the definition of $\phi$, it follows that
\[
y \in \overline{B}_{n_{1},\ldots,n_{r},n} \cap \overline{B}_{n_{1},\ldots,n_{r}}^{\leq(n-1)} \subseteq \overline{B}_{n_{1},\ldots,n_{r},n},
\]
and therefore $x \in \overline{B}_{n_{1},\ldots,n_{r},n}$. Since $\phi$ restricts to an injection on $\overline{B}_{n_{1},\ldots,n_{r},n}$, we conclude that $x = 0$. This proves the first claim.

The second statement is proved analogously.
\end{proof}

We have now collected all the ingredients required to prove Theorem \ref{UniversalProperty}.

\begin{proof}[{Proof of Theorem \ref{UniversalProperty}}]
Let us first prove the following claim.\\

\emph{Claim}. For all $1\leq r\leq L$ and every tuple $(n_{1},\ldots,n_{r})\in\mathbb{N}^{r}$, the $\ast$-homomorphism $\phi:\overline{\mathfrak{A}}\twoheadrightarrow\mathfrak{A}(\mathbf{A},\Gamma)$ restricts to an isomorphism $\overline{B}_{n_{1},\ldots,n_{r}}\cong B_{n_{1},\ldots,n_{r}}$.

\emph{Proof of the claim}. We prove the statement by induction on $r$, starting at $r = L$. By Lemma \ref{IsomorphismTheorem2}, we know that for every tuple $(n_{1},\ldots,n_{L})\in\mathbb{N}^{L}$, the map $\phi$ restricts to an isomorphism $\overline{B}_{n_{1},\ldots,n_{L}}\cong B_{n_{1},\ldots,n_{L}}$, which completes the base case $r = L$.

For the induction step, assume that $\overline{B}_{n_{1},\ldots,n_{r}}\cong B_{n_{1},\ldots,n_{r}}$ for some fixed $2 \leq r \leq L$ and all $(n_{1},\ldots,n_{r})\in\mathbb{N}^{r}$. Lemma \ref{InductionStatement} then implies that $\overline{B}_{n_{1},\ldots,n_{r-1}}^{\leq n} \cong B_{n_{1},\ldots,n_{r-1}}^{\leq n}$ for all  $(n_{1},\ldots,n_{r-1})\in\mathbb{N}^{r-1},\; n\in\mathbb{N}$, so that we obtain an isomorphism
\[
\overline{\bigcup_{n=0}^{\infty} \overline{B}_{n_{1},\ldots,n_{r-1}}^{\leq n}}^{\Vert \cdot \Vert} \cong \overline{\bigcup_{n=0}^{\infty} B_{n_{1},\ldots,n_{r-1}}^{\leq n}}^{\Vert \cdot \Vert}.
\]
By Lemma \ref{FixedpointIdentification}, these C$^{\ast}$-algebras coincide with the fixed point algebras $\overline{B}_{n_{1},\ldots,n_{r-1}}^{\beta_{r}}$ and $B_{n_{1},\ldots,n_{r-1}}^{\alpha_{r}}$, implying that $\overline{B}_{n_{1},\ldots,n_{r-1}} \cong B_{n_{1},\ldots,n_{r-1}}$ for all $(n_{1},\ldots,n_{r-1}) \in \mathbb{N}^{r-1}$; see, e.g., \cite[Proposition 4.5.1]{BrownOzawa08}. This proves the claim.\\

In particular, the claim implies -- together with Lemma \ref{InductionStatement} -- that the $\ast$-homomorphism $\phi:\overline{\mathfrak{A}} \twoheadrightarrow \mathfrak{A}(\mathbf{A},\Gamma)$ restricts to an isomorphism $\overline{B}_{\emptyset}^{\leq n} \cong B_{\emptyset}^{\leq n}$ for every $n \in \mathbb{N}$. By Lemma \ref{FixedpointIdentification}, we have $\overline{\mathfrak{A}}^{\beta_{1}} = \overline{\bigcup_{n=0}^{\infty} \overline{B}_{\emptyset}^{\leq n}}^{\Vert \cdot \Vert}$ and 
$\mathfrak{A}(\mathbf{A},\Gamma)^{\alpha_{1}} = \overline{\bigcup_{n=0}^{\infty} B_{\emptyset}^{\leq n}}^{\Vert \cdot \Vert}$, so that $\phi$ is indeed an isomorphism.
\end{proof}

By applying the universal property in Theorem \ref{UniversalProperty} repeatedly, we find that in the case of nuclear vertex algebras, it can be lifted to a property independent of the choice of $v_{0} \in V\Gamma$.

\begin{theorem} \label{UniversalProperty2}
Let $\Gamma$ be a finite, undirected, simplicial graph and let $\mathbf{A} := (A_{v})_{v \in V\Gamma}$ be a collection of unital nuclear C$^{\ast}$-algebras, equipped with GNS-faithful states $(\omega_{v})_{v \in V\Gamma}$. Then $\mathfrak{A}(\mathbf{A},\Gamma)$ satisfies the following universal property: for every unital C$^{\ast}$-algebra $\overline{\mathfrak{A}}$ generated by the images of unital $\ast$-homomorphisms $\kappa_{v} : \mathfrak{A}(A_{v},\{v\}) \rightarrow \overline{\mathfrak{A}}$, $v \in V\Gamma$, satisfying  $[\kappa_{v}(x),\kappa_{v^\prime}(y)] = 0$ for all $x \in \mathfrak{A}(A_{v},\{v\})$, $y \in \mathfrak{A}(A_{v^\prime},\{v^\prime\})$ with $(v,v^\prime) \in E\Gamma$, and $\kappa_{v}(Q_{v})\kappa_{v^\prime}(Q_{v^\prime}) = 0$ for all $(v,v^\prime) \in E\Gamma^{c}$, there exists a surjective $\ast$-homomorphism $\phi : \mathfrak{A}(\mathbf{A},\Gamma) \twoheadrightarrow \overline{\mathfrak{A}}$ with $\phi(x) = \kappa_{v}(x)$ for all $v \in V\Gamma$, $x \in \mathfrak{A}(A_{v},\{v\})$.
\end{theorem}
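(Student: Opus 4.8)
The plan is to argue by induction on $L := \#V\Gamma$, peeling off one vertex at a time via Theorem~\ref{UniversalProperty}. The base case $L = 1$ is immediate: then $E\Gamma^{c} = \emptyset$, so the only hypothesis on $\overline{\mathfrak{A}}$ is that it be generated by a single unital $\ast$-homomorphism $\kappa_{v} : \mathfrak{A}(A_{v},\{v\}) = \mathfrak{A}(\mathbf{A},\Gamma) \to \overline{\mathfrak{A}}$, and $\phi := \kappa_{v}$ works. The one genuinely new ingredient needed is that each single-vertex algebra $\mathfrak{A}(A_{v},\{v\})$ is nuclear whenever $A_{v}$ is. I would establish this directly: for $\Gamma = \{v\}$ one has $\mathcal{H}_{\Gamma} \cong L^{2}(A_{v},\omega_{v})$ with $A_{v}$ acting via its GNS representation and $Q_{v}^{\perp}$ equal to the rank-one projection $p$ onto the cyclic vector $\xi_{v}$; since $p A_{v} p = \mathbb{C}p$ and $A_{v}\xi_{v}$ is dense, the elements $a p b^{\ast} = |a\xi_{v}\rangle\langle b\xi_{v}| \in \mathfrak{A}(A_{v},\{v\})$ span a dense subspace of $\mathcal{K}(\mathcal{H}_{\Gamma})$, so $\mathfrak{A}(A_{v},\{v\}) = A_{v} + \mathcal{K}(\mathcal{H}_{\Gamma})$ sits in an extension $0 \to \mathcal{K}(\mathcal{H}_{\Gamma}) \to \mathfrak{A}(A_{v},\{v\}) \to A_{v}/(A_{v}\cap\mathcal{K}(\mathcal{H}_{\Gamma})) \to 0$ with nuclear ideal and nuclear quotient, hence is nuclear.

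For the inductive step, fix $v_{0} \in V\Gamma$ and adopt the notation of Theorem~\ref{UniversalProperty}, writing $\mathfrak{A}_{1} = \mathfrak{A}(\mathbf{A}_{1},\mathrm{Star}(v_{0}))$, $\mathfrak{A}_{2} = \mathfrak{A}(\mathbf{A}_{2},\Gamma\setminus\{v_{0}\})$, $B = \mathfrak{A}(\mathbf{B},\mathrm{Link}(v_{0}))$. The subfamilies $(\kappa_{v})_{v\in V(\Gamma\setminus\{v_{0}\})}$ and $(\kappa_{v})_{v\in V\mathrm{Link}(v_{0})}$ satisfy the hypotheses of Theorem~\ref{UniversalProperty2} for the induced subgraphs $\Gamma\setminus\{v_{0}\}$ and $\mathrm{Link}(v_{0})$ — commutation over edges is inherited, and any complement-edge of an induced subgraph is a complement-edge of $\Gamma$ — so the inductive hypothesis yields unital $\ast$-homomorphisms $\kappa_{2} : \mathfrak{A}_{2} \to \overline{\mathfrak{A}}$ and $\kappa_{B} : B \to \overline{\mathfrak{A}}$ that restrict to $\kappa_{v}$ on each copy of $\mathfrak{A}(A_{v},\{v\})$. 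Since $\mathrm{Star}(v_{0})$ is the join of $\{v_{0}\}$ with $\mathrm{Link}(v_{0})$, Proposition~\ref{TensorDecomposition} gives $\mathfrak{A}_{1} \cong \mathfrak{A}(A_{v_{0}},\{v_{0}\}) \otimes_{\min} B$; because $\kappa_{v_{0}}(\mathfrak{A}(A_{v_{0}},\{v_{0}\}))$ commutes with $\kappa_{B}(B)$ (the edges $(v_{0},v)$, $v\in V\mathrm{Link}(v_{0})$, lie in $E\Gamma$), the pair $\kappa_{v_{0}}, \kappa_{B}$ induces a $\ast$-homomorphism on $\mathfrak{A}(A_{v_{0}},\{v_{0}\}) \otimes_{\max} B$, and by nuclearity of $\mathfrak{A}(A_{v_{0}},\{v_{0}\})$ this coincides with $\otimes_{\min}$, producing $\kappa_{1} : \mathfrak{A}_{1} \to \overline{\mathfrak{A}}$ that restricts to $\kappa_{v_{0}}$ on $\mathfrak{A}(A_{v_{0}},\{v_{0}\})$ and to $\kappa_{v}$ on each $\mathfrak{A}(A_{v},\{v\})$, $v\in V\mathrm{Link}(v_{0})$.

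It then remains to verify the two compatibility conditions of Theorem~\ref{UniversalProperty}. Using that the copies of $B$ inside $\mathfrak{A}_{1}$ and inside $\mathfrak{A}_{2}$ are each generated by the canonically identified copies of $\mathfrak{A}(A_{v},\{v\})$ for $v\in V\mathrm{Link}(v_{0})$ (Theorem~\ref{ConditionalExpectation}(1)), and that $\kappa_{1}$ and $\kappa_{2}$ both restrict to $\kappa_{v}$ there, one gets $\kappa_{1}|_{B} = \kappa_{2}|_{B}$. For $v \in V(\Gamma\setminus\mathrm{Star}(v_{0}))$ one has $(v_{0},v)\in E\Gamma^{c}$, whence $\kappa_{1}(Q_{v_{0}})\kappa_{2}(Q_{v}) = \kappa_{v_{0}}(Q_{v_{0}})\kappa_{v}(Q_{v}) = 0$. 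Since $\overline{\mathfrak{A}}$ is generated by $\kappa_{1}(\mathfrak{A}_{1}) \cup \kappa_{2}(\mathfrak{A}_{2})$ (it is generated by the $\kappa_{v}(\mathfrak{A}(A_{v},\{v\}))$), Theorem~\ref{UniversalProperty} supplies a surjective $\ast$-homomorphism $\phi : \mathfrak{A}(\mathbf{A},\Gamma) \twoheadrightarrow \overline{\mathfrak{A}}$ with $\phi|_{\mathfrak{A}_{1}} = \kappa_{1}$ and $\phi|_{\mathfrak{A}_{2}} = \kappa_{2}$; in particular $\phi(x) = \kappa_{v}(x)$ for all $v\in V\Gamma$ and $x\in\mathfrak{A}(A_{v},\{v\})$, closing the induction.

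The principal obstacle is the passage from the maximal to the minimal tensor product in the construction of $\kappa_{1}$, which is exactly where nuclearity of the vertex algebras enters — this is what makes the single-vertex nuclearity lemma indispensable. The remaining difficulty is purely organizational: one must check that the various canonical embeddings of vertex algebras and of the projections $Q_{v}$ into $B$, $\mathfrak{A}_{1}$, $\mathfrak{A}_{2}$ and $\mathfrak{A}(\mathbf{A},\Gamma)$ are mutually consistent, which is guaranteed by Theorem~\ref{ConditionalExpectation}(1) and Proposition~\ref{TensorDecomposition}.
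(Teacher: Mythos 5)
Your argument is correct and follows the same overall strategy as the paper's proof: induction on $\#V\Gamma$, reduction to Theorem \ref{UniversalProperty}, and nuclearity of the single-vertex algebras $\mathfrak{A}(A_{v},\{v\})$ obtained from the extension $\mathcal{K}(\mathcal{H}_{v})\hookrightarrow\mathfrak{A}(A_{v},\{v\})\twoheadrightarrow A_{v}/(A_{v}\cap\mathcal{K}(\mathcal{H}_{v}))$. The only substantive difference is how the star algebra $\mathfrak{A}_{1}$ is supplied with a $\ast$-homomorphism: the paper chooses $v_{0}$ so that $\mathrm{Star}(v_{0})$ is a proper induced subgraph whenever $\Gamma$ is not complete, so the inductive hypothesis applies to $\mathfrak{A}_{1}$ directly and nuclearity is invoked only in the separate case of a complete graph (via Proposition \ref{TensorDecomposition} applied to the whole algebra); you instead decompose $\mathfrak{A}_{1}\cong\mathfrak{A}(A_{v_{0}},\{v_{0}\})\otimes_{\min}B$ uniformly and pass from $\otimes_{\max}$ to $\otimes_{\min}$ using nuclearity of the single-vertex factor in every inductive step. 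Your version buys a case-free induction at the cost of using nuclearity more pervasively; the verification of $\kappa_{1}|_{B}=\kappa_{2}|_{B}$ on generators via Theorem \ref{ConditionalExpectation}(1) and of the orthogonality condition $\kappa_{1}(Q_{v_{0}})\kappa_{2}(Q_{v})=0$ is the same in both treatments, and both are valid.
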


\begin{proof}
Let $\overline{\mathfrak{A}}$ be the universal C$^{\ast}$-algebra generated by the images of unital $\ast$-homomorphisms $\kappa_{v} : \mathfrak{A}(A_{v},\{v\}) \rightarrow \overline{\mathfrak{A}}$, $v \in V\Gamma$, satisfying the commutation and orthogonality conditions as in the theorem. Then there exists a surjective $\ast$-homomorphism $\phi : \overline{\mathfrak{A}} \twoheadrightarrow \mathfrak{A}(\mathbf{A},\Gamma)$ such that $\phi(\kappa_{v}(x)) = x$ for all $v \in V\Gamma$, $x \in \mathfrak{A}(A_{v},\{v\})$. It is clear that the maps $\kappa_{v}$ must be faithful.

We prove that $\phi$ is faithful by induction on the cardinality of the vertex set $V\Gamma$. The case $\#V\Gamma = 1$ is trivial. For the induction step, assume that the statement holds for all finite, undirected, simplicial graphs $\Gamma'$ with $\#V\Gamma' < \#V\Gamma$.

We distinguish two cases: \\

\begin{itemize}
\item \emph{Case 1}: If the graph $\Gamma$ is not complete, there exists a vertex $v_{0}\in V\Gamma$ with $\#V\text{Star}(v_{0})<\#V\Gamma$. Define $\mathbf{A}_1:=(A_{v})_{v\in V\text{Star}(v_0)}$, $\mathbf{A}_2:=(A_{v})_{v\in V(\Gamma \setminus \{v_0\})}$, $\mathbf{B}:=(A_{v})_{v\in V\text{Link}(v_{0})}$. Note that by the induction assumption the C$^{\ast}$-subalgebra $\overline{\mathfrak{A}}_{1}$ of $\overline{\mathfrak{A}}$ generated by the union $\bigcup_{v\in V \text{Star}(v_0)}\text{im}(\kappa_{v})$ is a quotient of $\mathfrak{A}_{1}$ via $x\mapsto\kappa_{v}(x)$ for $v\in V\Gamma_{1}$, $x\in\mathfrak{A}(A_{v},\{v\})$. The composition of this map with $\phi$ is the identity, so that $\overline{\mathfrak{A}}_{1}\cong\mathfrak{A}_{1}$. Similarly, the induction assumption implies $\overline{\mathfrak{A}}_{2}\cong\mathfrak{A}_{2}$, where $\overline{\mathfrak{A}}_{2}$ is generated by $\bigcup_{v\in V(\Gamma \setminus \{v_0\})}\text{im}(\kappa_{v})$ in $\overline{\mathfrak{A}}$. But this means that $\overline{\mathfrak{A}}$ is generated by the images of unital $\ast$-homomorphisms $\kappa_{1}:\mathfrak{A}_{1}\rightarrow\overline{\mathfrak{A}}$, $\kappa_{2}:\mathfrak{A}_{2}\rightarrow\overline{\mathfrak{A}}$ satisfying $\kappa_{1}|_{B}=\kappa_{2}|_{B}$ and $\kappa_{1}(Q_{v_{0}})\kappa_{2}(Q_{v})=0$ for all $v\in V(\Gamma \setminus \text{Star}(v_{0}))$. From Theorem \ref{UniversalProperty} we obtain a surjective $\ast$-homomorphism $\psi:\mathfrak{A}(\mathbf{A},\Gamma)\twoheadrightarrow\overline{\mathfrak{A}}$ with $\psi|_{\mathfrak{A}_{1}}=\kappa_{1}$ and $\psi|_{\mathfrak{A}_{2}}=\kappa_{2}$. But $\phi\circ\psi=\text{id}$, so that $\mathfrak{A}(\mathbf{A},\Gamma)\cong\overline{\mathfrak{A}}$ via $x\mapsto\kappa_{v}(x)$ for $v\in V\Gamma$, $x\in\mathfrak{A}(A_{v},\{v\})$, as claimed.
\item \emph{Case 2}: Suppose $\Gamma$ is complete. Then repeated application of Proposition \ref{TensorDecomposition} gives $\mathfrak{A}(\mathbf{A},\Gamma) \cong \bigotimes_{v \in V\Gamma} \mathfrak{A}(A_v,\{v\})$. It suffices to show that each $\mathfrak{A}(A_v,\{v\})$ is nuclear. Since this algebra is generated by $A_v$ and $Q_v \in \mathcal{B}(\mathcal{H}_v)$, and $Q_v ^\perp$ is the orthogonal projection onto $\mathbb{C}\xi_v$, it contains the compact operators $\mathcal{K}(\mathcal{H}_v)$. Thus, there is a short exact sequence
\[
\mathcal{K}(\mathcal{H}_v) \hookrightarrow \mathfrak{A}(A_v,\{v\}) = A_v + \mathcal{K}(\mathcal{H}_v) \twoheadrightarrow A_v / (A_v \cap \mathcal{K}(\mathcal{H}_v)).
\]
If $A_v$ is nuclear, so is the quotient above. Since $\mathcal{K}(\mathcal{H}_v)$ is nuclear, it follows that $\mathfrak{A}(A_v,\{v\})$ is nuclear as well.

\end{itemize}
This completes the proof.
\end{proof}

\begin{remark}
In general, the nuclearity assumption in Theorem \ref{UniversalProperty2} cannot be omitted. Indeed, let $\Gamma$ be the complete graph on two vertices $v_1$ and $v_2$, and let $A_{v_1} = A_{v_2} = C_r^{\ast}(G)$, the reduced group C$^{\ast}$-algebra of a discrete group $G$, equipped with the canonical tracial state. Set $\mathbf{A} := (A_{v_1}, A_{v_2})$. For $i = 1, 2$, the C$^\ast$-algebra $\mathfrak{A}(A_{v_i},\{v_i\})$ is the C$^\ast$-subalgebra of $\mathcal{B}(\ell^2(G))$ generated by $C_r^{\ast}(G)$ and $\mathcal{K}(\ell^2(G))$. Define a unitary $J \in \mathcal{B}(\ell^2(G))$ by $J\delta_g := \delta_{g^{-1}}$ for $g \in G$, where $(\delta_g)_{g \in G}\subseteq \ell^2(G)$ is the canonical basis, as well as $\ast$-homomorphisms $\kappa_{1}:\mathfrak{A}(A_{v_{1}},\{v_{1}\})\rightarrow\mathcal{B}(\ell^{2}(G))/\mathcal{K}(\ell^{2}(G))$ and $\kappa_{2}:\mathfrak{A}(A_{v_{2}},\{v_{2}\})\rightarrow\mathcal{B}(\ell^{2}(G))/\mathcal{K}(\ell^{2}(G))$ via $\kappa_{1}(x):=x+\mathcal{K}(\ell^{2}(G))$ and $\kappa_{2}(x):=JxJ+\mathcal{K}(\ell^{2}(G))$, whose images commute with each other. Assuming that $\mathfrak{A}(\mathbf{A},\Gamma)$ satisfies the universal property in Theorem \ref{UniversalProperty2}, we obtain a $\ast$-homomorphism
\[
\phi : \mathfrak{A}(A_{v_1},\{v_1\}) \otimes \mathfrak{A}(A_{v_2},\{v_2\}) \cong \mathfrak{A}(\mathbf{A},\Gamma) \rightarrow \mathcal{B}(\ell^2(G)) / \mathcal{K}(\ell^2(G))
\]
with $\phi(x \otimes 1) = \kappa_1(x)$ and $\phi(1 \otimes y) = \kappa_2(y)$ for all $x,y \in \mathfrak{A}(A_{v_i},\{v_i\})$. This restricts to a $\ast$-homomorphism
\[
C_r^{\ast}(G) \otimes C_r^{\ast}(G) \rightarrow \mathcal{B}(\ell^2(G)) / \mathcal{K}(\ell^2(G)), \quad x \otimes y \mapsto xJ y J + \mathcal{K}(\ell^2(G)),
\]
which implies that $G$ has Ozawa's Akemann–Ostrand property $(\mathcal{AO})$ (see \cite{Ozawa04} and \cite[Definition 6.1]{Anantharaman-Delaroche09}). However, there are known examples of groups that do not satisfy this property.
\end{remark}

\vspace{3mm}


\subsection{Nuclearity and Exactness}

The goal of this subsection is to prove the following theorem, which characterizes the nuclearity and exactness of our main construction.

\begin{theorem} \label{NuclearityExactness}
Let $\Gamma$ be a finite, undirected, simplicial graph, and let $\mathbf{A} := (A_{v})_{v \in V\Gamma}$ be a collection of unital C$^{\ast}$-algebras equipped with GNS-faithful states $(\omega_{v})_{v \in V\Gamma}$. Then the following two statements hold:
\begin{enumerate}
    \item The C$^{\ast}$-algebra $\mathfrak{A}(\mathbf{A},\Gamma)$ is nuclear if and only if $A_{v}$ is nuclear for every $v \in V\Gamma$.
    \item The C$^{\ast}$-algebra $\mathfrak{A}(\mathbf{A},\Gamma)$ is exact if and only if $A_{v}$ is exact for every $v \in V\Gamma$.
\end{enumerate}
\end{theorem}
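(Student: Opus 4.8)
\emph{The ``only if'' directions.} Each $A_v$ sits inside $\mathfrak{A}(\mathbf{A},\Gamma)$ as a C$^{\ast}$-subalgebra, so if $\mathfrak{A}(\mathbf{A},\Gamma)$ is exact then so is $A_v$. For nuclearity, Theorem~\ref{ConditionalExpectation} exhibits $\mathfrak{A}(A_v,\{v\})$ as an expected C$^{\ast}$-subalgebra of $\mathfrak{A}(\mathbf{A},\Gamma)$, hence it is nuclear whenever $\mathfrak{A}(\mathbf{A},\Gamma)$ is (expected subalgebras of nuclear C$^{\ast}$-algebras are nuclear by the Choi--Effros theorem). Since $Q_v^{\perp}$ is the rank-one projection onto $\mathbb{C}\xi_v$, one has $\mathfrak{A}(A_v,\{v\})=A_v+\mathcal{K}(\mathcal{H}_v)$, and I would record once and for all the little fact that, for each $v$, $\mathfrak{A}(A_v,\{v\})$ is nuclear (resp.\ exact) if and only if $A_v$ is: both implications follow from the short exact sequence $0\to\mathcal{K}(\mathcal{H}_v)\to\mathfrak{A}(A_v,\{v\})\to A_v/(A_v\cap\mathcal{K}(\mathcal{H}_v))\to0$ together with the extension $0\to A_v\cap\mathcal{K}(\mathcal{H}_v)\to A_v\to A_v/(A_v\cap\mathcal{K}(\mathcal{H}_v))\to0$, using that $\mathcal{K}(\mathcal{H}_v)$ and its ideals are nuclear and that nuclearity and exactness pass to ideals, quotients and extensions.

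\emph{The diagonal subalgebra.} Assume now that every $A_v$ is nuclear (resp.\ exact); the core of the proof is to deduce that $\mathcal{D}(\mathbf{A},\Gamma)$ is nuclear (resp.\ exact). I would first handle the ``coefficient algebra'' $\mathcal{D}:=C^{\ast}(\{\mathfrak{d}(a)\mid v\in V\Gamma,\ a\in A_v\})=\overline{\mathrm{Span}}(\mathcal{D}_0(\mathbf{A},\Gamma))$ from Subsection~\ref{subsec:Universality}. By Lemma~\ref{MainIdentities}(4) the generators $\mathfrak{d}(a)$ commute along edges of $\Gamma$ and satisfy $\mathfrak{d}(a)\mathfrak{d}(b)=0$ along non-edges, and for a clique $C\subseteq V\Gamma$ a direct computation identifies the C$^{\ast}$-algebra $\mathcal{D}_C$ generated by $\{\mathfrak{d}(a)\mid v\in C,\ a\in A_v\}$ spatially with $\bigotimes_{v\in C}^{\min}C^{\ast}(\mathfrak{d}(A_v))$ acting on $\bigotimes_{v\in C}\mathcal{H}_v^{\circ}$. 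Now $C^{\ast}(\mathfrak{d}(A_v))=Q_v\,\mathfrak{A}(A_v,\{v\})\,Q_v$ is a hereditary subalgebra of $\mathfrak{A}(A_v,\{v\})$ (one checks $\mathcal{K}(\mathcal{H}_v^{\circ})\subseteq C^{\ast}(\mathfrak{d}(A_v))$ using GNS-faithfulness), hence nuclear (resp.\ exact) by the fact recorded above, and therefore so is each $\mathcal{D}_C$. Filtering $\mathcal{D}$ by the ideals $J_{\mathcal{F}}:=\{x\in\mathcal{D}\mid x|_{\mathcal{H}_{\mathbf{w}}^{\circ}}=0\text{ whenever }\{v\in V\Gamma\mid v\leq\mathbf{w}\}\notin\mathcal{F}\}$, indexed by order ideals $\mathcal{F}$ of the finite poset of cliques of $\Gamma$, yields a finite composition series of $\mathcal{D}$ whose subquotients are $\mathbb{C}$ and the $\mathcal{D}_C$; since nuclearity and exactness are closed under extensions, $\mathcal{D}$ is nuclear (resp.\ exact). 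Next, by the machinery of Subsection~\ref{subsec:Universality} --- the isomorphisms $B_{n_1,\dots,n_L}\cong\mathcal{K}(\mathfrak{X}_{n_1,\dots,n_L})$, Proposition~\ref{IdealConstruction}, and Lemma~\ref{FixedpointIdentification} --- together with the fact that an elementary operator is diagonal precisely when it is balanced in every coordinate (Propositions~\ref{DiagonalityCharacterization} and~\ref{DiagonalCharacterization}), the algebra $\mathcal{D}(\mathbf{A},\Gamma)$ is an increasing union of C$^{\ast}$-algebras each obtained from the blocks $B_{n_1,\dots,n_L}$ by finitely many extensions. Every such block is Morita equivalent to an ideal of $\mathcal{D}$, hence nuclear (resp.\ exact); as nuclearity and exactness pass to extensions and to inductive limits, $\mathcal{D}(\mathbf{A},\Gamma)$ is nuclear (resp.\ exact).

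\emph{From the diagonal to $\mathfrak{A}(\mathbf{A},\Gamma)$.} By Theorem~\ref{GaugeActionTheorem} the gauge action $\alpha:\mathbb{T}^{V\Gamma}\curvearrowright\mathfrak{A}(\mathbf{A},\Gamma)$ has fixed-point algebra $\mathcal{D}(\mathbf{A},\Gamma)$ and comes with a faithful conditional expectation $\mathbb{E}$ onto it, so $\mathfrak{A}(\mathbf{A},\Gamma)$ is a topologically $\mathbb{Z}^{V\Gamma}$-graded C$^{\ast}$-algebra, i.e.\ the reduced cross-sectional C$^{\ast}$-algebra of a Fell bundle over the discrete amenable group $\mathbb{Z}^{V\Gamma}$ with unit fibre $\mathcal{D}(\mathbf{A},\Gamma)$. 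Invoking the standard transfer result --- that the (reduced) cross-sectional C$^{\ast}$-algebra of a Fell bundle over a discrete amenable (more generally, exact) group is nuclear (resp.\ exact) exactly when its unit fibre is --- the previous step shows $\mathfrak{A}(\mathbf{A},\Gamma)$ is nuclear (resp.\ exact). Alternatively, and more in the spirit of the paper, one can apply Lemma~\ref{FixedpointIdentification} $L=\#V\Gamma$ times, peeling off one circle at a time and using at each stage that passing from the fixed-point algebra of a circle action (equipped with a faithful expectation onto it) back to the ambient algebra preserves nuclearity and exactness.

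\emph{Expected main obstacle.} The delicate part is the middle step: pinning down the internal structure of $\mathcal{D}(\mathbf{A},\Gamma)$ and of the coefficient algebra $\mathcal{D}$ precisely enough to run the extension/Morita argument --- in particular verifying that $\mathcal{D}$ is a C$^{\ast}$-algebra carrying the claimed clique-indexed composition series and that $\mathcal{D}_C\cong\bigotimes^{\min}_{v\in C}C^{\ast}(\mathfrak{d}(A_v))$ --- together with the softer but no less crucial task of correctly locating and applying the Fell-bundle (equivalently, circle-action) transfer theorem for nuclearity and exactness. The ``only if'' directions, by contrast, should be routine.
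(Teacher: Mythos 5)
Your ``only if'' directions and your analysis of the single--vertex algebras $\mathfrak{A}(A_v,\{v\})=A_v+\mathcal{K}(\mathcal{H}_v)$ match the paper, and your overall skeleton (coefficient algebra $\mathcal{D}$ first, then the blocks $B_{n_1,\dots,n_L}\cong\mathcal{K}(\mathfrak{X}_{n_1,\dots,n_L})$, then a transfer along the gauge action) is the right one. Your clique--indexed composition series for $\mathcal{D}$ is a genuinely different route from the paper's, which instead runs an induction on $\#V\Gamma$ and builds explicit c.p.c.\ factorizations through $M_n(\mathbb{C})^{\oplus 3}$ using the orthogonal decomposition $x=Q_{v_0}x+Px+Q^{\perp}\mathbb{E}_B(x)$ (Proposition \ref{DNuclearity}); your version is plausible but leaves the identification of the subquotients with (quotients of) the $\mathcal{D}_C$ unverified, whereas the paper's argument is self-contained.

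However, there is a genuine gap in the transfer step. The fixed-point algebra of the full gauge action $\mathbb{T}^{V\Gamma}\curvearrowright\mathfrak{A}(\mathbf{A},\Gamma)$ is \emph{not} $\mathcal{D}(\mathbf{A},\Gamma)$, and an elementary operator is \emph{not} diagonal precisely when it is balanced in every coordinate: by Lemma \ref{GaugeActionLemma} gauge-invariance of $x=(a_1^{\dagger}\cdots a_k^{\dagger})d(b_1^{\dagger}\cdots b_l^{\dagger})^{*}$ only forces $\#_i(u_1,\dots,u_k)=\#_i(v_1,\dots,v_l)$ for all $i$, while diagonality requires $\Sigma(x)=(u_1\cdots u_k)(v_1\cdots v_l)^{-1}=e$ in $W_\Gamma$ (Proposition \ref{DiagonalityCharacterization}). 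For $(s,t)\in E\Gamma^{c}$ and $a,d\in A_s^{\circ}$, $b,c\in A_t^{\circ}$, the operator $a^{\dagger}b^{\dagger}(c^{\dagger}d^{\dagger})^{*}$ has balanced counts but signature $stst\neq e$, so it is gauge-invariant and non-diagonal. Hence the unit fibre of your $\mathbb{Z}^{V\Gamma}$-grading is the strictly larger algebra $\overline{\bigcup}\sum B_{n_1,\dots,n_L}$, and the statement ``unit fibre $=\mathcal{D}(\mathbf{A},\Gamma)$ is nuclear, therefore the cross-sectional algebra is nuclear'' does not get you to $\mathfrak{A}(\mathbf{A},\Gamma)$. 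What actually has to be proved nuclear/exact is this larger balanced algebra together with all the intermediate algebras $B_{n_1,\dots,n_r}$ and $B_{n_1,\dots,n_r}^{\leq n}$, and that is exactly where the paper's work lies: Proposition \ref{IdealConstruction} and Lemma \ref{NuclearityInductionStatement} provide the extension arguments at each level (with a local-splitting/local-reflexivity argument needed for exactness, since exactness is not closed under arbitrary extensions), and Lemma \ref{FixedpointIdentification} identifies each inductive limit $\overline{\bigcup_n B_{n_1,\dots,n_{r-1}}^{\leq n}}$ with the fixed-point algebra of one circle before \cite[Theorem 4.5.2]{BrownOzawa08} is applied -- the ``peeling off one circle at a time'' you mention only as an alternative is in fact the unavoidable core of the proof, not an optional variant.
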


\begin{remark}
Combined with \cite[Theorem 4.4.3]{BrownOzawa08}, Theorem \ref{NuclearityExactness} implies, in the context of Example \ref{MainExample} (1), that the action of the right-angled Coxeter group $W$ on its combinatorial boundary is amenable, thereby partially recovering the main result of \cite{Lecureux10} (see also \cite[Corollary 3.8]{Klisse23-1}). Similarly, if the vertex groups $(G_v)_{v \in V\Gamma}$ are all amenable, then the action of the graph product $\mathbf{G}_\Gamma = \star_{v, \Gamma} G_{v}$ on $\overline{(K,o)}$ in Example \ref{MainExample} (2) is amenable.
\end{remark}

The proof of Theorem \ref{NuclearityExactness} follows a similar structure to that of Theorem \ref{UniversalProperty} in Subsection \ref{subsec:Universality}. For this, we adopt the same notation as in Subsection \ref{subsec:Universality}: fix a finite, undirected, simplicial graph $\Gamma$, and let $\mathbf{A} := (A_{v})_{v \in V\Gamma}$ be a collection of unital C$^{\ast}$-algebras equipped with GNS-faithful states $(\omega_{v})_{v \in V\Gamma}$. Choose an enumeration $(s_{1}, \dots, s_{L})$ of the elements of $V\Gamma$, with $L := \#V\Gamma$, and define for every number $1 \leq r \leq L$, every tuple $(n_{1}, \dots, n_{r}) \in \mathbb{N}^{r}$, and every $n \in \mathbb{N}$ the C$^{\ast}$-algebras $B_{n_{1}, \dots, n_{r}}$, $B_{\emptyset}^{\leq n}$, and $B_{n_{1}, \dots, n_{r}}^{\leq n}$ as before. Furthermore, set $\mathcal{D} := \overline{\text{Span}}^{\Vert \cdot \Vert}(\mathcal{D}_{0}(\mathbf{A},\Gamma)) \subseteq \mathfrak{A}(\mathbf{A},\Gamma)$.

\begin{proposition} \label{DNuclearity}
If the C$^{\ast}$-algebras $(A_{v})_{v \in V\Gamma}$ are all nuclear (resp. exact), then $\mathcal{D}$ is nuclear (resp. exact) as well.
\end{proposition}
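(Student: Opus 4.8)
The plan is to realise $\mathcal{D}$ as a \emph{finite} direct sum of minimal tensor products of hereditary subalgebras of the single-vertex algebras $\mathfrak{A}(A_v,\{v\})$, and then to invoke the stability of nuclearity (resp.\ exactness) under hereditary subalgebras, minimal tensor products, finite direct sums and extensions (see \cite{BrownOzawa08}). I would start with the single-vertex pieces: for $v\in V\Gamma$ let $\mathcal{D}_v\subseteq\mathcal{D}$ be the C$^\ast$-subalgebra generated by $\{\mathfrak{d}(a)\mid a\in A_v\}$, so that $\mathfrak{d}(1)=Q_v$ is a unit for $\mathcal{D}_v$. A short computation with $a^\circ=a-\omega_v(a)1$ shows that $\mathfrak{d}(a^\ast a)-\mathfrak{d}(a)^\ast\mathfrak{d}(a)$ acts on $Q_v\mathcal{H}_\Gamma$, under the canonical identification $Q_v\mathcal{H}_\Gamma\cong\mathcal{H}_v^\circ\otimes\mathcal{K}_v$ with $\mathcal{H}_v^\circ=\mathcal{H}_v\ominus\mathbb{C}\xi_v$, as a positive rank-one operator tensored with the identity, whose range vector runs through a total subset of $\mathcal{H}_v^\circ$ as $a$ ranges over $A_v$; polarisation then gives $\mathcal{K}(\mathcal{H}_v^\circ)\subseteq\mathcal{D}_v$, and together with $\mathfrak{d}(a)=Q_vaQ_v$ this identifies $\mathcal{D}_v$ with the hereditary subalgebra $Q_v^{(v)}\mathfrak{A}(A_v,\{v\})Q_v^{(v)}$ of $\mathfrak{A}(A_v,\{v\})=A_v+\mathcal{K}(\mathcal{H}_v)$, where $Q_v^{(v)}$ is the projection onto $\mathcal{H}_v^\circ$. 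As in the proof of Theorem~\ref{UniversalProperty2}, $\mathfrak{A}(A_v,\{v\})$ sits in a short exact sequence with kernel $\mathcal{K}(\mathcal{H}_v)$ and quotient $A_v/(A_v\cap\mathcal{K}(\mathcal{H}_v))$, hence is nuclear when $A_v$ is nuclear and exact when $A_v$ is exact; therefore so is its hereditary subalgebra $\mathcal{D}_v$.

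Next I would produce a finite central decomposition of $\mathcal{D}$. By Lemma~\ref{MainIdentities}, for distinct $v,v'\in V\Gamma$ the projection $Q_v$ commutes with $\mathcal{D}_{v'}$: if $(v,v')\in E\Gamma$ this is the commutation relation, and if $(v,v')\in E\Gamma^c$ then $Q_vQ_{v'}=0$, so that $Q_v\mathcal{D}_{v'}=\mathcal{D}_{v'}Q_v=0$. Since $Q_v$ is the unit of $\mathcal{D}_v$ and $\mathcal{D}$ is generated by the $\mathcal{D}_v$ together with $1$, each $Q_v$, and hence the whole commutative C$^\ast$-algebra $\mathcal{Z}:=C^\ast(\{Q_v\mid v\in V\Gamma\}\cup\{1\})$, lies in the centre of $\mathcal{D}$. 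Being generated by finitely many commuting projections, $\mathcal{Z}$ is finite-dimensional, and its minimal projections are precisely the nonzero products $z_C:=\prod_{v\in C}Q_v\prod_{v\notin C}Q_v^\perp$: here $z_C\neq0$ forces $C$ to be a clique, and conversely, for a clique $C$, $z_C$ is the projection onto $\bigoplus_{\mathbf{w}\,:\,C(\mathbf{w})=C}\mathcal{H}_{\mathbf{w}}^\circ$, where $C(\mathbf{w}):=\{v\in V\Gamma\mid v\leq\mathbf{w}\}$ is always a clique and equals $C$ for $\mathbf{w}=\prod_{v\in C}v$. Thus $\mathcal{D}=\bigoplus_{C\text{ clique}}z_C\mathcal{D}$ is a finite direct sum, with $z_\emptyset\mathcal{D}=\mathbb{C}z_\emptyset$ corresponding to the action on $\mathbb{C}\Omega$.

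It then remains to identify the fibres. Fix a nonempty clique $C$. Since $z_C\leq Q_v$ for $v\in C$ and $z_C\leq Q_v^\perp$ for $v\notin C$, the compression $z_C\mathcal{D}$ is generated by the pairwise commuting unital C$^\ast$-subalgebras $z_C\mathcal{D}_v$, $v\in C$. Using the parabolic tensor decomposition of the graph product Hilbert space (cf.\ \cite[Lemma~4.3.1]{Davis08}), for every $\mathbf{w}$ with $C(\mathbf{w})=C$ one may take the first $\#C$ letters of a reduced expression of $\mathbf{w}$ to be exactly $C$, so that $\mathcal{H}_{\mathbf{w}}^\circ\cong\big(\bigotimes_{v\in C}\mathcal{H}_v^\circ\big)\otimes\mathcal{H}_{\mathbf{c}^{-1}\mathbf{w}}^\circ$ with $\mathbf{c}:=\prod_{v\in C}v$, and in this picture $\mathfrak{d}(a)$ for $a\in A_v$, $v\in C$, acts as the compression operator from the first paragraph on the $v$-th leg, tensored with the identity. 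Summing over such $\mathbf{w}$ yields a unitary $z_C\mathcal{H}_\Gamma\cong\big(\bigotimes_{v\in C}\mathcal{H}_v^\circ\big)\otimes\mathcal{K}_C$ under which $z_C\mathcal{D}_v$ becomes $\mathcal{D}_v\otimes1$ on the respective leg; hence $z_C\mathcal{D}$, being the C$^\ast$-algebra generated by the mutually commuting families $\{\mathcal{D}_v\otimes1\}_{v\in C}$ inside $\mathcal{B}\big(\bigotimes_{v\in C}\mathcal{H}_v^\circ\otimes\mathcal{K}_C\big)$, coincides with $\big(\bigotimes_{v\in C}^{\min}\mathcal{D}_v\big)\otimes1_{\mathcal{K}_C}\cong\bigotimes_{v\in C}^{\min}\mathcal{D}_v$. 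Combining this with the previous paragraph, $\mathcal{D}\cong\mathbb{C}\oplus\bigoplus_{\emptyset\neq C\text{ clique}}\bigotimes_{v\in C}^{\min}\mathcal{D}_v$, and the statement follows from the nuclearity (resp.\ exactness) of the $\mathcal{D}_v$ established above together with the permanence properties of minimal tensor products and finite direct sums.

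The step I expect to be the main obstacle is the last one, namely showing that each fibre $z_C\mathcal{D}$ is a \emph{spatial} tensor product rather than merely a quotient of a maximal tensor product of the $\mathcal{D}_v$ — the latter would already suffice for nuclearity, but not for exactness. This rests on the parabolic tensor decomposition of the Fock space together with the explicit leg-wise action of the diagonal operators $\mathfrak{d}(a)$ on each homogeneous subspace $\mathcal{H}_{\mathbf{w}}^\circ$; the bookkeeping there — in particular verifying that $C(\mathbf{w})$ is always a clique and that $\mathfrak{d}(a)$ acts on a single tensor leg — is the technical heart of the argument. The containment $\mathcal{K}(\mathcal{H}_v^\circ)\subseteq\mathcal{D}_v$ in the first paragraph needs the small computation indicated but is routine.
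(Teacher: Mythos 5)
Your argument is correct, but it takes a genuinely different route from the paper. The paper proves the proposition by induction on $\#V\Gamma$ with a case distinction: if $\Gamma$ is not complete, it picks $v_{0}$ with $\#V\mathrm{Star}(v_{0})<\#V\Gamma$, writes each $x\in\mathcal{D}$ as $Q_{v_{0}}x+Px+Q^{\perp}\mathbb{E}_{B}(x)$ with the three pieces lying in the smaller algebras $\mathcal{D}_{1},\mathcal{D}_{2},\mathcal{D}_{B}$ attached to $\mathrm{Star}(v_{0})$, $\Gamma\setminus\{v_{0}\}$ and $\mathrm{Link}(v_{0})$, and glues completely positive approximations of the three pieces (resp.\ runs the analogous argument for exactness); if $\Gamma$ is complete, it invokes Proposition~\ref{TensorDecomposition}. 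You instead prove a global structure theorem, $\mathcal{D}\cong\bigoplus_{C\ \text{clique}}z_{C}\mathcal{D}$ with $z_{C}\mathcal{D}\cong\bigotimes_{v\in C}^{\min}\mathcal{D}_{v}$ and $\mathcal{D}_{v}=Q_{v}\mathfrak{A}(A_{v},\{v\})Q_{v}=P_{v}^{\circ}A_{v}P_{v}^{\circ}+\mathcal{K}(\mathcal{H}_{v}^{\circ})$, and then quote permanence properties. The steps you flag as delicate do go through: each $Q_{v}=\mathfrak{d}(1)$ commutes with every $\mathfrak{d}(b)$ by Lemma~\ref{MainIdentities}, so the $z_{C}$ are mutually orthogonal central projections summing to $1$, with $z_{C}\neq0$ forcing $C$ to be a clique since $Q_{v}Q_{v^{\prime}}=0$ off edges; and since the algebras $z_{C}\mathcal{D}_{v}$, $v\in C$, act on distinct tensor legs of $z_{C}\mathcal{H}_{\Gamma}\cong\bigl(\bigotimes_{v\in C}\mathcal{H}_{v}^{\circ}\bigr)\otimes\mathcal{K}_{C}$, the C$^{\ast}$-algebra they generate is automatically the spatial, hence minimal, tensor product --- there is no danger of obtaining only a quotient of the maximal one. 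Your route buys an explicit, reusable description of $\mathcal{D}$ (a finite direct sum over cliques of tensor products of corners of the single-vertex algebras) and treats nuclearity and exactness uniformly; the paper's route buys consistency with the inductive scheme it recycles for Theorem~\ref{UniversalProperty} and Theorem~\ref{NuclearityExactness}. Two small points to make explicit in a final write-up: the identity $\mathcal{D}=C^{\ast}(\{1\}\cup\{\mathfrak{d}(a)\mid a\in A_{v},\,v\in V\Gamma\})$, which you use when asserting that $\mathcal{D}$ is generated by the $\mathcal{D}_{v}$ and $1$; and the degenerate case $A_{v}=\mathbb{C}$, where $Q_{v}=0$ and the corresponding summands $z_{C}\mathcal{D}$ vanish and should simply be omitted.
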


\begin{proof}
Assume that the C$^{\ast}$-algebras $(A_{v})_{v \in V\Gamma}$ are nuclear. We prove the nuclearity statement by induction on the size of the vertex set $V\Gamma$.

In the case where $V\Gamma = \{v\}$, we have $\mathcal{D} = \mathcal{D}(\mathbf{A},\Gamma)$. By Theorem \ref{GaugeActionTheorem} (3), the C$^{\ast}$-subalgebra $\mathcal{D}$ of $\mathfrak{A}(\mathbf{A},\Gamma)$ is expected. Since $A_{v}$ is nuclear, it follows -- by the same argument as in the proof of Theorem \ref{UniversalProperty2} -- that $\mathfrak{A}(\mathbf{A},\Gamma) = \mathfrak{A}(A_v, \{v\})$ is nuclear, and therefore $\mathcal{D}$ is nuclear as well.

For the induction step, consider a finite, undirected, simplicial graph $\Gamma$, and assume that the statement holds for all  graphs $\Gamma^{\prime}$ with $\#V\Gamma^{\prime} < \#V\Gamma$. We distinguish two cases:\\

\begin{itemize}
\item \emph{Case 1}: If the graph $\Gamma$ is not complete, then there exists $v_{0} \in V\Gamma$ such that $\#V\text{Star}(v_{0}) < \#V\Gamma$. Set $\mathbf{A}_1 := (A_{v})_{v \in V(\text{Star}(v_0))}$, $\mathbf{A}_2 := (A_{v})_{v \in V(\Gamma \setminus \{ v_0 \})}$, and $\mathbf{B} := (A_{v})_{v \in V(\text{Link}(v_0))}$. Further define the C$^{\ast}$-algebras
\begin{eqnarray}
\nonumber \mathcal{D}_{1} &:=& \overline{\text{Span}}^{\Vert \cdot \Vert} (\mathcal{D}_{0}(\mathbf{A}_1, \text{Star}(v_0))), \\
\nonumber \mathcal{D}_{2} &:=& \overline{\text{Span}}^{\Vert \cdot \Vert} (\mathcal{D}_{0}(\mathbf{A}_2, \Gamma \setminus \{v_0\})), \\
\nonumber \mathcal{D}_{B} &:=& \overline{\text{Span}}^{\Vert \cdot \Vert} (\mathcal{D}_{0}(\mathbf{B}, \text{Link}(v_0))).
\end{eqnarray}
Note that the conditional expectation $\mathbb{E}_{\Gamma, \text{Link}(v_0)}$ provided by Theorem \ref{ConditionalExpectation} restricts to a conditional expectation $\mathbb{E}_{B} : \mathcal{D} \rightarrow \mathcal{D}_{B}$. By the induction assumption, the C$^{\ast}$-algebras $\mathcal{D}_{1}$, $\mathcal{D}_{2}$, and $\mathcal{D}_{B}$ are nuclear.

Now let $\varepsilon > 0$ and let $\mathcal{F} \subseteq \mathcal{D}$ be a finite subset. As in the proof of Proposition \ref{IsomorphismTheorem}, we can express every $x \in \mathcal{F}$ as
\[
x = Q_{v_0}x + Px + Q^{\perp}x = Q_{v_0}x + Px + Q^{\perp}\mathbb{E}_{B}(x),
\]
where $P := \bigvee_{v \in V(\Gamma \setminus \text{Star}(v_0))} Q_{v} \in \mathfrak{A}(\mathbf{A}_2, \Gamma \setminus \{v_0\})$ and $Q := Q_{v_0} + P$. Note that $Q_{v_0}x \in \mathcal{D}_{1}$ and $Px \in \mathcal{D}_{2}$.

By assumption, there exists $n \in \mathbb{N}$ and contractive completely positive maps
\[
\varphi_{1} : \mathcal{D}_{1} \rightarrow M_{n}(\mathbb{C}), \quad 
\varphi_{2} : \mathcal{D}_{2} \rightarrow M_{n}(\mathbb{C}), \quad 
\varphi_{B} : \mathcal{D}_{B} \rightarrow M_{n}(\mathbb{C}),
\]
\[
\psi_{1} : M_{n}(\mathbb{C}) \rightarrow \mathcal{D}_{1}, \quad 
\psi_{2} : M_{n}(\mathbb{C}) \rightarrow \mathcal{D}_{2}, \quad 
\psi_{B} : M_{n}(\mathbb{C}) \rightarrow \mathcal{D}_B,
\]
such that for all $x \in \mathcal{F}$,
\begin{eqnarray}
\nonumber && \Vert Q_{v_0}x - (\psi_{1} \circ \varphi_{1})(Q_{v_0}x) \Vert < \frac{\varepsilon}{3}, \\
\nonumber && \Vert Px - (\psi_{2} \circ \varphi_{2})(Px) \Vert < \frac{\varepsilon}{3}, \\
\nonumber && \Vert \mathbb{E}_{B}(x) - (\psi_{B} \circ \varphi_{B})(\mathbb{E}_{B}(x)) \Vert < \frac{\varepsilon}{3}.
\end{eqnarray}

Define $\varphi : \mathcal{D} \rightarrow M_{n}(\mathbb{C}) \oplus M_{n}(\mathbb{C}) \oplus M_{n}(\mathbb{C})$ by
\[
\varphi(x) := (\varphi_{1}(Q_{v_0}x), \varphi_{2}(Px), \varphi_{B}(\mathbb{E}_{B}(x))),
\]
and $\psi : M_{n}(\mathbb{C}) \oplus M_{n}(\mathbb{C}) \oplus M_{n}(\mathbb{C}) \rightarrow \mathcal{D}$ by
\[
\psi(z_{1}, z_{2}, z_{3}) := Q_{v_0}\psi_{1}(z_{1}) + P\psi_{2}(z_{2}) + Q^{\perp}\psi_{B}(z_{3}).
\]
Then both $\varphi$ and $\psi$ are contractive completely positive maps, and for all $x \in \mathcal{F}$,
\begin{eqnarray}
\nonumber \Vert x - \psi \circ \varphi(x) \Vert 
&=& \Vert (Q_{v_0}x - Q_{v_0}(\psi_{1} \circ \varphi_{1})(Q_{v_0}x)) 
+ (Px - P(\psi_{2} \circ \varphi_{2})(Px)) \\
\nonumber && \quad + (Q^{\perp}\mathbb{E}_{B}(x) - Q^{\perp}(\psi_{B} \circ \varphi_{B})(\mathbb{E}_{B}(x))) \Vert 
< \varepsilon.
\end{eqnarray}
Hence, $\mathcal{D}$ is nuclear.

\item \emph{Case 2}: If $\Gamma$ is complete, then repeated application of Proposition \ref{TensorDecomposition} implies $ \mathcal{D} \cong \bigotimes_{v \in V\Gamma} \overline{\text{Span}}^{\Vert \cdot \Vert}(\mathcal{D}_{0}(A_{v}, \{v\}))$. By the induction assumption, each factor in the tensor product is nuclear, and so is $\mathcal{D}$.
\end{itemize}

This completes the proof of the nuclearity of $\mathcal{D}$. The statement about exactness follows similarly.
\end{proof}

\begin{corollary} \label{Nuclearity2}
If the C$^{\ast}$-algebras $(A_{v})_{v \in V\Gamma}$ are nuclear (resp. exact), then $B_{n_{1}, \dots, n_{L}}$ is nuclear (resp. exact) for every tuple $(n_{1}, \dots, n_{L}) \in \mathbb{N}^{L}$.
\end{corollary}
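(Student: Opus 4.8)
The plan is to reduce Corollary~\ref{Nuclearity2} to the already-established nuclearity (resp.\ exactness) of the diagonal algebra $\mathcal{D}$, together with standard permanence properties for C$^{\ast}$-algebras of compact operators on Hilbert modules. Recall from the discussion preceding Lemma~\ref{IsomorphismTheorem2} that the linear space $\mathfrak{X}_{n_{1},\dots,n_{L}}$ from \eqref{HilbertModule} is a Hilbert $\mathcal{D}$-module under $\langle \xi,\eta\rangle:=\xi^{\ast}\eta$, and that $\theta_{\xi,\eta}\mapsto\xi\eta^{\ast}$ implements a $\ast$-isomorphism $\mathcal{K}(\mathfrak{X}_{n_{1},\dots,n_{L}})\cong B_{n_{1},\dots,n_{L}}$. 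By Proposition~\ref{DNuclearity}, the C$^{\ast}$-algebra $\mathcal{D}=\overline{\mathrm{Span}}^{\Vert\cdot\Vert}(\mathcal{D}_{0}(\mathbf{A},\Gamma))$ is nuclear (resp.\ exact) as soon as every $A_{v}$ is. It therefore suffices to show that $\mathcal{K}(E)$ is nuclear (resp.\ exact) for every Hilbert module $E$ over a nuclear (resp.\ exact) C$^{\ast}$-algebra $\mathcal{D}$.

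For this I would use that $E:=\mathfrak{X}_{n_{1},\dots,n_{L}}$ is a full Hilbert module over the closed two-sided ideal $I_{E}:=\overline{\mathrm{Span}}^{\Vert\cdot\Vert}\{\langle\xi,\eta\rangle\mid\xi,\eta\in E\}\triangleleft\mathcal{D}$, and that $E$ is a $\mathcal{K}(E)$--$I_{E}$ imprimitivity bimodule; hence $\mathcal{K}(E)$ is strongly Morita equivalent to $I_{E}$. Since an ideal of a nuclear C$^{\ast}$-algebra is nuclear and a C$^{\ast}$-subalgebra of an exact C$^{\ast}$-algebra is exact, $I_{E}$ is nuclear (resp.\ exact); as nuclearity and exactness are invariants of strong Morita equivalence, so is $\mathcal{K}(E)\cong B_{n_{1},\dots,n_{L}}$. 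An alternative route, which avoids any worry about Morita invariance in full generality, is to apply Kasparov's stabilization theorem to realize $\mathcal{K}(E)$ as a corner -- hence a hereditary C$^{\ast}$-subalgebra -- of $\mathcal{K}(\ell^{2}_{S}(\mathcal{D}))\cong\mathcal{D}\otimes\mathcal{K}(\ell^{2}(S))$ for a suitable index set $S$; since $\mathcal{K}(\ell^{2}(S))$ is an inductive limit of matrix algebras and thus nuclear, $\mathcal{D}\otimes\mathcal{K}(\ell^{2}(S))$ is nuclear (resp.\ exact) whenever $\mathcal{D}$ is, and nuclearity passes to hereditary subalgebras while exactness passes to arbitrary C$^{\ast}$-subalgebras (see \cite{BrownOzawa08}).

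I do not anticipate a genuine obstacle here: the entire analytic content sits in Proposition~\ref{DNuclearity}, which is already available, and everything else is routine bookkeeping with well-known permanence properties of nuclearity and exactness. The one point deserving a little care is that the vertex algebras, and hence $E=\mathfrak{X}_{n_{1},\dots,n_{L}}$, need not be separable or countably generated; accordingly, one should invoke the general (rather than the separable) forms of Kasparov's stabilization theorem and of Morita invariance, all of which remain valid without countability assumptions.
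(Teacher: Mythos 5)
Your proposal is correct and follows essentially the same route as the paper: identify $B_{n_{1},\dots,n_{L}}$ with $\mathcal{K}(\mathfrak{X}_{n_{1},\dots,n_{L}})$ for the Hilbert $\mathcal{D}$-module of \eqref{HilbertModule}, invoke Proposition~\ref{DNuclearity} for $\mathcal{D}$, and transfer nuclearity (resp.\ exactness) to the compacts on the module. The only difference is that the paper simply cites \cite[Exercise 4.6.3]{BrownOzawa08} for this last permanence fact, whereas you supply its standard proof via Morita equivalence/stabilization.
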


\begin{proof}
As shown in Subsection \ref{subsec:Universality}, we have $B_{n_{1}, \dots, n_{L}} \cong \mathcal{K}(\mathfrak{X}_{n_{1}, \dots, n_{L}})$, where $\mathfrak{X}_{n_{1}, \dots, n_{L}}$ is the Hilbert $\mathcal{D}$-module defined in \eqref{HilbertModule}. The C$^{\ast}$-algebra $\mathcal{K}(\mathfrak{X}_{n_{1}, \dots, n_{L}})$ is nuclear (resp. exact) if and only if $\mathcal{D}$ is; see, e.g., \cite[Exercise 4.6.3]{BrownOzawa08}. The desired conclusion follows from Proposition \ref{DNuclearity}.
\end{proof}

\begin{lemma} \label{NuclearityInductionStatement} 
Let $1\leq r\leq L-1$ be an integer, $(n_{1},...,n_{r})\in\mathbb{N}^{r}$ a tuple, and assume that $B_{n_{1},...,n_{r},n}$ is nuclear (resp. exact) for every $n\in\mathbb{N}$. Then $B_{n_{1},...,n_{r}}^{\leq n}$ is also nuclear (resp. exact) for every $n\in\mathbb{N}$. Similarly, if $B_{n}$ is nuclear (resp. exact) for every $n\in\mathbb{N}$, then $B_{\emptyset}^{\leq n}$ is also nuclear (resp. exact) for every $n\in\mathbb{N}$.
\end{lemma}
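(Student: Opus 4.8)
The plan is to mirror the structure of the proof of Lemma~\ref{InductionStatement}: I would argue by induction on $n \in \mathbb{N}$, treating the nuclear and the exact cases in parallel, and exploit the extension structure recorded in Proposition~\ref{IdealConstruction} together with the standard permanence properties of nuclearity and exactness under passage to ideals, quotients and extensions (see, e.g., \cite{BrownOzawa08}).

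For the base case $n = 0$ there is nothing to do, since $B_{n_{1},\ldots,n_{r}}^{\leq 0} = B_{n_{1},\ldots,n_{r},0}$ (resp. $B_{\emptyset}^{\leq 0} = B_{0}$), which is nuclear (resp. exact) by hypothesis. For the induction step, fix $n \geq 1$ and assume that $B_{n_{1},\ldots,n_{r}}^{\leq(n-1)}$ is nuclear (resp. exact). By Proposition~\ref{IdealConstruction}, $B_{n_{1},\ldots,n_{r},n}$ is a closed two-sided ideal of $B_{n_{1},\ldots,n_{r}}^{\leq n} = B_{n_{1},\ldots,n_{r}}^{\leq(n-1)} + B_{n_{1},\ldots,n_{r},n}$. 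Since $B_{n_{1},\ldots,n_{r},n}$ is an ideal in the ambient algebra $B_{n_{1},\ldots,n_{r}}^{\leq n}$ which contains $B_{n_{1},\ldots,n_{r}}^{\leq(n-1)}$, the intersection $B_{n_{1},\ldots,n_{r}}^{\leq(n-1)} \cap B_{n_{1},\ldots,n_{r},n}$ is a closed ideal in $B_{n_{1},\ldots,n_{r}}^{\leq(n-1)}$, and the second isomorphism theorem yields
\[
B_{n_{1},\ldots,n_{r}}^{\leq n}\big/B_{n_{1},\ldots,n_{r},n} \;\cong\; B_{n_{1},\ldots,n_{r}}^{\leq(n-1)}\big/\bigl(B_{n_{1},\ldots,n_{r}}^{\leq(n-1)} \cap B_{n_{1},\ldots,n_{r},n}\bigr),
\]
which is a quotient of the nuclear (resp. exact) algebra $B_{n_{1},\ldots,n_{r}}^{\leq(n-1)}$ and hence itself nuclear (resp. exact). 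Consequently $B_{n_{1},\ldots,n_{r}}^{\leq n}$ fits into a short exact sequence with nuclear (resp. exact) ideal $B_{n_{1},\ldots,n_{r},n}$ and nuclear (resp. exact) quotient, so it is nuclear (resp. exact) by the extension permanence. The argument for $B_{\emptyset}^{\leq n}$ is identical, using the first part of Proposition~\ref{IdealConstruction} in place of the third.

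I do not expect any serious obstacle here. The only inputs beyond routine bookkeeping are the facts that nuclearity and exactness are inherited by quotients and by extensions; for exactness the quotient statement is the (nontrivial) theorem of Kirchberg, but it is available off the shelf, so the argument itself is short. The one point that merits a line of care is the verification that $B_{n_{1},\ldots,n_{r}}^{\leq(n-1)} \cap B_{n_{1},\ldots,n_{r},n}$ is genuinely a closed two-sided ideal in $B_{n_{1},\ldots,n_{r}}^{\leq(n-1)}$, which is what legitimizes the use of the second isomorphism theorem above.
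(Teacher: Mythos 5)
Your argument for the nuclearity half is correct and is essentially the paper's proof verbatim: induction on $n$, the identification of the quotient via the second isomorphism theorem, and the fact that nuclearity is closed under extensions. The observation you flag as needing care (that $B_{n_{1},\ldots,n_{r}}^{\leq(n-1)} \cap B_{n_{1},\ldots,n_{r},n}$ is an ideal in $B_{n_{1},\ldots,n_{r}}^{\leq(n-1)}$) is indeed the right thing to check and is immediate from Proposition~\ref{IdealConstruction}.

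However, the exactness half has a genuine gap: exactness is \emph{not} preserved under arbitrary extensions of C$^{\ast}$-algebras. Unlike nuclearity, an extension of an exact algebra by an exact ideal can fail to be exact (this is a known phenomenon; only quotients and subalgebras come ``off the shelf''). The permanence you invoke holds only for extensions that are \emph{locally split}, so your induction step does not close as written. The repair — and this is what the paper does — is to use the induction hypothesis more carefully: since $B_{n_{1},\ldots,n_{r}}^{\leq(n-1)}$ is exact, it is locally reflexive, and therefore by \cite[Proposition 9.1.4]{BrownOzawa08} the quotient map
\[
B_{n_{1},\ldots,n_{r}}^{\leq(n-1)} \twoheadrightarrow B_{n_{1},\ldots,n_{r}}^{\leq(n-1)}\big/\bigl(B_{n_{1},\ldots,n_{r}}^{\leq(n-1)} \cap B_{n_{1},\ldots,n_{r},n}\bigr)
\]
is locally split. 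Via the isomorphism of quotients you already established, this shows that the short exact sequence
\[
B_{n_{1},\ldots,n_{r},n} \hookrightarrow B_{n_{1},\ldots,n_{r}}^{\leq n} \twoheadrightarrow B_{n_{1},\ldots,n_{r}}^{\leq n}\big/B_{n_{1},\ldots,n_{r},n}
\]
is locally split as well, and exactness does pass to locally split extensions (see, e.g., \cite[Exercise 3.9.8]{BrownOzawa08}). With this insertion your induction goes through; without it, the exactness claim is unjustified.
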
 

\begin{proof}
\emph{Nuclearity:} For the first statement, assume that $B_{n_{1},...,n_{r},n}$ is nuclear for every $n\in\mathbb{N}$. We proceed by induction on $n\in\mathbb{N}$.

For $n=0$, we have $B_{n_{1},...,n_{r}}^{\leq 0} = B_{n_{1},...,n_{r},0}$, which is nuclear by assumption.

For the induction step, fix $n\in\mathbb{N}$ and assume that $B_{n_{1},...,n_{r}}^{\leq (n-1)}$ is nuclear. By Proposition \ref{IdealConstruction}, $B_{n_{1},...,n_{r},n} \triangleleft B_{n_{1},...,n_{r}}^{\leq n}$ is a closed two-sided ideal. We obtain a short exact sequence:
\begin{eqnarray} \label{eq:ShortExact}
B_{n_{1},...,n_{r},n} \hookrightarrow B_{n_{1},...,n_{r}}^{\leq n} \twoheadrightarrow B_{n_{1},...,n_{r}}^{\leq n}/B_{n_{1},...,n_{r},n} \cong B_{n_{1},...,n_{r}}^{\leq(n-1)}/(B_{n_{1},...,n_{r}}^{\leq(n-1)} \cap B_{n_{1},...,n_{r},n}).
\end{eqnarray}
By our assumptions, both $B_{n_{1},...,n_{r},n}$ and $B_{n_{1},...,n_{r}}^{\leq(n-1)}/(B_{n_{1},...,n_{r}}^{\leq(n-1)} \cap B_{n_{1},...,n_{r},n})$ are nuclear C$^{\ast}$-algebras. It follows that $B_{n_{1},...,n_{r}}^{\leq n}$ is nuclear as well, thus completing the induction.\\

\emph{Exactness:} The statement about exactness follows in a similar manner. Assume that $B_{n_{1},...,n_{r},n}$ is exact for every $n\in\mathbb{N}$ and proceed inductively as before.

The base case holds by assumption. 

For the induction step, assume that for some fixed $n\in\mathbb{N}$ the C$^{\ast}$-algebra $B_{n_{1},...,n_{r}}^{\leq(n-1)}$ is exact. Consider the short exact sequence
\begin{equation} \label{eq:ShortExactSequence2}
B_{n_{1},...,n_{r}}^{\leq(n-1)} \cap B_{n_{1},...,n_{r},n} \hookrightarrow B_{n_{1},...,n_{r}}^{\leq(n-1)} \twoheadrightarrow B_{n_{1},...,n_{r}}^{\leq(n-1)}/(B_{n_{1},...,n_{r}}^{\leq(n-1)} \cap B_{n_{1},...,n_{r},n}).
\end{equation}
Since $B_{n_{1},...,n_{r}}^{\leq(n-1)}$ is exact, it is also locally reflexive. By \cite[Proposition 9.1.4]{BrownOzawa08}, the sequence \eqref{eq:ShortExactSequence2} is locally split. That is, for every finite-dimensional operator system $E \subseteq B_{n_{1},...,n_{r}}^{\leq(n-1)}/(B_{n_{1},...,n_{r}}^{\leq(n-1)} \cap B_{n_{1},...,n_{r},n})$, there exists a unital completely positive map $\sigma: E \rightarrow B_{n_{1},...,n_{r}}^{\leq(n-1)}$ such that $q \circ \sigma = \mathrm{id}_E$, where $q$ is the canonical quotient map. Consequently, the sequence in \eqref{eq:ShortExact} is also locally split. Since exactness is preserved under extensions that locally split (see, e.g., \cite[Exercise 3.9.8]{BrownOzawa08}), and since both $B_{n_{1},...,n_{r},n}$ and the quotient are exact, it follows that $B_{n_{1},...,n_{r}}^{\leq n}$ is exact as well. This completes the induction.\\

The statement for the C$^{\ast}$-algebras $B_{\emptyset}^{\leq n}$, $n\in\mathbb{N}$, follows similarly.
\end{proof}

We can now prove Theorem \ref{NuclearityExactness}.

\begin{proof}[{Proof of Theorem \ref{NuclearityExactness}}]
We prove only the statement about nuclearity, as the case of exactness can be treated similarly.  For the “if” direction, assume that $A_v$ is nuclear for every $v \in V\Gamma$. We proceed by proving the following claim. \\

\emph{Claim}. For all $1 \leq r \leq L$ and every tuple $(n_1, \ldots, n_r) \in \mathbb{N}^r$, the C$^*$-algebra $B_{n_1, \ldots, n_r}$ is nuclear.

\emph{Proof of the claim}. As in the proof of Theorem \ref{UniversalProperty}, we proceed by induction on $r$, starting at $r = L$.

By Proposition \ref{DNuclearity}, $B_{n_1, \ldots, n_L}$ is nuclear for every tuple $(n_1, \ldots, n_L) \in \mathbb{N}^L$, which establishes the base case $r = L$.

For the induction step, assume that for some fixed $2 \leq r \leq L$, the C$^*$-algebra $B_{n_1, \ldots, n_r}$ is nuclear for all $(n_1, \ldots, n_r) \in \mathbb{N}^r$. Then Lemma \ref{NuclearityInductionStatement} implies that $B_{n_1, \ldots, n_{r-1}}^{\leq n}$ is nuclear for every $(n_1, \ldots, n_{r-1}) \in \mathbb{N}^{r-1}$ and every $n \in \mathbb{N}$. It follows that $\overline{\bigcup_{n=0}^{\infty} B_{n_1, \ldots, n_{r-1}}^{\leq n}}^{\Vert \cdot \Vert}$ is nuclear. By Lemma \ref{FixedpointIdentification}, this C$^*$-algebra coincides with the fixed-point algebra $B_{n_1, \ldots, n_{r-1}}^{\alpha_r}$, where $\alpha_r$ is defined as in Subsection \ref{subsec:Universality}. Using \cite[Theorem 4.5.2]{BrownOzawa08}, we conclude that $B_{n_1, \ldots, n_{r-1}}$ is nuclear for all $(n_1, \ldots, n_{r-1}) \in \mathbb{N}^{r-1}$. This completes the proof of the claim.\\

From the claim, we deduce -- using Lemma \ref{NuclearityInductionStatement} -- that $B_{\emptyset}^{\leq n}$ is nuclear for every $n \in \mathbb{N}$. Therefore, the C$^*$-algebra $\mathfrak{A}(\mathbf{A}, \Gamma)^{\alpha_1} = \overline{\bigcup_{n=0}^{\infty} B_{\emptyset}^{\leq n}}^{\Vert \cdot \Vert}$ is also nuclear. Together with \cite[Theorem 4.5.2]{BrownOzawa08}, this implies that $\mathfrak{A}(\mathbf{A}, \Gamma)$ is nuclear.\\

For the “only if” direction, assume that $\mathfrak{A}(\mathbf{A}, \Gamma)$ is a nuclear C$^*$-algebra. By Theorem \ref{ConditionalExpectation}, there exists a conditional expectation $\mathbb{E}_{\Gamma, \{v\}}: \mathfrak{A}(\mathbf{A}, \Gamma) \rightarrow \mathfrak{A}(A_v, \{v\})$ for every $v \in V\Gamma$, implying that each $\mathfrak{A}(A_v, \{v\})$ is nuclear. Since $A_v / (\mathcal{K}(\mathcal{H}_v) \cap A_v)$ is a quotient of $\mathfrak{A}(A_v, \{v\})$, it is also nuclear. Moreover, $\mathcal{K}(\mathcal{H}_v)$ is a type I C$^*$-algebra, so the C$^\ast$-subalgebra $\mathcal{K}(\mathcal{H}_v) \cap A_v$ is nuclear as well. Combining these two facts, we conclude that $A_v$ is nuclear. This completes the proof.
\end{proof}

\vspace{3mm}


\subsection{The Ideal $\mathfrak{I}(\mathbf{A},\Gamma)$}

\label{subsec:Ideal}

Let $\Gamma$ be a finite, undirected, simplicial graph, and let $\mathbf{A}:=(A_{v})_{v\in V\Gamma}$ be a collection of unital C$^{\ast}$-algebras, each equipped with a GNS-faithful state $\omega_{v}$. In this subsection, we identify a canonical maximal ideal $\mathfrak{I}(\mathbf{A},\Gamma)$ inside $\mathfrak{A}(\mathbf{A},\Gamma)$. As in the proof of Theorem \ref{GaugeActionTheorem}, consider for every element $\mathbf{w}\in W_{\Gamma}\setminus\{e\}$ the orthogonal projection $p_{\mathbf{w}}\in\mathcal{B}(\mathcal{H}_{\Gamma})$ onto the component $\mathcal{H}_{\mathbf{w}}^{\circ}$ in the decomposition $\mathcal{H}_{\Gamma}=\mathbb{C}\Omega\oplus\bigoplus_{\mathbf{w}\in W_{\Gamma}\setminus\{e\}}\mathcal{H}_{\mathbf{w}}^{\circ}$, and let $p_{e}$ be the projection onto $\mathbb{C}\Omega$. For $N\in\mathbb{N}$, define $P_{N}:=\sum_{\mathbf{w}\in W_{\Gamma}:|\mathbf{w}|\leq N}p_{\mathbf{w}}$, and set 
\begin{equation}
\mathfrak{I}(\mathbf{A},\Gamma):=\left\{ x\in\mathfrak{A}(\mathbf{A},\Gamma) \ \middle| \  \lim_{N\rightarrow\infty}\Vert\mathbb{E}(x^{\ast}x)P_{N}^{\perp}\Vert=0\right\} \subseteq\mathfrak{A}(\mathbf{A},\Gamma).\label{eq:IdealDefinition}
\end{equation}
Note that for every $x\in\mathfrak{A}(\mathbf{A},\Gamma)$,
\begin{equation}
\Vert\mathbb{E}(x^{\ast}x)P_{N}^{\perp}\Vert\leq\sup\left\{ \frac{\Vert x\Vert\Vert x\xi\Vert}{\Vert\xi\Vert} \ \middle| \  0\neq\xi\in\mathcal{H}_{\mathbf{w}}^{\circ}\text{ with }|\mathbf{w}|\geq N\right\} \leq\Vert x\Vert\Vert\mathbb{E}(x^{\ast}x)P_{N}^{\perp}\Vert^{\frac{1}{2}}.\label{eq:IdealInequality}
\end{equation}

\begin{proposition} \label{IdealExistence} The space $\mathfrak{I}(\mathbf{A},\Gamma)$ defined in (\ref{eq:IdealDefinition}) is a closed two-sided ideal in $\mathfrak{A}(\mathbf{A},\Gamma)$ that contains the compact operators $\mathcal{K}(\mathcal{H}_{\Gamma})\subseteq\mathcal{B}(\mathcal{H}_{\Gamma})$. \end{proposition}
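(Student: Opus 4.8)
The plan is to reduce everything to the behaviour of the seminorms measuring the size of $x$ on far-out graded components. Concretely, for $x\in\mathfrak{A}(\mathbf{A},\Gamma)$ and $N\in\mathbb{N}$ I set $\nu_{N}(x):=\sup_{|\mathbf{w}|\geq N}\|xp_{\mathbf{w}}\|=\sup\{\|x\xi\|/\|\xi\| \mid 0\neq\xi\in\mathcal{H}_{\mathbf{w}}^{\circ},\ |\mathbf{w}|\geq N\}$. Each $\nu_{N}$ is a supremum of the seminorms $x\mapsto\|xp_{\mathbf{w}}\|$, hence a seminorm bounded above by $\|\cdot\|$, and the sequence $(\nu_{N})_{N}$ is non-increasing, so $\nu_{\infty}(x):=\lim_{N}\nu_{N}(x)=\inf_{N}\nu_{N}(x)$ is again a seminorm with $\nu_{\infty}\leq\|\cdot\|$. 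The first point is that, by \eqref{eq:IdealInequality}, one has $x\in\mathfrak{I}(\mathbf{A},\Gamma)$ if and only if $\nu_{\infty}(x)=0$; thus $\mathfrak{I}(\mathbf{A},\Gamma)=\ker(\nu_{\infty})$ is a linear subspace, and it is norm-closed, since $x_{n}\to x$ with $\nu_{\infty}(x_{n})=0$ forces $\nu_{\infty}(x)\leq\nu_{\infty}(x-x_{n})\leq\|x-x_{n}\|\to 0$.

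Next I would establish the ideal property. For left multiplication this is immediate: $\|(ax)p_{\mathbf{w}}\|\leq\|a\|\|xp_{\mathbf{w}}\|$ gives $\nu_{\infty}(ax)\leq\|a\|\nu_{\infty}(x)$, so $ax\in\mathfrak{I}(\mathbf{A},\Gamma)$ whenever $x\in\mathfrak{I}(\mathbf{A},\Gamma)$. Right multiplication needs more. Since $\mathfrak{I}(\mathbf{A},\Gamma)$ is closed and $\nu_{\infty}(x(a-a'))\leq\|x\|\|a-a'\|$, Proposition~\ref{DensityStatement} lets me reduce to the case where $a\in\mathcal{E}(\mathbf{A},\Gamma)$ is a single elementary operator, with signature $g:=\Sigma(a)\in W_{\Gamma}$. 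By the computation in the proof of Lemma~\ref{GaugeActionLemma}, $a$ maps $\mathcal{H}_{\mathbf{w}}^{\circ}$ into $\mathcal{H}_{g\mathbf{w}}^{\circ}$, so $ap_{\mathbf{w}}=p_{g\mathbf{w}}\,ap_{\mathbf{w}}$ and $\|(xa)p_{\mathbf{w}}\|\leq\|xp_{g\mathbf{w}}\|\,\|a\|$; as $|g\mathbf{w}|\geq|\mathbf{w}|-|g|$, this yields $\nu_{N}(xa)\leq\|a\|\,\nu_{\max(N-|g|,\,0)}(x)$ and hence $\nu_{\infty}(xa)\leq\|a\|\nu_{\infty}(x)=0$. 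Combined with closedness, this shows $\mathfrak{I}(\mathbf{A},\Gamma)$ is a two-sided (hence automatically self-adjoint) closed ideal.

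Finally I would handle the compact operators in two steps. First, $\mathcal{K}(\mathcal{H}_{\Gamma})\subseteq\mathfrak{A}(\mathbf{A},\Gamma)$: the $Q_{v}$ pairwise commute — by Lemma~\ref{IdentificationLemma} together with the relation $P_{v}P_{v'}=P_{v\vee v'}$ recalled before it, which also gives $Q_{v}Q_{v'}=0$ when $(v,v')\notin E\Gamma$ — so $C^{*}(\{Q_{v}\mid v\in V\Gamma\})$ is commutative and therefore contains the supremum $\bigvee_{v}Q_{v}$; the latter is exactly the projection onto $\bigoplus_{\mathbf{w}\neq e}\mathcal{H}_{\mathbf{w}}^{\circ}$, so its complement $p_{e}$ lies in $\mathfrak{A}(\mathbf{A},\Gamma)$, and since $\Omega$ is cyclic for $\mathbf{A}_{\Gamma}$ the operators $ap_{e}b^{*}$ with $a,b\in\mathbf{A}_{\Gamma}$ have closed linear span equal to $\mathcal{K}(\mathcal{H}_{\Gamma})$. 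Second, for any compact $K$, any $N\geq 1$, and any $|\mathbf{w}|\geq N$ one has $p_{\mathbf{w}}\leq P_{N-1}^{\perp}$, so $\nu_{N}(K)\leq\|KP_{N-1}^{\perp}\|\to 0$ as $N\to\infty$ (because $P_{N}^{\perp}\to 0$ strongly and $K$ is compact); hence $\nu_{\infty}(K)=0$, i.e. $K\in\mathfrak{I}(\mathbf{A},\Gamma)$.

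I expect the main obstacle to be the right-ideal property. Left multiplication is local on each component $\mathcal{H}_{\mathbf{w}}^{\circ}$, whereas right multiplication is not; controlling it requires both the density of elementary operators from Proposition~\ref{DensityStatement} and the geometric fact that an elementary operator shifts the grading by at most the length of its signature, $|g\mathbf{w}|\geq|\mathbf{w}|-|g|$, which is precisely what propagates the vanishing of $\nu_{\infty}$ from $x$ to $xa$.
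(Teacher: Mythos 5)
Your proof is correct and follows essentially the same route as the paper's: linearity and closedness from the inequality \eqref{eq:IdealInequality}, the right-ideal property from the density of elementary operators combined with the fact that each one shifts the grading by a uniformly bounded amount, and the compact containment via $p_{e}=\prod_{v\in V\Gamma}Q_{v}^{\perp}\in\mathfrak{A}(\mathbf{A},\Gamma)$ together with the rank-one operators $a\,p_{e}\,b^{\ast}=\theta_{a\Omega,b\Omega}$. Your repackaging through the seminorms $\nu_{N}$ and the signature $\Sigma(a)$, and your explicit check that compact operators satisfy the defining vanishing condition, are cosmetic refinements of the same argument.
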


\begin{proof} By \eqref{eq:IdealInequality} is clear that $\mathfrak{I}(\mathbf{A},\Gamma)$ is a linear subspace of $\mathfrak{A}(\mathbf{A},\Gamma)$. To see that it is also norm closed, let $(x_{i})_{i\in\mathbb{N}}\subseteq\mathfrak{I}(\mathbf{A},\Gamma)$ be a sequence converging to some $x\in\mathfrak{A}(\mathbf{A},\Gamma)$. For any $\varepsilon>0$, we can find $i_{0}\in\mathbb{N}$ such that $\Vert x^{\ast}x-x_{i_{0}}^{\ast}x_{i_{0}}\Vert<\frac{\varepsilon}{2}$, and $N_{0}\in\mathbb{N}$ such that $\Vert\mathbb{E}(x_{i_{0}}^{\ast}x_{i_{0}})P_{N_{0}}^{\perp}\Vert<\frac{\varepsilon}{2}$. Then, 
\[
\Vert\mathbb{E}(x^{\ast}x)P_{N}^{\perp}\Vert\leq\Vert\mathbb{E}(x^{\ast}x)P_{N_{0}}^{\perp}\Vert\leq\Vert x^{\ast}x-x_{i_{0}}^{\ast}x_{i_{0}}\Vert+\Vert\mathbb{E}(x_{i_{0}}^{\ast}x_{i_{0}})P_{N_{0}}^{\perp}\Vert<\varepsilon
\]
for all $N\geq N_{0}$, showing that $x\in\mathfrak{I}(\mathbf{A},\Gamma)$. Hence, $\mathfrak{I}(\mathbf{A},\Gamma)$ is norm closed.

It is clear that $\mathfrak{I}(\mathbf{A},\Gamma)$ is invariant under left multiplication with elements in $\mathfrak{A}(\mathbf{A},\Gamma)$. For right invariance, let $v\in V\Gamma$, $a\in A_{v}^{\circ}$, $d\in\mathcal{D}_{0}(\mathbf{A},\Gamma)$, and $x\in\mathfrak{I}(\mathbf{A},\Gamma)$. Using 
\[
a^{\dagger}p_{\mathbf{w}}=p_{v\mathbf{w}}a^{\dagger}p_{\mathbf{w}},\quad dp_{\mathbf{w}}=p_{\mathbf{w}}dp_{\mathbf{w}},\quad(a^{\dagger})^{\ast}p_{\mathbf{w}}=p_{v\mathbf{w}}(a^{\dagger})^{\ast}p_{\mathbf{w}}
\]
for $\mathbf{w}\in W_{\Gamma}$, we deduce that 
\begin{eqnarray}
\nonumber
\Vert\mathbb{E}((xa^{\dagger})^{\ast}(xa^{\dagger}))P_{N}^{\perp}\Vert &=& \sup_{\mathbf{w}\in W_{\Gamma}:|\mathbf{w}|>N}\Vert p_{\mathbf{w}}((xa^{\dagger})^{\ast}(xa^{\dagger}))p_{\mathbf{w}}\Vert \\
\nonumber
&\leq&  \sup_{\mathbf{w}\in W_{\Gamma}:|\mathbf{w}|>N} \Vert a\Vert^{2} \Vert p_{v\mathbf{w}}(x^{\ast}x)p_{v\mathbf{w}}\Vert \\
\nonumber
&\leq& \Vert a\Vert^{2}\Vert\mathbb{E}(x^{\ast}x)P_{N-1}^{\perp}\Vert,
\end{eqnarray}
and similarly,
\[
\Vert\mathbb{E}((xd)^{\ast}(xd))P_{N}^{\perp}\Vert\leq\Vert d\Vert^{2}\Vert\mathbb{E}(x^{\ast}x)P_{N}^{\perp}\Vert,\quad\Vert\mathbb{E}((x(a^{\dagger})^{\ast})^{\ast}(x(a^{\dagger})^{\ast}))P_{N}^{\perp}\Vert\leq\Vert a\Vert^{2}\Vert\mathbb{E}(x^{\ast}x)P_{N-1}^{\perp}\Vert.
\]
By Proposition \ref{DensityStatement}, this implies that $\mathfrak{I}(\mathbf{A},\Gamma)$ is also invariant under right multiplication. Thus, $\mathfrak{I}(\mathbf{A},\Gamma)$ is a closed two-sided ideal in $\mathfrak{A}(\mathbf{A},\Gamma)$.

To see that $\mathcal{K}(\mathcal{H}_{\Gamma})\subseteq\mathfrak{I}(\mathbf{A},\Gamma)$, note that every rank-one operator $\theta_{\xi,\eta}\in\mathcal{K}(\mathcal{H}_{\Gamma})$ with $\xi,\eta\in\mathcal{H}_{\Gamma}$, given by $\zeta\mapsto\langle\eta,\zeta\rangle\xi$, can be approximated by finite linear combinations of expressions of the form 
\[
(a_{1}^{\dagger}\cdots a_{k}^{\dagger})p_{e}(b_{1}^{\dagger}\cdots b_{l}^{\dagger})^{\ast}=\theta_{a_{1}\cdots a_{k}\Omega,b_{1}\cdots b_{l}\Omega},
\]
where $k,l\in\mathbb{N}$, $(u_{1},\dots,u_{k}),(v_{1},\dots,v_{l})\in\mathcal{W}_{\text{red}}$, $a_{i}\in A_{u_{i}}^{\circ}$, $b_{j}\in A_{v_{j}}^{\circ}$, and where $p_{e}=\prod_{v\in V\Gamma}Q_{v}^{\perp}\in\mathfrak{A}(\mathbf{A},\Gamma)$ is the orthogonal projection onto $\mathbb{C}\Omega$. \end{proof}

As shown in Theorem \ref{GaugeActionTheorem}, the C$^{\ast}$-algebra $\mathfrak{A}(\mathbf{A},\Gamma)$ admits a canonical gauge action $\alpha:\mathbb{T}^{V\Gamma}\curvearrowright\mathfrak{A}(\mathbf{A},\Gamma)$, induced by conjugation with the unitaries $U_{z}\in\mathcal{B}(\mathcal{H}_{\Gamma})$, $z\in\mathbb{T}^{V\Gamma}$, defined by $U_{z}\Omega:=\Omega$ and $U_{z}\xi:=z_{\mathbf{w}}\xi$ for $\xi\in\mathcal{H}_{\mathbf{w}}^{\circ}$. This action is compatible with the ideal $\mathfrak{I}(\mathbf{A},\Gamma)$.

\begin{lemma} \label{GaugeActionTheorem2} Let $\Gamma$ be a finite, undirected, simplicial graph, and let $\mathbf{A}:=(A_{v})_{v\in V\Gamma}$ be a collection of unital C$^{\ast}$-algebras, each equipped with a GNS-faithful state $\omega_{v}$. Then the gauge action $\alpha:\mathbb{T}^{V\Gamma}\curvearrowright\mathfrak{A}(\mathbf{A},\Gamma)$ from Theorem \ref{GaugeActionTheorem} satisfies $\alpha_{z}(\mathfrak{I}(\mathbf{A},\Gamma))\subseteq\mathfrak{I}(\mathbf{A},\Gamma)$ for every $z\in\mathbb{T}^{V\Gamma}$, and we also have $\mathbb{E}(\mathfrak{I}(\mathbf{A},\Gamma))\subseteq\mathfrak{I}(\mathbf{A},\Gamma)$, where $\mathbb{E}$ is the canonical faithful conditional expectation onto $\mathcal{D}(\mathbf{A},\Gamma)$. \end{lemma}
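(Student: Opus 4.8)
The plan is to reduce both assertions to the single structural observation that the implementing unitaries $U_{z}$, $z\in\mathbb{T}^{V\Gamma}$, are \emph{diagonal} with respect to the decomposition $\mathcal{H}_{\Gamma}=\mathbb{C}\Omega\oplus\bigoplus_{\mathbf{w}\in W_{\Gamma}\setminus\{e\}}\mathcal{H}_{\mathbf{w}}^{\circ}$: since $U_{z}$ acts as the scalar $z_{\mathbf{w}}$ on $\mathcal{H}_{\mathbf{w}}^{\circ}$, it commutes with each projection $p_{\mathbf{w}}$, and hence with every $P_{N}$, $P_{N}^{\perp}$, and with the conditional expectation $\mathbb{E}=\sum_{\mathbf{w}\in W_{\Gamma}}p_{\mathbf{w}}(\cdot)p_{\mathbf{w}}$ from Theorem \ref{GaugeActionTheorem}. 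Concretely, for $y\in\mathfrak{A}(\mathbf{A},\Gamma)$ one passes $U_{z}(\cdot)U_{z}^{*}$ through the strong limit defining $\mathbb{E}$ to get $\mathbb{E}(\alpha_{z}(y))=\sum_{\mathbf{w}\in W_{\Gamma}}U_{z}p_{\mathbf{w}}yp_{\mathbf{w}}U_{z}^{*}=\alpha_{z}(\mathbb{E}(y))$, so that $\mathbb{E}\circ\alpha_{z}=\alpha_{z}\circ\mathbb{E}$. For $x\in\mathfrak{I}(\mathbf{A},\Gamma)$ this gives $\mathbb{E}(\alpha_{z}(x)^{*}\alpha_{z}(x))=\alpha_{z}(\mathbb{E}(x^{*}x))=U_{z}\mathbb{E}(x^{*}x)U_{z}^{*}$, and since $U_{z}$ commutes with $P_{N}^{\perp}$ we obtain $\Vert\mathbb{E}(\alpha_{z}(x)^{*}\alpha_{z}(x))P_{N}^{\perp}\Vert=\Vert U_{z}\mathbb{E}(x^{*}x)P_{N}^{\perp}U_{z}^{*}\Vert=\Vert\mathbb{E}(x^{*}x)P_{N}^{\perp}\Vert\to0$, i.e. $\alpha_{z}(x)\in\mathfrak{I}(\mathbf{A},\Gamma)$.

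For the statement about $\mathbb{E}$, I would first record the elementary estimate $\Vert\mathbb{E}(y)P_{N}^{\perp}\Vert\leq\Vert\mathbb{E}(y^{*}y)P_{N}^{\perp}\Vert^{1/2}$, valid for all $y\in\mathfrak{A}(\mathbf{A},\Gamma)$. Indeed, $\mathbb{E}(y)P_{N}^{\perp}=\sum_{|\mathbf{w}|>N}p_{\mathbf{w}}yp_{\mathbf{w}}$ is block-diagonal with respect to the mutually orthogonal projections $p_{\mathbf{w}}$, so $\Vert\mathbb{E}(y)P_{N}^{\perp}\Vert=\sup_{|\mathbf{w}|>N}\Vert p_{\mathbf{w}}yp_{\mathbf{w}}\Vert$, while $\Vert p_{\mathbf{w}}yp_{\mathbf{w}}\Vert\leq\Vert yp_{\mathbf{w}}\Vert=\Vert p_{\mathbf{w}}y^{*}yp_{\mathbf{w}}\Vert^{1/2}$ and $\sup_{|\mathbf{w}|>N}\Vert p_{\mathbf{w}}y^{*}yp_{\mathbf{w}}\Vert=\Vert\mathbb{E}(y^{*}y)P_{N}^{\perp}\Vert$. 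Applying this with $y=x\in\mathfrak{I}(\mathbf{A},\Gamma)$ yields $\Vert\mathbb{E}(x)P_{N}^{\perp}\Vert\to0$. Since $\mathbb{E}(x)\in\mathcal{D}(\mathbf{A},\Gamma)$ and $\mathcal{D}(\mathbf{A},\Gamma)$ is a C$^{*}$-subalgebra on which $\mathbb{E}$ restricts to the identity, we have $\mathbb{E}(\mathbb{E}(x)^{*}\mathbb{E}(x))=\mathbb{E}(x)^{*}\mathbb{E}(x)$, and therefore $\Vert\mathbb{E}(\mathbb{E}(x)^{*}\mathbb{E}(x))P_{N}^{\perp}\Vert\leq\Vert\mathbb{E}(x)\Vert\,\Vert\mathbb{E}(x)P_{N}^{\perp}\Vert\to0$, i.e. $\mathbb{E}(x)\in\mathfrak{I}(\mathbf{A},\Gamma)$.

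I do not expect a genuine obstacle here: the whole argument is bookkeeping around the diagonality of the $U_{z}$ and the block-diagonal form of $\mathbb{E}(y)P_{N}^{\perp}$. The one point deserving care is the identity $\Vert\sum_{|\mathbf{w}|>N}p_{\mathbf{w}}yp_{\mathbf{w}}\Vert=\sup_{|\mathbf{w}|>N}\Vert p_{\mathbf{w}}yp_{\mathbf{w}}\Vert$, together with the strong convergence of such sums; this is exactly the reasoning already invoked in the proof of Theorem \ref{GaugeActionTheorem} and in \eqref{eq:IdealInequality}, and it rests on the mutual orthogonality of both the ranges and the source subspaces of the $p_{\mathbf{w}}$.
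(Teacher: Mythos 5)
Your proposal is correct and follows essentially the same route as the paper: the first inclusion comes from $U_{z}$ commuting with each $p_{\mathbf{w}}$ (hence with $\mathbb{E}$ and $P_{N}^{\perp}$), and the second from the block-diagonal estimate $\Vert p_{\mathbf{w}}\mathbb{E}(x)^{*}\mathbb{E}(x)p_{\mathbf{w}}\Vert\leq\Vert p_{\mathbf{w}}x^{*}xp_{\mathbf{w}}\Vert$, which the paper applies directly while you repackage it via $\Vert\mathbb{E}(x)P_{N}^{\perp}\Vert\leq\Vert\mathbb{E}(x^{*}x)P_{N}^{\perp}\Vert^{1/2}$ together with the fact that $\mathbb{E}$ is the identity on $\mathcal{D}(\mathbf{A},\Gamma)$. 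The difference is purely one of bookkeeping; both arguments are sound.
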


\begin{proof} The inclusion $\alpha_{z}(\mathfrak{I}(\mathbf{A},\Gamma))\subseteq\mathfrak{I}(\mathbf{A},\Gamma)$ for all $z\in\mathbb{T}^{V\Gamma}$ is immediate, since both $\mathbb{E}$ and $U_{z}^{\ast}$ commute with each $P_{N}^{\perp}$. It remains to show that $\mathbb{E}(\mathfrak{I}(\mathbf{A},\Gamma))\subseteq\mathfrak{I}(\mathbf{A},\Gamma)$. For $x\in\mathfrak{I}(\mathbf{A},\Gamma)$, we compute 
\begin{eqnarray}
\nonumber
\Vert\mathbb{E}(\mathbb{E}(x)^{\ast}\mathbb{E}(x))P_{N}^{\perp}\Vert &=& \sup_{\mathbf{w}\in W_{\Gamma}:|\mathbf{w}|>N}\Vert p_{\mathbf{w}}(\mathbb{E}(x)^{\ast}\mathbb{E}(x))p_{\mathbf{w}}\Vert \\
\nonumber
&\leq& \sup_{\mathbf{w}\in W_{\Gamma}:|\mathbf{w}|>N}\Vert p_{\mathbf{w}}(x^{\ast}x)p_{\mathbf{w}}\Vert \\
\nonumber
&=& \Vert\mathbb{E}(x^{\ast}x)P_{N}^{\perp}\Vert \\
\nonumber
&\rightarrow& 0,
\end{eqnarray}
as $N\to\infty$, hence $\mathbb{E}(x)\in\mathfrak{I}(\mathbf{A},\Gamma)$. \end{proof}

The lemma above implies, in particular, that the gauge action $\alpha:\mathbb{T}^{V\Gamma}\curvearrowright\mathfrak{A}(\mathbf{A},\Gamma)$ descends to a well-defined action on the quotient $\mathfrak{A}(\mathbf{A},\Gamma)/\mathfrak{I}(\mathbf{A},\Gamma)$, and similarly, $\mathbb{E}$ induces a well-defined conditional expectation $\mathfrak{A}(\mathbf{A},\Gamma)/\mathfrak{I}(\mathbf{A},\Gamma)\rightarrow\pi(\mathcal{D}(\mathbf{A},\Gamma))$, where $\pi:\mathfrak{A}(\mathbf{A},\Gamma)\twoheadrightarrow\mathfrak{A}(\mathbf{A},\Gamma)/\mathfrak{I}(\mathbf{A},\Gamma)$ denotes the quotient map. The induced conditional expectation is faithful. Indeed, for positive $x\in\mathfrak{A}(\mathbf{A},\Gamma)$ with $\mathbb{E}(x)\in\mathfrak{I}(\mathbf{A},\Gamma)$, also $\mathbb{E}(x)^{\frac{1}{2}}\in\mathfrak{I}(\mathbf{A},\Gamma)$. From 
\[
\Vert\mathbb{E}(x)P_{N}^{\perp}\Vert=\Vert\mathbb{E}(\mathbb{E}(x)^{\frac{1}{2}}\mathbb{E}(x)^{\frac{1}{2}})P_{N}^{\perp}\Vert\rightarrow0,
\]
it therefore follows that $x^{\frac{1}{2}}\in\mathfrak{I}(\mathbf{A},\Gamma)$, hence $x\in\mathfrak{I}(\mathbf{A},\Gamma)$.

We thank Diego Martínez for pointing out the relevance of \cite[Proposition 11.43]{Roe03} in the context of the proof of the following proposition.

\begin{proposition} \label{FiniteDimensionalIdeal}
Let $\Gamma$ be a finite, undirected simplicial graph, and let $\mathbf{A} := (A_{v})_{v \in V\Gamma}$ be a collection of finite-dimensional C$^{\ast}$-algebras, each equipped with a faithful state $\omega_{v}$. Then the ideal $\mathfrak{I}(\mathbf{A}, \Gamma)$ coincides with the compact operators $\mathcal{K}(\mathcal{H}_{\Gamma})$ on $\mathcal{H}_{\Gamma}$.
\end{proposition}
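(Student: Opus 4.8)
The plan is to prove the missing inclusion $\mathfrak{I}(\mathbf{A},\Gamma)\subseteq\mathcal{K}(\mathcal{H}_{\Gamma})$, the reverse one being already contained in Proposition~\ref{IdealExistence}. The guiding idea is to realise $\mathfrak{A}(\mathbf{A},\Gamma)$ as a subalgebra of the Roe algebra of the Coxeter group $W_{\Gamma}$ and to recognise the elements of $\mathfrak{I}(\mathbf{A},\Gamma)$ as \emph{ghost operators}, so that Yu's property A for $W_{\Gamma}$ together with \cite[Proposition~11.43]{Roe03} applies.

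First I would record the consequences of finite-dimensionality. Since each $\omega_{v}$ is faithful on the finite-dimensional algebra $A_{v}$, the GNS space $\mathcal{H}_{v}$ is finite-dimensional, hence so is every fibre $\mathcal{H}_{\mathbf{w}}^{\circ}=\mathcal{H}_{v_{1}}^{\circ}\otimes\dots\otimes\mathcal{H}_{v_{n}}^{\circ}$ with $\mathbf{w}\in W_{\Gamma}$; and since $S_{\Gamma}=V\Gamma$ is finite, the set $\{\mathbf{w}\in W_{\Gamma}:|\mathbf{w}|=n\}$ is finite for every $n$, so each $P_{N}=\sum_{|\mathbf{w}|\le N}p_{\mathbf{w}}$ has finite rank. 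Fixing orthonormal bases of the fibres identifies $\mathcal{H}_{\Gamma}=\bigoplus_{\mathbf{w}\in W_{\Gamma}}\mathcal{H}_{\mathbf{w}}^{\circ}$ (with $\mathcal{H}_{e}^{\circ}:=\mathbb{C}\Omega$) with the range of a block-diagonal projection $E\in\mathcal{B}(\ell^{2}(W_{\Gamma})\otimes\ell^{2}(\mathbb{N}))$, and we regard $\mathfrak{A}(\mathbf{A},\Gamma)=E\,\mathfrak{A}(\mathbf{A},\Gamma)\,E$ as acting there. By Proposition~\ref{DensityStatement} the algebra is the closed span of $\mathcal{E}(\mathbf{A},\Gamma)$, and an elementary operator $x$ maps $\mathcal{H}_{\mathbf{w}}^{\circ}$ into $\mathcal{H}_{\Sigma(x)\mathbf{w}}^{\circ}$; thus $p_{\mathbf{u}}xp_{\mathbf{w}}\neq 0$ forces $\mathbf{u}=\Sigma(x)\mathbf{w}$, so $x$ has propagation $|\Sigma(x)|$ for the word metric on $W_{\Gamma}$ and, the fibres being finite-dimensional, is locally of finite rank. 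Hence $\mathrm{Span}\,\mathcal{E}(\mathbf{A},\Gamma)$ consists of finite-propagation, locally compact operators, so $\mathfrak{A}(\mathbf{A},\Gamma)\subseteq C^{\ast}(W_{\Gamma})$, the Roe algebra of $W_{\Gamma}$.

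Next I would identify the elements of $\mathfrak{I}(\mathbf{A},\Gamma)$ with ghost operators. Let $x\in\mathfrak{I}(\mathbf{A},\Gamma)$. By definition $\|\mathbb{E}(x^{\ast}x)P_{N}^{\perp}\|=\sup_{|\mathbf{w}|>N}\|xp_{\mathbf{w}}\|^{2}\to 0$, and since $\mathfrak{I}(\mathbf{A},\Gamma)$ is a closed two-sided, hence self-adjoint, ideal we also have $x^{\ast}\in\mathfrak{I}(\mathbf{A},\Gamma)$, so $\sup_{|\mathbf{w}|>N}\|p_{\mathbf{w}}x\|^{2}=\sup_{|\mathbf{w}|>N}\|x^{\ast}p_{\mathbf{w}}\|^{2}\to 0$. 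As $\{\mathbf{w}:|\mathbf{w}|\le N\}$ is finite, this says $\|xp_{\mathbf{w}}\|\to 0$ and $\|p_{\mathbf{w}}x\|\to 0$ as $\mathbf{w}\to\infty$ in $W_{\Gamma}$. The matrix coefficients of $x$, viewed in $\mathcal{B}(\ell^{2}(W_{\Gamma})\otimes\ell^{2}(\mathbb{N}))$, are $p_{\mathbf{u}}xp_{\mathbf{v}}$ (using $x=ExE$), and $\|p_{\mathbf{u}}xp_{\mathbf{v}}\|\le\min\{\|xp_{\mathbf{v}}\|,\|p_{\mathbf{u}}x\|\}$, which tends to $0$ whenever $\mathbf{u}\to\infty$ or $\mathbf{v}\to\infty$; thus $x$ is a ghost operator in $C^{\ast}(W_{\Gamma})$. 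Now $W_{\Gamma}$ is a finitely generated right-angled Coxeter group; it acts properly and cocompactly on its Davis complex, a finite-dimensional $\mathrm{CAT}(0)$ cube complex, hence is a bounded-geometry metric space with property A (alternatively, $W_{\Gamma}$ is linear, so exact, so has property A). By \cite[Proposition~11.43]{Roe03}, every ghost operator in $C^{\ast}(W_{\Gamma})$ is compact, so $x\in\mathcal{K}(\ell^{2}(W_{\Gamma})\otimes\ell^{2}(\mathbb{N}))$; since $x=ExE$ and the fibres are finite-dimensional, $x\in E\,\mathcal{K}(\ell^{2}(W_{\Gamma})\otimes\ell^{2}(\mathbb{N}))\,E=\mathcal{K}(\mathcal{H}_{\Gamma})$. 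Together with Proposition~\ref{IdealExistence} this yields $\mathfrak{I}(\mathbf{A},\Gamma)=\mathcal{K}(\mathcal{H}_{\Gamma})$.

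The main obstacle — and the reason for routing through property A instead of arguing directly — is that $x\in\mathfrak{I}(\mathbf{A},\Gamma)$ does \emph{not} imply $\|xP_{N}^{\perp}\|\to 0$: approximating $x$ by a sum $\sum_{j=1}^{m}y_{j}$ of elementary operators with distinct signatures, the obvious triangle-inequality estimate yields only $\|xP_{N}^{\perp}\|\lesssim m\max_{j}\|y_{j}P_{N}^{\perp}\|$, and the factor $m$ cannot be controlled uniformly as the approximation improves, so one is left with an $\ell^{1}$-type bound where an $\ell^{2}$-type bound is required. Property A of $W_{\Gamma}$ is precisely the input that bridges this gap. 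The remaining work is routine bookkeeping: verifying that elementary operators are finite-propagation and locally compact (immediate from $x\,\mathcal{H}_{\mathbf{w}}^{\circ}\subseteq\mathcal{H}_{\Sigma(x)\mathbf{w}}^{\circ}$ and $\dim\mathcal{H}_{\mathbf{w}}^{\circ}<\infty$), checking the ghost condition, and noting that finite-dimensionality of the fibres makes the corner $E\,\mathcal{K}(\ell^{2}(W_{\Gamma})\otimes\ell^{2}(\mathbb{N}))\,E$ coincide with $\mathcal{K}(\mathcal{H}_{\Gamma})$.
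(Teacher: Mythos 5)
Your proposal is correct and is essentially the paper's own argument: the paper explicitly credits \cite[Proposition 11.43]{Roe03} and simply unpacks it into a self-contained proof, using exactness of $W_{\Gamma}$ to build the vectors $(\eta_{\mathbf{w}})$, the compression $m(T)=V^{\ast}(T\otimes 1)V$ onto a finite-propagation operator within $\varepsilon$ of $x$, and the finite rank of the $p_{\mathbf{w}}$ (which is where finite-dimensionality of the $A_v$ enters) to conclude compactness band by band. The only points you should make explicit when citing the black box are that elementary operators have propagation $|\Sigma(x)|$ only for the \emph{right}-invariant word metric $d(\mathbf{u},\mathbf{v})=|\mathbf{u}\mathbf{v}^{-1}|$ (with the left-invariant one, conjugation can blow up $|\mathbf{w}^{-1}\Sigma(x)\mathbf{w}|$), which is harmless since inversion is an isometry between the two, and that the ghost-implies-compact statement is being applied in the non-uniform Roe algebra over the block decomposition $E$ with varying finite-dimensional fibres.
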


\begin{proof}
The inclusion $\mathcal{K}(\mathcal{H}_{\Gamma}) \subseteq \mathfrak{I}(\mathbf{A}, \Gamma)$ follows directly from Proposition~\ref{IdealExistence}.

To show the reverse inclusion, let $x \in \mathfrak{I}(\mathbf{A}, \Gamma)$. By Proposition~\ref{DensityStatement}, for any $\varepsilon > 0$ there exists a finite set $\mathcal{S} \subseteq W_{\Gamma}$ and a sum $y = \sum_{\mathbf{w} \in \mathcal{S}} y_{\mathbf{w}}$, where each $y_{\mathbf{w}}$ is a finite sum of elementary operators with signature $\mathbf{w}$, such that $\|x - y\| < \frac{\varepsilon}{3}$.

It is known that Coxeter groups are exact (cf.~\cite{DranishnikovJanuszkiewicz99} and \cite[Theorem~5.1.7]{BrownOzawa08}). Therefore, by \cite[Theorem~5.1.6]{BrownOzawa08}, there exists a finite subset $\mathcal{F} \subseteq W_{\Gamma}$ and a family of unit vectors $(\eta_{\mathbf{w}})_{\mathbf{w} \in W_{\Gamma}} \subseteq \ell^2(W_{\Gamma})$ with $\operatorname{supp}(\eta_{\mathbf{w}}) \subseteq \mathcal{F} \mathbf{w}$, such that
\[
\sup \left\{ \|\eta_{\mathbf{v}} - \eta_{\mathbf{w}}\| \;\middle|\; \mathbf{v}, \mathbf{w} \in W_{\Gamma},\ \mathbf{w}\mathbf{v}^{-1} \in \mathcal{S} \right\} < \frac{\varepsilon}{3 (\# \mathcal{S}) \|y\|}.
\]

Define an isometry $V : \mathcal{H}_{\Gamma} \to \mathcal{H}_{\Gamma} \otimes \ell^2(W_{\Gamma})$ by $V\eta := \sum_{\mathbf{w} \in W_{\Gamma}} (p_{\mathbf{w}} \eta) \otimes \eta_{\mathbf{w}}$, and let $m : \mathcal{B}(\mathcal{H}_{\Gamma}) \to \mathcal{B}(\mathcal{H}_{\Gamma})$ be the unital completely positive map given by $m(T) := V^*(T \otimes 1)V$. Then for all $T \in \mathcal{B}(\mathcal{H}_{\Gamma})$ and $\mathbf{v}, \mathbf{w} \in W_{\Gamma}$, we have
\[
p_{\mathbf{v}} m(T) p_{\mathbf{w}} = \langle \eta_{\mathbf{w}}, \eta_{\mathbf{v}} \rangle (p_{\mathbf{v}} T p_{\mathbf{w}}),
\]
implying $p_{\mathbf{v}} m(T) p_{\mathbf{w}} = 0$ whenever $\mathcal{F} \mathbf{v} \cap \mathcal{F} \mathbf{w} = \emptyset$.

We estimate the approximation error:
\begin{align*}
\|y - m(y)\| 
&\leq \sum_{\mathbf{w} \in \mathcal{S}} \|y_{\mathbf{w}} - m(y_{\mathbf{w}})\| \\
&= \sum_{\mathbf{w} \in \mathcal{S}} \sup_{\mathbf{v} \in W_{\Gamma}} \left| 1 - \langle \eta_{\mathbf{v}}, \eta_{\mathbf{w} \mathbf{v}} \rangle \right| \cdot \|p_{\mathbf{w}\mathbf{v}} y_{\mathbf{w}} p_{\mathbf{v}}\| \\
&\leq \sum_{\mathbf{w} \in \mathcal{S}} \sup_{\mathbf{v} \in W_{\Gamma}} \|\eta_{\mathbf{v}} - \eta_{\mathbf{w} \mathbf{v}}\| \cdot \|p_{\mathbf{w}\mathbf{v}} y p_{\mathbf{v}}\| \\
&< \frac{\varepsilon}{3}.
\end{align*}
Hence,
\[
\|x - m(x)\| \leq \|x - y\| + \|y - m(y)\| + \|m(x - y)\| < \varepsilon.
\]

It now suffices to prove the following claim.\\

\emph{Claim.} The operator $m(x)$ is compact.

\emph{Proof of the claim.}
Choose $L \in \mathbb{N}$ such that $\mathcal{F} \mathbf{v} \cap \mathcal{F} \mathbf{w} = \emptyset$ whenever $|\mathbf{v} \mathbf{w}^{-1}| > L$, and define
\[
B_L := \{\mathbf{w} \in W_{\Gamma} \mid |\mathbf{w}| \leq L\}.
\]
Then,
\[
m(x) = \sum_{\substack{\mathbf{v}, \mathbf{w} \in W_{\Gamma}: \\ |\mathbf{v} \mathbf{w}^{-1}| \leq L}} p_{\mathbf{v}} m(x) p_{\mathbf{w}} = \sum_{\mathbf{u} \in B_L} T_{\mathbf{u}},
\]
where
\[
T_{\mathbf{u}} := \sum_{\mathbf{v} \in W_{\Gamma}} \langle \eta_{\mathbf{v}}, \eta_{\mathbf{u} \mathbf{v}} \rangle (p_{\mathbf{u} \mathbf{v}} x p_{\mathbf{v}}),
\]
with convergence in the strong operator topology.

Each partial sum $\sum_{\mathbf{v} \in W_{\Gamma} : |\mathbf{v}| \leq N} \langle \eta_{\mathbf{v}}, \eta_{\mathbf{u} \mathbf{v}} \rangle (p_{\mathbf{u} \mathbf{v}} x p_{\mathbf{v}})$ has finite rank, and
\begin{align*}
\| \sum_{\substack{\mathbf{v} \in W_{\Gamma}: \\ |\mathbf{v}| > N}} \langle \eta_{\mathbf{v}}, \eta_{\mathbf{u} \mathbf{v}} \rangle (p_{\mathbf{u} \mathbf{v}} x p_{\mathbf{v}}) \| 
&\leq \sup_{\mathbf{v} \in W_{\Gamma}:  |\mathbf{v}| > N} |\langle \eta_{\mathbf{v}}, \eta_{\mathbf{u} \mathbf{v}} \rangle| \cdot \|p_{\mathbf{u} \mathbf{v}} x p_{\mathbf{v}}\| \\
&\leq \sup_{\mathbf{v} \in W_{\Gamma}: |\mathbf{v}| > N} \|p_{\mathbf{v}} x^* p_{\mathbf{u} \mathbf{v}} x p_{\mathbf{v}}\|^{1/2} \\
&\leq \|\mathbb{E}(x^* x) P_N^{\perp}\|^{1/2}.
\end{align*}
Thus, each $T_{\mathbf{u}}$ is compact, and therefore $m(x) \in \mathcal{K}(\mathcal{H}_{\Gamma})$. This completes the proof of the claim, and hence of the proposition.
\end{proof}

The following lemma asserts an analogue of the minimality condition in the crossed product setting, see \cite[Theorem 3.19]{Klisse23-1}.

\begin{lemma} \label{MinimalityAnalogue} Let $\Gamma$ be a finite, undirected, simplicial graph with $\#V\Gamma\geq2$ whose complement $\Gamma^{c}$ is connected, and let $\mathbf{A}:=(A_{v})_{v\in V\Gamma}$ be a collection of unital C$^{\ast}$-algebras, equipped with GNS-faithful states $(\omega_{v})_{v\in V\Gamma}$. Then any ideal $I\triangleleft\mathfrak{A}(\mathbf{A},\Gamma)/\mathfrak{I}(\mathbf{A},\Gamma)$ intersects the image of $\mathcal{D}(\mathbf{A},\Gamma)$ inside $\mathfrak{A}(\mathbf{A},\Gamma)/\mathfrak{I}(\mathbf{A},\Gamma)$ trivially. \end{lemma}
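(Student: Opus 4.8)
The plan is to prove the following sharper and clearly equivalent statement: if $I \triangleleft \mathfrak{A}(\mathbf{A},\Gamma)/\mathfrak{I}(\mathbf{A},\Gamma)$ is an ideal with $I \cap \pi(\mathcal{D}(\mathbf{A},\Gamma)) \neq 0$, where $\pi$ is the quotient map, then $I = \mathfrak{A}(\mathbf{A},\Gamma)/\mathfrak{I}(\mathbf{A},\Gamma)$. Fix $0 \neq d \in I \cap \pi(\mathcal{D}(\mathbf{A},\Gamma))$, which we may take positive, and choose a positive lift $\tilde{d} \in \mathcal{D}(\mathbf{A},\Gamma) \setminus \mathfrak{I}(\mathbf{A},\Gamma)$. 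Since $\mathfrak{I}(\mathbf{A},\Gamma) \supseteq \mathcal{K}(\mathcal{H}_\Gamma)$ and, by Lemma~\ref{IdentificationLemma}, $C^{*}(\{Q_{\mathbf{w}} \mid \mathbf{w} \in W_\Gamma\}) \cong \mathcal{D}(W_\Gamma,S_\Gamma) \cong C(\overline{(W_\Gamma,S_\Gamma)})$, the image $\pi(C^{*}(\{Q_{\mathbf{w}}\}))$ is a unital commutative subalgebra isomorphic to $C(\partial(W_\Gamma,S_\Gamma))$ whose unit is $1_{\mathfrak{A}(\mathbf{A},\Gamma)/\mathfrak{I}(\mathbf{A},\Gamma)}$. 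It therefore suffices to exhibit inside the ideal $\langle d \rangle$ generated by $d$ a strictly positive element of this subalgebra, as such an element is invertible in $\mathfrak{A}(\mathbf{A},\Gamma)/\mathfrak{I}(\mathbf{A},\Gamma)$.

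First I would extract a boundary function from $d$. As $\tilde{d} \notin \mathfrak{I}(\mathbf{A},\Gamma)$ there are words $\mathbf{w}_n \in W_\Gamma$ with $|\mathbf{w}_n| \to \infty$ and $\|\tilde{d}|_{\mathcal{H}^{\circ}_{\mathbf{w}_n}}\| \geq \varepsilon > 0$. Conjugating $\tilde{d}$ by normalized reduced creation operators $x = a_1^{\dagger} \cdots a_k^{\dagger}$ built along (initial segments of) such deep words and computing $x^{*}\tilde{d}x$ via the identities of Lemma~\ref{MainIdentities} shows that $x^{*}\tilde{d}x$ is again a diagonal operator, now with strictly smaller word-length content; iterating until this content is exhausted produces $0 \neq g \in \langle d \rangle$ whose image $f := \pi(g) \geq 0$ lies in $\pi(C^{*}(\{Q_{\mathbf{w}}\})) \cong C(\partial(W_\Gamma,S_\Gamma))$. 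The point at which the hypothesis bites is that $\tilde{d}$ being bounded below deep in the grading prevents this iteration from collapsing to $0$.

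Next I would spread $f$ over the whole boundary using minimality. Under the hypotheses $\#V\Gamma \geq 2$ and $\Gamma^{c}$ connected, the canonical action $W_\Gamma \curvearrowright \partial(W_\Gamma,S_\Gamma)$ is minimal by \cite[Theorem~3.19]{Klisse23-1}, so every orbit is dense; pick $z_0 \in \partial(W_\Gamma,S_\Gamma)$ with $f(z_0) > 0$. For a vertex $v$ with $A_v \neq \mathbb{C}$ and a normalized $c \in A_v^{\circ}$, the relations $Q_{\mathbf{w}} c^{\dagger} = c^{\dagger}(v.Q_{\mathbf{w}})$ of Lemma~\ref{MainIdentities}(5) yield, for every diagonal scalar operator $F$, the identity $(c^{\dagger})^{*} F c^{\dagger} = Q_v^{\perp}(v.F)$; passing to the quotient this realizes the map $f \mapsto \chi_{\{v \nleq z\}} \cdot (v.f)$ on $C(\partial(W_\Gamma,S_\Gamma))$ \emph{inside} $\langle d \rangle$. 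Iterating along a reduced word $\mathbf{v}$ whose initial letter is not $\leq z$ — which forces every prefix of $\mathbf{v}$ to be $\nleq z$, so that all the ``defect'' projections $Q_{\mathbf{w}}^{\perp}$ evaluate to $1$ at $z$ — gives an element of $\langle d \rangle \cap \pi(C^{*}(\{Q_{\mathbf{w}}\}))$ that takes the value $f(\mathbf{v}^{-1}z)$ at $z$; a second application of minimality lets us choose $\mathbf{v}$, subject to this initial-letter constraint, so that $\mathbf{v}^{-1}z$ lands near $z_0$ and the value is positive. By compactness of $\partial(W_\Gamma,S_\Gamma)$, finitely many such elements sum to a strictly positive element of $\pi(C^{*}(\{Q_{\mathbf{w}}\}))$ lying in $\langle d \rangle \subseteq I$, whence $1 \in I$ and $I = \mathfrak{A}(\mathbf{A},\Gamma)/\mathfrak{I}(\mathbf{A},\Gamma)$.

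The main obstacle is the third step. In the Hecke case of \cite{Klisse23-2} the quotient is a genuine reduced crossed product $C(\partial(W_\Gamma,S_\Gamma)) \rtimes_{r} W_\Gamma$, the $W_\Gamma$-translates of a diagonal element are implemented by honest unitaries, $\langle d \rangle \cap C(\partial(W_\Gamma,S_\Gamma))$ is a $W_\Gamma$-invariant ideal, and minimality finishes the argument in one line. For general vertex algebras no such unitaries exist, and the $W_\Gamma$-action on the diagonal subalgebra can only be simulated by conjugation with creation/annihilation operators at the cost of the defect projections $Q_v^{\perp}$. Managing these defects — ensuring that for each boundary point $z$ one can still choose a translating word whose prefixes avoid $z$ while pushing $z$ into the support of $f$ — amounts to re-proving a one-sided, defect-compatible refinement of minimality, and is the technically delicate core of the argument. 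Step~1 is easier but still needs care to verify that the iterative stripping of vertex-algebra content terminates at a nonzero scalar diagonal operator.
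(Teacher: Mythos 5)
Your outline is a genuinely different strategy from the paper's, but as it stands it has two unfilled gaps, both of which you flag yourself, and both of which sit exactly where the difficulty of the lemma lives. First, the extraction step: a general positive $\tilde{d}\in\mathcal{D}(\mathbf{A},\Gamma)\setminus\mathfrak{I}(\mathbf{A},\Gamma)$ is only a norm limit of finite linear combinations of elementary diagonal operators, so the algebraic ``stripping'' via Lemma~\ref{MainIdentities} can only be applied to a finite approximant $y$; you must then control simultaneously the approximation error and the fact that the stripped element does not fall into $\mathfrak{I}(\mathbf{A},\Gamma)$ (conjugation by $(c^{\dagger})^{*}(\cdot)c^{\dagger}$ maps $\mathfrak{I}$ into $\mathfrak{I}$ but can easily annihilate the non-vanishing-at-infinity part of $y$ as well). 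The one-sentence justification ``being bounded below deep in the grading prevents collapse'' is not an argument; making it one requires essentially the same quantitative bookkeeping the paper carries out. Second, and more seriously, the ``defect-compatible minimality'' in your third step is a strictly stronger statement than the minimality of $W_{\Gamma}\curvearrowright\partial(W_{\Gamma},S_{\Gamma})$ from \cite[Theorem~3.19]{Klisse23-1}: for every $z\in\partial(W_{\Gamma},S_{\Gamma})$ you need a word $\mathbf{v}$ that simultaneously translates $z$ into the support of $f$ and whose letters compose with $z$ without cancellation so that all the translated defect projections $Q^{\perp}$ evaluate to $1$ at $z$. You correctly identify this as the core difficulty, but identifying it is not proving it; without it the compactness/covering argument that produces a strictly positive element of $\pi(C^{*}(\{Q_{\mathbf{w}}\}))$ inside $\langle d\rangle$ does not go through.

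For comparison, the paper's proof never localizes on the boundary and never invokes minimality of the boundary action. It argues directly that a positive $x\in\mathcal{D}(\mathbf{A},\Gamma)$ with $\pi(x)\in I$ must satisfy $\Vert\mathbb{E}(x)P_{N}^{\perp}\Vert\to 0$, i.e.\ lie in $\mathfrak{I}(\mathbf{A},\Gamma)$: starting from a state $\psi$ vanishing on $I$, it first produces a vertex $v_{0}$ with $\psi(Q_{v_{0}})\neq 1$ (using the multiplicative-domain trick and $Q_{v}Q_{v^{\prime}}=0$ for $(v,v^{\prime})\in E\Gamma^{c}$), then conjugates $\psi$ by creation operators along a deep word ending in a closed walk of $\Gamma^{c}$ to manufacture a new state that is simultaneously zero on $x^{\ast}x$ and approximately $\Vert y\xi\Vert^{2}$ on $y^{\ast}y$ for a finite approximant $y$ and a deep unit vector $\xi$. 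This sidesteps both of your obstacles at once, which is why the two steps you leave open should be regarded as genuine gaps rather than routine verifications.
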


\begin{proof} Our assumptions in particular imply that the right-angled Coxeter group $W_{\Gamma}$ is infinite. Denote the quotient map $\mathfrak{A}(\mathbf{A},\Gamma)\twoheadrightarrow\mathfrak{A}(\mathbf{A},\Gamma)/\mathfrak{I}(\mathbf{A},\Gamma)$ by $\pi$, and let $I\triangleleft\pi(\mathfrak{A}(\mathbf{A},\Gamma))$ be an ideal. Assume that the intersection $I\cap\pi(\mathcal{D}(\mathbf{A},\Gamma))$ is non-trivial, let $\phi$ be an arbitrary state on $\pi(\mathfrak{A}(\mathbf{A},\Gamma))$ that vanishes on $I$, set $\psi:=\phi\circ\pi$, and pick a positive element $x\in\mathcal{D}(\mathbf{A},\Gamma)$ with $\Vert x\Vert=1$ and $\pi(x)\in I\cap\pi(\mathcal{D}(\mathbf{A},\Gamma))$.

For every $0<\varepsilon<1$, we find by Corollary \ref{DiagonalCharacterization} an element $y$ that can be written as a linear combination of summands of the form $(a_{1}^{\dagger}\cdots a_{k}^{\dagger})d(b_{1}^{\dagger}\cdots b_{k}^{\dagger})^{\ast}$ with $k\in\mathbb{N}$, $(u_{1},\ldots,u_{k})\in\mathcal{W}_{\text{red}}$, $a_{i},b_{i}\in A_{u_{i}}^{\circ}$, $d\in\mathcal{D}_{0}(\mathbf{A},\Gamma)$ such that $\Vert x-y\Vert<\frac{\varepsilon^{2}}{25}$. Observe that $y$ acts on a finite number of tensor legs, meaning that there exists $M\in\mathbb{N}$ such that for every $\mathbf{w}\in W_{\Gamma}$ with $|\mathbf{w}|>M$, there exists a decomposition $\mathbf{w}=\mathbf{w}_{1}\mathbf{w}_{2}$ with $\mathbf{w}_{1},\mathbf{w}_{2}\in W_{\Gamma}$, $|\mathbf{w}|=|\mathbf{w}_{1}|+|\mathbf{w}_{2}|$, $|\mathbf{w}_{1}|=M$ such that $y(\xi_{1}\otimes\xi_{2})=(y\xi_{1})\otimes\xi_{2}$ for all $\xi_{1}\in\mathcal{H}_{\mathbf{w}_{1}}$, $\xi_{2}\in\mathcal{H}_{\mathbf{w}_{2}}$. Here we view $\xi_{1}\otimes\xi_{2}$ as an element in $\mathcal{H}_{\mathbf{w}}^{\circ}\cong\mathcal{H}_{\mathbf{w}_{1}}^{\circ}\otimes\mathcal{H}_{\mathbf{w}_{2}}^{\circ}$.

We proceed by proving a claim.\\

\emph{Claim 1}. There exists a vertex $v_{0}\in V\Gamma$ with $\psi(Q_{v_{0}})\ne1$.

\emph{Proof of the claim:} Assume that $\psi(Q_{v})=1$ for every $v\in V\Gamma$. It follows that $Q_{v}$ is contained in the multiplicative domain of the state $\psi$ (see, e.g.,\ \cite[Proposition 1.5.7]{BrownOzawa08}) for every $v\in V\Gamma$. In particular, for $v,v^{\prime}\in V\Gamma$ with $(v,v^{\prime})\in E\Gamma^{c}$ we obtain 
\[
1=\psi(Q_{v})\psi(Q_{v^{\prime}})=\psi(Q_{v}Q_{v^{\prime}})=\psi(0)=0,
\]
which leads to a contradiction. This proves the claim.\\

Fix $v_{0}\in V\Gamma$ as in Claim 1 and choose a closed walk $(v_{1},...,v_{n})\in V\Gamma\times\dots\times V\Gamma$ in the complement $\Gamma^{c}$ that covers the whole graph and satisfies $v_{1}=v_{0}$. Set furthermore $\mathbf{g}:=v_{1}\cdots v_{n}\in W_{\Gamma}$. Now let $N>M+n$ be an arbitrary integer. Since $y$ acts diagonally, we find a group element $\mathbf{w}\in W_{\Gamma}$ of length $l:=|\mathbf{w}|>N$ and a unit vector $\xi\in\mathcal{H}_{\mathbf{w}}^{\circ}$ with $\Vert yP_{N}^{\perp}\Vert<\Vert y\xi\Vert+\frac{\varepsilon^{2}}{25}$.

By a density argument, we can assume that $\xi$ is given by a linear combination of the form $\xi=\sum_{i=1}^{n}c_{i,1}^{\dagger}\cdots c_{i,l}^{\dagger}\Omega$ with suitable $n\in\mathbb{N}$, $(w_{1},...,w_{l})\in\mathcal{W}_{\text{red}}$, $c_{i,j}\in A_{w_{j}}^{\circ}$, where $\mathbf{w}=w_{1}\cdots w_{l}$. The discussion above implies that, by possibly altering the tail of $\mathbf{w}_{2}$, we can furthermore assume that $\mathbf{g}\leq\mathbf{w}^{-1}$.\\

\emph{Claim 2}. The following identity holds: 
\[
\sum_{i,j}(c_{i,1}^{\dagger}\cdots c_{i,l}^{\dagger})^{\ast}(y^{\ast}y)(c_{j,1}^{\dagger}\cdots c_{j,l}^{\dagger})=\Vert y\xi\Vert^{2}Q_{v_{0}}^{\perp}.
\]

\emph{Proof of the claim:} For $\mathbf{u},\mathbf{v}\in W_{\Gamma}$ and $\eta\in\mathcal{H}_{\mathbf{u}}^{\circ}$, $\zeta\in\mathcal{H}_{\mathbf{v}}^{\circ}$ with $v_{0}\nleq\mathbf{u}$, $v_{0}\nleq\mathbf{v}$, we have by the previous discussion that 
\begin{eqnarray*}
 &  & \left\langle \sum_{i,j}(c_{i,1}^{\dagger}\cdots c_{i,l}^{\dagger})^{\ast}(y^{\ast}y)(c_{j,1}^{\dagger}\cdots c_{j,l}^{\dagger})\eta,\zeta\right\rangle \\
 & = & \sum_{i,j}\left\langle (yc_{i,1}^{\dagger}\cdots c_{i,l}^{\dagger}\Omega)\otimes\eta,(yc_{j,1}^{\dagger}\cdots c_{j,l}^{\dagger}\Omega)\otimes\zeta\right\rangle \\
 & = & \sum_{i,j}\left\langle yc_{i,1}^{\dagger}\cdots c_{i,l}^{\dagger}\Omega,yc_{j,1}^{\dagger}\cdots c_{j,l}^{\dagger}\Omega\right\rangle \langle\eta,\zeta\rangle\\
 & = & \Vert y\xi\Vert^{2}\langle Q_{v_{0}}^{\perp}\eta,\zeta\rangle,
\end{eqnarray*}
while 
\[
\left\langle \sum_{i,j}(c_{i,1}^{\dagger}\cdots c_{i,l}^{\dagger})^{\ast}(y^{\ast}y)(c_{j,1}^{\dagger}\cdots c_{j,l}^{\dagger})\eta,\zeta\right\rangle =0
\]
for all $\mathbf{u},\mathbf{v}\in W_{\Gamma}$, $\eta\in\mathcal{H}_{\mathbf{u}}^{\circ}$, $\zeta\in\mathcal{H}_{\mathbf{v}}^{\circ}$ with $v_{0}\leq\mathbf{u}$ or $v_{0}\leq\mathbf{v}$. This implies the claim.\\

By the choice of $v_{0}$, we can consider the positive linear functional $\overline{\psi}$ on $\mathfrak{A}(\mathbf{A},\Gamma)$ given by 
\[
z\mapsto\psi(Q_{v_{0}}^{\perp})^{-1}\sum_{i,j}\psi\left((c_{i,1}^{\dagger}\cdots c_{i,l}^{\dagger})^{\ast}z(c_{j,1}^{\dagger}\cdots c_{j,l}^{\dagger})\right).
\]

In the same way as in the proof of Claim 2 above, we deduce 
\[
\Vert\overline{\psi}\Vert=\overline{\psi}(1)=\psi(Q_{v_{0}}^{\perp})^{-1}\psi(Q_{v_{0}}^{\perp})=1,
\]
so that $\overline{\psi}$ is a state. Furthermore, we have $\overline{\psi}(x^{\ast}x)=0$ while by Claim 2, 
\[
\Vert y\xi\Vert^{2}=\overline{\psi}(y^{\ast}y)\leq\Vert y^{\ast}y-x^{\ast}x\Vert+\overline{\psi}(x^{\ast}x)\leq(\Vert y\Vert+\Vert x\Vert)\Vert x-y\Vert<\left(\frac{\varepsilon^{2}}{25}+2\right)\frac{\varepsilon^{2}}{25}<\frac{9\varepsilon^{2}}{25},
\]
implying that for all $N>M+n$, 
\[
\Vert\mathbb{E}(x)P_{N}^{\perp}\Vert=\Vert xP_{N}^{\perp}\Vert<\frac{\varepsilon^{2}}{25}+\Vert yP_{N}^{\perp}\Vert<\frac{2\varepsilon^{2}}{25}+\Vert y\xi\Vert<\frac{2\varepsilon^{2}}{25}+\frac{3}{5}\varepsilon<\varepsilon.
\]
Since $0<\varepsilon<1$ was arbitrary, we obtain $\lim_{N\rightarrow\infty}\Vert\mathbb{E}(x)P_{N}^{\perp}\Vert=0$. But by definition, this means that $x^{\frac{1}{2}}\in\mathfrak{I}(\mathbf{A},\Gamma)$, i.e., $\pi(x)=0$. This finishes the proof. \end{proof}

From the previous lemma, we deduce the main theorem of the present subsection.

\begin{theorem} \label{MaximalityTheorem} Let $\Gamma$ be a finite, undirected, simplicial graph with $\#V\Gamma\geq3$ whose complement $\Gamma^{c}$ is connected, and let $\mathbf{A}:=(A_{v})_{v\in V\Gamma}$ be a collection of unital C$^{\ast}$-algebras equipped with GNS-faithful states $(\omega_{v})_{v\in V\Gamma}$. Then $\mathfrak{I}(\mathbf{A},\Gamma)\triangleleft\mathfrak{A}(\mathbf{A},\Gamma)$ is a maximal ideal, i.e., $\mathfrak{A}(\mathbf{A},\Gamma)/\mathfrak{I}(\mathbf{A},\Gamma)$ is a simple C$^{\ast}$-algebra. \end{theorem}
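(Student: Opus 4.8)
\emph{Proof strategy.} Recall that $\mathfrak{I}(\mathbf{A},\Gamma)$ is maximal precisely when $\mathfrak{A}(\mathbf{A},\Gamma)/\mathfrak{I}(\mathbf{A},\Gamma)$ is simple, so the plan is to show that every nonzero closed two-sided ideal $I\triangleleft\mathfrak{A}(\mathbf{A},\Gamma)/\mathfrak{I}(\mathbf{A},\Gamma)$ contains a nonzero element of $\pi(\mathcal{D}(\mathbf{A},\Gamma))$, where $\pi:\mathfrak{A}(\mathbf{A},\Gamma)\twoheadrightarrow\mathfrak{A}(\mathbf{A},\Gamma)/\mathfrak{I}(\mathbf{A},\Gamma)$ is the quotient map; together with Lemma~\ref{MinimalityAnalogue} this forces $I=0$. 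So I would fix such an $I$, pick $0\ne\bar a\in I_{+}$, and lift it to a positive $a\in\mathfrak{A}(\mathbf{A},\Gamma)$. Since the induced conditional expectation $\bar{\mathbb{E}}:\mathfrak{A}(\mathbf{A},\Gamma)/\mathfrak{I}(\mathbf{A},\Gamma)\to\pi(\mathcal{D}(\mathbf{A},\Gamma))$ from Lemma~\ref{GaugeActionTheorem2} is faithful, $\mathbb{E}(a)\notin\mathfrak{I}(\mathbf{A},\Gamma)$; as $N\mapsto\Vert\mathbb{E}(a)P_{N}^{\perp}\Vert$ is non-increasing, we may set $2\delta:=\lim_{N}\Vert\mathbb{E}(a)P_{N}^{\perp}\Vert>0$, so for every $N$ there is $\mathbf{w}\in W_{\Gamma}$ with $\vert\mathbf{w}\vert>N$ and a unit vector in $\mathcal{H}_{\mathbf{w}}^{\circ}$ on which the positive operator $p_{\mathbf{w}}\mathbb{E}(a)p_{\mathbf{w}}$ takes a value arbitrarily close to $2\delta$.

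Next, using Proposition~\ref{DensityStatement} together with the gauge action (Lemma~\ref{GaugeActionLemma}), I would approximate $a$ in norm, to within a small $\varepsilon>0$ fixed at the very end, by $y=\sum_{\mathbf{x}\in\mathcal{S}}y_{\mathbf{x}}$, where $\mathcal{S}\subseteq W_{\Gamma}$ is finite with $e\in\mathcal{S}$ and each $y_{\mathbf{x}}$ is a finite sum of elementary operators of signature $\mathbf{x}$ (so $\mathbb{E}(y)=y_{e}$), and where each $y_{\mathbf{x}}$ acts only on finitely many initial tensor legs, say the first $M$ of them, exactly as in the proof of Lemma~\ref{MinimalityAnalogue}. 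Choosing a witnessing word $\mathbf{w}$ as above with $\vert\mathbf{w}\vert>M$, together with a length-$M$ prefix decomposition $\mathbf{w}=\mathbf{w}_{1}\mathbf{w}_{2}$ and a unit vector $\zeta\in\mathcal{H}_{\mathbf{w}_{1}}^{\circ}$ with $\mathrm{Re}\,\langle y_{e}\zeta,\zeta\rangle>2\delta-O(\varepsilon)$, I would then apply Proposition~\ref{TopologicalFrenessImplication} to $\mathbf{w}$ and $\mathcal{S}$ to obtain $\mathbf{v}\in W_{\Gamma}$ and a closed walk $(t_{1},\dots,t_{n})$ in $\Gamma^{c}$ covering the whole graph such that, writing $\mathbf{g}_{L}:=\mathbf{w}\mathbf{v}(t_{1}\cdots t_{n})^{L}$, one has $\vert\mathbf{g}_{L}\vert=\vert\mathbf{w}\vert+\vert\mathbf{v}\vert+Ln$ (so $\mathbf{w}\le\mathbf{g}_{L}$) and $\mathbf{g}_{L}\nleq\mathbf{x}\mathbf{g}_{L}$ for all $\mathbf{x}\in\mathcal{S}\setminus\{e\}$ and all $L\ge 1$.

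The heart of the argument, and the step I expect to be the main obstacle, is the \emph{compression lemma}: for $L$ large enough, $Q_{\mathbf{g}_{L}}\,y_{\mathbf{x}}\,Q_{\mathbf{g}_{L}}\in\mathfrak{I}(\mathbf{A},\Gamma)$ for every $\mathbf{x}\in\mathcal{S}\setminus\{e\}$. Reducing to a single elementary operator $z$ of signature $\mathbf{x}$, one pushes the outer projections through $z$ via Lemma~\ref{MainIdentities}(5) and rewrites the resulting $W_{\Gamma}$-translates of $Q_{\mathbf{g}_{L}}$ using Lemma~\ref{ActionLemma}; the nonzero contributions of $Q_{\mathbf{g}_{L}}zQ_{\mathbf{g}_{L}}$ on a subspace $\mathcal{H}_{\mathbf{w}'}^{\circ}$ require simultaneously $\mathbf{g}_{L}\le\mathbf{w}'$ and $\mathbf{g}_{L}\le\mathbf{x}\mathbf{w}'$. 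The crucial point is that, since $(t_{1},\dots,t_{n})$ covers $\Gamma$, for large $L$ the element $\mathbf{g}_{L}$ ends in no generator, so the weak order is rigid near $\mathbf{g}_{L}$ and the non-starting relation $\mathbf{g}_{L}\nleq\mathbf{x}\mathbf{g}_{L}$ propagates to $\mathbf{g}_{L}\nleq\mathbf{x}\mathbf{g}_{L}\mathbf{h}$ for the relevant extensions $\mathbf{h}$ determined by the finitely many legs $z$ acts on; this confines the support of $Q_{\mathbf{g}_{L}}zQ_{\mathbf{g}_{L}}$ to finitely many subspaces $\mathcal{H}_{\mathbf{w}'}^{\circ}$ (in fact the operator is finite-rank), hence into $\mathfrak{I}(\mathbf{A},\Gamma)$. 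Making this combinatorial step precise — in particular controlling how left multiplication by $\mathbf{x}$ interacts with right extensions, exploiting exactly the ``for all $L$'' strength of Proposition~\ref{TopologicalFrenessImplication} and the covering property of the walk — is where the real work lies.

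Granting the compression lemma, I would conclude as follows. The operator $Q_{\mathbf{g}_{L}}\,y_{e}\,Q_{\mathbf{g}_{L}}$ is diagonal, and since $\mathbf{w}\le\mathbf{g}_{L}$ and $y_{e}$ acts within the first $M$ legs, the vector $\zeta\otimes\eta$ with $\eta$ a unit vector of arbitrarily large length in $\mathcal{H}_{\mathbf{w}_{1}^{-1}\mathbf{g}_{L}}^{\circ}$ witnesses $\Vert Q_{\mathbf{g}_{L}}aQ_{\mathbf{g}_{L}}\Vert\ge 2\delta-O(\varepsilon)$ on words of unbounded length; hence $Q_{\mathbf{g}_{L}}aQ_{\mathbf{g}_{L}}\notin\mathfrak{I}(\mathbf{A},\Gamma)$ and $b_{L}:=\pi(Q_{\mathbf{g}_{L}}aQ_{\mathbf{g}_{L}})\in I_{+}$ satisfies $\Vert b_{L}\Vert\ge 2\delta-O(\varepsilon)$. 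Setting $h:=\pi(Q_{\mathbf{g}_{L}}y_{e}Q_{\mathbf{g}_{L}})\in\pi(\mathcal{D}(\mathbf{A},\Gamma))$, the compression lemma gives $\Vert b_{L}-h\Vert=\Vert\pi(Q_{\mathbf{g}_{L}}(a-y)Q_{\mathbf{g}_{L}})\Vert<\varepsilon$. Replacing $h$ by its real part and then by its positive part $h_{+}$ (which stays in the C$^{\ast}$-algebra $\pi(\mathcal{D}(\mathbf{A},\Gamma))$ by functional calculus) keeps us within $O(\varepsilon)$ of $b_{L}$ while retaining norm $\ge 2\delta-O(\varepsilon)$; a standard perturbation argument for positive elements then produces a contraction $d$ with $(h_{+}-O(\varepsilon))_{+}=d\,b_{L}\,d^{\ast}\in I$, so that for $\varepsilon$ small enough $(h_{+}-O(\varepsilon))_{+}$ is a nonzero element of $I\cap\pi(\mathcal{D}(\mathbf{A},\Gamma))$. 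This contradicts Lemma~\ref{MinimalityAnalogue}, whence $I=0$; thus $\mathfrak{A}(\mathbf{A},\Gamma)/\mathfrak{I}(\mathbf{A},\Gamma)$ is simple and $\mathfrak{I}(\mathbf{A},\Gamma)$ is a maximal ideal.
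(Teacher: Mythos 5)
Your architecture (compress a positive lift of an element of $I$ to something close to a diagonal element, then contradict Lemma~\ref{MinimalityAnalogue}) is a legitimate alternative to the paper's route, and you correctly isolate the two external inputs that the paper also uses (Lemma~\ref{MinimalityAnalogue} and Proposition~\ref{TopologicalFrenessImplication} applied to the signatures of an approximant). There is, however, a concrete gap that breaks the argument as written: for $|\mathbf{w}|\geq 2$ the projection $Q_{\mathbf{w}}$ is in general \emph{not} an element of $\mathfrak{A}(\mathbf{A},\Gamma)$. The algebra is generated by $\mathbf{A}_{\Gamma}$ and the single-letter projections $Q_{v}$ only, and already for two non-adjacent vertices $s,t$ with $\dim\mathcal{H}_{s}^{\circ}=\infty$ one checks (using Corollary~\ref{DiagonalCharacterization}) that every diagonal element of $\mathfrak{A}(\mathbf{A},\Gamma)$ acts on $\mathcal{H}_{st}^{\circ}\cong\mathcal{H}_{s}^{\circ}\otimes\mathcal{H}_{t}^{\circ}$ as ``(something agreeing with its action on $\mathcal{H}_{s}^{\circ}$) plus (finite rank in the first leg)'', so $Q_{st}$, which is $1$ on $\mathcal{H}_{st}^{\circ}$ and $0$ on $\mathcal{H}_{s}^{\circ}$, cannot be norm-approximated. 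This is precisely why Lemma~\ref{IdentificationLemma} and Propositions~\ref{MainProposition}, \ref{StateExistence} always evaluate states on all of $\mathcal{B}(\mathcal{H}_{\Gamma})$ rather than treating $Q_{\mathbf{w}}$ as an element of $\mathfrak{A}(\mathbf{A},\Gamma)$. Consequently $Q_{\mathbf{g}_{L}}aQ_{\mathbf{g}_{L}}$ need not lie in $\mathfrak{A}(\mathbf{A},\Gamma)$, $b_{L}:=\pi(Q_{\mathbf{g}_{L}}aQ_{\mathbf{g}_{L}})$ is not defined, and there is no reason for it to lie in $I$; likewise $Q_{\mathbf{g}_{L}}y_{e}Q_{\mathbf{g}_{L}}$ need not lie in $\mathcal{D}(\mathbf{A},\Gamma)$. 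The paper circumvents exactly this by compressing with $TT^{\ast}$, where $T$ is a sum of products of creation operators along $\mathbf{g}_{L_{0}}$: this is a genuine diagonal element of $\mathfrak{A}(\mathbf{A},\Gamma)$ dominated by $Q_{\mathbf{g}_{L_{0}}}$, and the loss of the full corner is compensated by passing to a state vanishing on $I$ and using the multiplicative domain of $TT^{\ast}$.

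Second, your ``compression lemma'' is the combinatorial heart of the paper's proof as well (there it appears as the assertion $(TT^{\ast})y_{i}(TT^{\ast})=0$, which likewise requires propagating $\mathbf{g}_{L}\nleq\mathbf{x}\mathbf{g}_{L}$ to $\mathbf{g}_{L}\nleq\mathbf{x}\mathbf{g}_{L}\mathbf{h}$ for the relevant right extensions $\mathbf{h}$), and you explicitly leave it unproven. Two remarks on your sketch of it: the phrase ``$\mathbf{g}_{L}$ ends in no generator'' is not meaningful as stated --- what one actually exploits is that the tail $(t_{1}\cdots t_{n})^{L}$ is a rigid walk in $\Gamma^{c}$ covering all of $V\Gamma$, so letters of $\mathbf{h}$ cannot be shuffled past the surviving tail of $\mathbf{g}_{L}$ inside $\mathbf{x}\mathbf{g}_{L}\mathbf{h}$; and ``finite-rank'' should be ``supported on finitely many blocks $\mathcal{H}_{\mathbf{w}'}^{\circ}$'' (the blocks are infinite-dimensional in general), which still suffices for membership in $\mathfrak{I}(\mathbf{A},\Gamma)$. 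Your route is likely salvageable if you replace $Q_{\mathbf{g}_{L}}$ throughout by an element of the form $TT^{\ast}$ and then actually prove the propagation statement, but as it stands the proposal has a genuine gap at both points.
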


\begin{proof} Denote the quotient map $\mathfrak{A}(\mathbf{A},\Gamma)\twoheadrightarrow\mathfrak{A}(\mathbf{A},\Gamma)/\mathfrak{I}(\mathbf{A},\Gamma)$ by $\pi$, and assume that there exists a non-trivial ideal $I\triangleleft\pi(\mathfrak{A}(\mathbf{A},\Gamma))$. We may then choose a positive element $x\in\mathfrak{A}(\mathbf{A},\Gamma)$ with $\Vert x\Vert=1$ and $0\neq\pi(x)\in I$. For every $0<\varepsilon<1$, we find by Proposition \ref{DensityStatement} an element $y$ in the $\ast$-subalgebra $\mathcal{A}$ of $\mathfrak{A}(\mathbf{A},\Gamma)$ generated by $\mathbf{A}_{\Gamma}$ and all projections $(Q_{v})_{v\in V\Gamma}$ with $\Vert x-y\Vert<\frac{\varepsilon^{2}}{25}$.

As in the proof of Lemma \ref{MinimalityAnalogue}, there exists $M\in\mathbb{N}$ such that every element $\mathbf{w}\in W_{\Gamma}$ with $|\mathbf{w}|>M$ admits a decomposition $\mathbf{w}=\mathbf{w}_{1}\mathbf{w}_{2}$ with $\mathbf{w}_{1},\mathbf{w}_{2}\in W_{\Gamma}$, $|\mathbf{w}|=|\mathbf{w}_{1}|+|\mathbf{w}_{2}|$, $|\mathbf{w}_{1}|=M$, such that $\mathbb{E}(y)(\xi_{1}\otimes\xi_{2})=(\mathbb{E}(y)\xi_{1})\otimes\xi_{2}$ for all $\xi_{1}\in\mathcal{H}_{\mathbf{w}_{1}}$, $\xi_{2}\in\mathcal{H}_{\mathbf{w}_{2}}$, where $\xi_{1}\otimes\xi_{2}$ is viewed as an element in $\mathcal{H}_{\mathbf{w}}^{\circ}\cong\mathcal{H}_{\mathbf{w}_{1}}^{\circ}\otimes\mathcal{H}_{\mathbf{w}_{2}}^{\circ}$.

By Proposition \ref{DensityStatement}, the element $y^{\ast}y$ can be written as a linear combination of summands of the form $y_{i}:=(a_{i,1}^{\dagger}\cdots a_{i,k_{i}}^{\dagger})d_{i}(b_{i,1}^{\dagger}\cdots b_{i,l_{i}}^{\dagger})^{\ast}$ with $1\leq i\leq n$, $k_{i},l_{i}\in\mathbb{N}$, $(u_{i,1},\ldots,u_{i,k_{i}}),(v_{i,1},\ldots,v_{i,k_{i}})\in\mathcal{W}_{\text{red}}$, $a_{i,j}\in A_{u_{i,j}}^{\circ}$, $b_{i,j}\in A_{v_{i,j}}^{\circ}$, and $d_{i}\in\mathcal{D}_{0}(\mathbf{A},\Gamma)$. Denote the (finite) set of signatures of these summands by $\mathcal{S}:=\{\Sigma(y_{i})\mid1\leq i\leq n\}$ and let $N>M$ be an arbitrary integer. We then find a group element $\mathbf{w}\in W_{\Gamma}$ of length $l:=|\mathbf{w}|>N$ and a unit vector $\xi\in\mathcal{H}_{\mathbf{w}}^{\circ}$ with $\Vert\mathbb{E}(y)P_{N}^{\perp}\Vert<\Vert\mathbb{E}(y)\xi\Vert_{2}+\frac{\varepsilon^{2}}{25}$. One can assume that $\xi$ is given by a linear combination of the form $\xi=\sum_{j=1}^{m}c_{j,1}^{\dagger}\cdots c_{j,l}^{\dagger}\Omega$ with suitable elements $m\in\mathbb{N}$, $(w_{1},\ldots,w_{l})\in\mathcal{W}_{\text{red}}$, $c_{i,j}\in A_{w_{j}}^{\circ}$, where $\mathbf{w}=w_{1}\cdots w_{l}$.

Choose an element $\mathbf{v}\in W_{\Gamma}$ and a closed path $(t_{1},\ldots,t_{n})\in V\Gamma\times\cdots\times V\Gamma$ as in Lemma \ref{TopologicalFrenessImplication}. Denote $z:=\lim_{L}(\mathbf{w}\mathbf{v}(t_{1}\cdots t_{n})^{L})\in\partial(W_{\Gamma},S_{\Gamma})$, and write for notational ease $\mathbf{v}(t_{1}\cdots t_{n})^{L}=s_{1}\cdots s_{k_{L}}$ with $(s_{1},\ldots,s_{k_{L}})\in\mathcal{W}_{\text{red}}$ for $L\in\mathbb{N}$. For every $1\leq j\leq k_{L}$, we choose an element $d_{j}\in A_{s_{j}}^{\circ}$ with $\omega_{s_{j}}(d_{j}^{\ast}d_{j})=1$ and set 
\[
\eta_{L}:=\sum_{j=1}^{m}c_{j,1}^{\dagger}\cdots c_{j,l}^{\dagger}d_{1}^{\dagger}\cdots d_{k_{L}}^{\dagger}\Omega\in\mathcal{H}_{\mathbf{w}\mathbf{v}(t_{1}\cdots t_{n})^{L}}^{\circ}.
\]

By the discussion above and the choice of $N$, we have that $\eta_{L}$ is a unit vector with $\Vert\mathbb{E}(y)P_{N}^{\perp}\Vert<\Vert\mathbb{E}(y)\eta_{L}\Vert_{2}+\frac{\varepsilon^{2}}{25}$. Denote the restriction of the corresponding vector state to $\mathfrak{A}(\mathbf{A},\Gamma)$ by $\omega_{L}$. We can then go over to a subnet of $(\omega_{L})_{L\in\mathbb{N}}$ with weak$^{\ast}$-limit $\omega$. Note that for every positive element $T\in\mathfrak{A}(\mathbf{A},\Gamma)$, 
\[
\omega_{L}(T) \leq \Vert \mathbb{E}(T) \eta_{L}\Vert \leq\Vert \mathbb{E}(T^{\frac{1}{2}}T^{\frac{1}{2}}) P_{L+l}^\perp \Vert,
\]
so that $\omega$ in particular vanishes on the ideal $\mathfrak{I}(\mathbf{A},\Gamma)$. We proceed by proving the following two claims.\\

\emph{Claim 1.} There exists a state $\phi$ on $\mathfrak{A}(\mathbf{A},\Gamma)$ with $\phi(T+a)=\omega(T)$ for $T\in\mathcal{D}(\mathbf{A},\Gamma)$, $a\in\pi^{-1}(I)$.

\emph{Proof of the claim:} First note that the map $\phi_{0}:\mathcal{D}(\mathbf{A},\Gamma)+\pi^{-1}(I)\rightarrow\mathbb{C}$, $T+a\mapsto\omega(T)$ is well-defined. Indeed, if $T_{1},T_{2}\in\mathcal{D}(\mathbf{A},\Gamma)$, $a_{1},a_{2}\in\pi^{-1}(I)$ with $T_{1}+a_{1}=T_{2}+a_{2}$, then $T_{1}-T_{2}=a_{1}-a_{2}\in\mathcal{D}(\mathbf{A},\Gamma)\cap\pi^{-1}(I)$, implying $\pi(T_{1}-T_{2})\in\pi(\mathcal{D}(\mathbf{A},\Gamma))\cap I$. But by Lemma \ref{MinimalityAnalogue}, the intersection $\pi(\mathcal{D}(\mathbf{A},\Gamma))\cap I$ is trivial, so $T_{1}-T_{2}\in\mathfrak{I}(\mathbf{A},\Gamma)$. Thus, $\omega(T_{1})=\omega(T_{2})$, and $\phi_{0}$ is well-defined. It is clearly positive and satisfies $\phi_{0}(1)=\omega(1)=1$, so it is a state. Extending $\phi_{0}$ to $\mathfrak{A}(\mathbf{A},\Gamma)$ yields the claim.\\

\emph{Claim 2.} Let $\phi$ be a state as in the previous claim. Then every summand $y_{i}$ with $1\leq i\leq n$ and $\Sigma(y_{i})\in\mathcal{S}\setminus\{e\}$ satisfies $\phi(y_{i})=0$.

\emph{Proof of the claim:} Consider a summand $y_{i}=(a_{i,1}^{\dagger}\cdots a_{i,k}^{\dagger})d_{i}(b_{i,1}^{\dagger}\cdots b_{i,k}^{\dagger})^{\ast}$ with $1\leq i\leq n$ and $\Sigma(y_{i})\in\mathcal{S}\setminus\{e\}$. Note that for $L_{0}<L$ and $T:=\sum_{j=1}^{m}(c_{j,1}^{\dagger}\cdots c_{j,l}^{\dagger}d_{1}^{\dagger}\cdots d_{k_{L_{0}}}^{\dagger})$ the element $TT^{\ast}$ is contained in $\mathcal{D}(\mathbf{A},\Gamma)$ with 
\begin{eqnarray*}
T^{\ast}\eta_{L} & = & \sum_{j_{1},j_{2}=1}^{m}\left((c_{j_{1},1}^{\dagger}\cdots c_{j_{1},l}^{\dagger}d_{1}^{\dagger}\cdots d_{k_{L_{0}}}^{\dagger})^{\ast}(c_{j_{2},1}^{\dagger}\cdots c_{j_{2},l}^{\dagger}d_{1}^{\dagger}\cdots d_{k_{L_{0}}}^{\dagger})\right)(d_{k_{L_{0}}+1}^{\dagger}\cdots d_{L}^{\dagger}\Omega)\\
 & = & \sum_{j_{1},j_{2}=1}^{m}\left(\omega_{w_{1}}(c_{j_{1},1}^{\ast}c_{j_{2},1})\cdots\omega_{w_{r}}(c_{j_{1},l}^{\ast}c_{j_{2},l})\times\omega_{s_{1}}(d_{1}^{\ast}d_{1})\cdots\omega_{s_{i_{0}}}(d_{k_{L_{0}}}^{\ast}d_{k_{L_{0}}})\right)(d_{k_{L_{0}}+1}^{\dagger}\cdots d_{L}^{\dagger}\Omega)\\
 & = & \Vert\eta_{L_{0}}\Vert^{2}(d_{k_{L_{0}}+1}^{\dagger}\cdots d_{L}^{\dagger}\Omega)\\
 & = & (d_{k_{L_{0}}+1}^{\dagger}\cdots d_{L}^{\dagger})\Omega
\end{eqnarray*}
and 
\[
TT^{\ast}\eta_{L}=\sum_{j=1}^{m}(c_{j,1}^{\dagger}\cdots c_{j,r}^{\dagger}d_{1}^{\dagger}\cdots d_{L}^{\dagger})\Omega=\eta_{L}.
\]
It follows that 
\[
\langle TT^{\ast}\eta_{L},\eta_{L}\rangle=\Vert T^{\ast}\eta_{L}\Vert^{2}=1\quad\text{ and }\quad\langle(TT^{\ast})^{2}\eta_{L},\eta_{L}\rangle=\Vert(TT^{\ast})\eta_{L}\Vert^{2}=1
\]
so that 
\[
\omega((TT^{\ast})^{2})=1=|\omega(TT^{\ast})|^{2}.
\]
But then $TT^{\ast}$ is contained in the multiplicative domain of $\phi$. Since we have chosen $\mathbf{v}$ and $(t_{1},\ldots,t_{n})$ according to Proposition \ref{TopologicalFrenessImplication}, we have that $\mathbf{w}\mathbf{v}(t_{1}\cdots t_{n})^{L_{0}}\nleq\Sigma(x_{i})\mathbf{w}\mathbf{v}(t_{1}\cdots t_{n})^{L_{0}}$ which implies $(TT^{\ast})x_{i}(TT^{\ast})=0$. Therefore, 
\[
\phi(y_{i})=\phi(TT^{\ast})\phi(y_{i})\phi(TT^{\ast})=\phi((TT^{\ast})y_{i}(TT^{\ast}))=0.
\]
This implies the claim.\\

Choose a state $\phi$ as in the claims above and note that $\phi(y^{\ast}y)=\phi\circ\mathbb{E}(y^{\ast}y)$, where $\mathbb{E}$ is the conditional expectation in Theorem \ref{GaugeActionTheorem}. Since 
\[
\Vert\mathbb{E}(x)P_{N}^{\perp}\Vert\leq\frac{\varepsilon^{2}}{25}+\Vert\mathbb{E}(y)P_{N}^{\perp}\Vert<\frac{2\varepsilon^{2}}{25}+\Vert\mathbb{E}(y)\eta_{L}\Vert\leq\frac{2\varepsilon^{2}}{25}+(\omega_{L}\circ\mathbb{E}(y^{\ast}y))^{\frac{1}{2}},
\]
we get 
\[
\Vert\mathbb{E}(x)P_{N}^{\perp}\Vert<\frac{2\varepsilon^{2}}{25}+(\omega\circ\mathbb{E}(y^{\ast}y))^{\frac{1}{2}}.
\]
In combination with 
\[
\omega\circ\mathbb{E}(y^{\ast}y)\leq\Vert y^{\ast}y-x^{\ast}x\Vert+\phi(x^{\ast}x)\leq(\Vert y\Vert+\Vert x\Vert)\Vert x-y\Vert\leq\left(\frac{\varepsilon^{2}}{25}+2\right)\frac{\varepsilon^{2}}{25}<\frac{9\varepsilon^{2}}{25},
\]
this gives 
\[
\Vert\mathbb{E}(x)P_{N}^{\perp}\Vert<\frac{2\varepsilon^{2}}{25}+\frac{3\varepsilon}{5}<\varepsilon.
\]
Since $0<\varepsilon<1$ was arbitrary, we obtain $\lim_{N\rightarrow\infty}\Vert\mathbb{E}(x)P_{N}^{\perp}\Vert=0$. But this means that $x^{\frac{1}{2}}\in\mathfrak{I}(\mathbf{A},\Gamma)$, i.e., $\pi(x)=0$, contradicting our assumption and completing the proof. \end{proof}

\begin{remark} An application of Proposition \ref{TensorDecomposition} implies that the statement in Theorem \ref{MaximalityTheorem} extends to general graphs whose complements decompose as disjoint unions of induced subgraphs containing at least three vertices. \end{remark}

\vspace{3mm}


\section{Applications to Graph Product C$^{\ast}$-Algebras}

\vspace{3mm}

In this section, we illustrate how structural properties of the C$^{\ast}$-algebras constructed in Section~\ref{sec:Main-construction} can be leveraged to gain insights into the corresponding graph product C$^{\ast}$-algebras.

\vspace{3mm}


\subsection{Nuclearity and Exactness}

As a direct consequence of Theorem~\ref{NuclearityExactness}, we obtain an alternative proof of the preservation of exactness under graph products of C$^{\ast}$-algebras, originally established in \cite[Corollary~2.17]{CaspersFima17}. Notably, our approach circumvents the use of techniques developed in \cite{Dykema04} (see also \cite{DykemaShlyakhtenko01}).

\begin{corollary}[{\cite[Corollary~2.17]{CaspersFima17}}] \label{GraphProductExactness}
Let $\Gamma$ be a finite, undirected, simplicial graph, and let $\mathbf{A} := (A_v)_{v \in V\Gamma}$ be a collection of unital C$^{\ast}$-algebras, each equipped with a GNS-faithful state  $\omega_{v}$. Then the graph product C$^\ast$-algebra $\mathbf{A}_{\Gamma}$ is exact if and only if each vertex algebra $A_v$ is exact.
\end{corollary}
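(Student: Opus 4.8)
The plan is to deduce the corollary from Theorem~\ref{NuclearityExactness} by relating the exactness of $\mathbf{A}_\Gamma$ to that of $\mathfrak{A}(\mathbf{A},\Gamma)$ in both directions. First I would handle the ``only if'' direction, which is the easier half: since $\mathbf{A}_\Gamma$ is exact and each vertex algebra $A_v$ embeds into $\mathbf{A}_\Gamma$ (as a unital subalgebra, by the defining properties of the graph product recalled in Subsection~\ref{GraphProductCAlgebras}), exactness passes to C$^\ast$-subalgebras, so each $A_v$ is exact. (Alternatively, one can invoke the conditional expectation $\mathbf{A}_\Gamma \to A_v$ coming from $\omega_\Gamma$ restricting to $\omega_v$, but a subalgebra argument suffices since exactness is inherited by arbitrary C$^\ast$-subalgebras.)

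For the ``if'' direction, suppose every $A_v$ is exact. By Theorem~\ref{NuclearityExactness}(2), the ambient algebra $\mathfrak{A}(\mathbf{A},\Gamma)$ is then exact. Now $\mathbf{A}_\Gamma$ sits inside $\mathfrak{A}(\mathbf{A},\Gamma)$ as a C$^\ast$-subalgebra by construction (indeed $\mathfrak{A}(\mathbf{A},\Gamma) = C^\ast(\{Q_v\} \cup \mathbf{A}_\Gamma)$), and exactness is preserved under passing to C$^\ast$-subalgebras. Hence $\mathbf{A}_\Gamma$ is exact, completing the proof.

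So the argument is essentially a two-line sandwich: $A_v \hookrightarrow \mathbf{A}_\Gamma \hookrightarrow \mathfrak{A}(\mathbf{A},\Gamma)$, with exactness of the outer two terms transferring to the middle one via Theorem~\ref{NuclearityExactness} and hereditariness of exactness for subalgebras. The only point requiring a word of care — and the place where the nuclearity analogue (Corollary~\ref{GraphProductNuclearity}) genuinely diverges and needs the extra hypothesis that $A_v$ contains the compacts — is that exactness, unlike nuclearity, descends freely to C$^\ast$-subalgebras, so no such hypothesis is needed here. There is no real obstacle; the substance is entirely in Theorem~\ref{NuclearityExactness}, and this corollary is a clean formal consequence.

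\begin{proof}
If $\mathbf{A}_\Gamma$ is exact, then so is each vertex algebra $A_v$, since $A_v$ is (isomorphic to) a C$^\ast$-subalgebra of $\mathbf{A}_\Gamma$ and exactness passes to C$^\ast$-subalgebras.

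Conversely, assume that $A_v$ is exact for every $v \in V\Gamma$. By Theorem~\ref{NuclearityExactness}(2), the C$^\ast$-algebra $\mathfrak{A}(\mathbf{A},\Gamma)$ is exact. Since $\mathbf{A}_\Gamma \subseteq \mathfrak{A}(\mathbf{A},\Gamma)$ is a C$^\ast$-subalgebra and exactness is inherited by C$^\ast$-subalgebras, it follows that $\mathbf{A}_\Gamma$ is exact as well.
\end{proof}
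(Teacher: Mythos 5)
Your proof is correct and follows exactly the paper's argument: the ``if'' direction invokes Theorem~\ref{NuclearityExactness} and the fact that exactness passes to C$^{\ast}$-subalgebras of $\mathfrak{A}(\mathbf{A},\Gamma)$, while the ``only if'' direction (which the paper dismisses as trivial) uses the same hereditariness applied to $A_v \subseteq \mathbf{A}_\Gamma$. Your remark contrasting this with the nuclearity case, where the compact-operators hypothesis is needed precisely because nuclearity does not descend to subalgebras, is an accurate reading of why Corollary~\ref{GraphProductNuclearity} is stated differently.
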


\begin{proof}
For the ``if'' direction assume that each vertex algebra $A_v$ is exact. By Theorem~\ref{NuclearityExactness}, the ambient C$^{\ast}$-algebra $\mathfrak{A}(\mathbf{A}, \Gamma)$ is exact. Since exactness is preserved under the passage to C$^{\ast}$-subalgebras, it follows that $\mathbf{A}_{\Gamma}$ is exact.
The proof of the ``only if'' direction is trivial.
\end{proof}

Furthermore, our methods yield the following refinement of \cite[Theorem~H]{BorstCaspersChen24}, characterizing nuclearity of graph products under suitable assumptions.

\begin{corollary} \label{GraphProductNuclearity}Let $\Gamma$ be a finite, undirected, simplicial graph and let $\mathbf{A}:=(A_{v})_{v\in V\Gamma}$ be a collection of unital C$^{\ast}$-algebras, each equipped with a GNS-faithful state $\omega_{v}$. Suppose that for each $v \in V\Gamma$, the algebra $A_v \subseteq \mathcal{B}(\mathcal{H}_v)$ contains the compact operators. Then the graph product C$^{\ast}$-algebra $\mathbf{A}_\Gamma$ is nuclear if and only if each vertex algebra $A_v$ is nuclear.
\end{corollary}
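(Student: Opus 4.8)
The plan is to reduce the statement to Theorem~\ref{NuclearityExactness} by showing that, under the hypothesis that each $A_{v}\subseteq\mathcal{B}(\mathcal{H}_{v})$ contains $\mathcal{K}(\mathcal{H}_{v})$, the graph product algebra $\mathbf{A}_{\Gamma}$ in fact coincides with the ambient algebra $\mathfrak{A}(\mathbf{A},\Gamma)$. Once this identity is established, the equivalence is immediate from Theorem~\ref{NuclearityExactness}(1).

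First I would fix $v\in V\Gamma$ and consider the rank-one orthogonal projection $P\in\mathcal{B}(\mathcal{H}_{v})$ onto $\mathbb{C}\xi_{v}$. Being compact, $P$ lies in $A_{v}$ by assumption, hence $\lambda_{v}(P)\in\mathbf{A}_{\Gamma}$. The only computation in the argument is to identify this operator. Since $\xi_{v}$ is a unit vector we have $\langle P\xi_{v},\xi_{v}\rangle=1$, so $P^{\circ}\xi_{v}=P\xi_{v}-\xi_{v}=0$, and moreover $P\eta=0$ for every $\eta\in\mathcal{H}_{v}^{\circ}$. Substituting this into the two cases of the defining formula for $\lambda_{v}$ shows that, for $\xi_{1}\otimes\cdots\otimes\xi_{n}\in\mathcal{H}_{\mathbf{w}}^{\circ}$, the operator $\lambda_{v}(P)$ acts as the identity whenever $v\nleq\mathbf{w}$ and as $0$ whenever $v\leq\mathbf{w}$ (with the same computation giving $\lambda_{v}(P)\Omega=\Omega$). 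In other words $\lambda_{v}(P)=Q_{v}^{\perp}$, and consequently $Q_{v}=1-\lambda_{v}(P)\in\mathbf{A}_{\Gamma}$, using that $\mathbf{A}_{\Gamma}$ is unital.

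Since this holds for every $v\in V\Gamma$, we obtain
\[
\mathfrak{A}(\mathbf{A},\Gamma)=C^{\ast}\bigl(\{Q_{v}\mid v\in V\Gamma\}\cup\mathbf{A}_{\Gamma}\bigr)=\mathbf{A}_{\Gamma}.
\]
The corollary then follows at once from Theorem~\ref{NuclearityExactness}(1): $\mathbf{A}_{\Gamma}=\mathfrak{A}(\mathbf{A},\Gamma)$ is nuclear if and only if $A_{v}$ is nuclear for every $v\in V\Gamma$. I do not anticipate any genuine obstacle; the single point requiring a little care is the bookkeeping in the identification $\lambda_{v}(P)=Q_{v}^{\perp}$, which is a routine substitution into the definition of $\lambda_{v}$. (Alternatively, the ``only if'' direction could be obtained independently from the $\omega_{v}$-preserving conditional expectation $\mathbf{A}_{\Gamma}\to A_{v}$ together with the permanence of nuclearity under expected C$^{\ast}$-subalgebras, but the identity $\mathbf{A}_{\Gamma}=\mathfrak{A}(\mathbf{A},\Gamma)$ makes this detour unnecessary.)
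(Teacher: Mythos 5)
Your proposal is correct and follows essentially the same route as the paper: the paper also observes that the rank-one projection $p_v$ onto $\mathbb{C}\xi_v$ lies in $A_v$, identifies $p_v^{\perp}$ with $Q_v$ in $\mathcal{B}(\mathcal{H}_\Gamma)$, concludes $\mathbf{A}_\Gamma = \mathfrak{A}(\mathbf{A},\Gamma)$, and invokes Theorem~\ref{NuclearityExactness}. Your verification that $\lambda_v(P)=Q_v^{\perp}$ is the same computation, just written out in more detail.
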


\begin{proof}
For each $v \in V\Gamma$, let $p_v$ denote the orthogonal projection onto $\mathbb{C} \xi_v \subseteq \mathcal{H}_v$. Then for every $x \in A_v$, we have $p_v x p_v = \omega_v(x) p_v$. Viewed within $\mathcal{B}(\mathcal{H}_\Gamma)$, it follows that $p_v^\perp = Q_v$, implying that $\mathbf{A}_\Gamma = \mathfrak{A}(\mathbf{A}, \Gamma)$. The conclusion then follows directly from Theorem~\ref{NuclearityExactness}.
\end{proof}

\vspace{3mm}


\subsection{Simplicity of Graph Product C$^{\ast}$-Algebras} \label{GraphSimplicity}

The results and techniques developed in this subsection are inspired by \cite{Klisse23-2}, which characterizes the simplicity of right-angled Hecke C$^{\ast}$-algebras in terms of the growth series of the underlying Coxeter group. Earlier results on the simplicity of free product C$^{\ast}$-algebras can be found in \cite{McClanahan94, Avitzour82, Dykema99}.\\

Let $\Gamma$ be a finite, undirected, simplicial graph, and let $\mathbf{A} := (A_{v})_{v \in V\Gamma}$ be a collection of unital C$^{\ast}$-algebras, equipped with GNS-faithful states $(\omega_v)_{v \in V\Gamma}$. When the complement graph $\Gamma^{c}$ is disconnected, $\Gamma$ decomposes as a \emph{graph join} $\Gamma = \Gamma_1 + \Gamma_2$, where $\Gamma_1$ and $\Gamma_2$ are disjoint induced subgraphs of $\Gamma$ and $\Gamma_1 + \Gamma_2$ denotes the join obtained from the union of $\Gamma_{1}$ and $\Gamma_{2}$ by adding edges between every vertex of $\Gamma_1$ and every vertex of $\Gamma_2$.

In this case, the associated graph product C$^{\ast}$-algebra admits a tensor product decomposition:
\[
\star_{v, \Gamma}(A_{v}, \omega_{v}) \cong \left(\star_{v, \Gamma_1}(A_{v}, \omega_{v})\right) \otimes \left(\star_{v, \Gamma_2}(A_{v}, \omega_{v})\right).
\]
Since the tensor product of two C$^{\ast}$-algebras is simple if and only if both factors are simple (see, e.g., \cite[II.9.5.3]{Blackadar06}), in the context of this subsection we may henceforth assume that $\Gamma^{c}$ is connected.\\

Recall that for each tuple $z := (z_v)_{v \in V\Gamma} \in \mathbb{C}^{V\Gamma}$ and every group element $\mathbf{v} \in W_{\Gamma}$ corresponding to a reduced expression $(v_1, \dots, v_n) \in \mathcal{W}_{\text{red}}$ we defined $z_{\mathbf{v}} := z_{v_1} \cdots z_{v_n}$. This expression is independent of the choice of reduced word representing $\mathbf{v}=v_1 \cdots v_n \in W_\Gamma$. Let $\Gamma(z) := \sum_{\mathbf{w} \in W_{\Gamma}} z_{\mathbf{w}}$ denote the multivariate \emph{growth series}, and let $\mathcal{R}(\Gamma)$ denote its region of convergence in $\mathbb{C}^{V\Gamma}$, with $\overline{\mathcal{R}(\Gamma)}$ denoting the closure. We also denote by $\pi: \mathfrak{A}(\mathbf{A}, \Gamma) \twoheadrightarrow \mathfrak{A}(\mathbf{A}, \Gamma)/\mathfrak{I}(\mathbf{A}, \Gamma)$ the canonical quotient map, and write $\widetilde{Q}_v := \pi(Q_v)$ for $v \in V\Gamma$, where $\mathfrak{I}(\mathbf{A}, \Gamma)$ is the ideal constructed in Subsection~\ref{subsec:Ideal}.\\

Combining \cite[Proposition 2.10]{Klisse23-2} with Lemma~\ref{IdentificationLemma}, we obtain the following key auxiliary result.

\begin{proposition} \label{MainProposition}
Let $\Gamma$ be a finite, undirected, simplicial graph, and let $\mathbf{A} := (A_v)_{v \in V\Gamma}$ be a collection of unital, exact C$^{\ast}$-algebras, equipped with GNS-faithful states $(\omega_v)_{v \in V\Gamma}$. Suppose that the complement $\Gamma^{c}$ is connected, and let $q \in \mathbb{R}_{>0}^{V\Gamma} \setminus \overline{\mathcal{R}(\Gamma)}$. Let $(v_1, \dots, v_n) \in V\Gamma \times \cdots \times V\Gamma$ be a walk in $\Gamma^{c}$ that covers the whole graph. Then for any state $\phi$ on $\mathcal{B}(\mathcal{H}_{\Gamma})$, there exists a sequence $(\mathbf{w}_i)_{i \in \mathbb{N}} \subseteq W_{\Gamma}$ of group elements with increasing word length such that $v_1 \cdots v_n \leq \mathbf{w}_i^{-1}$ for all $i \in \mathbb{N}$ and $ q_{\mathbf{w}_i}^{-1} \phi(Q_{\mathbf{w}_i}) \to 0$.
\end{proposition}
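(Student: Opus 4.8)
The statement we want to establish is Proposition~\ref{MainProposition}, which transfers \cite[Proposition~2.10]{Klisse23-2} from the Hecke setting on $\ell^2(W_\Gamma)$ to the graph product Fock space $\mathcal{H}_\Gamma$. The key observation is that, by Lemma~\ref{IdentificationLemma}, the C$^\ast$-algebra $C^\ast(\{Q_{\mathbf{w}}\mid \mathbf{w}\in W_\Gamma\})\subseteq\mathcal{B}(\mathcal{H}_\Gamma)$ is canonically $\ast$-isomorphic to $\mathcal{D}(W_\Gamma,S_\Gamma)\subseteq\mathcal{B}(\ell^2(W_\Gamma))$ via $Q_{\mathbf{w}}\mapsto P_{\mathbf{w}}$, and that this isomorphism respects the partial-order combinatorics that drive the argument in \cite{Klisse23-2}. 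So the plan is: first, given a state $\phi$ on $\mathcal{B}(\mathcal{H}_\Gamma)$, restrict it to $C^\ast(\{Q_{\mathbf{w}}\})$ and push it forward along the isomorphism of Lemma~\ref{IdentificationLemma} to obtain a state $\widetilde\phi$ on $\mathcal{D}(W_\Gamma,S_\Gamma)$ satisfying $\widetilde\phi(P_{\mathbf{w}})=\phi(Q_{\mathbf{w}})$ for all $\mathbf{w}\in W_\Gamma$. (If necessary one extends $\widetilde\phi$ to a state on all of $\mathcal{B}(\ell^2(W_\Gamma))$ by Hahn--Banach, but this is not needed since \cite[Proposition~2.10]{Klisse23-2} only uses the values on the $P_{\mathbf{w}}$.)

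Next I would invoke \cite[Proposition~2.10]{Klisse23-2} directly for the data $(W_\Gamma,S_\Gamma)$, the exactness hypothesis, the parameter $q\in\mathbb{R}_{>0}^{V\Gamma}\setminus\overline{\mathcal{R}(\Gamma)}$, the walk $(v_1,\dots,v_n)$ covering $\Gamma^c$, and the state $\widetilde\phi$. This yields a sequence $(\mathbf{w}_i)_{i\in\mathbb{N}}\subseteq W_\Gamma$ with $|\mathbf{w}_i|\to\infty$, with $v_1\cdots v_n\leq\mathbf{w}_i^{-1}$ for all $i$, and with $q_{\mathbf{w}_i}^{-1}\widetilde\phi(P_{\mathbf{w}_i})\to 0$. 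Translating back through the isomorphism of Lemma~\ref{IdentificationLemma} gives $q_{\mathbf{w}_i}^{-1}\phi(Q_{\mathbf{w}_i})\to 0$, which is exactly the assertion. The role of exactness of the $A_v$ is that it guarantees exactness of $W_\Gamma$ (Coxeter groups are exact anyway, cf.\ \cite{DranishnikovJanuszkiewicz99} and \cite[Theorem~5.1.7]{BrownOzawa08}) and, more importantly, it is the hypothesis under which \cite[Proposition~2.10]{Klisse23-2} is formulated; since $\mathcal{H}_\Gamma$ plays no further structural role beyond hosting the projections $Q_{\mathbf{w}}$, no genuinely new analysis on the Fock space is required.

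\textbf{Main obstacle.} The only subtlety is to make sure that the growth series $\Gamma(z)=\sum_{\mathbf{w}\in W_\Gamma}z_{\mathbf{w}}$ and its region of convergence $\mathcal{R}(\Gamma)$ used here coincide with the objects appearing in \cite{Klisse23-2}, and that the well-definedness of $z_{\mathbf{w}}$ (independence of the reduced expression) is available --- this is recorded in Subsection~\ref{subsec:Universality} via \cite[Chapter~17.1]{Davis08}. One should also check that the normalisation conventions for $P_{\mathbf{w}}$ in \cite{Klisse23-2} (projection onto $\overline{\mathrm{span}}\{\delta_{\mathbf{v}}\mid\mathbf{w}\le\mathbf{v}\}$) match the definition of $Q_{\mathbf{w}}$ here (projection onto $\bigoplus_{\mathbf{v}:\mathbf{w}\le\mathbf{v}}\mathcal{H}_{\mathbf{v}}^\circ$), which is precisely the content of Lemma~\ref{IdentificationLemma}; the order relation $\le$ on $W_\Gamma$ is the same weak right Bruhat order in both papers. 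Once these bookkeeping points are settled, the proof is a one-line reduction to \cite[Proposition~2.10]{Klisse23-2}, so there is no deep new difficulty --- the content was already isolated in \cite{Klisse23-2} and Lemma~\ref{IdentificationLemma} is the bridge.
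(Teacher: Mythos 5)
Your proposal is correct and follows exactly the paper's own argument: restrict $\phi$ to $C^{\ast}(\{Q_{\mathbf{w}}\mid\mathbf{w}\in W_{\Gamma}\})$, transport it via the isomorphism of Lemma~\ref{IdentificationLemma} to a state on $\mathcal{D}(W_{\Gamma},S_{\Gamma})$, and apply \cite[Proposition~2.10]{Klisse23-2}. The paper's proof is precisely this one-line reduction, so no further comment is needed.
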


\begin{proof}
Let $\Psi$ denote the $\ast$-isomorphism provided by Lemma~\ref{IdentificationLemma}, and let $\phi$ be a state on $\mathcal{B}(\mathcal{H}_{\Gamma})$. Composing the restriction of $\phi$ to the C$^{\ast}$-subalgebra generated by the projections $(Q_{\mathbf{w}})_{\mathbf{w} \in W_{\Gamma}}$ with $\Psi$, we obtain a state $\psi$ on $\mathcal{D}(W_{\Gamma}, S_{\Gamma})$. Applying \cite[Proposition~2.10]{Klisse23-2} to $\psi$ yields a sequence $(\mathbf{w}_i)_{i \in \mathbb{N}} \subseteq W_{\Gamma}$ of group elements of increasing word length such that $v_1 \cdots v_n \leq \mathbf{w}_i^{-1}$ for all $i$ and $q_{\mathbf{w}_i}^{-1} \phi(Q_{\mathbf{w}_i}) = q_{\mathbf{w}_i}^{-1} \psi (P_{\mathbf{w}_i}) \to 0$, as desired.
\end{proof}

In addition to Proposition~\ref{MainProposition}, we will require further information on the combinatorial structure of the projections $(Q_{\mathbf{w}})_{\mathbf{w} \in W_{\Gamma}}$.

\begin{lemma} \label{ConjugationLemma}
Let $\Gamma$ be a finite, undirected, simplicial graph, and let $\mathbf{A} := (A_{v})_{v \in V\Gamma}$ be a collection of unital C$^{\ast}$-algebras, equipped with GNS-faithful states $(\omega_{v})_{v \in V\Gamma}$. Then, for every $v \in V\Gamma$, the following identities hold:
\begin{enumerate}
\item $a^{\ast}Q_{v}^{\perp}a \leq \omega_{v}(aa^{\ast}) Q_{v}$ for all $a \in A_{v}^{\circ}$;
\item $a^{\ast}Q_{\mathbf{w}}a \leq \omega_{v}(aa^{\ast}) Q_{v\mathbf{w}}$ for all $a \in A_{v}^{\circ}$ and $\mathbf{w} \notin C_{W_{\Gamma}}(v)$ with $v \nleq \mathbf{w}$.
\end{enumerate}
Here, $C_{W_{\Gamma}}(v) := \{\mathbf{w} \in W_{\Gamma} \mid v\mathbf{w} = \mathbf{w}v\}$ denotes the \emph{centralizer} of $v$ in $W_{\Gamma}$.
\end{lemma}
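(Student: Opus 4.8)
\textbf{Proof plan for Lemma \ref{ConjugationLemma}.}

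The plan is to verify both inequalities by testing them against arbitrary elementary tensors $\xi \in \mathcal{H}_{\mathbf{u}}^{\circ}$ for $\mathbf{u} \in W_{\Gamma}$, using the explicit description of the action of $\lambda_{v}(a)$ on the Fock space $\mathcal{H}_{\Gamma}$ recalled in Subsection~\ref{GraphProductCAlgebras}, together with the defining property of the projections $Q_{\mathbf{w}}$ as orthogonal projections onto $\bigoplus_{\mathbf{v}\colon \mathbf{w}\leq\mathbf{v}}\mathcal{H}_{\mathbf{v}}^{\circ}$. For part (1): given $a \in A_{v}^{\circ}$ and $\xi \in \mathcal{H}_{\mathbf{u}}^{\circ}$, I would compute $Q_{v}^{\perp}a\xi$. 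Since $Q_v^\perp$ kills exactly the components lying over words $\mathbf{v}$ with $v\leq\mathbf{v}$, and the output $a\xi$ of $\lambda_v(a)$ splits into a ``creation part'' lying over $v\mathbf{u}$ (present only when $v\nleq\mathbf{u}$) and, when $v\leq\mathbf{u}$, an ``annihilation/diagonal part'' over $\mathbf{u}$ and over $v\mathbf{u}$, the operator $Q_v^\perp$ retains only the terms over words not starting in $v$. A direct but careful bookkeeping then gives $\langle a^{\ast}Q_{v}^{\perp}a\,\xi,\xi\rangle \leq \omega_{v}(aa^{\ast})\langle Q_{v}\xi,\xi\rangle$, distinguishing the two cases $v\leq\mathbf{u}$ and $v\nleq\mathbf{u}$; in the first case $Q_v\xi = \xi$ and one uses the Cauchy–Schwarz-type bound $\|a\eta\|^2 \leq \omega_v(aa^*)\|\eta\|^2$ for $\eta \in \mathcal{H}_v^\circ$ (which follows from $a$ acting as a bounded operator on $\mathcal{H}_v$ with $aa^* \le \omega_v(aa^*)1 + (\text{annihilation correction})$), while in the second case $Q_v\xi = 0$ and one checks the left side also vanishes on the relevant components.

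For part (2), the hypotheses $\mathbf{w}\notin C_{W_\Gamma}(v)$ and $v\nleq\mathbf{w}$ are precisely the conditions under which, by Lemma~\ref{ActionLemma}(1), one has $v.Q_{\mathbf{w}} = Q_{v\mathbf{w}}$ and $|v\mathbf{w}| = |\mathbf{w}|+1$; combined with Lemma~\ref{MainIdentities}(5) (or rather the analogous relation $Q_{\mathbf{w}}\lambda_v(a) = \lambda_v(a)(v.Q_{\mathbf{w}})$ for $a \in A_v^\circ$, valid since $a = \mathfrak{d}(a) + a^\dagger + ((a^*)^\dagger)^*$ and the diagonal/annihilation pieces interact with $Q_{\mathbf{w}}$ appropriately), this rewrites $a^{\ast}Q_{\mathbf{w}}a$ in a form where the relevant estimate reduces, upon testing against $\xi\in\mathcal{H}_{\mathbf{u}}^{\circ}$, to the same Cauchy–Schwarz bound on $\mathcal{H}_v^\circ$ used in part (1). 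The point is that for $Q_{\mathbf{w}}a\xi$ to be nonzero one needs $\mathbf{w}\leq v\mathbf{u}$ with the first tensor leg of $a\xi$ genuinely in $\mathcal{H}_v^\circ$, which forces $v\leq v\mathbf{u}$, i.e. the creation scenario, and then $\langle Q_{v\mathbf{w}}\xi,\xi\rangle$ captures exactly the support condition $v\mathbf{w}\leq v\mathbf{u}$, equivalently $\mathbf{w}\leq\mathbf{u}$.

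The main obstacle I anticipate is the precise case analysis in part (1): one must carefully track how $\lambda_v(a)$ mixes the subspace over $\mathbf{u}$ with the subspace over $v\mathbf{u}$ (there are cross terms, since $\lambda_v(a)\xi$ can have components over both words when $v \le \mathbf{u}$), and then verify that applying $Q_v^\perp$ and pairing with $a^*$ on the other side produces a genuinely positive operator dominated by $\omega_v(aa^*)Q_v$ — in particular one should double-check that the cross terms do not spoil positivity or the bound. A clean way to organize this is to first establish the scalar inequality $\langle a^*Q_v^\perp a\,\eta,\eta\rangle \le \omega_v(aa^*)\langle Q_v\eta,\eta\rangle$ separately on each of the two orthogonal pieces $p_{\mathbf{u}}\mathcal{H}_\Gamma$ with $v\nleq\mathbf{u}$ and with $v\leq\mathbf{u}$, observing that $a^*Q_v^\perp a$ preserves this orthogonal decomposition in the relevant sense, and then summing. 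Part (2) is then comparatively routine once part (1) and Lemma~\ref{ActionLemma} are in hand. I would present the computation for $v\leq\mathbf{u}$ in full and indicate that the case $v\nleq\mathbf{u}$ is handled analogously (indeed more easily, as the left-hand side is then supported on $\mathcal{H}_{v\mathbf{u}}^\circ$ and one compares directly with $Q_{v\mathbf{w}}$ restricted to that component).
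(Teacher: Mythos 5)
Your overall strategy --- reduce both inequalities to the annihilation part of $a$ and bound its norm by $\omega_{v}(aa^{\ast})^{1/2}$ --- is sound and is essentially what the paper does, but two of the specific identities you propose to lean on are false as written and would derail a literal execution. First, the inequality $\Vert a\eta\Vert^{2}\leq\omega_{v}(aa^{\ast})\Vert\eta\Vert^{2}$ for $\eta\in\mathcal{H}_{v}^{\circ}$ does not hold: the left-hand side is controlled only by $\Vert a\Vert^{2}$, which can vastly exceed $\omega_{v}(aa^{\ast})$. What you actually need is the Cauchy--Schwarz bound on the annihilation coefficient, $|\langle a\eta,\xi_{v}\rangle|^{2}=|\langle\eta,a^{\ast}\xi_{v}\rangle|^{2}\leq\omega_{v}(aa^{\ast})\Vert\eta\Vert^{2}$, equivalently $\Vert((a^{\ast})^{\dagger})^{\ast}\Vert^{2}=\omega_{v}(aa^{\ast})$, which is exactly Lemma~\ref{MainIdentities}(3) applied to $(a^{\ast})^{\dagger}$. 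Second, the intertwining relation $Q_{\mathbf{w}}\lambda_{v}(a)=\lambda_{v}(a)(v.Q_{\mathbf{w}})$ fails for the full element $a$: it holds for $a^{\dagger}$ and $(a^{\dagger})^{\ast}$ by Lemma~\ref{MainIdentities}(5), but the diagonal part satisfies $Q_{\mathbf{w}}\mathfrak{d}(a)=\mathfrak{d}(a)Q_{\mathbf{w}}$, and $Q_{\mathbf{w}}\neq v.Q_{\mathbf{w}}$ under the present hypotheses.

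Both defects are cured by one observation, which also collapses the Fock-space bookkeeping you anticipate into a short algebraic computation. Since $a\in A_{v}^{\circ}$ one has $Q_{v}^{\perp}aQ_{v}^{\perp}=0$, hence $Q_{v}^{\perp}a=Q_{v}^{\perp}aQ_{v}=((a^{\ast})^{\dagger})^{\ast}$ and
\[
a^{\ast}Q_{v}^{\perp}a=(a^{\ast})^{\dagger}((a^{\ast})^{\dagger})^{\ast}=Q_{v}\bigl((a^{\ast})^{\dagger}((a^{\ast})^{\dagger})^{\ast}\bigr)Q_{v}\leq\Vert(a^{\ast})^{\dagger}\Vert^{2}Q_{v}=\omega_{v}(aa^{\ast})Q_{v},
\]
so in particular there are no cross terms to control in part (1): $a^{\ast}Q_{v}^{\perp}a=(Q_{v}^{\perp}aQ_{v})^{\ast}(Q_{v}^{\perp}aQ_{v})$ is manifestly positive and supported on the range of $Q_{v}$. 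For part (2), the hypotheses $v\nleq\mathbf{w}$ and $\mathbf{w}\notin C_{W_{\Gamma}}(v)$ imply that $v$ and $\mathbf{w}$ admit no common upper bound in the weak order, whence $Q_{v}Q_{\mathbf{w}}=0$; this annihilates $Q_{\mathbf{w}}a^{\dagger}$ and $Q_{\mathbf{w}}\mathfrak{d}(a)$ (both factor through $Q_{v}$ on the left), so that $Q_{\mathbf{w}}a=Q_{\mathbf{w}}((a^{\ast})^{\dagger})^{\ast}=((a^{\ast})^{\dagger})^{\ast}(v.Q_{\mathbf{w}})=((a^{\ast})^{\dagger})^{\ast}Q_{v\mathbf{w}}$ by Lemma~\ref{MainIdentities}(5) and Lemma~\ref{ActionLemma}(1), and the same norm bound as in (1) finishes the proof. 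Note that the vanishing of the diagonal and creation contributions comes from $Q_{v}Q_{\mathbf{w}}=0$, not from a support condition on the first tensor leg as your sketch suggests.
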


\begin{proof}
\emph{About (1):} Every $a \in A_{v}^{\circ}$ admits a decomposition of the form $a = a^{\dagger} + ((a^{\ast})^{\dagger})^{\ast} + \mathfrak{d}(a)$. Then, using Proposition \ref{MainIdentities}, we compute
\[
a^{\ast}Q_{v}^{\perp}a = (a^{\ast})^{\dagger}((a^{\ast})^{\dagger})^{\ast} = Q_{v} \left( (a^{\ast})^{\dagger}((a^{\ast})^{\dagger})^{\ast} \right) Q_{v} \leq \| (a^{\ast})^{\dagger} \|^{2} Q_{v} = \omega_{v}(aa^{\ast}) Q_{v}.
\]

\emph{About (2):} By $Q_{v} Q_{\mathbf{w}} = Q_{\mathbf{w}} Q_{v} = 0$ and applying Lemma \ref{ActionLemma} and Proposition \ref{MainIdentities}, we find
\[
\begin{aligned}
a^{\ast}Q_{\mathbf{w}}a &= (a^{\ast})^{\dagger} Q_{\mathbf{w}} ((a^{\ast})^{\dagger})^{\ast} \\
&= (v.Q_{\mathbf{w}}) \left( (a^{\ast})^{\dagger}((a^{\ast})^{\dagger})^{\ast} \right) (v.Q_{\mathbf{w}}) \\
&= Q_{v\mathbf{w}} \left( (a^{\ast})^{\dagger}((a^{\ast})^{\dagger})^{\ast} \right) Q_{v\mathbf{w}} \\
&\leq  \| (a^{\ast})^{\dagger} \|^{2} Q_{v\mathbf{w}} \\
&= \omega_{v}(aa^{\ast}) Q_{v\mathbf{w}}.
\end{aligned}
\]
This concludes the proof.
\end{proof}

\begin{proposition} \label{StateExistence}
Let $\Gamma$ be a finite, undirected, simplicial graph whose complement $\Gamma^{c}$ is connected, and let $\mathbf{A} := (A_{v})_{v \in V\Gamma}$ be a collection of unital C$^{\ast}$-algebras, each equipped with a GNS-faithful state $\omega_{v}$. Suppose that for every $v \in V\Gamma$, there exist $a_{v} \in \text{\emph{ker}}(\omega_v)$ and $q_{v} > 0$ such that $a_{v} a_{v}^{\ast} \geq q_{v} \omega_{v}(a_{v}^{\ast} a_{v}) 1 > 0$, and that $(q_{v})_{v \in V\Gamma} \in \mathbb{R}_{>0}^{V\Gamma} \setminus \overline{\mathcal{R}(\Gamma)}$. Let $\pi(\mathbf{A}_{\Gamma}) \subseteq \mathcal{A} \subseteq \mathfrak{A}(\mathbf{A},\Gamma)/\mathfrak{I}(\mathbf{A},\Gamma)$ be an intermediate C$^{\ast}$-algebra. Then, for every closed two-sided ideal $I \subseteq \mathcal{A}$ and every vertex $v \in V\Gamma$, there exists a state $\phi$ on $\mathfrak{A}(\mathbf{A},\Gamma)/\mathfrak{I}(\mathbf{A},\Gamma)$ such that $\phi(I) = 0$, and $\phi(\widetilde{Q}_{v}) = 1$. \end{proposition}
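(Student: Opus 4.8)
The plan is to exploit the freedom in choosing a state annihilating $I$ and to ``push it out to infinity'' by conjugating with long products of the elements $a_{v}$, using Proposition~\ref{MainProposition} to control the resulting projection masses. Since the state space of $\mathfrak{A}(\mathbf{A},\Gamma)/\mathfrak{I}(\mathbf{A},\Gamma)$ is weak$^{\ast}$-compact and the conditions $\phi(I)=0$ and $\phi(\widetilde{Q}_{v})=1$ each cut out a weak$^{\ast}$-closed set of states, it suffices to produce, for every $\varepsilon>0$, a state $\phi_{\varepsilon}$ with $\phi_{\varepsilon}(I)=0$ and $\phi_{\varepsilon}(\widetilde{Q}_{v})>1-\varepsilon$, and then pass to a weak$^{\ast}$-cluster point. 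First I would fix a state on the unital quotient $\mathcal{A}/I$ (we may assume $I\neq\mathcal{A}$, the statement being vacuous otherwise), pull it back to $\mathcal{A}$, extend it to $\mathfrak{A}(\mathbf{A},\Gamma)/\mathfrak{I}(\mathbf{A},\Gamma)$, lift it along $\pi$, and finally extend by Hahn--Banach to a state $\psi$ on $\mathcal{B}(\mathcal{H}_{\Gamma})$. By construction $\psi$ vanishes on $\mathfrak{I}(\mathbf{A},\Gamma)$ and on $\pi^{-1}(I)$, and since $\pi(b)\in\mathcal{A}$ for $b\in\mathbf{A}_{\Gamma}$ one has $b\,\pi^{-1}(I)\,b^{\ast}\subseteq\pi^{-1}(I)$ and $b\,\mathfrak{I}(\mathbf{A},\Gamma)\,b^{\ast}\subseteq\mathfrak{I}(\mathbf{A},\Gamma)$.

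Next, since $\Gamma^{c}$ is connected, pick a walk $(v_{1},\dots,v_{n})$ in $\Gamma^{c}$ covering the whole graph with $v_{1}=v$, and apply Proposition~\ref{MainProposition} to $\psi$ (using $(q_{v})_{v}\notin\overline{\mathcal{R}(\Gamma)}$): this yields a sequence $(\mathbf{w}_{i})_{i\in\mathbb{N}}\subseteq W_{\Gamma}$ with $|\mathbf{w}_{i}|\to\infty$, $v_{1}\cdots v_{n}\leq\mathbf{w}_{i}^{-1}$, and $q_{\mathbf{w}_{i}}^{-1}\psi(Q_{\mathbf{w}_{i}})\to 0$. For each $i$, the relation $v_{1}\cdots v_{n}\leq\mathbf{w}_{i}^{-1}$ lets me choose a reduced word $(s_{1},\dots,s_{l_{i}})\in\mathcal{W}_{\mathrm{red}}$ for $\mathbf{w}_{i}^{-1}$ with $(s_{1},\dots,s_{n})=(v_{1},\dots,v_{n})$, and I set $c_{i}:=a_{s_{l_{i}}}\cdots a_{s_{2}}a_{s_{1}}\in\mathbf{A}_{\Gamma}$, so that $a_{s_{1}}=a_{v}$ sits innermost. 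Once the denominator is known to be non-zero, $\phi_{i}(z):=\psi(c_{i}zc_{i}^{\ast})/\psi(c_{i}c_{i}^{\ast})$ is a state still vanishing on $\mathfrak{I}(\mathbf{A},\Gamma)$ and $\pi^{-1}(I)$, hence descends to a state on $\mathfrak{A}(\mathbf{A},\Gamma)/\mathfrak{I}(\mathbf{A},\Gamma)$ annihilating $I$.

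The heart of the argument is then the pair of estimates $\psi(c_{i}c_{i}^{\ast})\geq q_{\mathbf{w}_{i}}C_{i}$ and $c_{i}Q_{v}^{\perp}c_{i}^{\ast}\leq C_{i}\,Q_{\mathbf{w}_{i}}$, where $C_{i}:=\prod_{j=1}^{l_{i}}\omega_{s_{j}}(a_{s_{j}}^{\ast}a_{s_{j}})>0$. The lower bound comes from peeling off the factors of $c_{i}c_{i}^{\ast}=a_{s_{l_{i}}}\cdots a_{s_{1}}a_{s_{1}}^{\ast}\cdots a_{s_{l_{i}}}^{\ast}$ from the inside out, repeatedly invoking $a_{s}a_{s}^{\ast}\geq q_{s}\omega_{s}(a_{s}^{\ast}a_{s})1$, together with $\prod_{j}q_{s_{j}}=q_{\mathbf{w}_{i}^{-1}}=q_{\mathbf{w}_{i}}$; in particular $\psi(c_{i}c_{i}^{\ast})>0$. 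For the upper bound I would first apply Lemma~\ref{ConjugationLemma}(1) to the innermost sandwich, $a_{v}Q_{v}^{\perp}a_{v}^{\ast}\leq\omega_{v}(a_{v}^{\ast}a_{v})Q_{v}$, and then apply Lemma~\ref{ConjugationLemma}(2) iteratively, replacing at the $j$-th step $a_{s_{j}}Q_{s_{j-1}\cdots s_{1}}a_{s_{j}}^{\ast}$ by $\omega_{s_{j}}(a_{s_{j}}^{\ast}a_{s_{j}})Q_{s_{j}\cdots s_{1}}$ and using that conjugation by $a_{s_{j+1}},\dots,a_{s_{l_{i}}}$ is completely positive, hence order-preserving; after $l_{i}$ steps the accumulated projection is $Q_{s_{l_{i}}\cdots s_{1}}=Q_{(\mathbf{w}_{i}^{-1})^{-1}}=Q_{\mathbf{w}_{i}}$. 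Combining, $\phi_{i}(Q_{v}^{\perp})=\psi(c_{i}Q_{v}^{\perp}c_{i}^{\ast})/\psi(c_{i}c_{i}^{\ast})\leq q_{\mathbf{w}_{i}}^{-1}\psi(Q_{\mathbf{w}_{i}})\to 0$, so any weak$^{\ast}$-cluster point $\phi$ of $(\phi_{i})$ is a state on $\mathfrak{A}(\mathbf{A},\Gamma)/\mathfrak{I}(\mathbf{A},\Gamma)$ with $\phi(I)=0$ and $\phi(\widetilde{Q}_{v})=1$.

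The hard part will be justifying the iterated application of Lemma~\ref{ConjugationLemma}(2): at the $j$-th step one must check $s_{j}\nleq s_{j-1}\cdots s_{1}$ and $s_{j}\notin C_{W_{\Gamma}}(s_{j-1}\cdots s_{1})$. The first is immediate from the reducedness of $(s_{1},\dots,s_{l_{i}})$. For the second I would use that in a right-angled Coxeter group the relations $s\mathbf{u}=\mathbf{u}s$ and $s\nleq\mathbf{u}$ force $s$ to commute with every letter of any reduced word for $\mathbf{u}$; were the condition to fail at some $j$, then $s_{j}$ would commute with $s_{1},\dots,s_{j-1}$, and for $j>n$ this set already equals $V\Gamma$ because the walk covers the graph, making $s_{j}$ an isolated vertex of $\Gamma^{c}$ — impossible since $\Gamma^{c}$ is connected with at least two vertices; for $j\leq n$ the condition holds because consecutive letters of a walk in $\Gamma^{c}$ do not commute. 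This combinatorial observation is exactly what makes the estimate land precisely on the projection $Q_{\mathbf{w}_{i}}$ supplied by Proposition~\ref{MainProposition}; the remaining operator-norm bookkeeping is routine.
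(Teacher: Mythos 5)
Your proposal is correct and follows essentially the same route as the paper: extend a state annihilating $I$ to $\mathcal{B}(\mathcal{H}_{\Gamma})$, invoke Proposition~\ref{MainProposition} along a closed walk in $\Gamma^{c}$ starting at $v$, conjugate by the reduced operators $a_{\mathbf{w}_{i}}$, bound $\phi_{i}(Q_{v}^{\perp})$ by $q_{\mathbf{w}_{i}}^{-1}\psi(Q_{\mathbf{w}_{i}})$ via Lemma~\ref{ConjugationLemma}, and take a weak$^{\ast}$-cluster point. You in fact supply more detail than the paper does on the iteration of Lemma~\ref{ConjugationLemma}(2) and the verification of the centralizer hypothesis, and both verifications are sound.
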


\begin{proof}
Let $\vartheta$ be a state on $\mathfrak{A}(\mathbf{A},\Gamma)/\mathfrak{I}(\mathbf{A},\Gamma)$ vanishing on the ideal $I$. Composing with $\pi$ and extending to a state $\psi$ on $\mathcal{B}(\mathcal{H}_{\Gamma})$, we proceed as follows.

Let $(v_{1}, \ldots, v_{n})$ be a closed walk in $\Gamma^{c}$ that covers the whole graph, with $v_{1} = v$. Let $q := (q_{w})_{w \in V\Gamma}$. By Proposition \ref{MainProposition}, there exists a sequence $(\mathbf{w}_{i})_{i \in \mathbb{N}} \subseteq W_{\Gamma}$ of increasing word length such that $v_{1} \cdots v_{n} \leq \mathbf{w}_{i}^{-1}$ and $q_{\mathbf{w}_{i}}^{-1} \psi(Q_{\mathbf{w}_{i}}) \to 0$. Define $a := (a_{w})_{w \in V\Gamma}$ and note that the assumptions imply $\vartheta\big(\pi(a_{\mathbf{w}_{i}} a_{\mathbf{w}_{i}}^{\ast})\big) \geq q_{\mathbf{w}_{i}} \omega_{\mathbf{w}_{i}} > 0$, where $\omega := (\omega_{w}(a_{w}^{\ast} a_{w}))_{w \in V\Gamma}$.

Consider the sequence of states
\begin{equation}
\left( \vartheta(\pi(a_{\mathbf{w}_{i}} a_{\mathbf{w}_{i}}^{\ast}))^{-1} \, \vartheta\big(\pi(a_{\mathbf{w}_{i}}) (\cdot) \pi(a_{\mathbf{w}_{i}}^{\ast})\big) \right)_{i \in \mathbb{N}}. \label{eq:StateSequence}
\end{equation}
Using Lemma \ref{ConjugationLemma}, we obtain for all $i \in \mathbb{N}$:
\[
\left| \frac{\vartheta(\pi(a_{\mathbf{w}_{i}}) \widetilde{Q}_{v} \pi(a_{\mathbf{w}_{i}}^{\ast}))}{\vartheta(\pi(a_{\mathbf{w}_{i}} a_{\mathbf{w}_{i}}^{\ast}))} - 1 \right| 
= \left| \frac{\psi(a_{\mathbf{w}_{i}} Q_{v}^{\perp} a_{\mathbf{w}_{i}}^{\ast})}{\psi(a_{\mathbf{w}_{i}} a_{\mathbf{w}_{i}}^{\ast})} \right| 
\leq q_{\mathbf{w}_{i}}^{-1} |\psi(Q_{\mathbf{w}_{i}})| \to 0.
\]

By weak$^{\ast}$-compactness, the sequence in \eqref{eq:StateSequence} admits a subnet converging to a state $\phi$ with $\phi(\widetilde{Q}_{v}) = 1$. This completes the proof.
\end{proof}

\begin{lemma} \label{ElementExpression} 
Let $\Gamma$ be a finite, undirected, simplicial graph whose complement $\Gamma^{c}$ is connected, and let $\mathbf{A} := (A_{v})_{v \in V\Gamma}$ be a collection of unital C$^{\ast}$-algebras, each equipped with a GNS-faithful state $\omega_v$. Let $v \in V\Gamma$ be a vertex, and let $(u_{1}, \ldots, u_{m}) \in V\Gamma \times \cdots \times V\Gamma$ be a closed walk in $\Gamma^{c}$ with $v = u_1$ and covering the whole graph. Suppose that $a_1 \in A_{u_1}^{\circ}, \ldots, a_m \in A_{u_m}^{\circ}$ are non-zero elements, and define the group element $\mathbf{g} := u_1 \cdots u_m \in W_{\Gamma}$ corresponding to the walk. Set $a := (a_i)_{1 \leq i \leq m} \in A_{u_1}^{\circ} \times \cdots \times A_{u_m}^{\circ}$.

Then for any choice of vertices $v_1, \ldots, v_n \in V\Gamma$ and elements $x_1, \ldots, x_n \in \mathbf{A}_{\Gamma}^{\mathrm{alg}}$, there exists an integer $j_0 \in \mathbb{N}$ such that for every $j \geq j_0$, the element
\begin{equation}
\left(Q_{v_1}x_1Q_{v_2}x_2 \cdots Q_{v_n}x_n\right)a_{\mathbf{g}^j} Q_v \in \mathfrak{A}(\mathbf{A}, \Gamma) \label{BuildingBlocks}
\end{equation}
can be written in the form $y Q_v$ for some $y \in \mathbf{A}_{\Gamma}^{\mathrm{alg}}$.
\end{lemma}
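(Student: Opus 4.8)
The plan is to induct on the number of projections $n$ appearing in the product in \eqref{BuildingBlocks}, reducing each $Q_{v_i}$ to something of the form ``element of $\mathbf{A}_\Gamma^{\mathrm{alg}}$ times $Q_v$''. The key observation is that $Q_v$, when hit on the right by a sufficiently high power $a_{\mathbf{g}^j}$, produces a vector landing in subspaces $\mathcal{H}^{\circ}_{\mathbf{u}}$ with $\mathbf{g}^j \leq_L \mathbf{u}$, so in particular $u_i \leq \mathbf{u}^{-1}$ for every vertex $u_i$ in the walk; since the walk covers the whole graph, this means \emph{every} vertex $w \in V\Gamma$ satisfies $w \leq \mathbf{u}^{-1}$ once $j$ is large enough. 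I would first record this as a short lemma-style observation: there is $j_0$ (depending on $\mathbf{g}$ and the combinatorial length of the $x_i$'s, i.e.\ the number of tensor legs they can touch) such that for $j \geq j_0$ the range of $\bigl(Q_{v_1}x_1 \cdots Q_{v_n}x_n\bigr)^{*}$ composed appropriately forces the relevant words to have all of $V\Gamma$ in their left descent set, and hence $Q_w$ acts as the identity on the corresponding subspaces for all $w$.

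The inductive step then goes as follows. Write $z := Q_{v_1}x_1Q_{v_2}x_2 \cdots Q_{v_n}x_n$. We want to move the innermost $Q_{v_n}$ to the right past $x_n$ and then absorb it. Using Lemma~\ref{MainIdentities}(5) and Lemma~\ref{ActionLemma}, conjugating $Q_{v_n}$ through the reduced operators making up $x_n \in \mathbf{A}_\Gamma^{\mathrm{alg}}$ produces a finite sum of terms of the form $x_n' \, Q_{\mathbf{w}}$ with $x_n' \in \mathbf{A}_\Gamma^{\mathrm{alg}}$ and $\mathbf{w} \in W_\Gamma$ of controlled length (each creation/annihilation operator shifts $Q_{\mathbf{w}}$ to $Q_{v'\mathbf{w}}$ or a difference of two such, by Lemma~\ref{ActionLemma}, and diagonal parts commute with it). Now $Q_{\mathbf{w}} \, a_{\mathbf{g}^j} Q_v$: applying $a_{\mathbf{g}^j}Q_v$ first lands everything in subspaces whose associated word starts (on the right) with $\mathbf{g}^j$, hence with every vertex; so for $j$ large relative to $|\mathbf{w}|$, the projection $Q_{\mathbf{w}}$ acts as the identity on these subspaces and $Q_{\mathbf{w}} a_{\mathbf{g}^j} Q_v = a_{\mathbf{g}^j}Q_v$. (More precisely one compares $\mathbf{w}$ with the left-descent structure guaranteed by $\mathbf{g}^j \leq_L$; since $\mathbf{g}$ ranges over all generators, every $\mathbf{w}$ of length $\leq j \cdot m$ minus the bound from the $x_i$'s satisfies $\mathbf{w} \leq$ the words in question.) Thus each term becomes $x_n' \, a_{\mathbf{g}^j} Q_v$, and $z \, a_{\mathbf{g}^j}Q_v$ is rewritten as $\bigl(Q_{v_1}x_1 \cdots Q_{v_{n-1}}x_{n-1}\bigr)\bigl(\sum x_n'\bigr) a_{\mathbf{g}^j} Q_v$, i.e.\ $\bigl(Q_{v_1}x_1 \cdots Q_{v_{n-1}} x_{n-1}'\bigr)a_{\mathbf{g}^j}Q_v$ with $x_{n-1}' := x_{n-1}\sum x_n' \in \mathbf{A}_\Gamma^{\mathrm{alg}}$, which has one fewer projection. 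Iterating $n$ times (each time possibly increasing $j_0$, but only finitely often), we reach $y \, a_{\mathbf{g}^j} Q_v$ with $y \in \mathbf{A}_\Gamma^{\mathrm{alg}}$; since $a_{\mathbf{g}^j} \in \mathbf{A}_\Gamma^{\mathrm{alg}}$, the product $ya_{\mathbf{g}^j}$ is again in $\mathbf{A}_\Gamma^{\mathrm{alg}}$, and the base case $n = 0$ is trivial.

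The main obstacle I anticipate is bookkeeping the threshold $j_0$: one must verify that the number of tensor legs affected by $x_1, \dots, x_n$ and by the intermediate rewritings $x_i'$ stays bounded independently of $j$, so that a single $j_0$ works for the whole chain of reductions. This is the same ``finite reach'' phenomenon used in the proof of Lemma~\ref{MinimalityAnalogue} (there exists $M$ such that operators in $\mathbf{A}_\Gamma^{\mathrm{alg}}$ act only on the first $M$ tensor legs of long enough words), and I would invoke exactly that: fix $M$ bounding the reach of $x_1, \dots, x_n$, note that conjugating $Q_{v_i}$ through $x_i$ yields $Q_{\mathbf w}$'s with $|\mathbf w| \leq M$, choose $j_0$ with $j_0 \cdot m > M$ (so that $\mathbf{g}^{j_0}$ already dominates all such $\mathbf w$ on the relevant side), and observe the reach of $y, x_i'$ does not grow because multiplying reduced operators and taking the shuffle-reduced form does not increase the set of vertices involved beyond $V\Gamma$, which is finite. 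A careful but routine induction then closes the argument; the only genuinely delicate point is matching the partial-order bookkeeping of Lemma~\ref{ActionLemma}(2) (the case $\mathbf{w} \in C_{W_\Gamma}(v)$ and $v \leq \mathbf w$, producing $Q_{v\mathbf w} - Q_{\mathbf w}$) with the claim that both resulting projections act as the identity on the target subspaces — which again follows since those subspaces have all of $V\Gamma$ in the appropriate descent set.
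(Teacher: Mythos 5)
Your overall strategy (induct on $n$, exploit that $a_{\mathbf{g}^j}Q_v$ maps $\mathcal{H}^{\circ}_{\mathbf{u}}$ with $v\leq\mathbf{u}$ by pure creation into $\mathcal{H}^{\circ}_{\mathbf{g}^j\mathbf{u}}$) is in the right spirit, but two steps do not hold up. First, the claim that $Q_{\mathbf{w}}$ acts as the identity on the subspaces $\mathcal{H}^{\circ}_{\mathbf{g}^j\mathbf{u}}$ for all $\mathbf{w}$ of bounded length is false. The projections $Q_{\mathbf{w}}$ are defined via the \emph{right} weak order ($\mathbf{w}\leq\mathbf{v}$ means $\mathbf{v}$ starts in $\mathbf{w}$), whereas the word $\mathbf{g}^j\mathbf{u}$ starts with $\mathbf{g}^j=u_1u_2\cdots u_m\cdots$, whose initial segment is rigid because consecutive letters of the walk do not commute. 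Hence the only short $\mathbf{w}$ with $\mathbf{w}\leq\mathbf{g}^j\mathbf{u}$ are the prefixes $e,u_1,u_1u_2,\dots$; for instance $Q_{u_2}a_{\mathbf{g}^j}Q_v=0$, not $a_{\mathbf{g}^j}Q_v$. (The fact that $\mathbf{g}$ covers all of $V\Gamma$ controls the \emph{left} descent structure of $\mathbf{g}^j\mathbf{u}$, not the right one that $Q_{\mathbf{w}}$ sees.) The correct statement, $Q_{\mathbf{w}}a_{\mathbf{g}^j}Q_v\in\{0,\,a_{\mathbf{g}^j}Q_v\}$ for $j$ large relative to $|\mathbf{w}|$, would still be usable, but it is not what you assert or justify. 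Second, and more seriously: pushing $Q_{v_n}$ rightward through $x_n$ via Lemma~\ref{MainIdentities}(5) requires first splitting each letter of $x_n$ into its creation, diagonal and annihilation parts, and these parts (e.g.\ $\mathfrak{d}(b)=Q_wbQ_w$) involve the projections $Q_w$ and therefore do \emph{not} lie in $\mathbf{A}_{\Gamma}^{\mathrm{alg}}$. Your assertion that the resulting coefficients $x_n'$ belong to $\mathbf{A}_{\Gamma}^{\mathrm{alg}}$ is exactly the content of the lemma and is left unproved; since different elementary pieces of one reduced operator acquire different projections $Q_{\mathbf{w}_k}$, and hence potentially different scalars $\epsilon_k\in\{0,1\}$ after collapsing against $a_{\mathbf{g}^j}Q_v$, there is no reason the surviving combination recombines into an element of the graph product algebra. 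This matters: the lemma is applied in Theorem~\ref{SimplicityCriterion1} precisely because $y$ lies in $\mathbf{A}_{\Gamma}^{\mathrm{alg}}$ and not merely in $\mathfrak{A}(\mathbf{A},\Gamma)$.

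The paper sidesteps both problems by decomposing differently. For the base case it expands the product $x_1a_{\mathbf{g}^{j_0}}$ (an element of $\mathbf{A}_{\Gamma}^{\mathrm{alg}}$) into \emph{reduced operators} $y_{\mathbf{v}}$ of type $\mathbf{v}$ — which stay inside $\mathbf{A}_{\Gamma}^{\mathrm{alg}}$ — and notes that for $j_0$ large every occurring type satisfies $\mathbf{g}\leq_L\mathbf{v}$. On a vector $a_{\mathbf{g}^{j-j_0}}Q_v\eta$ such an operator acts by pure creation, so $Q_{v_1}y_{\mathbf{v}}$ equals $y_{\mathbf{v}}$ or $0$ according to whether $v_1\leq\mathbf{v}$, and the surviving sub-sum $\sum_{\mathbf{v}:\,v_1\leq\mathbf{v},\,\mathbf{g}\leq_L\mathbf{v}}y_{\mathbf{v}}a_{\mathbf{g}^{j-j_0}}$ is manifestly in $\mathbf{A}_{\Gamma}^{\mathrm{alg}}$. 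The induction step then peels off the \emph{inner} block: apply the hypothesis to $Q_{v_2}x_2\cdots Q_{v_n}x_na_{\mathbf{g}^{j_0}}Q_v=yQ_v$, use $a_{\mathbf{g}^{j-j_0}}Q_v=Q_va_{\mathbf{g}^{j-j_0}}Q_v$ to absorb $Q_v$, and reduce $(Q_{v_1}x_1y)a_{\mathbf{g}^{j-j_0}}Q_v$ to the base case. If you want to salvage your route, you would need to prove the recombination claim for the $x_n'$; adopting the reduced-operator decomposition is the cleaner fix.
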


\begin{proof}
We proceed by induction on $n$.

For the base case $n=1$, the element in~\eqref{BuildingBlocks} has the form $Q_{v_1}x_1 a_{\mathbf{g}^j} Q_v$, where $x_1 \in \mathbf{A}_{\Gamma}^{\mathrm{alg}}$. Without loss of generality, we may assume that $x_1$ is a reduced word, say $x_1 = b_1 \cdots b_k$ for some $b_i \in A_{w_i}^{\circ}$ with $(w_1, \ldots, w_k) \in \mathcal{W}_{\mathrm{red}}$. Since $(u_1, \ldots, u_m)$ is a closed walk in $\Gamma^c$, it follows that for large enough $j_0$, the product $x_1 a_{\mathbf{g}^{j_0}}$ can be expressed as a finite sum
\[
x_1 a_{\mathbf{g}^{j_0}} = \sum_{\mathbf{v} \in W_\Gamma : \mathbf{g} \leq_L \mathbf{v}} y_{\mathbf{v}}
\]
with each $y_{\mathbf{v}} \in \mathbf{A}_{\Gamma}^{\mathrm{alg}}$ a reduced operator of type $\mathbf{v}$. Thus, for all $j \geq j_0$ and $\eta \in \mathcal{H}_{\mathbf{w}}^{\circ}$ with $v \leq \mathbf{w}$, we have
\begin{eqnarray}
\nonumber
Q_{v_1} x_1 a_{\mathbf{g}^j} Q_v \eta &=& Q_{v_1} (x_1 a_{\mathbf{g}^{j_0}}) a_{\mathbf{g}^{j - j_0}} \eta  = \sum_{\mathbf{v} : \mathbf{g} \leq_L \mathbf{v}} Q_{v_1} y_{\mathbf{v}} a_{\mathbf{g}^{j - j_0}} \eta \\
\nonumber
&=& \left( \sum_{\mathbf{v} : v_1 \leq \mathbf{v}, \, \mathbf{g} \leq_L \mathbf{v}} y_{\mathbf{v}} a_{\mathbf{g}^{j - j_0}} \right) Q_v \eta =  y Q_v \eta,
\end{eqnarray}
where $y := \sum_{\mathbf{v} : v_1 \leq \mathbf{v}, \, \mathbf{g} \leq_L \mathbf{v}} y_{\mathbf{v}} a_{\mathbf{g}^{j - j_0}}  \in \mathbf{A}_{\Gamma}^{\mathrm{alg}}$, as desired.

For the induction step assume the statement holds for all natural numbers strictly less than some $n \geq 2$. Consider the element $(Q_{v_1}x_1Q_{v_2}x_2 \cdots Q_{v_n}x_n)a_{\mathbf{g}^j} Q_v$. By the induction hypothesis, there exists $j_0 \in \mathbb{N}$ such that $Q_{v_2}x_2 \cdots Q_{v_n}x_n a_{\mathbf{g}^{j_0}} Q_v = y Q_v$ for some $y \in \mathbf{A}_{\Gamma}^{\mathrm{alg}}$. Since $a_{\mathbf{g}^{j - j_0}} Q_v = Q_v a_{\mathbf{g}^{j - j_0}} Q_v$, we compute:
\[
(Q_{v_1}x_1Q_{v_2}x_2 \cdots Q_{v_n}x_n)a_{\mathbf{g}^j} Q_v 
= Q_{v_1}x_1 (y Q_v) a_{\mathbf{g}^{j - j_0}} Q_v 
= (Q_{v_1}x_1 y) a_{\mathbf{g}^{j - j_0}} Q_v.
\]
Since $x_1 y \in \mathbf{A}_{\Gamma}^{\mathrm{alg}}$, the base case applies to the right-hand side, which shows that this element is of the desired form for all $j \geq j_0$. This concludes the proof.
\end{proof}

Note that finite sums of elements of the form appearing in the bracketed expression of~\eqref{BuildingBlocks} are norm-dense in the C$^{\ast}$-algebra $\mathfrak{A}(\mathbf{A}, \Gamma)$. In combination with the preceding lemma, this observation will allow us to derive a simplicity criterion for graph product C$^{\ast}$-algebras.

\begin{theorem} \label{SimplicityCriterion1}
Let $\Gamma$ be a finite, undirected, simplicial graph with $\# \Gamma \geq 3$, and let $\mathbf{A} := (A_{v})_{v \in V\Gamma}$ be a collection of unital C$^*$-algebras, each equipped with a GNS-faithful state $\omega_{v}$. Suppose that the complement $\Gamma^{c}$ is connected, and that for every vertex $v \in V\Gamma$ there exist elements $a_{v} \in \ker(\omega_{v})$, $q_{v} > 0$ such that $a_{v} a_{v}^{*} \geq q_{v} \omega_{v}(a_{v}^{*} a_{v}) 1 > 0$. Assume further that $(q_{v})_{v \in V\Gamma} \in \mathbb{R}_{>0}^{V\Gamma} \setminus \overline{\mathcal{R}(\Gamma)}$. Then the graph product C$^*$-algebra $\mathbf{A}_{\Gamma}$ is simple if and only if $\mathbf{A}_{\Gamma} \cap \mathfrak{I}(\mathbf{A}, \Gamma) = 0$.

Moreover, if $\mathbf{A}_{\Gamma}$ is simple, the canonical inclusion $\mathbf{A}_{\Gamma} \hookrightarrow \mathfrak{A}(\mathbf{A}, \Gamma)/\mathfrak{I}(\mathbf{A}, \Gamma)$ is \emph{C$^*$-irreducible} in the sense that every intermediate C$^*$-algebra is simple as well; see \cite[Definition 3.1]{Rordam23}.
\end{theorem}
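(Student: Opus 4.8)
The ``only if'' half of the equivalence is the soft one. Since $\#V\Gamma\ge 3$ and $\Gamma^{c}$ is connected, $\Gamma$ is not complete, so $W_{\Gamma}$ is infinite and $1\notin\mathfrak I(\mathbf A,\Gamma)$ (one has $\|\mathbb E(1)P_{N}^{\perp}\|=\|P_{N}^{\perp}\|=1$ for all $N$); hence $\mathbf A_{\Gamma}\cap\mathfrak I(\mathbf A,\Gamma)$ is a \emph{proper} closed two-sided ideal of $\mathbf A_{\Gamma}$, which vanishes once $\mathbf A_{\Gamma}$ is simple. For the ``if'' half and the C$^{*}$-irreducibility assertion I would prove the single statement: \emph{if $\mathbf A_{\Gamma}\cap\mathfrak I(\mathbf A,\Gamma)=0$, then every C$^{*}$-algebra $\mathcal A$ with $\pi(\mathbf A_{\Gamma})\subseteq\mathcal A\subseteq\mathfrak A(\mathbf A,\Gamma)/\mathfrak I(\mathbf A,\Gamma)$ is simple}, where $\pi$ is the quotient map. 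Taking $\mathcal A=\pi(\mathbf A_{\Gamma})\cong\mathbf A_{\Gamma}$ (the isomorphism holds precisely because $\mathbf A_{\Gamma}\cap\mathfrak I(\mathbf A,\Gamma)=0$) yields the ``if'' half, and letting $\mathcal A$ range over all intermediate algebras yields C$^{*}$-irreducibility in the sense of \cite[Definition 3.1]{Rordam23}. Note that under the hypothesis $\mathbf A_{\Gamma}\cap\mathfrak I(\mathbf A,\Gamma)=0$ the largest intermediate algebra $\mathfrak A(\mathbf A,\Gamma)/\mathfrak I(\mathbf A,\Gamma)$ is already simple by Theorem \ref{MaximalityTheorem} (its hypotheses hold here); the task is to propagate simplicity down to every $\mathcal A$.

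The proof of the italicized statement should follow the template of the proof of Theorem \ref{MaximalityTheorem}, with Proposition \ref{StateExistence} and Lemma \ref{ElementExpression} inserted to compensate for the fact that $I$ is only an ideal of $\mathcal A$ and that $\mathcal A$ need not contain the projections $\widetilde Q_{v}$. Concretely: assume $I\triangleleft\mathcal A$ is nonzero and proper, fix a positive $b\in I$ with $\|b\|=1$, a positive lift $\hat b\in\mathfrak A(\mathbf A,\Gamma)$, and a state $\vartheta$ on $\mathfrak A(\mathbf A,\Gamma)/\mathfrak I(\mathbf A,\Gamma)$ with $\vartheta(I)=0$. The goal is to show $\hat b\in\mathfrak I(\mathbf A,\Gamma)$, i.e.\ $\lim_{N}\|\mathbb E(\hat b)P_{N}^{\perp}\|=0$, which forces $b=0$ and contradicts $I\neq0$. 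Fixing $\varepsilon>0$, one approximates $\hat b$ within $\varepsilon$ by a finite sum $\hat b_{0}$ of ``building block'' operators $Q_{v_{1}}x_{1}\cdots Q_{v_{n}}x_{n}$ (dense by the remark after Lemma \ref{ElementExpression}); since $\hat b_{0}$ acts on only finitely many tensor legs, it suffices to make $\|\mathbb E(\hat b_{0})P_{N}^{\perp}\|$ small for one sufficiently large $N$. Picking such an $N$, then $\mathbf w\in W_{\Gamma}$ with $|\mathbf w|>N$ and first letter $v$, and a unit vector $\xi\in\mathcal H_{\mathbf w}^{\circ}$ (of the explicit form $\sum_{i}c_{i,1}^{\dagger}\cdots c_{i,|\mathbf w|}^{\dagger}\Omega$) nearly realizing $\|\mathbb E(\hat b_{0})P_{N}^{\perp}\|$, one fixes a closed walk $\mathbf g=u_{1}\cdots u_{m_{0}}$ in $\Gamma^{c}$ covering $\Gamma$ with $u_{1}=v$ and, after rescaling, arranges $\omega_{v}(a_{v}^{*}a_{v})=1$ for all $v$.

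The key is now to produce a single state $\phi$ on $\mathfrak A(\mathbf A,\Gamma)/\mathfrak I(\mathbf A,\Gamma)$ that simultaneously (i) vanishes on $I$, (ii) satisfies $\phi(\widetilde Q_{v})=1$, and (iii) annihilates every elementary summand of non-trivial signature occurring after the substitution below. Properties (i)--(ii) are exactly the output of Proposition \ref{StateExistence} — whose proof realizes $\phi$ as a weak$^{*}$-limit of the $\vartheta$-conjugates $\vartheta(\pi(a_{\mathbf w_{i}}a_{\mathbf w_{i}}^{*}))^{-1}\,\vartheta(\pi(a_{\mathbf w_{i}})(\cdot)\pi(a_{\mathbf w_{i}}^{*}))$ along a sequence $(\mathbf w_{i})$ with $q_{\mathbf w_{i}}^{-1}\psi(Q_{\mathbf w_{i}})\to0$ supplied by Proposition \ref{MainProposition} and the growth-series hypothesis $(q_{v})_{v}\notin\overline{\mathcal R(\Gamma)}$, using Lemma \ref{ConjugationLemma} together with $a_{v}a_{v}^{*}\ge q_{v}\omega_{v}(a_{v}^{*}a_{v})1>0$ — while (iii) is enforced, as in Claim 2 of the proof of Theorem \ref{MaximalityTheorem}, by choosing the $\mathbf w_{i}$ to end in long words furnished by Proposition \ref{TopologicalFrenessImplication} relative to the finite set of occurring signatures. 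Granting such a $\phi$, one invokes Lemma \ref{ElementExpression} to rewrite $\hat b_{0}\,a_{\mathbf g^{j}}\,Q_{v}=y_{j}Q_{v}$ with $y_{j}\in\mathbf A_{\Gamma}^{\mathrm{alg}}$ for $j$ large. Since $\widetilde Q_{v}$ lies in the multiplicative domain of $\phi$ and $\phi$ kills non-trivial signatures, the value $\phi\!\left(\widetilde Q_{v}\,\pi(a_{\mathbf g^{j}})^{*}(\hat b_{0}^{*}\hat b_{0})\,\pi(a_{\mathbf g^{j}})\,\widetilde Q_{v}\right)$ reduces, up to an error $O(\varepsilon)$, to $\omega_{\Gamma}(y_{j}^{*}y_{j})$, which via Lemma \ref{ConjugationLemma} and the lower bound $a_{\mathbf g^{j}}a_{\mathbf g^{j}}^{*}\ge q_{\mathbf g^{j}}1>0$ dominates a fixed positive multiple (depending on $j$ and $\mathbf g$) of $\|\mathbb E(\hat b_{0})\xi\|^{2}$; on the other hand, because $\hat b_{0}^{*}\hat b_{0}$ is within $O(\varepsilon)$ of $\hat b^{2}\in I$, $\pi(a_{\mathbf g^{j}})\in\pi(\mathbf A_{\Gamma})\subseteq\mathcal A$, and $\phi$ vanishes on $I$, the same value is $O(\varepsilon)$. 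Holding $j$ fixed and letting $\varepsilon\to0$ forces $\|\mathbb E(\hat b_{0})\xi\|$, hence $\|\mathbb E(\hat b)P_{N}^{\perp}\|$, to $0$, so $\hat b\in\mathfrak I(\mathbf A,\Gamma)$, the desired contradiction.

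The main obstacle is the construction of the state $\phi$ in the third paragraph: one must interweave the $a_{\mathbf w_{i}}$-conjugation/weak$^{*}$-limit device from the proof of Proposition \ref{StateExistence} with the choice of the $\mathbf w_{i}$ dictated by the topological-freeness input of Proposition \ref{TopologicalFrenessImplication}, so that properties (i)--(iii) all hold along one and the same net, and then to keep track of the various $\varepsilon$-approximations, of the (fixed but possibly small) normalization constants $q_{\mathbf g^{j}}$, and of the interplay between the tail words $\mathbf g^{j}$ used in Lemma \ref{ElementExpression} and those used in Proposition \ref{TopologicalFrenessImplication}; in particular, making precise the comparison between $\omega_{\Gamma}(y_{j}^{*}y_{j})$ and $\|\mathbb E(\hat b_{0})\xi\|^{2}$ requires a careful analysis of the element $y_{j}$ produced by Lemma \ref{ElementExpression}. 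Everything else — the soft direction, the reduction, the density of building blocks, and the norm bookkeeping — is comparatively routine.
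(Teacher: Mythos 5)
Your soft direction and your overall framing (prove simplicity of every intermediate algebra $\mathcal A$ at once, using Proposition \ref{StateExistence} and Lemma \ref{ElementExpression}) agree with the paper, but the core of your argument diverges from the paper's proof and contains a genuine gap. The paper does \emph{not} attempt to show that a lift of a positive element of $I$ lies in $\mathfrak I(\mathbf A,\Gamma)$. Instead it passes to the closed two-sided ideal $J$ generated by $I$ in the \emph{full} quotient $\mathfrak A(\mathbf A,\Gamma)/\mathfrak I(\mathbf A,\Gamma)$, shows that the weak$^{*}$-limit $\psi$ of the conjugated states $\phi(\pi(a_{\mathbf g^{i}}a_{\mathbf g^{i}}^{*}))^{-1}\phi(\pi(a_{\mathbf g^{i}})(\cdot)\pi(a_{\mathbf g^{i}}^{*}))$ (with $\phi$ supplied by Proposition \ref{StateExistence}) annihilates $J$, and then concludes $J=0$ from Theorem \ref{MaximalityTheorem}. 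The role of Lemma \ref{ElementExpression} is exactly this: a generic generator $\pi(Q_{v_{1}}x_{1}\cdots Q_{v_{k}}x_{k})^{*}\,x\,\pi(Q_{v_{1}'}x_{1}'\cdots Q_{v_{l}'}x_{l}')$ of $J$, after conjugation by $a_{\mathbf g^{j}}$ and insertion of $\widetilde Q_{u_{1}}$ via its multiplicative domain, becomes $\pi(y_{j}^{*})\,x\,\pi(y_{j}')$ with $y_{j},y_{j}'\in\mathbf A_{\Gamma}^{\mathrm{alg}}$, hence an element of $I$ again, on which $\phi$ vanishes. No state with your property (iii), and no estimate on $\Vert\mathbb E(\hat b)P_{N}^{\perp}\Vert$, is needed.

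Your route amounts to re-running the proof of Theorem \ref{MaximalityTheorem} for an ideal of an intermediate algebra, and two steps do not go through as sketched. First, to make one state simultaneously vanish on $I$, kill non-trivial signatures, and agree with the relevant vector states on diagonal elements, the maximality proof relies on $\pi(\mathcal D(\mathbf A,\Gamma))\cap I=0$ (Lemma \ref{MinimalityAnalogue}); that lemma is proved only for ideals of the full quotient, and its proof conjugates by creation operators $c_{i,j}^{\dagger}$ which need not normalize $\mathcal A$, so it does not transfer to your $I\triangleleft\mathcal A$. Second, and more seriously, your final estimate is unjustified: the state $\phi$ produced by Proposition \ref{StateExistence} is a weak$^{*}$-limit of conjugates of an arbitrary state vanishing on $I$ and bears no relation to the vector $\xi$ witnessing $\Vert\mathbb E(\hat b_{0})P_{N}^{\perp}\Vert$, so there is no reason why $\phi(\pi(y_{j}^{*}y_{j}))$ should equal $\omega_{\Gamma}(y_{j}^{*}y_{j})$ up to $O(\varepsilon)$, nor why either quantity should dominate a positive multiple of $\Vert\mathbb E(\hat b_{0})\xi\Vert^{2}$. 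The reduction to the generated ideal $J$ and to Theorem \ref{MaximalityTheorem} is the missing idea that sidesteps both problems.
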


\begin{proof}
The ``only if'' direction is immediate, since $\mathfrak{I}(\mathbf{A}, \Gamma)$ is a closed two-sided ideal in $\mathfrak{A}(\mathbf{A}, \Gamma)$.

For the converse, suppose $\mathbf{A}_{\Gamma} \cap \mathfrak{I}(\mathbf{A}, \Gamma) = 0$. Let $\pi: \mathbf{A}_{\Gamma} \to \mathfrak{A}(\mathbf{A}, \Gamma)/\mathfrak{I}(\mathbf{A}, \Gamma)$ be the quotient map, and consider an intermediate C$^*$-algebra $\mathcal{A}$ such that $\mathbf{A}_\Gamma \cong \pi(\mathbf{A}_{\Gamma}) \subseteq \mathcal{A} \subseteq \mathfrak{A}(\mathbf{A}, \Gamma)/\mathfrak{I}(\mathbf{A}, \Gamma)$, with $\mathcal{A}$ admitting a closed two-sided ideal $I$.

Let $(u_{1}, \ldots, u_{n}) \in V\Gamma \times \cdots \times V\Gamma$ be a closed walk in the complement $\Gamma^{c}$ covering the whole graph. By Proposition~\ref{StateExistence}, there exists a state $\phi$ on $\mathfrak{A}(\mathbf{A}, \Gamma)/\mathfrak{I}(\mathbf{A}, \Gamma)$ such that $\phi$ vanishes on $I$ and satisfies $\phi(\widetilde{Q}_{u_{1}}) = 1$.

Let $\mathbf{g} := u_{1} \cdots u_{n} \in W_{\Gamma}$ be the group element corresponding to the walk, and let $a := (a_{v})_{v \in V\Gamma}$. By assumption, for each $i \in \mathbb{N}$, $\phi(\pi(a_{\mathbf{g}^{i}} a_{\mathbf{g}^{i}}^{*})) \geq q_{\mathbf{g}^{i}} \omega_{\mathbf{g}^{i}} > 0$, where $\omega := (\omega_{v}(a_{v}^{*} a_{v}))_{v \in V\Gamma}$.

Consider the sequence of states:
\begin{equation}
\left( \phi(\pi(a_{\mathbf{g}^{i}} a_{\mathbf{g}^{i}}^{*}))^{-1} \, \phi\left( \pi(a_{\mathbf{g}^{i}}) (\,\cdot\,) \pi(a_{\mathbf{g}^{i}}^{*}) \right) \right)_{i \in \mathbb{N}}. \label{eq:StateSequence-1}
\end{equation}
By the Banach–Alaoglu theorem, this sequence admits a subnet converging to some state $\psi$.

We claim that $\psi$ vanishes on the closed two-sided ideal $J$ generated by $I$ in $\mathfrak{A}(\mathbf{A}, \Gamma)/\mathfrak{I}(\mathbf{A}, \Gamma)$. Indeed, let $x \in I $, and take any $x_{1}, \ldots, x_{k}, x_{1}', \ldots, x_{l}' \in \mathbf{A}_{\Gamma}^{\mathrm{alg}}$, and vertices $v_{1}, \ldots, v_{k}, v_{1}', \ldots, v_{l}' \in V\Gamma$. Since $(u_{n}, \ldots, u_{1})$ is also a closed walk in $\Gamma^{c}$ covering the whole graph, Lemma~\ref{ElementExpression} yields $j_{0} \in \mathbb{N}$ such that for all $j \geq j_{0}$, there exist $y_{j}, y_{j}' \in \mathbf{A}_{\Gamma}^{\mathrm{alg}}$ with
\[
Q_{u_{1}} a_{\mathbf{g}^{j}} (Q_{v_{1}} x_{1} Q_{v_{2}} x_{2} \cdots Q_{v_{k}} x_{k})^{*} = Q_{u_{1}} y_{j}^{*},
\quad
(Q_{v_{1}'} x_{1}' Q_{v_{2}'} x_{2}' \cdots Q_{v_{l}'} x_{l}') a_{\mathbf{g}^{i}}^{*} Q_{u_{1}} = y_{j}' Q_{u_{1}}.
\]
Since $\phi(\widetilde{Q}_{u_{1}}) = 1$, the projection $\widetilde{Q}_{u_{1}}$ lies in the multiplicative domain of $\phi$. Thus,
\begin{eqnarray}
\nonumber
& & \phi(\pi(a_{\mathbf{g}^{i}}a_{\mathbf{g}^{i}}^{\ast}))^{-1}\phi\left(\pi(a_{\mathbf{g}^{i}})\pi(Q_{v_{1}}x_{1}Q_{v_{2}}x_{2}\cdots Q_{v_{k}}x_{k})x\pi(Q_{v_{1}^{\prime}}x_{1}^{\prime}Q_{v_{2}^{\prime}}x_{2}^{\prime}\cdots Q_{v_{l}^{\prime}}x_{l}^{\prime})\pi(a_{\mathbf{g}^{i}}^{\ast})\right)\\
\nonumber
&=& \phi(\pi(a_{\mathbf{g}^{i}}a_{\mathbf{g}^{i}}^{\ast}))^{-1}\phi\left(\pi\left(Q_{u_{1}}a_{\mathbf{g}^{i}}(Q_{v_{1}}x_{1}Q_{v_{2}}x_{2}\cdots Q_{v_{k}}x_{k})^{\ast}\right)x\pi\left((Q_{v_{1}^{\prime}}x_{1}^{\prime}Q_{v_{2}^{\prime}}x_{2}^{\prime}\cdots Q_{v_{l}^{\prime}}x_{l}^{\prime})a_{\mathbf{g}^{i}}^{\ast}Q_{u_{1}}\right)\right) \\
\nonumber
&=& \phi(\pi(a_{\mathbf{g}^{i}}a_{\mathbf{g}^{i}}^{\ast}))^{-1}\phi\left(\pi(Q_{u_{1}}y_{j}^{\ast})x\pi(y_{j}^{\prime}Q_{u_{1}})\right) \\
\nonumber
&=& \phi(a_{\mathbf{g}^{i}}a_{\mathbf{g}^{i}}^{\ast})^{-1}\phi\left(\pi(y_{j}^{\ast})x\pi(y_{j}^{\prime})\right) \\
\nonumber
&=& 0
\end{eqnarray}
for all $j \geq j_{0}$. It follows that
\[
\psi\left( \pi(Q_{v_{1}} x_{1} \cdots Q_{v_{k}} x_{k})^{*} x \pi(Q_{v_{1}'} x_{1}' \cdots Q_{v_{l}'} x_{l}')^{*} \right) = 0.
\]
Since sums of elements of the form $(Q_{v_{1}}x_{1}Q_{v_{2}}x_{2}\cdots Q_{v_{k}}x_{k})^{\ast}$ and $Q_{v_{1}^{\prime}}x_{1}^{\prime}Q_{v_{2}^{\prime}}x_{2}^{\prime}\cdots Q_{v_{l}^{\prime}}x_{l}^{\prime}$ as above are dense in $\mathfrak{A}(\mathbf{A},\Gamma)$, we conclude that $\psi$ vanishes on $J$.

As $\psi \neq 0$, we have $J \neq \mathfrak{A}(\mathbf{A}, \Gamma)/\mathfrak{I}(\mathbf{A}, \Gamma)$. Applying Theorem~\ref{MaximalityTheorem}, we deduce that $J = 0$, and thus $I = 0$. Hence $\mathcal{A}$ is simple, as desired.
\end{proof}

\begin{remark}
Let $\Gamma$ be a finite, undirected, simplicial graph, and let $\mathbf{A} := (A_v)_{v \in V\Gamma}$ be a collection of unital C$^{*}$-algebras, each equipped with a GNS-faithful state $\omega_v$. When $\#\Gamma = 2$, the graph product $\mathbf{A}_\Gamma$ reduces to either a reduced free product of the vertex algebras or a minimal tensor product. The tensor product case has been discussed earlier, while results on the free product setting -- complementary to Theorem~\ref{SimplicityCriterion1} -- can be found in \cite{McClanahan94, Avitzour82, Dykema99}.
\end{remark}

The proof of the following corollary is inspired by \cite[Theorem 4.3]{Klisse23-1}.

\begin{corollary} \label{SimplicityCriterion2} 
Let $\Gamma$ be a finite, undirected simplicial graph with $\# \Gamma \geq 3$, and let $\mathbf{A} := (A_v)_{v \in V\Gamma}$ be a collection of finite-dimensional C$^{\ast}$-algebras equipped with faithful states $(\omega_v)_{v \in V\Gamma}$.  Assume that the complement $\Gamma^c$ is connected, and that for each vertex $v \in V\Gamma$, there exist elements $a_v \in \ker(\omega_v)$, $q_v > 0$ such that $a_v a_v^* \geq q_v \omega_v(a_v^* a_v)1 > 0$. Suppose further that $(q_v)_{v \in V\Gamma} \in \mathbb{R}_{>0}^{V\Gamma} \setminus \overline{\mathcal{R}(\Gamma)}$.  Then the graph product C$^{\ast}$-algebra $\mathbf{A}_\Gamma$ is simple, and the inclusion $\mathbf{A}_\Gamma \hookrightarrow \mathfrak{A}(\mathbf{A}, \Gamma) / \mathfrak{I}(\mathbf{A}, \Gamma)$ is C$^{\ast}$-irreducible.
\end{corollary}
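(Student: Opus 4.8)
The plan is to deduce Corollary~\ref{SimplicityCriterion2} from Theorem~\ref{SimplicityCriterion1} by verifying its single remaining hypothesis, namely that $\mathbf{A}_{\Gamma}\cap\mathfrak{I}(\mathbf{A},\Gamma)=0$. All other assumptions of Theorem~\ref{SimplicityCriterion1} are literally part of the statement, so once this intersection is shown to be trivial, both the simplicity of $\mathbf{A}_{\Gamma}$ and the C$^{\ast}$-irreducibility of $\mathbf{A}_{\Gamma}\hookrightarrow\mathfrak{A}(\mathbf{A},\Gamma)/\mathfrak{I}(\mathbf{A},\Gamma)$ follow immediately.

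The key observation is that, because the vertex algebras are finite-dimensional and the states $\omega_{v}$ are faithful, Proposition~\ref{FiniteDimensionalIdeal} applies and gives $\mathfrak{I}(\mathbf{A},\Gamma)=\mathcal{K}(\mathcal{H}_{\Gamma})$. Hence it suffices to show $\mathbf{A}_{\Gamma}\cap\mathcal{K}(\mathcal{H}_{\Gamma})=0$. For this I would argue as follows: suppose $0\neq x\in\mathbf{A}_{\Gamma}$ is compact. The vacuum state $\omega_{\Gamma}$ on $\mathbf{A}_{\Gamma}$ is GNS-faithful and the corresponding GNS representation is the defining one on $\mathcal{H}_{\Gamma}$ with cyclic vector $\Omega$; in particular $\mathbf{A}_{\Gamma}\Omega$ is dense in $\mathcal{H}_{\Gamma}$, so the representation of $\mathbf{A}_{\Gamma}$ on $\mathcal{H}_{\Gamma}$ is faithful and nondegenerate. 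Now exploit the gauge invariance: for $z\in\mathbb{T}^{V\Gamma}$ one has $U_{z}\,\mathbf{A}_{\Gamma}\,U_{z}^{\ast}=\mathbf{A}_{\Gamma}$ (the gauge action restricts to $\mathbf{A}_{\Gamma}$, acting on a reduced operator of type $\mathbf{v}$ by the scalar $z_{\mathbf{v}}$), and averaging a compact $x$ over the diagonal subtorus picks out its ``type-$e$'' part, which by the freeness relations for $\omega_{\Gamma}$ is $\omega_{\Gamma}(x)1$. Since the identity is not compact on the infinite-dimensional space $\mathcal{H}_{\Gamma}$ — here $W_{\Gamma}$ is infinite because $\Gamma^{c}$ is connected with $\#V\Gamma\geq 3$ — we must have $\omega_{\Gamma}(x)=0$, and more generally $\omega_{\Gamma}(a^{\ast}xb)=0$ for all $a,b\in\mathbf{A}_{\Gamma}$ after translating; faithfulness of $\omega_{\Gamma}$ then forces $x=0$. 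Alternatively, and perhaps more cleanly, one can invoke the shift-like behaviour of reduced words: a nonzero reduced operator $a_{\mathbf{v}}$ with $\mathbf{v}\neq e$ maps $\mathcal{H}_{\mathbf{w}}^{\circ}$ isometrically (up to a fixed constant) into $\mathcal{H}_{\mathbf{v}\mathbf{w}}^{\circ}$ whenever $\mathbf{v}\leq\mathbf{v}\mathbf{w}$, of which there are infinitely many $\mathbf{w}$, so no nonzero element of $\mathbf{A}_{\Gamma}$ supported off the diagonal can be compact, while the diagonal part is a scalar; combining the two yields $\mathbf{A}_{\Gamma}\cap\mathcal{K}(\mathcal{H}_{\Gamma})=0$.

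With $\mathbf{A}_{\Gamma}\cap\mathfrak{I}(\mathbf{A},\Gamma)=\mathbf{A}_{\Gamma}\cap\mathcal{K}(\mathcal{H}_{\Gamma})=0$ established, Theorem~\ref{SimplicityCriterion1} applies verbatim — the hypotheses $\#\Gamma\geq 3$, connectedness of $\Gamma^{c}$, the existence of $a_{v}\in\ker(\omega_{v})$ with $a_{v}a_{v}^{\ast}\geq q_{v}\omega_{v}(a_{v}^{\ast}a_{v})1>0$, and $(q_{v})_{v}\notin\overline{\mathcal{R}(\Gamma)}$ are all assumed here — and yields at once that $\mathbf{A}_{\Gamma}$ is simple and that every intermediate C$^{\ast}$-algebra $\mathbf{A}_{\Gamma}\subseteq\mathcal{A}\subseteq\mathfrak{A}(\mathbf{A},\Gamma)/\mathfrak{I}(\mathbf{A},\Gamma)$ is simple, i.e.\ the inclusion is C$^{\ast}$-irreducible.

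The only real content beyond bookkeeping is the compactness argument $\mathbf{A}_{\Gamma}\cap\mathcal{K}(\mathcal{H}_{\Gamma})=0$, and I expect this to be the main (though modest) obstacle: one needs to combine the gauge-action averaging with the freeness/moment identities for $\omega_{\Gamma}$ on reduced words, and to be slightly careful that the diagonal part of a reduced operator genuinely collapses to a scalar multiple of $1$ rather than merely to an element of $\mathcal{D}(\mathbf{A},\Gamma)$ — this is exactly where GNS-faithfulness of $\omega_{\Gamma}$ together with $\omega_{\Gamma}(a_{\mathbf{v}})=0$ for $\mathbf{v}\neq e$ is used. Everything else is a direct citation of Proposition~\ref{FiniteDimensionalIdeal} and Theorem~\ref{SimplicityCriterion1}.
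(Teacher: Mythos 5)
Your overall reduction is the right one and matches the paper: identify $\mathfrak{I}(\mathbf{A},\Gamma)$ with $\mathcal{K}(\mathcal{H}_{\Gamma})$ via Proposition~\ref{FiniteDimensionalIdeal}, verify $\mathbf{A}_{\Gamma}\cap\mathcal{K}(\mathcal{H}_{\Gamma})=0$, and feed this into Theorem~\ref{SimplicityCriterion1}. However, the step you yourself single out as ``the only real content'' --- the proof that $\mathbf{A}_{\Gamma}\cap\mathcal{K}(\mathcal{H}_{\Gamma})=0$ --- is not correct as sketched. The gauge action does \emph{not} restrict to $\mathbf{A}_{\Gamma}$ acting on a reduced operator of type $\mathbf{v}$ by the scalar $z_{\mathbf{v}}$: for $a\in A_{v}^{\circ}$ one has $U_{z}aU_{z}^{\ast}=z_{v}a^{\dagger}+\overline{z_{v}}\,((a^{\ast})^{\dagger})^{\ast}+\mathfrak{d}(a)$, which is neither a scalar multiple of $a$ nor, in general, an element of $\mathbf{A}_{\Gamma}$ (it involves the projections $Q_{v}$). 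For the same reason the ``diagonal part'' of a reduced operator of type $\mathbf{v}\neq e$ is $\mathfrak{d}(a_{1})\cdots$-type data in $\mathcal{D}(\mathbf{A},\Gamma)$, not $\omega_{\Gamma}(x)1$; e.g.\ $\mathbb{E}(a)=\mathfrak{d}(a)\neq 0$ for generic $a\in A_{v}^{\circ}$ even though $\omega_{\Gamma}(a)=0$. The alternative ``shift-like'' argument is also not salvageable as stated: $a_{\mathbf{v}}$ does not map $\mathcal{H}_{\mathbf{w}}^{\circ}$ into a single component $\mathcal{H}_{\mathbf{v}\mathbf{w}}^{\circ}$, and the claimed uniform lower bound on $\Vert a_{\mathbf{v}}\xi\Vert$ is precisely what the hypothesis $a_{v}a_{v}^{\ast}\geq q_{v}\omega_{v}(a_{v}^{\ast}a_{v})1$ is needed for.

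The decisive objection is that your argument uses none of the hypotheses on the elements $a_{v}$, the constants $q_{v}$, or the condition $(q_{v})_{v}\notin\overline{\mathcal{R}(\Gamma)}$; if it worked, it would show $\mathbf{A}_{\Gamma}\cap\mathcal{K}(\mathcal{H}_{\Gamma})=0$ for \emph{all} finite-dimensional vertex algebras with faithful states whenever $W_{\Gamma}$ is infinite. This is false: in the Hecke-algebra setting of Example~\ref{MainExample}(1), for multiparameters $q$ inside the region of convergence the algebra $C^{\ast}_{r,q}(W)$ does contain nonzero compact operators and fails to be simple (this is the content of \cite{Klisse23-2}). The paper's actual proof is genuinely different: it passes to the graph product von Neumann algebra $\mathcal{M}$, uses that $\Omega$ is cyclic and separating so that a nonzero compact in $\mathbf{A}_{\Gamma}$ would produce a nonzero finite-rank projection $P\in\mathcal{M}'$ commuting with $\mathbf{A}_{\Gamma}$, and then derives a contradiction from the estimate $\Vert P\Omega\Vert^{2}\leq(q_{\mathbf{w}}\omega_{\mathbf{w}})^{-1}\sum_{i}|\langle e_{i},a_{\mathbf{w}}^{\ast}\Omega\rangle|^{2}$ together with the divergence of $\sum_{\mathbf{w}}q_{\mathbf{w}}$ (i.e.\ $(q_{v})_{v}\notin\overline{\mathcal{R}(\Gamma)}$) and the fact that $\Omega$ is separating for $\mathcal{M}'$. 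You need an argument of this kind; the gauge-averaging route cannot close the gap.
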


\begin{proof}
By Proposition \ref{FiniteDimensionalIdeal}, the ideal $\mathfrak{I}(\mathbf{A}, \Gamma)$ coincides with the ideal of compact operators $\mathcal{K}(\mathcal{H}_\Gamma)$ on $\mathcal{H}_\Gamma$. By Theorem~\ref{SimplicityCriterion1}, it remains to show that $\mathbf{A}_\Gamma \cap \mathcal{K}(\mathcal{H}_\Gamma) = 0$.

Each $A_v$ embeds into $\mathcal{B}(\mathcal{H}_v)$ and is closed under the strong operator topology; thus, it is a von Neumann algebra. Following the construction in~\cite[Subsection 2.3]{CaspersFima17}, let $\mathcal{M}$ denote the graph product von Neumann algebra generated by $\bigcup_{v \in V\Gamma} \lambda_v(A_v)$ inside $\mathcal{B}(\mathcal{H}_\Gamma)$. By the same reasoning as in~\cite[Subsection 2.3]{CaspersFima17}, the vacuum vector $\Omega$ is cyclic and separating for $\mathcal{M}$.

Let $J$ denote the modular conjugation associated to $\omega_\Gamma$. Then we have
\[
J \mathbf{A}_\Gamma J \subseteq J \mathcal{M} J = \mathcal{M}'.
\]
Suppose, toward a contradiction, that $\mathbf{A}_\Gamma \cap \mathcal{K}(\mathcal{H}_\Gamma) \neq 0$. Then $\mathcal{M}'$ contains a non-zero compact operator and its spectral projections, and hence also a non-zero finite-rank projection $P$ commuting with all elements of $\mathbf{A}_\Gamma$.

Let $(e_i)_{1\leq i \leq n} \subseteq P \mathcal{H}_\Gamma$ be an orthonormal basis of $P \mathcal{H}_\Gamma$. Define $a := (a_v)_{v \in V\Gamma}$, and observe that by assumption, $a_{\mathbf{w}} a_{\mathbf{w}}^* \geq q_{\mathbf{w}} \omega_{\mathbf{w}} > 0$, where $\omega:=\left(\omega_{v}(a_{v}^{\ast}a_{v})\right)_{v\in V\Gamma}$. Hence,
\[
\Vert P\Omega\Vert^{2}\leq(q_{\mathbf{w}}\omega_{\mathbf{w}})^{-1}\Vert Pa_{\mathbf{w}}^{\ast}\Omega\Vert^{2}=\sum_{i=1}^{n}(q_{\mathbf{w}}\omega_{\mathbf{w}})^{-1}\left|\langle e_{i},Pa_{\mathbf{w}}^{\ast}\Omega\rangle\right|^{2}=\sum_{i=1}^{n}(q_{\mathbf{w}}\omega_{\mathbf{w}})^{-1}\left|\langle e_{i},a_{\mathbf{w}}^{\ast}\Omega\rangle\right|^{2}.
\]

We now distinguish two cases:

\begin{itemize}
\item \textit{Case 1:} Suppose there exists a constant $C > 0$ such that for every $\mathbf{w} \in W_\Gamma$, there exists $1 \leq i \leq n$ with 
\[
(q_{\mathbf{w}} \omega_{\mathbf{w}})^{-1} \left| \langle e_i, a_{\mathbf{w}}^* \Omega \rangle \right|^2 > C.
\]
Since the vectors $a_{\mathbf{w}}^* \Omega$ are pairwise orthogonal, we compute
\begin{eqnarray}
\nonumber
\sum_{i=1}^{n}\Vert e_{i}\Vert^{2} &\geq& \sum_{i=1}^{n}\sum_{\mathbf{w}\in W_{\Gamma}}\Vert a_{\mathbf{w}}\Omega\Vert^{-1}\left|\left\langle e_{i},a_{\mathbf{w}}^{\ast}\Omega\right\rangle \right|^{2} \\
\nonumber
&=& \sum_{i=1}^{n}\sum_{\mathbf{w}\in W_{\Gamma}}\omega_{\mathbf{w}}^{-1}\left|\left\langle e_{i},a_{\mathbf{w}}^{\ast}\Omega\right\rangle \right|^{2} \\
\nonumber
&>& C\sum_{\mathbf{w}\in W_{\Gamma}}q_{\mathbf{w}}.
\end{eqnarray}
Since $(q_v)_{v \in V\Gamma} \notin \overline{\mathcal{R}(\Gamma)}$, the sum on the right diverges, while the left-hand side is finite -- yielding a contradiction.

\item \textit{Case 2:} Suppose now that there exists a sequence $(\mathbf{w}_j)_{j \in \mathbb{N}} \subseteq W_\Gamma$ such that for each $1 \leq i \leq n$,
\[
(q_{\mathbf{w}_j} \omega_{\mathbf{w}_j})^{-1} | \langle e_i, a_{\mathbf{w}_j}^* \Omega \rangle |^2 \to 0 \quad \text{as } j \to \infty.
\]
Then,
\[
\|P\Omega\|^2 \leq \sum_{i=1}^n (q_{\mathbf{w}_j} \omega_{\mathbf{w}_j})^{-1} | \langle e_i, a_{\mathbf{w}_j}^* \Omega \rangle |^2 \to 0,
\]
implying $P\Omega = 0$. But since $\Omega$ is separating and cyclic for $\mathcal{M}$, it is also separating for $\mathcal{M}'$, so this forces $P = 0$, a contradiction.
\end{itemize}

In either case, we reach a contradiction. Therefore, $\mathbf{A}_\Gamma \cap \mathcal{K}(\mathcal{H}_\Gamma) = 0$, completing the proof.
\end{proof}

\begin{corollary} \label{SimplicityCriterion3}
Let $\Gamma$ be a finite, undirected, simplicial graph with $\#V\Gamma \geq 3$, and let $\mathbf{A} := (A_{v})_{v \in V\Gamma}$ be a collection of unital C$^{\ast}$-algebras, equipped with GNS-faithful states $(\omega_{v})_{v \in V\Gamma}$. Suppose that the complement $\Gamma^{c}$ is connected, and that for every vertex $v \in V\Gamma$, there exists a unitary $u_{v} \in \ker(\omega_{v})$ with $\omega_v(u_v x)=\omega_v(x u_v)$ for all $x \in A_v$. Then the graph product C$^{\ast}$-algebra $\mathbf{A}_{\Gamma}$ is simple, and the canonical inclusion $\mathbf{A}_{\Gamma} \hookrightarrow \mathfrak{A}(\mathbf{A}, \Gamma)/\mathfrak{I}(\mathbf{A}, \Gamma)$ is C$^{\ast}$-irreducible.
\end{corollary}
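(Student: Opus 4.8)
The plan is to deduce the corollary from Theorem~\ref{SimplicityCriterion1} by taking, for every $v\in V\Gamma$, the element $a_v:=u_v$ and the constant $q_v:=1$. Since $u_v\in\ker(\omega_v)$ and $u_v u_v^{\ast}=1=q_v\,\omega_v(u_v^{\ast}u_v)\,1>0$, the only hypotheses of Theorem~\ref{SimplicityCriterion1} that remain to be checked are $(1,\dots,1)\notin\overline{\mathcal{R}(\Gamma)}$ and $\mathbf{A}_{\Gamma}\cap\mathfrak{I}(\mathbf{A},\Gamma)=0$; the C$^{\ast}$-irreducibility statement then follows at once from Theorem~\ref{SimplicityCriterion1}. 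For the first point, $\#V\Gamma\geq 3$ and connectedness of $\Gamma^{c}$ force $\Gamma^{c}$ to contain an induced connected subgraph on three vertices, whose associated parabolic subgroup of $W_{\Gamma}$ is isomorphic to $\mathbb{Z}_2\ast\mathbb{Z}_2\ast\mathbb{Z}_2$ or to $(\mathbb{Z}_2\times\mathbb{Z}_2)\ast\mathbb{Z}_2$. Both groups have exponential growth, hence so does $W_{\Gamma}$, so the single-variable growth series $\sum_{\mathbf{w}\in W_{\Gamma}}t^{|\mathbf{w}|}$ has radius of convergence strictly below $1$. Since $\mathcal{R}(\Gamma)$ is a (logarithmically convex) complete Reinhardt domain, a point of $\overline{\mathcal{R}(\Gamma)}$ would be dominated coordinatewise by a diagonal point $(t,\dots,t)$ with $t$ arbitrarily close to $1$ lying in $\mathcal{R}(\Gamma)$, contradicting the radius estimate; hence $(1,\dots,1)\notin\overline{\mathcal{R}(\Gamma)}$ (this is essentially \cite{Klisse23-2}, and could also be quoted verbatim from there).

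The key technical input is the following assertion: for every $v\in V\Gamma$ the inner automorphism $\mathrm{Ad}(u_v)$ of $\mathbf{A}_{\Gamma}$ preserves the state $\omega_{\Gamma}$. I would prove this via the uniqueness part of the universal property of the reduced graph product (\cite[Proposition~2.12]{CaspersFima17}): the unital faithful $\ast$-homomorphisms $\pi_{v'}:=\mathrm{Ad}(u_v)\circ\lambda_{v'}:A_{v'}\to\mathbf{A}_{\Gamma}$ generate $\mathbf{A}_{\Gamma}$, commute along the edges of $\Gamma$, and satisfy $\omega_{\Gamma}\big(\pi_{v_1}(a_1)\cdots\pi_{v_n}(a_n)\big)=\omega_{\Gamma}\big(u_v\,\lambda_{v_1}(a_1)\cdots\lambda_{v_n}(a_n)\,u_v^{\ast}\big)=0$ for every reduced word $(v_1,\dots,v_n)$ and $a_i\in A_{v_i}^{\circ}$. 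The last identity is the heart of the matter and is verified by a routine (if slightly tedious) centering argument: one moves $u_v$ rightward past the vertex factors commuting with it and $u_v^{\ast}$ leftward symmetrically, then successively centers the resulting $A_v$-factors, observing that by the hypothesis $\omega_v(u_vx)=\omega_v(xu_v)$ one has $\mathrm{Ad}(u_v)(A_v^{\circ})\subseteq A_v^{\circ}$, so centering never destroys reducedness; consequently every non-scalar term of the expansion is a reduced operator of nonempty type (hence $\omega_{\Gamma}$-null) and every scalar term is a multiple of such an operator. Uniqueness in the universal property then identifies $\mathrm{Ad}(u_v)$ with the canonical copy of $\mathbf{A}_{\Gamma}$ and yields $\omega_{\Gamma}\circ\mathrm{Ad}(u_v)=\omega_{\Gamma}$. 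Iterating over a reduced expression gives $\omega_{\Gamma}\circ\mathrm{Ad}(u_{\mathbf{w}})=\omega_{\Gamma}$ for all $\mathbf{w}\in W_{\Gamma}$, where $u_{\mathbf{w}}:=\lambda_{v_1}(u_{v_1})\cdots\lambda_{v_n}(u_{v_n})$ (well defined, since the $\lambda_{v_i}(u_{v_i})$ with $v_i$ adjacent commute); note that $u_{\mathbf{w}}$ is a unitary of $\mathbf{A}_\Gamma$ with $u_{\mathbf{w}}\Omega=u_{v_1}\xi_{v_1}\otimes\cdots\otimes u_{v_n}\xi_{v_n}\in\mathcal{H}_{\mathbf{w}}^{\circ}$ a unit vector.

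Granting this, the vanishing $\mathbf{A}_{\Gamma}\cap\mathfrak{I}(\mathbf{A},\Gamma)=0$ is short. Put $J:=\mathbf{A}_{\Gamma}\cap\mathfrak{I}(\mathbf{A},\Gamma)$, a closed two-sided ideal of $\mathbf{A}_{\Gamma}$. Let $0\neq x\in J$ with $x\geq 0$. By definition of $\mathfrak{I}(\mathbf{A},\Gamma)$ we have $\sup_{|\mathbf{w}|>N}\|x\,p_{\mathbf{w}}\|^{2}=\|\mathbb{E}(x^{2})P_{N}^{\perp}\|\to 0$, so $\|x\,u_{\mathbf{w}}\Omega\|\leq\|x\,p_{\mathbf{w}}\|\to 0$ as $|\mathbf{w}|\to\infty$ (and $W_{\Gamma}$ is infinite). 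On the other hand $\|x\,u_{\mathbf{w}}\Omega\|^{2}=\langle x^{2}u_{\mathbf{w}}\Omega,u_{\mathbf{w}}\Omega\rangle=\omega_{\Gamma}\big(\mathrm{Ad}(u_{\mathbf{w}}^{\ast})(x^{2})\big)=\omega_{\Gamma}(x^{2})=\|x\Omega\|^{2}$ by the previous paragraph, whence $x\Omega=0$. Thus every positive element of $J$, and therefore all of $J$, annihilates $\Omega$; since $J$ is a two-sided ideal, $a(b\Omega)=(ab)\Omega=0$ for all $a\in J$, $b\in\mathbf{A}_{\Gamma}$, and as $\Omega$ is cyclic for $\mathbf{A}_{\Gamma}$ this forces $a=0$, i.e.\ $J=0$. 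By Theorem~\ref{SimplicityCriterion1}, $\mathbf{A}_{\Gamma}$ is simple and the inclusion $\mathbf{A}_{\Gamma}\hookrightarrow\mathfrak{A}(\mathbf{A},\Gamma)/\mathfrak{I}(\mathbf{A},\Gamma)$ is C$^{\ast}$-irreducible.

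The main obstacle is the centering computation establishing $\omega_{\Gamma}\circ\mathrm{Ad}(u_v)=\omega_{\Gamma}$; once that is in place, everything else is either bookkeeping or a direct appeal to Theorem~\ref{SimplicityCriterion1}. A secondary, more cosmetic point is pinning down $(1,\dots,1)\notin\overline{\mathcal{R}(\Gamma)}$ from the hypotheses $\#V\Gamma\geq 3$ and $\Gamma^{c}$ connected, which can be handled by the parabolic-subgroup growth argument above or simply cited.
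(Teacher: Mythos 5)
Your proposal is correct and follows essentially the same route as the paper: both reduce to Theorem~\ref{SimplicityCriterion1} with $a_{v}=u_{v}$, $q_{v}=1$, and kill $\mathbf{A}_{\Gamma}\cap\mathfrak{I}(\mathbf{A},\Gamma)$ by conjugating with the unitaries $u_{\mathbf{w}}$ and using that they lie in the centralizer of $\omega_{\Gamma}$, so that $\Vert x\Omega\Vert=\Vert x u_{\mathbf{w}}\Omega\Vert\leq\Vert xP_{N}^{\perp}\Vert\rightarrow0$. The paper merely asserts the centralizer fact and leaves the condition $(1,\ldots,1)\notin\overline{\mathcal{R}(\Gamma)}$ implicit (it is checked in the proof of Proposition~\ref{StateExistence2} via \cite[Proposition~17.2.1]{Davis08}), whereas you prove both; your write-up is also slightly more careful in working with $\mathfrak{I}(\mathbf{A},\Gamma)$ rather than just $\mathcal{K}(\mathcal{H}_{\Gamma})$ and in passing from $x\Omega=0$ to $x=0$ via the ideal property and cyclicity of $\Omega$.
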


\begin{proof}
By Theorem \ref{SimplicityCriterion1}, it suffices to show that $\mathbf{A}_{\Gamma} \cap \mathcal{K}(\mathcal{H}_{\Gamma}) = 0$. To this end, let $x \in \mathbf{A}_{\Gamma} \cap \mathcal{K}(\mathcal{H}_{\Gamma})$. Since each $u_{v}$ lies in the centralizer $A_{v}^{\omega_{v}}$, it is also contained in the centralizer of $\omega_{\Gamma}$, viewed as a state on $\mathbf{A}_{\Gamma}$. Consequently, for every reduced word $\mathbf{w} \in W_{\Gamma}$ of length $|\mathbf{w}| > N$, we have
\[
\|x\Omega\|^{2} = \omega_{\Gamma}(x^{\ast}x) = \omega_{\Gamma}(u_{\mathbf{w}}^{\ast} x^{\ast}x u_{\mathbf{w}}) = \|x(u_{\mathbf{w}} \Omega)\|^{2} \leq \|x P_{N}^{\perp}\|^{2},
\]
where $ u:=(u_{w})_{w\in V\Gamma}$.  It follows that $\Vert x\Omega\Vert=0$ and hence $x=0$. Therefore, $\mathbf{A}_{\Gamma} \cap \mathcal{K}(\mathcal{H}_{\Gamma}) = 0$, which completes the proof.
\end{proof}

\vspace{3mm}


\subsection{Trace-Uniqueness}

In parallel with our analysis of the simplicity of graph product C$^{\ast}$-algebras, we now investigate the uniqueness of tracial states under the assumption that all vertex states are tracial. Related results for free product C$^{\ast}$-algebras can be found in \cite{Dykema99}. As in Subsection \ref{GraphSimplicity}, it suffices to restrict attention to graphs whose complements are connected.

The proof of the following proposition follows the same general strategy as that of Proposition~\ref{StateExistence}.

\begin{proposition} \label{StateExistence2}
Let $\Gamma$ be a finite, undirected, simplicial graph with $\#V\Gamma \geq 3$ such that its complement $\Gamma^{c}$ is connected, and let $\mathbf{A} := (A_{v})_{v \in V\Gamma}$ be a collection of unital C$^{\ast}$-algebras, equipped with GNS-faithful states $(\omega_{v})_{v \in V\Gamma}$. Suppose that every vertex $v \in V\Gamma$ admits a unitary $u_{v} \in \ker(\omega_{v})$. Then for every tracial state $\tau$ on $\mathbf{A}_{\Gamma}$ and every $v \in V\Gamma$, there exists a state $\phi$ on $\mathcal{B}(\mathcal{H}_{\Gamma})$ satisfying $\phi(Q_{v}) = 1$ and whose restriction to $\mathbf{A}_{\Gamma}$ coincides with $\tau$.
\end{proposition}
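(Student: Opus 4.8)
The plan is to mimic the argument for Proposition~\ref{StateExistence}, replacing the role played there by the elements $a_v$ with the spectral radius bound $a_va_v^*\ge q_v\omega_v(a_v^*a_v)1$ by the use of unitaries (for which one can take $q_v$ arbitrarily close to $1$, so the convergence hypothesis on the growth series is automatic). Fix a tracial state $\tau$ on $\mathbf A_\Gamma$ and a vertex $v\in V\Gamma$. First I would extend $\tau$ to a state $\psi$ on $\mathcal B(\mathcal H_\Gamma)$ via Hahn--Banach; since $\Gamma^c$ is connected I would pick a closed walk $(v_1,\dots,v_n)$ in $\Gamma^c$ covering the whole graph with $v_1=v$, and set $u:=(u_w)_{w\in V\Gamma}$ and $\mathbf g:=v_1\cdots v_n\in W_\Gamma$.

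The key point is that for a unitary $u_w$ one has $u_w^*Q_w^\perp u_w=Q_w$ (this is the $q_w=1$, $\omega_w(u_w^*u_w)=1$ instance of Lemma~\ref{ConjugationLemma}(1)), and more generally $u_w^*Q_{\mathbf w}u_w=Q_{w\mathbf w}$ when $\mathbf w\notin C_{W_\Gamma}(w)$ with $w\nleq\mathbf w$, by Lemma~\ref{ConjugationLemma}(2); hence $u_{\mathbf g^i}^* Q_{v} u_{\mathbf g^i}$ will be a projection of the form $Q_{\mathbf w_i}$ for a suitable group element $\mathbf w_i$ with $|\mathbf w_i|\to\infty$ (here one uses that $v_1\cdots v_n$ is a closed walk in $\Gamma^c$, so successive applications of the $u_{v_j}$ translate the support by $\mathbf g$, pushing it out to infinity and keeping $v_1\cdots v_n\le\mathbf w_i^{-1}$). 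Next, I would apply Proposition~\ref{MainProposition} with the parameter $q:=(q_w)_{w}$ for any $q\in\mathbb R_{>0}^{V\Gamma}\setminus\overline{\mathcal R(\Gamma)}$ (such $q$ exists since $\overline{\mathcal R(\Gamma)}$ is a proper closed subset not containing a neighbourhood of the point with all coordinates equal, as $W_\Gamma$ is infinite) to the state $\psi$: this yields a sequence $(\mathbf w_i)_i\subseteq W_\Gamma$ of increasing length with $v_1\cdots v_n\le\mathbf w_i^{-1}$ and $q_{\mathbf w_i}^{-1}\psi(Q_{\mathbf w_i})\to 0$.

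Then I would form the sequence of states $\psi\big(u_{\mathbf w_i}(\,\cdot\,)u_{\mathbf w_i}^*\big)$ on $\mathcal B(\mathcal H_\Gamma)$ (each is a state since $u_{\mathbf w_i}$ is unitary; no normalization is needed, unlike in Proposition~\ref{StateExistence}). Using $u_{\mathbf w_i}Q_v^\perp u_{\mathbf w_i}^*$ together with Lemma~\ref{ConjugationLemma} one gets
\[
\bigl|\psi(u_{\mathbf w_i}Q_v u_{\mathbf w_i}^*)-1\bigr|=\bigl|\psi(u_{\mathbf w_i}Q_v^\perp u_{\mathbf w_i}^*)\bigr|\le q_{\mathbf w_i}^{-1}\,|\psi(Q_{\mathbf w_i})|\to 0,
\]
so any weak$^*$-cluster point $\phi$ of this sequence satisfies $\phi(Q_v)=1$. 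Finally, since $\tau$ is a \emph{trace} on $\mathbf A_\Gamma$ and $u_{\mathbf w_i}\in\mathbf A_\Gamma$ is a unitary, $\psi(u_{\mathbf w_i}x u_{\mathbf w_i}^*)=\tau(u_{\mathbf w_i}xu_{\mathbf w_i}^*)=\tau(x)$ for every $x\in\mathbf A_\Gamma$, so every term of the sequence — and hence $\phi$ — restricts to $\tau$ on $\mathbf A_\Gamma$. This gives the desired state $\phi$.

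The main obstacle I expect is bookkeeping the identity $u_{\mathbf w_i}^*Q_vu_{\mathbf w_i}=Q_{\mathbf w_i}$ carefully: one needs to verify that for the particular word $\mathbf g=v_1\cdots v_n$ coming from a closed walk in $\Gamma^c$ and $\mathbf w$ with $\mathbf g\le\mathbf w^{-1}$, the relevant instances of Lemma~\ref{ConjugationLemma}(2) actually apply (i.e. the centralizer and length hypotheses are met at each step as one conjugates letter by letter), and that the $\mathbf w_i$ produced this way are exactly those supplied by Proposition~\ref{MainProposition}. Everything else is a routine weak$^*$-compactness argument paralleling Proposition~\ref{StateExistence}.
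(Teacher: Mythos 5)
Your proposal is correct and follows essentially the same route as the paper: extend $\tau$ to a state $\psi$ on $\mathcal{B}(\mathcal{H}_{\Gamma})$, produce the sequence $(\mathbf{w}_{i})_{i}$ via Proposition~\ref{MainProposition} along a closed walk in $\Gamma^{c}$ starting at $v$, conjugate by the unitaries $u_{\mathbf{w}_{i}}$, and pass to a weak$^{*}$-cluster point, with traciality ensuring the restriction to $\mathbf{A}_{\Gamma}$ stays equal to $\tau$. The one point to pin down is the choice of $q$: the paper simply takes $q=1$, which lies outside $\overline{\mathcal{R}(\Gamma)}$ because $\#V\Gamma\geq 3$ and the connectedness of $\Gamma^{c}$ force $W_{\Gamma}$ to be non-amenable; with $q=1$ the estimate $\psi(u_{\mathbf{w}_{i}}Q_{v}^{\perp}u_{\mathbf{w}_{i}}^{\ast})\leq\psi(Q_{\mathbf{w}_{i}})$ from Lemma~\ref{ConjugationLemma} closes the argument directly, whereas for a general $q\notin\overline{\mathcal{R}(\Gamma)}$ with some coordinates exceeding $1$ your displayed inequality $\psi(u_{\mathbf{w}_{i}}Q_{v}^{\perp}u_{\mathbf{w}_{i}}^{\ast})\leq q_{\mathbf{w}_{i}}^{-1}\psi(Q_{\mathbf{w}_{i}})$ would not follow and the limit could fail.
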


\begin{proof}
Extend $\tau$ to a state $\psi$ on $\mathcal{B}(\mathcal{H}_{\Gamma})$, and let $(v_{1}, \dots, v_{n}) \in V\Gamma \times \cdots \times V\Gamma$ be a closed walk in $\Gamma^{c}$ covering the whole graph, with $v_{1} = v$. Since $\#V\Gamma \geq 3$ and $\Gamma^{c}$ is connected, it follows that $W_{\Gamma}$ is non-amenable, and hence $1 \in \mathbb{R}_{>0}^{V\Gamma} \setminus \overline{\mathcal{R}(\Gamma)}$ by \cite[Proposition~17.2.1]{Davis08}.

Applying Proposition~\ref{MainProposition}, we obtain a sequence $(\mathbf{w}_{i})_{i \in \mathbb{N}} \subseteq W_{\Gamma}$ with increasing word length such that $v_{1} \cdots v_{n} \leq \mathbf{w}_{i}^{-1}$ for all $i$, and $\psi(Q_{\mathbf{w}_{i}}) \to 0$.

Let $u := (u_{w})_{w \in V\Gamma}$ and consider the sequence of states $( \psi ( u_{\mathbf{w}_{i}} (\cdot) u_{\mathbf{w}_{i}}^{\ast} ) )_{i \in \mathbb{N}}$. By Lemma~\ref{ConjugationLemma}, for each $i \in \mathbb{N}$ we have
\[
| \psi(u_{\mathbf{w}_{i}} Q_{v} u_{\mathbf{w}_{i}}^{\ast}) - 1 | = | \psi(u_{\mathbf{w}_{i}} Q_{v}^{\perp} u_{\mathbf{w}_{i}}^{\ast}) | \leq \psi(Q_{\mathbf{w}_{i}}) \to 0.
\]
Then, by the weak$^{\ast}$-compactness of the state space of $\mathcal{B}(\mathcal{H}_{\Gamma})$, there exists a subnet converging to a state $\phi$ such that $\phi(Q_{v}) = 1$. By construction, we have $\phi|_{\mathbf{A}_{\Gamma}} = \tau$.
\end{proof}

\begin{theorem} \label{TraceUniqueness}
Let $\Gamma$ be a finite, undirected, simplicial graph, and let $\mathbf{A} := (A_{v})_{v \in V\Gamma}$ be a collection of unital C$^{\ast}$-algebras, each equipped with a GNS-faithful state $\omega_{v}$. Suppose that for every $v \in V\Gamma$ there exists a unitary $u_{v} \in \ker(\omega_{v})$. Then:
\begin{enumerate}
\item If the states $(\omega_{v})_{v\in V\Gamma}$ are tracial, then $\omega_{\Gamma}$ is the unique tracial state on $\mathbf{A}_{\Gamma}$.
\item If at least one $\omega_{v}$ is non-tracial, then $\mathbf{A}_{\Gamma}$ admits no tracial state.
\end{enumerate}
\end{theorem}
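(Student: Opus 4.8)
The plan is to prove both statements simultaneously by exploiting the conditional expectations and the density of the span of elementary operators, combined with the state-existence result in Proposition~\ref{StateExistence2}. For part (1), first recall that when $\Gamma^{c}$ is disconnected, $\Gamma$ decomposes as a graph join and $\mathbf{A}_{\Gamma}$ is a minimal tensor product; since the unique trace property passes to tensor products (the tracial states of $A \otimes B$ are precisely the tensor products of tracial states when $A$ and $B$ each have a unique trace), we may assume $\Gamma^{c}$ is connected, and the degenerate cases $\#V\Gamma \leq 2$ are handled directly (for $\#V\Gamma = 1$ the claim is the hypothesis; for $\#V\Gamma = 2$ with $\Gamma^{c}$ connected, $\mathbf{A}_{\Gamma}$ is a reduced free product and uniqueness of the trace is classical, cf.\ \cite{Dykema99}). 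So assume $\#V\Gamma \geq 3$ and $\Gamma^{c}$ connected.

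For the main case, let $\tau$ be an arbitrary tracial state on $\mathbf{A}_{\Gamma}$; I want to show $\tau = \omega_{\Gamma}$. By Proposition~\ref{StateExistence2}, for each $v \in V\Gamma$ there is a state $\phi_{v}$ on $\mathcal{B}(\mathcal{H}_{\Gamma})$ with $\phi_{v}|_{\mathbf{A}_{\Gamma}} = \tau$ and $\phi_{v}(Q_{v}) = 1$. Since $\phi_{v}(Q_{v}) = 1$, the projection $Q_{v}$ lies in the multiplicative domain of $\phi_{v}$, so $\phi_{v}(Q_{v} x) = \phi_{v}(x Q_{v}) = \phi_{v}(x)$ for all $x$. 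Now take any reduced operator $a_{\mathbf{w}} = \lambda_{v_{1}}(a_{1}) \cdots \lambda_{v_{n}}(a_{n}) \in \mathbf{A}_{\Gamma}$ of type $\mathbf{w} = v_{1} \cdots v_{n} \in W_{\Gamma} \setminus \{e\}$, with $a_{i} \in A_{v_{i}}^{\circ}$. Using the decomposition $a_{i} = a_{i}^{\dagger} + ((a_{i}^{*})^{\dagger})^{*} + \mathfrak{d}(a_{i})$ together with the identities in Lemma~\ref{MainIdentities} and Lemma~\ref{MainIdentities}(5) relating $Q_{\mathbf{w}}$ to the $Q_{v}$, one checks that multiplying $a_{\mathbf{w}}$ on an appropriate side by a suitable $Q_{v}$ (with $v$ chosen as the first letter of a reduced word for $\mathbf{w}$) kills the diagonal and annihilation parts, forcing $Q_{v} a_{\mathbf{w}} Q_{v} = 0$ or an expression whose $\phi_{v}$-value vanishes; more robustly, since $a_{\mathbf{w}}$ maps $\mathcal{H}_{\mathbf{u}}^{\circ}$ into $\mathcal{H}_{\mathbf{w}\mathbf{u}}^{\circ}$-type components and $Q_{v}$ is a projection onto words starting with $v$, one arranges $Q_{v} a_{\mathbf{w}} Q_{v}^{\perp} = a_{\mathbf{w}}$ or $Q_{v}^{\perp} a_{\mathbf{w}} Q_{v} = a_{\mathbf{w}}$ by a suitable choice of $v$. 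Consequently $\tau(a_{\mathbf{w}}) = \phi_{v}(a_{\mathbf{w}}) = \phi_{v}(Q_{v}^{\perp} a_{\mathbf{w}} Q_{v}) = \phi_{v}(Q_{v}^{\perp})\phi_{v}(a_{\mathbf{w}})\phi_{v}(Q_{v})$ (multiplicative domain) $= 0$. Thus $\tau$ vanishes on every reduced operator of non-trivial type, and since such operators together with $1$ have dense span in $\mathbf{A}_{\Gamma}$ and $\tau(1) = 1 = \omega_{\Gamma}(1)$, we conclude $\tau = \omega_{\Gamma}$, because $\omega_{\Gamma}$ is characterized by exactly this vanishing property (Subsection~\ref{GraphProductCAlgebras}).

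For part (2), suppose some $\omega_{v_{0}}$ is non-tracial but, for contradiction, $\mathbf{A}_{\Gamma}$ carries a tracial state $\tau$. Applying the same argument as in part (1) (which never used traciality of the $\omega_{v}$, only the existence of the unitaries $u_{v}$ and of $\tau$), we find $\tau$ vanishes on all reduced operators of non-trivial type, hence $\tau = \omega_{\Gamma}$ on the dense span, so $\tau = \omega_{\Gamma}$. But then $\omega_{\Gamma}$ would be tracial, and since $\omega_{\Gamma}$ restricts to $\omega_{v_{0}}$ on the embedded copy of $A_{v_{0}}$, it would follow that $\omega_{v_{0}}$ is tracial, contradicting the hypothesis. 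Therefore no tracial state exists.

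The main obstacle I anticipate is making precise the claim that for each non-trivial $\mathbf{w} \in W_{\Gamma}$ one can choose a vertex $v$ (a valid first letter of some reduced word for $\mathbf{w}$, or alternatively the first letter of $\mathbf{w}^{-1}$) such that $Q_{v}$ multiplied on the correct side recovers $a_{\mathbf{w}}$ while the opposite-side complement $Q_{v}^{\perp}$ also acts as the identity on $a_{\mathbf{w}}$ — i.e.\ simultaneously $Q_{v}^{\perp} a_{\mathbf{w}} = a_{\mathbf{w}}$ and $a_{\mathbf{w}} Q_{v} = a_{\mathbf{w}}$ (or a symmetric variant), so that the multiplicative-domain trick yields $\phi_{v}(a_{\mathbf{w}}) = \phi_{v}(Q_{v}^{\perp}) \phi_{v}(a_{\mathbf{w}}) \phi_{v}(Q_{v}) = 0$. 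This should follow from the explicit description of the left and right creation operators $\lambda_{v}, \rho_{v}$ acting on $\mathcal{H}_{\Gamma}$: if $v \leq \mathbf{w}$ then $a_{\mathbf{w}}$ carries $\mathcal{H}_{\mathbf{u}}^{\circ}$ with $v \nleq \mathbf{u}$ into $\mathcal{H}_{v \cdots}^{\circ}$, giving $Q_{v} a_{\mathbf{w}} Q_{v}^{\perp} = a_{\mathbf{w}} Q_{v}^{\perp}$ and, choosing $v$ appropriately on the range side as well, the needed relation; one then also needs $\phi_{v}(Q_{v}^{\perp}) = 1 - \phi_{v}(Q_{v}) = 0$, so actually one wants $\phi_{v}(Q_{v}) = 1$ used on one side and the decomposition $a_{\mathbf{w}} = Q_{v} a_{\mathbf{w}} + Q_{v}^{\perp} a_{\mathbf{w}}$ with $\phi_{v}(Q_{v}^{\perp} a_{\mathbf{w}}) = \phi_{v}(Q_{v}^{\perp})\phi_{v}(a_{\mathbf{w}}) = 0$ and $\phi_{v}(Q_{v} a_{\mathbf{w}}) = \phi_{v}(Q_{v} a_{\mathbf{w}} Q_{v}^{\perp})$ plus $Q_{v} a_{\mathbf{w}} Q_{v}^{\perp}$ lying in the kernel on the right via another projection. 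Getting this bookkeeping exactly right, possibly by invoking the signature map $\Sigma$ and Proposition~\ref{DiagonalityCharacterization} to show no elementary summand of $a_{\mathbf{w}}$ has trivial signature, is the delicate point; everything else is a routine assembly of the cited results.
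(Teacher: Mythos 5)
Your scaffolding is the right one — reducing to $\Gamma^{c}$ connected, invoking Proposition~\ref{StateExistence2} to produce a state $\phi_{v}$ extending $\tau$ with $\phi_{v}(Q_{v})=1$, exploiting the multiplicative domain, and deducing part (2) from part (1) together with the fact that $\omega_{\Gamma}$ restricts to $\omega_{v_{0}}$ on $A_{v_{0}}$. But the central step fails as stated. For a reduced operator $a_{\mathbf{w}}$ of non-trivial type there is in general \emph{no} vertex $v$ with $Q_{v}a_{\mathbf{w}}Q_{v}^{\perp}=a_{\mathbf{w}}$ or $Q_{v}^{\perp}a_{\mathbf{w}}Q_{v}=a_{\mathbf{w}}$: already for a single letter $a\in A_{v}^{\circ}$ one has $a=a^{\dagger}+((a^{\ast})^{\dagger})^{\ast}+\mathfrak{d}(a)$ with $\mathfrak{d}(a)=Q_{v}aQ_{v}$ typically nonzero, so the multiplicative-domain computation only yields $\tau(a)=\phi_{v}(Q_{v}aQ_{v})=\phi_{v}(\mathfrak{d}(a))$, which has no reason to vanish. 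Your fallback — using the signature map to show that no elementary summand of $a_{\mathbf{w}}$ has trivial signature — is also false: the decomposition of a reduced operator into elementary operators (cf.\ \cite[Proposition 2.6]{CaspersKlisseLarsen21}) always contains diagonal summands of signature $e$, such as $\mathfrak{d}(a)$ above. So your argument stalls at $\tau(a_{\mathbf{w}})=\phi_{v}(Q_{v}a_{\mathbf{w}}Q_{v})$.

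The missing idea is to use the traciality of $\tau$ itself via conjugation by the unitaries $u_{\mathbf{g}^{i}}$. The paper chooses a closed walk $(v_{1},\dots,v_{n})$ in $\Gamma^{c}$ covering the whole graph with $v_{n}=u_{m}$ (the last letter of the type $\mathbf{u}=u_{1}\cdots u_{m}$ of $x=a_{1}\cdots a_{m}$), sets $\mathbf{g}:=v_{1}\cdots v_{n}$, and computes $\tau(x)=\tau(u_{\mathbf{g}^{i}}^{\ast}xu_{\mathbf{g}^{i}})=\phi(Q_{v_{1}}u_{\mathbf{g}^{i}}^{\ast}xu_{\mathbf{g}^{i}}Q_{v_{1}})=\phi\bigl(u_{\mathbf{g}^{i}}^{\ast}(Q_{\mathbf{g}^{i}}xQ_{\mathbf{g}^{i}})u_{\mathbf{g}^{i}}\bigr)$, moving $Q_{v_{1}}$ through $u_{\mathbf{g}^{i}}^{\ast}$ into $Q_{\mathbf{g}^{i}}$ by means of Lemma~\ref{ActionLemma} and Lemma~\ref{MainIdentities}(5) (cf.\ also Lemma~\ref{ConjugationLemma}). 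Since $Q_{\mathbf{g}^{i}}xQ_{\mathbf{g}^{i}}$ is nonzero on $\mathcal{H}_{\mathbf{w}}^{\circ}$ only when both $\mathbf{g}^{i}\leq\mathbf{w}$ and $\mathbf{g}^{i}\leq\mathbf{u}\mathbf{w}$ with $|\mathbf{u}\mathbf{w}|=|\mathbf{u}|+|\mathbf{w}|$, and since $\mathbf{u}\notin C_{W_{\Gamma}}(\mathbf{g})$ by the choice of the walk, one gets $Q_{\mathbf{g}^{i}}xQ_{\mathbf{g}^{i}}=0$ for all sufficiently large $i$, hence $\tau(x)=0$. It is exactly this conjugation step — which uses both the trace property of $\tau$ and the hypothesis that each $u_{v}$ is a unitary in $\ker(\omega_{v})$ — that your proposal omits, and without it the vanishing of $\tau$ on reduced operators of non-trivial type does not follow.
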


\begin{proof}
Let $\tau$ be a tracial state on $\mathbf{A}_{\Gamma}$, and consider an element $x = a_{1} \cdots a_{m} \in \mathbf{A}_{\Gamma}$, where $m \geq 1$, $a_{i} \in A_{u_{i}}^{\circ}$, and $(u_{1}, \dots, u_{m}) \in \mathcal{W}_{\mathrm{red}}$ is a reduced word with associated product $\mathbf{u} := u_{1} \cdots u_{m}$.

Choose a closed walk $(v_{1}, \dots, v_{n})$ in $\Gamma^{c}$ that covers the whole graph and satisfies $v_{n} = u_{m}$, and set $\mathbf{g} := v_{1} \cdots v_{n}$. By Proposition~\ref{StateExistence2}, there exists a state $\phi$ on $\mathcal{B}(\mathcal{H}_{\Gamma})$ with $\phi(Q_{v_{1}}) = 1$ and $\phi|_{\mathbf{A}_{\Gamma}} = \tau$. In particular, $Q_{v_{1}}$ lies in the multiplicative domain of $\phi$.

For $u := (u_{v})_{v \in V\Gamma}$ and $i \in \mathbb{N}$, we compute
\[
\tau(x) = \tau(u_{\mathbf{g}^{i}}^{\ast} x u_{\mathbf{g}^{i}}) = \phi(u_{\mathbf{g}^{i}}^{\ast} x u_{\mathbf{g}^{i}}) = \phi(Q_{v_{1}} u_{\mathbf{g}^{i}}^{\ast} x u_{\mathbf{g}^{i}} Q_{v_{1}}) = \phi(u_{\mathbf{g}^{i}}^{\ast} (Q_{\mathbf{g}^{i}} x Q_{\mathbf{g}^{i}}) u_{\mathbf{g}^{i}}).
\]
For each $\mathbf{w} \in W_{\Gamma}$, by the choice of $\mathbf{g}$ we have $Q_{\mathbf{g}^{i}}xQ_{\mathbf{g}^{i}}\mathcal{H}_{\mathbf{w}}^{\circ}\neq0$ if and only if $\mathbf{g}^{i}\leq\mathbf{w}$ and $\mathbf{g}^{i}\leq\mathbf{u}\mathbf{w}$, where $|\mathbf{u}\mathbf{w}|=|\mathbf{u}|+|\mathbf{w}|$. This implies that $Q_{\mathbf{g}^{i}} x Q_{\mathbf{g}^{i}} = 0$ for sufficiently large $i$, unless $\mathbf{u} \in C_{W_{\Gamma}}(\mathbf{g})$, the centralizer of $\mathbf{g}$. However, the construction of $\mathbf{g}$ ensures that $\mathbf{u} \notin C_{W_{\Gamma}}(\mathbf{g})$, and so $\tau(x) = 0$.

Since $x$ was arbitrary, we conclude that $\tau = \omega_{\Gamma}$. The fact that $\omega_{\Gamma}$ is tracial if and only if all vertex states $\omega_{v}$ are tracial follows easily from the construction of the graph product state.
\end{proof}

\vspace{3mm}



\end{document}